\newcommand\sprime{{\mathop{}_{\mkern40mu '}}}
\def\id{\mathrm{Id}}
\def\Stab{\mathrm{Stab}}
\def\ZZ{\ensuremath{\mathbb Z}}
\def\NN{\ensuremath{\mathbb N}}
\def\QQ{\ensuremath{\mathbb Q}}
\def\CC{\ensuremath{\mathbb C}}
\def\RR{\ensuremath{\mathbb R}}
\def\PP{\ensuremath{\mathbb P}}
\def\charun{{\mathbf 1}}
\def\cC{\ensuremath{\mathcal C}}
\def\cD{\ensuremath{\mathcal D}}
\def\cE{\ensuremath{\mathcal E}}
\def\cF{\ensuremath{\mathcal F}}
\def\cH{\ensuremath{\mathcal H}}
\def\cM{\ensuremath{\mathcal M}}
\def\cO{\ensuremath{\mathcal O}}
\def\cS{\ensuremath{\mathcal S}}
\def\cV{\ensuremath{\mathcal V}}
\def\cW{\ensuremath{\mathcal W}}
\DeclareMathOperator{\Hom}{Hom}
\DeclareMathOperator{\ord}{ord}
\DeclareMathOperator{\im}{Im}
\DeclareMathOperator{\re}{Re}
\theoremstyle{plain}
\newtheorem{thm}{Théorème}[section]
\newtheorem*{thme}{Théorème}
\newtheorem{lem}[thm]{Lemme}
\newtheorem{prop}[thm]{Proposition}
\newtheorem{cor}[thm]{Corollaire}
\theoremstyle{definition}
\newtheorem{defn}[thm]{Définition}
\newtheorem{rem}[thm]{Remarque}
\def\Eisc#1#2{Eis_{#2#1}}
\def\Eiscc#1{Eis_{#1}}
\def\Eis{\mathcal{E}is}
\def\Eisk#1#2{\mathcal{E}_{#1}(#2)}
\def\wEisk#1#2{\widetilde{\mathcal{E}}_{#1}(#2)}
\def\EiskQ#1{\mathcal{E}_{#1}}
\def\wEiskQ#1{\widetilde{\mathcal{E}}_{#1}}
\def\eisenmod#1#2{G_{#1,#2}}
\def\eisen#1#2{E_{#1,#2}}
\def\Per{\mathcal{P}er}
\def\mellin{\cM}
\def\periode{\cM}
\def\cocycle{\mathcal{C}}
\def\cocycleF#1#2{\mathcal{C}(#1,#2)}
\DeclareMathOperator{\Sl}{SL}
\DeclareMathOperator{\Gl}{GL}
\DeclareMathOperator{\Psl}{PSL}
\def\CN#1#2{C_{#1,#2}}
\def\Fonct#1#2{\cF_{#1}(#2)}
\def\Wmod{\widetilde{W}}
\def\cl#1{\widetilde {#1}}
\def\cll#1#2{\tau_{#2}(#1)}
\def\permm#1{\tau_{#1}}
\def\perm#1#2{\tau_{#1}(#2)}
\def\gammap{\gamma\ '}
\def\gammapp{{\gamma\ '}}
\newcommand\concat{+}
\def\Ikf#1#2{I_{#1}(#2)}
\def\Vmod{\widetilde{\cV\ }}
\def\smallmat#1{\left(\begin{smallmatrix}#1\end{smallmatrix}\right)}
\def\KKK{\Xi}
\DeclareMathOperator{\pgcd}{}
\DeclareMathOperator{\ddivides}{\mid\!\mid}
\def\Orb#1{\textrm{Orb}_{\Gamma_0(#1)}}
\def\VN{\cV_{\Gamma_0(N)}}
\def\vN{V_{\Gamma_0(N)}}
\def\dd#1#2{d#1 \otimes d#2}
\def\hypk#1{{#1}_k'}
\title{Symboles modulaires et produit de Petersson}
\author[*]{Dominique Bernardi}
\author[**]{Bernadette Perrin-Riou}
\affil[*]{Sorbonne Université, Institut de Mathématiques de Jussieu - Paris Rive Gauche, F-75005 Paris, France}
\affil[**]{Université Paris-Saclay, CNRS, Laboratoire de mathématiques d'Orsay, 91405, Orsay, France.}
\begin{document}
\maketitle
\selectlanguage{english}
\begin{abstract}
We revisit some papers by Eichler and Shimura in order to give an algebraic
formulation (based on Farey symbols) for the intersection product on the
space of modular symbols, as described by Pollack and Stevens.
We define the period homomorphism of an Eisenstein series (Eisenstein-Dedekind-Stevens
symbol) and extend the definition of the intersection product to these objects.
We construct a computationally convenient  basis for the space of Eisenstein series
for $\Gamma_0(N)$ with rational periods. Given a Farey symbol for a subgroup $\Gamma$
of the modular group and a subgroup $\Gamma'$ of finite index of $\Gamma$,
we give an algorithmic construction for a Farey symbol for $\Gamma'$.
\end{abstract}
\selectlanguage{french}
\begin{abstract}
On revisite des articles de Eichler et de Shimura afin de donner une formule algébrique
(basée sur les symboles de Farey) pour le produit d'intersection
sur l'espace des symboles modulaires tel qu'il est décrit par Pollack et Stevens.
On définit l'homomorphisme de périodes
d'une série d'Eisenstein (symbole d'Eisenstein-Dedekind-Stevens)
et on étend le produit d'intersection à ces objets.
On construit une base adaptée à un traitement algorithmique de l'espace
des séries d'Eisenstein de période rationnelle pour $\Gamma_0(N)$.
On donne un algorithme pour construire un symbole de Farey
d'un sous-groupe d'indice fini d'un groupe donné par un symbole de Farey.
\end{abstract}

Le but de cette note est de revisiter des articles de Eichler et de Shimura
(\cite{eichler}, \cite{shimura})
et de donner une formule pour le produit d'intersection
sur l'espace des symboles modulaires tel qu'il est décrit
dans \cite{PS}. Ce produit étendu à $\RR$ ou à $\CC$
et appliqué aux symboles modulaires associés à des formes modulaires
de poids $k$ pour un sous-groupe de congruence $\Gamma$
redonne le produit de Petersson classique.
Cette formule se trouve dans l'article de Eichler \cite{eichler}. Elle
a été reprise par Haberland \cite{haberland} et aussi par Zagier, Pasol-Popa, Cohen, \dots
(\cite{zagier}, \cite{pasol}, \cite{cohen})
en passant par les sous-groupes de congruence de petit niveau
comme $\Gamma_0(2)$.
Ici, comme le font Eichler et Shimura, nous donnons la formule directement
pour les symboles modulaires associés à $\Gamma$ (ou symboles modulaires généralisés
pour tenir compte des symboles d'Eisenstein) en utilisant
les notions de symbole de Farey et de polygone fondamental
associé à un sous-groupe d'indice fini de $\Psl_2(\ZZ)$ telles qu'elles
sont rappelées dans \cite{farey}.
Nous avons pris le parti de redémontrer des résultats déjà connus
mais pouvant être formulés différemment et pour lesquels il n'est pas
toujours facile de trouver une référence claire, en citant
en même temps les articles qui nous ont inspirés.

Donnons un aperçu sans préciser certaines notations qui seront reprises
dans le texte.
Dans la première partie, nous reprenons une extension des symboles modulaires
dans le langage de Stevens \cite{stevenscupcap}.
Soit $k$ un entier $\geq 2$. Considérons le $\QQ$-espace vectoriel
$V_k$ des polynômes en $x$ et $y$ homogènes de degré $k-2$,
muni de la forme bilinéaire $\langle \cdot, \cdot\rangle_{V_k}$
vérifiant
$$\langle (\tau x + y)^{k-2}, (\tau' x + y)^{k-2} \rangle_{V_k} =(\tau-\tau')^{k-2},$$
et $M_k$ l'espace des formes modulaires de poids $k$.
Si $F$ appartient à $M_k$,
l'\textsl{intégrale de Eichler} $W(F)$ est définie pour $\tau$ appartenant
au demi-plan de Poincaré par
$$W(F)(\tau)=\int_{i\infty}^{\tau} \left(F(t) -a_0(F)\right)(t x + y)^{k-2} dt
+ a_0(F) \int_{0}^{\tau} (t x + y)^{k-2} dt \in V_k(\CC)\ .$$
Les deux théorèmes suivants sont des formulations nouvelles de résultats
classiques.
\begin{thme}
\begin{enumerate}
\item L'intégrale définissant $W(F)$ converge absolument
et permet de définir une fonction holomorphe
$W(F)$ sur $\cH$ à valeurs dans $V_k(\CC)$ vérifiant
$$\frac{\partial W(F)}{\partial \tau} (\tau)= F(\tau)(\tau x + y)^{k-2}\ .$$
\item Si $\gamma \in \Gl_2^+(\QQ)$,
$\cocycleF{F}{\gamma}=W(F)- W(F|_k\gamma^{-1})|\gamma$ ne dépend pas de $\tau$.
L'application $$\gamma \mapsto \left (F \mapsto \cocycleF{F}{\gamma}\right)$$
définit un cocycle sur $\Gl_2^+(\QQ)$ à valeurs dans
$\Hom(M_k, V_k(\CC))$.
\end{enumerate}
\end{thme}
Soit $\KKK$ le groupe des diviseurs sur l'ensemble des symboles $\pi_r(s)$
pour $r$ et $s$ $\in \PP^1(\QQ)$, $r\neq s$ et
$\KKK_0$ le sous-groupe des diviseurs de degré 0, munis de l'action naturelle de
$\Gl_2^+(\QQ)$.
\begin{thme}
Il existe un unique $\Gl_2^+(\QQ)$-homomorphisme $\Per$ de $M_k$ dans
$\Hom(\KKK_0, V_k(\CC))$ (homomorphisme de périodes)
tel que pour tout $F \in M_k$ et tout $\gamma \in \Gl_2^+(\QQ)$,
$$ \Per(F)([\pi_\infty(0),\gamma^{-1}\pi_\infty(0)])=
\cocycleF{F}{\gamma}\ .
$$
Si $F$ est une forme modulaire de poids $k$ pour un sous-groupe de congruence
$\Gamma$, $\Per(F)$ appartient à $\Hom_\Gamma(\KKK_0,V_k(\CC))$.
Si $F$ est parabolique, son image dans $H^1(\Gamma, V_k(\CC))$
est un élément de la cohomologie
parabolique $H^1_{par}(\Gamma, V_k(\CC))$.
\end{thme}

Nous construisons ensuite le symbole de Eisenstein-Dedekind
associé aux fonctions de $(\ZZ/N\ZZ)^2$ comme le fait Stevens.
Notons $\cF_N$ le $\QQ$-espace vectoriel des
fonctions sur $(\ZZ/N\ZZ)^2$ à valeurs dans $\QQ$
et $\Fonct{N}{\CC}=\CC \otimes \cF_N$.
Pour $f \in \Fonct{N}{\CC}$ vérifiant de plus $f(0)=0$ si $k=2$
(on notera ce sous-espace $\hypk{\Fonct{N}{\CC}}$),
$$\Eis_{k}(f)(z)=
N^{k-1}\frac{(k-1)!}{(-2i\pi)^k}{\sum_{(c,d)\in \ZZ^2}^\sprime}\widehat{f}(c,d) (cz+d)^{-k}$$
est une forme modulaire de poids $k$ pour le sous-groupe de congruence
$\Gamma(N)$ (la manière dont la sommation est faite pour $k=2$ sera donnée au paragraphe
\ref{sub:eisenstein}). Soit $\cE_{k}(\Gamma)$ l'image par $\Eis_{k}$ des éléments de
${\cF_{N,k}'}$ invariants par $\Gamma$.

\begin{thme}[Stevens]
L'application $\Psi_k=\Per\circ \Eis_k$
définit un homomorphisme de $\Sl_2(\ZZ)$-modules
$\hypk{\Fonct{N}{\CC}}\to \Hom(\KKK_0,V_k(\CC))$.
L'image de $\cE_{k}(\Gamma)$ par $\Per$ dans $H^1(\Gamma,V_k(\CC))$
est en fait incluse dans $H^1(\Gamma,V_k)$.
\end{thme}

Dans la seconde partie, nous donnons la définition de l'accouplement
étudié dans le cadre de ces symboles modulaires et nous faisons le lien avec
le produit de Petersson classique en utilisant les isomorphismes de Eichler-Shimura.
Si $\cD$ est un domaine fondamental de $\Gamma$ dans $\cH^*=\cH \cup \PP^1(\QQ)$ et
si $F$ et $G$ sont deux formes pour $\Gamma$ dont l'une est parabolique,
le produit de Petersson de $F$ et $G$ est défini par
$$\langle F,G\rangle_{\Gamma} =
\int_{\cD} F(\tau) \overline{G(\tau)} y^k \frac{dx dy}{y^2}
=-\frac{1}{2i}
\int_{\cD} F(\tau) \overline{G(\tau)} \im(\tau)^{k-2} d\tau\wedge\overline{d\tau}
\ . $$

\begin{thme}
Soit un symbole de Farey étendu $\cF=(\cV,*, \mu_{ell})$ associé à $\Gamma$.
La forme bilinéaire
$$\Hom_\Gamma(\KKK_0,V_k) \times \Hom_\Gamma(\Delta_0, V_k) \to \QQ$$
définie par \begin{equation*}
\begin{split}
\left\lbrace \Phi_1, \Phi_2\right\rbrace_{\Gamma}&=
\frac{1}{2}\sum_{a\in \Vmod} \langle
  \Phi_1([\pi_\infty(0),\gamma_a\pi_\infty(0)]),\Phi_2 (a)\rangle_{V_k}
\end{split}
\end{equation*}
se factorise en une forme bilinéaire
$$H^1(\Gamma,V_k) \times \Hom_\Gamma(\Delta_0, V_k) \to \QQ$$
et vérifie les propriétés suivantes :
\begin{enumerate}
\item
$\left\lbrace \Phi_1, \Phi_2\right\rbrace_{\Gamma}$ ne dépend pas du choix
du symbole de Farey étendu $\cF$.
\item La forme bilinéaire $\left\lbrace \cdot , \cdot \right\rbrace_{\Gamma}$
induite sur $\Hom_\Gamma(\Delta_0,V_k)\times \Hom_\Gamma(\Delta_0,V_k)$
est antisymétrique (resp. symétrique) si $k$ est pair (resp. impair).
\item
Soient $\Phi_1 \in \Hom_{\Gamma_1}(\Delta_0, V_k)$
et $\Phi_2\in \Hom_{\Gamma_2}(\Delta_0,V_k)$. Si $\alpha \in M_2(\ZZ)^+$, on a
$$
\left\lbrace \Phi_1|[\Gamma_1 \alpha \Gamma_2], \Phi_2 \right\rbrace_{\Gamma_2}
=
\left\lbrace \Phi_1,
\Phi_2|[\Gamma_2 \alpha^* \Gamma_1] \right\rbrace_{\Gamma_1} \ .
$$
\item
Soient $F$ une forme modulaire et $G$ une forme parabolique
de poids $k$ pour $\Gamma$. Alors
\begin{equation*}
\begin{split}
\left\lbrace \Per(F), \overline{\Per(G)}\right\rbrace_{\Gamma}
=-(2i)^{k-1}\langle F,G\rangle_{\Gamma}
\ , \quad
\left\lbrace Per(F), \Per(G)\right\rbrace_{\Gamma}=0 \ .
\end{split}
\end{equation*}
\end{enumerate}
\end{thme}

La définition étant algébrique, nous avons pris le parti d'essayer de démontrer
les propriétés de cette forme bilinéaire
sans utiliser le lien avec le produit de Petersson classique.
Ainsi, pour nous persuader du bon comportement de l'accouplement
construit par les opérateurs de Hecke,
nous avons eu besoin de redonner l'algorithme de construction
d'un polygone fondamental d'un sous-groupe d'indice fini de $\Gamma$ à partir
d'un polygone fondamental de $\Gamma$ sans supposer que $\Gamma= \Sl_2(\ZZ)$.
Par contre, nous ne sommes pas arrivés à démontrer l'indépendance de l'accouplement
par rapport au symbole de Farey étendu de manière algébrique et avons dû passer
par le produit de Petersson classique.

L'accouplement tel qu'il est décrit ici est implémenté dans Pari/GP
\cite{pari} et permet de calculer un sous-espace de l'espace des symboles
modulaires isomorphe à l'espace des formes paraboliques par orthogonalité
avec l'espace des symboles d'Eisenstein.
C'est ce qui nous a amené à reprendre les constructions de Katz et de Stevens
du $\QQ$-espace vectoriel $\cE_{k}(\Gamma)$.
Nous avons alors cherché un système minimal de fonctions
de $(\ZZ/N\ZZ)^2$ dans $\QQ$ permettant d'engendrer $\cE_{k}(\Gamma)$
en partant de constructions de Kubert. Dans le cas où $\Gamma=\Gamma_0(N)$,
nous en indiquons un plus efficace d'un point de vue algorithmique.
Merci à Karim Belabas pour son implémentation dans Pari/GP
des objets contenus dans cet article
(comme les symboles modulaires, les symboles d'Eisenstein,
le produit de Petersson sur ces symboles,
\dots).

Dans l'appendice, nous avons regroupé des calculs classiques
sur les séries d'Eisenstein de niveau $N$ associées aux fonctions
de $(\ZZ/N\ZZ)^2$ dans $\CC$ dans l'esprit de \cite{katz}.

\tableofcontents
\section{Espaces de symboles modulaires}
\subsection{Symboles modulaires et symboles infinitésimaux}
Cette sous-section et les deux suivantes sont inspirées de \cite{stevenscupcap}.

Soient $\Delta= \ZZ[\PP^1(\QQ)]$ le groupe des diviseurs sur $\PP^1(\QQ)$
et $\Delta_0$ le sous-groupe des diviseurs de degré 0.
L'action naturelle de $\Gl_2(\QQ)$ sur $\PP^1(\QQ)$ induit une action
sur $\Delta$.
Si $c \in \PP^1(\QQ)$, on note $\{c\}$ le diviseur associé à $c$ dans $\ZZ[\PP^1(\QQ)]$.
Si $c_1$ et $c_2$ sont dans $\PP^1(\QQ)$, on note $\{c_1,c_2\}$ le diviseur
$\{c_2\} - \{c_1\}$ associé dans $\Delta_0$.

Soit $\mathcal{P}(\QQ)$ l'ensemble des symboles $\pi_r(s)$
pour $r$ et $s$ $\in \PP^1(\QQ)$, $r\neq s$. Il est muni d'une action à gauche
naturelle de $\Gl_2(\QQ)$. Nous parlons de \textsl{pointes infinitésimales} pour
désigner un élément de $\mathcal{P}(\QQ)$.
Soient $\KKK$ le groupe des diviseurs sur $\mathcal{P}(\QQ)$ et
$\KKK_0$ le sous-groupe des diviseurs de degré 0.
Il est engendré par les diviseurs de la forme
$[c_1,c_2] = \{c_2\} - \{c_1\}$ avec $c_1$, $c_2 \in \mathcal{P}(\QQ)$.
On parlera de \textsl{symboles modulaires} pour les éléments de type
$\{r,s\}=[\pi_{r}(s),\pi_s(r)]$ et de \textsl{symboles infinitésimaux} basés
en $r$ pour les éléments
de type $[s,t]_r=[\pi_{r}(s),\pi_r(t)]$ pour $s$ et $t$ différents de $r$.
\begin{figure}[h]
\begin{center}
\begin{tikzpicture}[scale=0.7]
\scriptsize
\draw [->] (0,0) arc (180:0:3);
\draw [->,very thick] (6,0) arc (180:0:3) node [midway, above] {$\{r,t\}$};
\draw (12,0) arc(180:0:3);

\fill [white] (6,1) circle (1); \draw (6,1) circle (1);
\fill [white] (12,1) circle (1); \draw (12,1) circle (1);

\draw (12,2) node [above]  {$[r,u]_t$};
\draw (4.8,1.9) node {$\pi_r(s)$};
\draw (7.2,1.9) node {$\pi_r(t)$};
\draw (6,2) node [above] {$[s,t]_r$};
\draw [dashed] (9,3) arc (90:0:3) arc (180:90:3);

\draw (10.8,1.9) node {$\pi_t(r)$};
\draw (13.2,1.9) node {$\pi_t(u)$};
\draw [dashed] (3,3) arc (90:0:3) arc (180:90:3);
\draw (-1,0) -- (0,0) node [below] {$s$} -- (6,0) node [below] {$r$} -- (12,0) node [below] {$t$} -- (18,0) node [below] {$u$} -- (19,0);

\draw [very thick] (6,2) arc (90:53:1);
\draw [very thick] (6,2) arc (90:127:1);
\draw [very thick] (12,2) arc (90:53:1);
\draw [very thick] (12,2) arc (90:127:1);
\end{tikzpicture}
\end{center}
\caption{$[\pi_r(s),\pi_t(u)]= [s,t]_r+\{r,t\} + [r,u]_t$}
\end{figure}
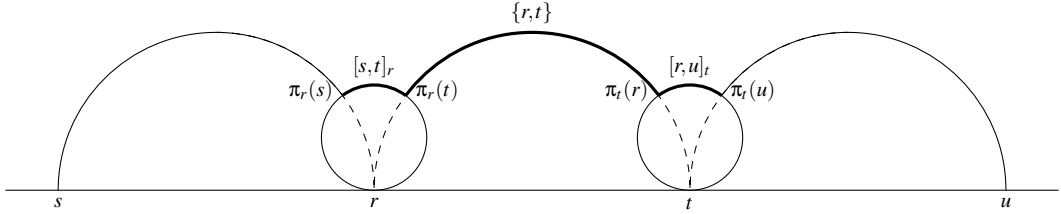
\begin{lem}
Le groupe $\KKK_0$ est engendré par les symboles modulaires
$\{r,s\}$ pour $r\neq s$ et par les symboles infinitésimaux
$[s,t]_r$ pour $s$ et $t$ différents de $r$.
\end{lem}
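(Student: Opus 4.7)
The plan is to start from the known generators of $\KKK_0$ — divisors of the form $[c_1,c_2]=\{c_2\}-\{c_1\}$ with $c_1,c_2\in\mathcal{P}(\QQ)$ — and rewrite each such generator as a sum of modular and infinitesimal symbols. Writing $c_1=\pi_r(s)$ and $c_2=\pi_t(u)$ (with $r\neq s$ and $t\neq u$), I split into two cases according to whether the base points $r$ and $t$ coincide.

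If $r=t$, then $[c_1,c_2]=[\pi_r(s),\pi_r(u)]=[s,u]_r$ is by definition an infinitesimal symbol based at $r$, so nothing remains to show. If instead $r\neq t$, the strategy is to insert the two intermediate points $\pi_r(t)$ and $\pi_t(r)$, which decomposes the divisor into three pieces: one infinitesimal symbol based at $r$, one modular symbol, and one infinitesimal symbol based at $t$. Concretely, this is the identity announced by the figure,
\[
[\pi_r(s),\pi_t(u)] = [s,t]_r + \{r,t\} + [r,u]_t,
\]
and it is checked by a direct telescoping computation: expanding each term gives $\{\pi_r(t)\}-\{\pi_r(s)\}$, then $\{\pi_t(r)\}-\{\pi_r(t)\}$, then $\{\pi_t(u)\}-\{\pi_t(r)\}$, whose sum is exactly $\{\pi_t(u)\}-\{\pi_r(s)\}=[c_1,c_2]$.

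It only remains to verify that every symbol appearing on the right-hand side of the decomposition is legitimate. Since $r\neq t$ and, by hypothesis, $s\neq r$ and $u\neq t$, the infinitesimal symbols $[s,t]_r$ and $[r,u]_t$ are well defined (the degenerate cases $s=t$ or $r=u$ simply make one of them zero), and the modular symbol $\{r,t\}$ is defined because $r\neq t$. This covers all generators of $\KKK_0$, which concludes the proof.

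I do not expect a real obstacle here: the only content is the telescoping identity made visible by the figure, and the case analysis on whether the two base points coincide. The lemma should be viewed as a combinatorial bookkeeping statement preparing the use of these two families of generators in the sequel.
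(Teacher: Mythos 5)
Your proof is correct and follows exactly the same route as the paper: the case split on whether the base points $r$ and $t$ coincide, and, when they differ, the decomposition $[\pi_r(s),\pi_t(u)]=[s,t]_r+\{r,t\}+[r,u]_t$ illustrated by the figure. The explicit telescoping check and the remark on degenerate cases are fine additions but do not change the argument.
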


\begin{proof}
L'élément $[\pi_{r}(s),\pi_t(u)]$ de $\KKK_0$
avec $r\neq s$ et $t\neq u$ s'écrit
\begin{equation*}
[\pi_{r}(s),\pi_t(u)]=
\begin{cases}
[s,u]_r &\text{si $r = t$}\\
[s,t]_r +\{r,t\} +[r,u]_t&\text{si $r\neq t$}
\end{cases}
\end{equation*}
\end{proof}
Soit $\partial : \KKK \to \Delta$ l'application induite par
$\{\pi_r(s)\} \mapsto \{r\}$.
On a donc
\begin{equation*}
\partial ( [\pi_{r_1}(s_1), \pi_{r_2}(s_2)])=
\{r_1, r_2\}
\end{equation*}
\begin{lem}
L'image de $\KKK$ (resp. $\KKK_0$) par $\partial$ est $\Delta$ (resp. $\Delta_0$).
Le noyau de $\partial$ est
$$W= \oplus_{r \in \PP^1(\QQ)} \left(\oplus_{s\neq r} \QQ[\pi_r(s)]\right)_0\ . $$
\end{lem}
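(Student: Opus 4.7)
Le plan consiste \`a traiter s\'epar\'ement les deux assertions en exploitant la d\'ecomposition naturelle de $\KKK$ selon la premi\`ere coordonn\'ee des g\'en\'erateurs $\pi_r(s)$.

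Pour l'assertion sur l'image, je commencerais par observer que pour tout $r \in \PP^1(\QQ)$, le choix d'un $s \neq r$ arbitraire fournit un ant\'ec\'edent $[\pi_r(s)]$ de $\{r\}$ dans $\KKK$, ce qui donne imm\'ediatement la surjectivit\'e de $\partial$ sur $\Delta$. Comme $\partial$ pr\'eserve manifestement le degr\'e (chaque g\'en\'erateur de $\KKK$ \'etant envoy\'e sur un g\'en\'erateur de $\Delta$), on a d'une part $\partial(\KKK_0) \subset \Delta_0$; d'autre part, le m\^eme proc\'ed\'e de rel\`evement appliqu\'e \`a $\sum n_i \{r_i\} \in \Delta_0$ produit l'ant\'ec\'edent $\sum n_i [\pi_{r_i}(s_i)] \in \KKK_0$.

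Pour la description du noyau, j'utiliserais la d\'ecomposition
$$\KKK = \bigoplus_{r \in \PP^1(\QQ)} \Big(\bigoplus_{s\neq r} \QQ[\pi_r(s)]\Big)$$
qui d\'ecoule directement de la d\'efinition. Tout $z \in \KKK$ s'\'ecrit ainsi de mani\`ere unique $z = \sum_r z_r$ avec $z_r = \sum_{s \neq r} a_{r,s}\, [\pi_r(s)]$, et on calcule $\partial(z) = \sum_r \bigl(\sum_s a_{r,s}\bigr)\{r\}$. L'ind\'ependance lin\'eaire des $\{r\}$ dans $\Delta$ donne alors l'\'equivalence
$$\partial(z) = 0 \iff \forall r,\ \sum_s a_{r,s} = 0 \iff z \in W\ ,$$
ce qui fournit simultan\'ement les deux inclusions $W \subset \ker(\partial)$ et $\ker(\partial) \subset W$.

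Il n'y a pas d'obstacle d\'elicat ici: tout repose sur le fait que $\partial$ respecte la d\'ecomposition pr\'ec\'edente de $\KKK$, envoyant chaque bloc index\'e par $r$ sur la droite $\QQ\{r\}$ via la somme des coefficients. Les sections de $\partial$ utilis\'ees pour la surjectivit\'e et la description explicite du noyau proviennent toutes deux de cette m\^eme structure en somme directe.
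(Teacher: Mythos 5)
Votre preuve est correcte et suit essentiellement la m\^eme d\'emarche que celle du texte : la description du noyau s'obtient par le m\^eme calcul de coefficients ($\sum_{r_i=r} c_i=0$ pour tout $r$), que vous pr\'esentez via la d\'ecomposition en somme directe selon la base $r$. Vous explicitez en outre la surjectivit\'e (rel\`evement $\{r\}\mapsto[\pi_r(s)]$ et pr\'eservation du degr\'e), que le texte laisse implicite ; c'est un compl\'ement l\'egitime mais sans difficult\'e nouvelle.
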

\begin{proof}
Soit $v = \sum_{r_i\neq s_i} c_i \{\pi_{r_i}(s_i)\}$. Supposons
$\partial v$ nul. On a donc
$$\sum_{r\in \PP^1(\QQ)} (\sum_{r_i=r} c_i )\{r\} = 0 \ . $$
Donc $\sum_{r_i=r} c_i =0$ pour tout $r\in\PP^1(\QQ)$.
\end{proof}
\subsection{Décomposition de Manin-Stevens}
\begin{lem}Soit $r\in \QQ$.
Notons $r=[a_0,\cdots, a_n]=a_0+1/(a_1+...+1/a_n)$ le développement en fraction continue
de $r$ de convergents $\frac{p_j}{q_j}$.
Notons $a_{n+1}=-\frac{q_{n-1}}{q_n}$ et
$\tau_j=\begin{pmatrix}(-1)^{j-1} p_j&p_{j-1}\\(-1)^{j-1} q_j&q_{j-1}\end{pmatrix}$
pour $-1 \leq j \leq n$
avec $p_{-2}=0$, $q_{-2}=1$, $p_{-1}=1$, $q_{-1}=0$.
Alors
$$
[\pi_0(\infty), \pi_r(\infty)] +
\sum_{j=-1}^n \tau_j [\pi_\infty((-1)^{j+1} a_{j+1}), \pi_0(\infty)] = 0
\ . $$
\end{lem}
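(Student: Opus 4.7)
The plan is to compute $\tau_j\cdot[\pi_\infty((-1)^{j+1}a_{j+1}),\pi_0(\infty)]$ for each $j$, rewrite the result in the form $\{c\}-\{c'\}$, and observe that summing from $j=-1$ to $j=n$ produces a telescoping series whose total equals $-[\pi_0(\infty),\pi_r(\infty)]$.

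First I would introduce the shorthand $\alpha_j=p_j/q_j\in\PP^1(\QQ)$ for $-2\le j\le n$, so that $\alpha_{-2}=0$, $\alpha_{-1}=\infty$ and $\alpha_n=r$; I extend the notation by also setting $\alpha_{n+1}=\infty$. Reading the columns of $\tau_j$ one gets at once $\tau_j\cdot\infty=\alpha_j$ and $\tau_j\cdot 0=\alpha_{j-1}$. A direct homographic calculation (using $(-1)^{j+1}=(-1)^{j-1}$ to cancel the signs in both numerator and denominator) yields
$$\tau_j\cdot\bigl((-1)^{j+1}a_{j+1}\bigr)=\frac{a_{j+1}p_j+p_{j-1}}{a_{j+1}q_j+q_{j-1}}.$$
For $-1\le j\le n-1$ the continued-fraction recurrences $p_{j+1}=a_{j+1}p_j+p_{j-1}$, $q_{j+1}=a_{j+1}q_j+q_{j-1}$ identify this with $\alpha_{j+1}$ (the case $j=-1$ reducing to $p_0=a_0$, $q_0=1$). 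For $j=n$, substituting $a_{n+1}=-q_{n-1}/q_n$ makes the denominator vanish while the numerator equals $(p_{n-1}q_n-p_nq_{n-1})/q_n=(-1)^{n-1}/q_n\neq 0$, so the result is $\infty=\alpha_{n+1}$. Thus uniformly $\tau_j\cdot(-1)^{j+1}a_{j+1}=\alpha_{j+1}$.

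Combining these actions, each transformed term takes the form
$$\tau_j[\pi_\infty((-1)^{j+1}a_{j+1}),\pi_0(\infty)]=[\pi_{\alpha_j}(\alpha_{j+1}),\pi_{\alpha_{j-1}}(\alpha_j)]=\{\pi_{\alpha_{j-1}}(\alpha_j)\}-\{\pi_{\alpha_j}(\alpha_{j+1})\}.$$
Writing $v_j=\{\pi_{\alpha_{j-1}}(\alpha_j)\}$, the $j$th term is $v_j-v_{j+1}$, so summing from $j=-1$ to $j=n$ telescopes to $v_{-1}-v_{n+1}=\{\pi_0(\infty)\}-\{\pi_r(\infty)\}=-[\pi_0(\infty),\pi_r(\infty)]$, which is exactly the claim.

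The only delicate point is the management of the two endpoints: for $j=-1$ the matrix $\tau_{-1}$ reduces to the identity and the recurrence still gives $\alpha_0=a_0$, while for $j=n$ the ad hoc value $a_{n+1}=-q_{n-1}/q_n$ is engineered precisely so that $\tau_n$ sends $(-1)^{n+1}a_{n+1}$ to the cusp $\infty$, closing the telescoping chain without any extra correction term.
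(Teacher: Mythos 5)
Your proof is correct and follows essentially the same route as the paper's: the same three evaluations $\tau_j\cdot\infty=\alpha_j$, $\tau_j\cdot 0=\alpha_{j-1}$, $\tau_j\cdot((-1)^{j+1}a_{j+1})=\alpha_{j+1}$ (including the endpoint checks at $j=-1$ and $j=n$), followed by telescoping along the chain of convergents. The only cosmetic difference is that you let $[\pi_0(\infty),\pi_r(\infty)]$ emerge as the boundary of an open telescope (via $\alpha_{-2}=p_{-2}/q_{-2}=0$), whereas the paper closes the loop with $c_{-2}=r$ and splits that term off by hand in the $j=-1$ case.
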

\begin{proof}
Posons $c_{-2}=r$, $c_{-1}=\infty=\frac{p_{-1}}{q_{-1}}$, $c_0=\frac{p_0}{q_0}$,
$c_1=\frac{p_1}{q_1}$, $\cdots$,
$c_n=\frac{p_n}{q_n}=r$, $c_{n+1}= \infty$.
On a aussi $q_0=1$, $q_1=a_1$ et donc $c_0=p_0=a_0$.
Alors, le chemin fermé
$$\infty=c_{n+1} \to r=c_{n} \to \cdots \to c_{0} \to c_{-1}=\infty$$
donne la relation
$$ \sum_{j=-1}^{n} [\pi_{c_j}(c_{j+1}), \pi_{c_{j-1}}(c_j)] = 0\ . $$
Posons $a_j'=(-1)^{j}a_j$ pour $i=0,\cdots, n+1$
et calculons les images réciproques des $\tau_j$ sur $c_{j-1}$, $c_{j}$ et $c_{j+1}$ :
\begin{equation*}
\begin{split}
\tau_j \infty &= \frac{p_{j}}{q_{j}}=c_j \text{ pour $0\leq j \leq n$}\\
\tau_j 0 &= \frac{p_{j-1}}{q_{j-1}}=c_{j-1} \text{ pour $0\leq j \leq n$}\\
\tau_{j} a_{j+1}'&= \frac{(-1)^{j-1}p_{j} a'_{j+1} + p_{j-1}}{(-1)^{j-1}q_{j} a'_{j+1} + q_{j-1}}=
\frac{p_{j} a_{j+1} + p_{j-1}}{q_{j} a_{j+1} + q_{j-1}}=
\frac{p_{j+1}}{q_{j+1}}=c_{j+1} \text{ pour $0\leq j< n$}
\\
\tau_n a'_{n+1} &= \frac{-p_{n} q_{n-1} + p_{n-1}q_n}{-q_{n} q_{n-1} + q_{n-1}q_n}=\infty=c_{n+1}
\\
\tau_{-1} \infty&=\infty, \quad
\tau_{-1} 0=0, \quad
\tau_{-1} a_0'=a_0=c_0
\end{split}
\end{equation*}
Ainsi, pour tout $0\leq j \leq n$, on a
$$\tau_j(\infty)=c_j, \quad \tau_j(0)=c_{j-1}, \quad \tau_j(a'_{j+1})=c_{j+1}$$
ce qui implique que
$$[\pi_{c_j}(c_{j+1}), \pi_{c_{j-1}}(c_j)] = \tau_j([\pi_{\infty}(a'_{j+1}), \pi_{0}(\infty)])\ . $$
Pour $j=-1$,
$$[\pi_{c_{-1}}(c_{0}), \pi_{c_{-2}}(c_{-1})]=
[\pi_\infty(a_0), \pi_r(\infty)]
=[\pi_\infty(a_{0}), \pi_0(\infty)] + [\pi_0(\infty), \pi_r(\infty)]
\ . $$
D'où le lemme.
\end{proof}

\begin{prop}
\label{Manin}
\begin{enumerate}
\item
Si $\gamma_r\in \Sl_2(\ZZ)$ est tel que $\gamma_r \infty=r$,
pour $s_1$ et $s_2$ différents de $r$, on a
$$[s_1,s_2]_r=
\gamma_r ([0,\gamma_r^{-1} s_2]_\infty-[0,\gamma_r^{-1} s_1]_\infty)
\ . $$
\item
En utilisant les notations du lemme précédent indexées par $r$, on a
\begin{equation}\label{manino}
[\pi_0(\infty), \pi_r(\infty)]=
\sum_{j=-1}^n \tau_{j,r} [\pi_0(\infty),\pi_\infty((-1)^{j+1}a_{j+1,r})]
\end{equation}
\begin{equation}\label{maninoo}
 [\pi_\infty(0), \pi_r(\infty)]=[\pi_\infty(0), \pi_\infty(a_{0,r})]+
\sum_{j=0}^n \tau_{j,r} [\pi_0(\infty),\pi_\infty((-1)^{j+1}a_{j+1,r})]
\end{equation}
\end{enumerate}
\end{prop}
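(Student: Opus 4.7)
Pour la premi\`ere partie, je proc\'ederais par calcul direct \`a partir des d\'efinitions. L'action de $\gamma_r$ sur les pointes infinit\'esimales est donn\'ee par $\gamma_r \cdot \pi_s(t) = \pi_{\gamma_r s}(\gamma_r t)$, ce qui entra\^\i{}ne $\gamma_r \cdot [a,b]_\infty = [\gamma_r a, \gamma_r b]_r$ puisque $\gamma_r \infty = r$. Appliquant ceci au membre de droite, on obtient $[\gamma_r 0, s_2]_r - [\gamma_r 0, s_1]_r$. En d\'eveloppant chacun de ces symboles infinit\'esimaux comme $\{\pi_r(\cdot)\} - \{\pi_r(\cdot)\}$, les contributions en $\{\pi_r(\gamma_r 0)\}$ se compensent et il reste $\{\pi_r(s_2)\} - \{\pi_r(s_1)\} = [s_1,s_2]_r$. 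Cette compensation rend au passage visible que le r\'esultat ne d\'epend pas du choix de $\gamma_r$ envoyant $\infty$ sur $r$.

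Pour la seconde partie, la formule (\ref{manino}) d\'ecoule imm\'ediatement du lemme pr\'ec\'edent en transf\'erant la somme dans l'autre membre et en utilisant $-[\pi_\infty(c), \pi_0(\infty)] = [\pi_0(\infty), \pi_\infty(c)]$. Tout le contenu combinatoire non trivial --- le parcours ferm\'e le long des convergents $c_j=p_j/q_j$ et l'identification des $\tau_{j,r}$ comme matrices de transition --- est en effet d\'ej\`a absorb\'e par le lemme.

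Pour la formule (\ref{maninoo}), j'isolerais le terme d'indice $j=-1$ dans (\ref{manino}). Avec les conventions $p_{-2}=0$, $q_{-2}=1$, $p_{-1}=1$, $q_{-1}=0$, un bref calcul montre que $\tau_{-1,r}$ est la matrice identit\'e et $a_{0,r}$ est $a_0$, si bien que ce terme vaut $[\pi_0(\infty),\pi_\infty(a_{0,r})]$. Pour passer alors de la base $\pi_0(\infty)$ \`a la base $\pi_\infty(0)$ dans $[\pi_0(\infty), \pi_r(\infty)]$ d'un c\^ot\'e et dans $[\pi_0(\infty), \pi_\infty(a_{0,r})]$ de l'autre, il suffit d'observer que la m\^eme quantit\'e $\{\pi_0(\infty)\} - \{\pi_\infty(0)\}$ s'ajoute des deux c\^ot\'es et se simplifie.

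La principale difficult\'e attendue n'est pas conceptuelle mais tient \`a la gestion soigneuse des signes et des points de base, en particulier pour v\'erifier que $\tau_{-1,r}=\mathrm{id}$ et que le basculement de $\pi_0(\infty)$ vers $\pi_\infty(0)$ s'effectue sans reste ; la proposition est essentiellement une reformulation commode du lemme pr\'ec\'edent en vue des calculs ult\'erieurs de p\'eriodes.
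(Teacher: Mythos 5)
Votre preuve est correcte et suit exactement la d\'emarche implicite du texte, qui ne donne d'ailleurs aucune d\'emonstration pour cette proposition et la traite comme cons\'equence imm\'ediate du lemme sur les fractions continues. Les trois v\'erifications que vous effectuez (compensation des termes en $\{\pi_r(\gamma_r 0)\}$ pour le point (1), changement de signe $[A,B]=-[B,A]$ pour \eqref{manino}, puis $\tau_{-1,r}=\id$ et ajout de $\{\pi_0(\infty)\}-\{\pi_\infty(0)\}$ des deux c\^ot\'es pour \eqref{maninoo}) sont pr\'ecis\'ement les \'etapes attendues.
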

\begin{prop}\label{generateur}
\begin{enumerate}
\item
Le $\Sl_2(\ZZ)$-module $\KKK_0$ est engendré par le symbole
$\{\infty,0\}$
et par les symboles infinitésimaux $[0,t]_\infty$
pour $t \in \QQ$.
\item
Le $\Sl_2(\ZZ)$-module $W$, noyau de $\partial $, est engendré
par les symboles infinitésimaux $[0,t]_\infty$ pour $t \in \QQ$.
\end{enumerate}
\end{prop}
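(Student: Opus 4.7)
Plan de démonstration. La proposition découle directement des outils déjà établis : la décomposition de $\KKK_0$ en symboles modulaires et infinitésimaux (premier lemme de la sous-section), la description explicite du noyau $W$, et surtout les écritures de la proposition \ref{Manin}. On commencera par (2), puis on utilisera le même mécanisme pour ramener la preuve de (1) au cas où les pointes de base sont $\infty$ et $0$.

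Pour (2), on part du lemme précédent qui décrit $W$ comme somme directe, sur $r \in \PP^1(\QQ)$, de la partie de degré nul de $\bigoplus_{s\neq r} \QQ[\pi_r(s)]$. Ces parties sont engendrées sur $\QQ$ par les symboles infinitésimaux $[s_1, s_2]_r$, et la proposition \ref{Manin}(1) donne $[s_1, s_2]_r = \gamma_r\bigl([0, \gamma_r^{-1}s_2]_\infty - [0, \gamma_r^{-1}s_1]_\infty\bigr)$. Il en résulte immédiatement que $W$ est engendré comme $\Sl_2(\ZZ)$-module par les $[0, t]_\infty$ pour $t \in \QQ$.

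Pour (1), le premier lemme ramène la preuve aux symboles modulaires $\{r, s\}$, les symboles infinitésimaux étant déjà pris en charge par (2). La relation $\{r, s\} = \{\infty, s\} - \{\infty, r\}$ réduit au cas $\{\infty, r\}$ avec $r \in \QQ$. On écrira alors $\{\infty, r\} = [\pi_\infty(r), \pi_\infty(0)] + [\pi_\infty(0), \pi_r(\infty)] = -[0, r]_\infty + [\pi_\infty(0), \pi_r(\infty)]$, puis on appliquera la formule (\ref{maninoo}) au second terme : celle-ci l'exprime comme $[0, a_{0,r}]_\infty$ plus une somme de translatés par $\tau_{j,r}$ de termes de la forme $[\pi_0(\infty), \pi_\infty(c)]$. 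Chacun de ces derniers se décompose, via la formule du premier lemme et l'identité $[\infty, \infty]_0 = 0$, en $-\{\infty, 0\} + [0, c]_\infty$, ce qui fournit l'écriture voulue comme $\Sl_2(\ZZ)$-combinaison de $\{\infty, 0\}$ et des $[0, t]_\infty$.

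L'obstacle principal ne sera pas conceptuel mais tiendra uniquement à la précision sur les signes et à la manipulation des bases des symboles infinitésimaux (notamment l'alternance $(-1)^{j+1}$ dans (\ref{maninoo}) et le passage de $\{0,\infty\}$ à $-\{\infty,0\}$). Le travail de fond est déjà contenu dans la décomposition de Manin-Stevens, de sorte que l'argument se réduit à un enchaînement soigneux des identités précédentes.
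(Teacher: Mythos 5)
Votre d\'emonstration est correcte et suit pour l'essentiel la m\^eme voie que celle de l'article : on se ram\`ene aux g\'en\'erateurs fournis par la d\'ecomposition de Manin--Stevens, la proposition \ref{Manin}(1) traite les symboles infinit\'esimaux (donc le point (2) via la description du noyau $W$), et les formules \eqref{manino}/\eqref{maninoo} ram\`enent les symboles modulaires \`a $\{\infty,0\}$ et aux $[0,t]_\infty$. Seule petite impr\'ecision : l'identit\'e $\{r,s\}=\{\infty,s\}-\{\infty,r\}$ n'est vraie dans $\KKK_0$ qu'\`a des symboles infinit\'esimaux pr\`es, mais comme ceux-ci appartiennent d\'ej\`a au sous-module engendr\'e, la r\'eduction reste valable.
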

\begin{proof}
Soit $M$ le $\Sl_2(\ZZ)$-module engendré par le
symbole $\{\infty,0\}$
et par les
$[0,t]_\infty$
pour $t\in \QQ$.
Par la proposition \ref{Manin}, tous les symboles infinitésimaux sont dans $M$.
Comme
$$[\pi_0(\infty),\pi_\infty(s)]=[\pi_0(\infty),\pi_\infty(0)]
+[0,s]_\infty\ , $$
$[\pi_0(\infty),\pi_\infty(s)]$ appartient à $M$.
Donc, par la proposition \ref{Manin},
$[\pi_0(\infty),\pi_r(\infty)]$ appartient à $M$.
Comme $$[\pi_0(\infty),\pi_r(s)]=[\pi_0(\infty),\pi_r(\infty)]
+
[\infty,s]_r \ ,
$$
$[\pi_0(\infty),\pi_r(s)]$ appartient à $M$,
ce qui termine la démonstration.
\end{proof}
\begin{rem}
Soit $(r,s)$ un couple unimodulaire de rationnels, c'est-à-dire
l'image de $(0,\infty)$ par un élément de $\Sl_2(\ZZ)$. Le $\Sl_2(\ZZ)$-module $\KKK_0$
est engendré par $\{r,s\}$ et par les $[s,t]_r$ pour $t\in \QQ- \{r\}$.
\end{rem}
\subsection{Cocycle associé à un symbole}
\subsubsection{Cocycle universel}
Soit $Z$ une pointe infinitésimale.
Si $\gamma \in \Gl_2(\QQ)$, on pose $V_Z(\gamma)=[Z, \gamma(Z)]$.
Explicitement, si l'on prend $Z=\pi_{r}(s)$ avec $r$ et $s$ des éléments distincts
de $\PP^1(\QQ)$, on a
$$V_{Z}(\gamma)=
\begin{cases}
[s,\gamma r]_{r} +\{ r, \gamma r\}
+[r,\gamma s]_{\gamma r} &\text{si $\gamma r \neq r$}\\
[s,\gamma s]_{r} &\text{si $\gamma r = r$}\ .
\end{cases}
$$
L'application $\gamma \mapsto V_Z(\gamma)$ est un $1$-cocycle
sur $\Gl_2(\QQ)$ à valeurs dans $\KKK_0$
$$V_Z(\gamma \gamma') =
[Z, \gamma Z]+[\gamma Z, \gamma(\gamma'Z)]= V_Z(\gamma) + \gamma V_Z(\gamma')
$$
et dépend de $Z$ par un cobord :
$$ V_{Z'}(\gamma)- V_{Z}(\gamma)=
(1-\gamma)([Z',Z])\ . $$
\begin{lem}[Stevens] Si $Z_\infty=\pi_{\infty}(0)$ et
  $\gamma=\begin{pmatrix}a&b\\c&d\end{pmatrix}$,
\begin{equation}
\label{cocycle}
[Z_\infty,\gamma^{-1} Z_\infty]=
\begin{cases}
[\pi_{\infty}(0),\pi_{-\frac{d}{c}}(\infty)]-
\gamma^{-1} \left([\pi_{\infty}(0),\pi_{\infty}(\frac{a}{c})]\right ) &\text{ si $c\neq 0$}\\
[\pi_{\infty}(0),\pi_{\infty}(-\frac{b}{a})]
&\text{ si $c=0$.}
\end{cases}
\end{equation}
\end{lem}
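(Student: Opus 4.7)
The plan is a direct computation: unfold the action of $\gamma^{-1}$ on each pointe infinitésimale appearing in the claimed identity and verify that both sides give the same element of $\KKK_0$.

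First, I would compute the key images under $\gamma^{-1} = (ad-bc)^{-1}\begin{pmatrix}d&-b\\-c&a\end{pmatrix}$: namely $\gamma^{-1}\infty = -d/c$ when $c\neq 0$, and $\gamma^{-1} 0 = -b/a$ when $a\neq 0$. The crucial auxiliary observation is that $\gamma\cdot\infty = a/c$, so $\gamma^{-1}(a/c) = \infty$. From the $\Gl_2(\QQ)$-action on $\mathcal{P}(\QQ)$, $\gamma^{-1}\pi_r(s) = \pi_{\gamma^{-1} r}(\gamma^{-1} s)$, so $\gamma^{-1} Z_\infty = \gamma^{-1}\pi_\infty(0) = \pi_{-d/c}(-b/a)$ in the case $c\neq 0$, and $\gamma^{-1} Z_\infty = \pi_\infty(-b/a)$ in the case $c = 0$.

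The case $c=0$ is then immediate: $[Z_\infty, \gamma^{-1} Z_\infty] = [\pi_\infty(0),\pi_\infty(-b/a)]$, which is exactly the right-hand side of the formula.

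For the case $c\neq 0$, using the above description of the action I would compute
$$\gamma^{-1}\bigl([\pi_\infty(0),\pi_\infty(a/c)]\bigr) = [\pi_{-d/c}(-b/a),\pi_{-d/c}(\infty)].$$
Subtracting this from $[\pi_\infty(0),\pi_{-d/c}(\infty)]$ and using that $[X,Y] = \{Y\}-\{X\}$ in $\KKK_0$, the two occurrences of $\pi_{-d/c}(\infty)$ cancel by telescoping, leaving $\{\pi_{-d/c}(-b/a)\}-\{\pi_\infty(0)\} = [Z_\infty,\gamma^{-1} Z_\infty]$, as required. The only thing requiring a moment of care is the degenerate subcase $a=0$ (with $c\neq 0$), where $a/c = 0$ makes $[\pi_\infty(0),\pi_\infty(a/c)] = 0$ and $\gamma^{-1}(0) = \infty$; one checks that the formula still holds trivially. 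There is no real obstacle here — the lemma is essentially a bookkeeping identity, the substantive content being the identification $\gamma\cdot\infty = a/c$ that allows the factor $\pi_\infty(a/c)$ to land on $\pi_{-d/c}(\infty)$ after applying $\gamma^{-1}$.
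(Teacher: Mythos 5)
Your proof is correct and takes essentially the same route as the paper's: both rest on the identification $\gamma\infty=a/c$ together with the telescoping relation $[X,Y]+[Y,Z]=[X,Z]$ in $\KKK_0$, the paper inserting the intermediate point $\pi_{-d/c}(\infty)$ into the left-hand side while you expand $\gamma^{-1}([\pi_\infty(0),\pi_\infty(a/c)])$ and cancel. Your explicit check of the degenerate subcase $a=0$, $c\neq 0$ is a small point the paper passes over silently, and it goes through as you say.
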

\begin{proof}
On a
\begin{equation*}
[Z_\infty,\gamma^{-1} Z_\infty] = [\pi_{\infty}(0),
\pi_{\gamma^{-1} \infty}(\gamma^{-1}0)]
\end{equation*}
Lorsque $\gamma \infty=\infty$ (i.e $c=0$),
on a
\begin{equation*}
[Z_\infty,\gamma^{-1} Z_\infty] = [\pi_{\infty}(0),\pi_{\infty}(-\frac{b}{a})]
\end{equation*}
Lorsque $c$ est non nul, on a
\begin{equation*}
\begin{split}
[Z_\infty,\gamma^{-1} Z_\infty] =&
[\pi_{\infty}(0),\pi_{\gamma^{-1} \infty}(\infty)]+
[\pi_{\gamma^{-1} \infty}(\infty),\pi_{\gamma^{-1} \infty}(\gamma^{-1}0)]
\\
=&
[\pi_{\infty}(0),\pi_{-\frac{d}{c}}(\infty)]
+
\gamma^{-1}[\pi_{\infty}(\gamma \infty),\pi_{\infty}(0)]\\
=&
[\pi_{\infty}(0),\pi_{-\frac{d}{c}}(\infty)]
+
\gamma^{-1}[\pi_{\infty}(\frac{a}{c}),\pi_{\infty}(0)] ,
\end{split}
\end{equation*}
d'où le lemme.
\end{proof}
\subsubsection{Cocycle universel à valeurs dans $V$}

Si un groupe $G$ agit sur un ensemble $X$, on note
$\Stab_G(x)$ le stabilisateur de $x$ dans $G$ pour $x\in X$.
Soit $V$ un $\QQ$-espace vectoriel muni d'une action à droite de $\Gl_2^+(\QQ)$.
On a une application naturelle de $\Gl_2^+(\QQ)$-modules
$\Hom(\Delta_0, V) \to \Hom(\Xi_0, V)$ induite par $\partial$.
\begin{lem}
\label{lem:surjectivite}
Soient $H$ un sous-groupe de $\Gl_2^+(\QQ)$ et
$\cD(H)$ un système de représentants de $H\backslash\mathcal{P}(\QQ)$.
Soit $c$ un cocycle de $H$ à valeurs dans $V$ s'annulant
sur les stabilisateurs de tous les éléments de $\cD(H)$.
Il existe un élément $\Psi$ de
$\Hom_{H}(\Xi_0, V)$ tel que
$\Psi([Z, \gamma^{-1} Z])=c(\gamma)$ pour tout $\gamma \in H$
et pour tout $Z\in \cD(H)$.
Lorsque $H$ opère transitivement sur $\mathcal{P}(\QQ)$, $\Psi$ est unique.
\end{lem}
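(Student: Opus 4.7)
Le plan consiste à fixer un point-base $Z_* \in \cD(H)$ et à exploiter que $\Xi_0$ admet, comme $\ZZ$-module libre, la base $(\{Z\} - \{Z_*\})_{Z \in \mathcal{P}(\QQ) \setminus \{Z_*\}}$. Il suffit donc de prescrire $\Psi$ sur cette base. Pour chaque $Z$, je choisis une écriture $Z = h \tilde Z$ avec $\tilde Z \in \cD(H)$ et $h \in H$, et je pose
$$\Psi(\{Z\} - \{Z_*\}) := c(h^{-1}).$$

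Le point technique essentiel sera l'indépendance vis-à-vis du choix de $h$ : si $h' = hs$ avec $s \in \Stab_H(\tilde Z)$, la relation de cocycle (convention droite : $c(ab) = c(a)|b + c(b)$) donne $c((h')^{-1}) = c(s^{-1} h^{-1}) = c(s^{-1})|h^{-1} + c(h^{-1})$, et l'on a $c(s^{-1}) = -c(s)|s^{-1} = 0$ par hypothèse, d'où $c((h')^{-1}) = c(h^{-1})$. C'est précisément ici qu'intervient l'annulation de $c$ sur les stabilisateurs de \emph{tous} les éléments de $\cD(H)$, puisque $\tilde Z$ y parcourt tous les représentants. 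Pour la $H$-équivariance, sous la forme $\Psi(g\xi) = \Psi(\xi)|g^{-1}$, on décompose
$$\{gZ\} - \{gZ_*\} = (\{gZ\} - \{Z_*\}) - (\{gZ_*\} - \{Z_*\}),$$
on applique la définition (avec $gZ = (gh)\tilde Z$ et $gZ_* = g \cdot Z_*$), puis la relation $c((gh)^{-1}) = c(h^{-1})|g^{-1} + c(g^{-1})$ : les deux occurrences de $c(g^{-1})$ se compensent et l'on obtient bien $c(h^{-1})|g^{-1} = \Psi(\xi)|g^{-1}$. Enfin, pour $Z \in \cD(H)$ et $\gamma \in H$, l'écriture triviale $Z = 1 \cdot Z$ donne $\Psi(\{Z\} - \{Z_*\}) = c(1) = 0$, tandis que $\gamma^{-1} Z = \gamma^{-1} \cdot Z$ donne $\Psi(\{\gamma^{-1}Z\} - \{Z_*\}) = c(\gamma)$, d'où immédiatement $\Psi([Z, \gamma^{-1}Z]) = c(\gamma)$.

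Dans le cas où $H$ agit transitivement sur $\mathcal{P}(\QQ)$, $\cD(H)$ est réduit à $\{Z_*\}$ et $\Xi_0$ est engendré comme $\ZZ$-module par les $[Z_*, \gamma^{-1}Z_*]$ pour $\gamma \in H$ ; la condition $\Psi([Z_*, \gamma^{-1}Z_*]) = c(\gamma)$ détermine alors $\Psi$ de manière unique. L'essentiel du plan se concentre sur la bonne définition de $\Psi$, qui consomme exactement l'hypothèse d'annulation sur les stabilisateurs ; l'équivariance et la condition prescrite découleront ensuite par manipulation formelle de la relation de cocycle.
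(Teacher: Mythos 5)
Votre preuve est correcte et suit essentiellement la m\^eme d\'emarche que celle de l'article : la bonne d\'efinition repose sur l'annulation de $c$ sur les stabilisateurs des repr\'esentants, et l'\'equivariance d\'ecoule de la relation de cocycle $c(ab)=c(a)|b+c(b)$ exactement comme dans le texte. La seule diff\'erence est cosm\'etique : vous d\'efinissez $\Psi$ sur la base $(\{Z\}-\{Z_*\})_{Z\neq Z_*}$ de $\Xi_0$ puis \'etendez par lin\'earit\'e, alors que l'article pose directement $\Psi([T_1,T_2])=c(\gamma_2)-c(\gamma_1)$ pour $T_j=\gamma_j^{-1}Z_{i_j}$, ce qui revient au m\^eme.
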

\begin{proof}
Remarquons d'abord que si $Z_i \in \cD(H)$ et si $\gamma^{-1}Z_i= {\gamma'}^{-1}Z_i$,
on a $\gamma'=\lambda \gamma$ avec $\lambda \in \Stab_H(Z_i)$
et $c(\gamma')=c(\gamma)$ puisque
$c(\lambda)=0$. On peut donc définir un unique élément
$\Psi$ de $\Hom(\Xi_0, V)$
tel que
$\Psi([T_1, T_2])=c(\gamma_2)-c(\gamma_1)$
avec $T_j=\gamma_j^{-1}Z_{i_j}$ pour un unique $Z_{i_j}$ de $\cD(H)$.
Pour $T_1$ et $T_2$ dans $\mathcal{P}(\QQ)$ et $\gamma \in H$, on a
\begin{equation*}
\begin{split}
\Psi([\gamma T_1, \gamma T_2])&=
\Psi([\gamma\gamma_1^{-1}Z_{i_1}, \gamma\gamma_2^{-1}Z_{i_2}])=
c(\gamma_2 \gamma^{-1})-c(\gamma_1 \gamma^{-1})\\
&=
c(\gamma_2)|\gamma^{-1} + c(\gamma^{-1})
-
c(\gamma_1)|\gamma^{-1} - c(\gamma^{-1})
\\&=
\Psi([T_1, T_2])| \gamma^{-1}
\end{split}
\end{equation*}
ce qui montre que $\Psi$ est un homomorphisme de $H$-modules.
Par définition pour tout $Z_i\in\cD(H)$, on a bien
$\Psi([Z_i, \gamma^{-1} Z_i])=c(\gamma)$.
Si $H$ opère transitivement sur $\mathcal{P}(\QQ)$, $\cD(H)$ est réduit à un élément $Z$
et les $[Z, \gamma^{-1} Z]$ engendrent $\Xi_0$, d'où l'unicité de
$\Psi$.
\end{proof}

Regardons le cas particulier où $H=\Gl_2^+(\QQ)$. L'action de $\Gl_2^+(\QQ)$
sur $\mathcal{P}(\QQ)$ est transitive.
Si $Z\in \mathcal{P}(\QQ)$, soit $U_Z$ l'application
$\Hom_{\Gl_2^+(\QQ)}(\Xi_0, V)\to H^1({\Gl_2^+(\QQ)}, V)$
telle que
$U_Z(\psi)$ est la classe du cocycle $\gamma \mapsto \psi([Z, \gamma^{-1} Z])$.
Si $r\in \PP^1(\QQ)$, on note de même $U_r$ l'application
$$\Hom_{\Gl_2^+(\QQ)}(\Delta_0, V)\to H^1({\Gl_2^+(\QQ)}, V)$$
telle que
$U_r(\psi)$ est la classe du cocycle $\gamma \mapsto \psi(\{r, \gamma^{-1} r\})$.
\begin{prop}Si $Z\in \mathcal{P}(\QQ)$, on a la suite exacte
\begin{equation*}
0\to V^{\Gl_2^+(\QQ)} \to V^{\Stab_{\Gl_2^+(\QQ)} Z}
\to
\Hom_{\Gl_2^+(\QQ)}(\Xi_0, V) \overset{U_Z}{\to} H^1({\Gl_2^+(\QQ)}, V)
\overset{res}{\to} H^1(\Stab_{\Gl_2^+(\QQ)} Z, V)
\end{equation*}
où $V^{\Stab_{\Gl_2^+(\QQ)} Z}\to
\Hom_{\Gl_2^+(\QQ)}(\Xi_0, V)$ est définie
par $v \mapsto ([Z, \gamma^{-1} Z] \mapsto v|(\gamma -1))$.
\end{prop}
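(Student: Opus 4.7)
Le plan est de v\'erifier l'exactitude en chacun des quatre termes interm\'ediaires en s'appuyant essentiellement sur deux faits~: l'action de $\Gl_2^+(\QQ)$ sur $\mathcal{P}(\QQ)$ est transitive (de sorte qu'un syst\`eme de repr\'esentants $\cD(\Gl_2^+(\QQ))$ est r\'eduit \`a $\{Z\}$ dans le lemme \ref{lem:surjectivite}), et ce m\^eme lemme fournit, pour tout cocycle s'annulant sur $\Stab_{\Gl_2^+(\QQ)}(Z)$, un \'el\'ement de $\Hom_{\Gl_2^+(\QQ)}(\Xi_0, V)$ le repr\'esentant via $U_Z$.

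La bonne d\'efinition de l'application $V^{\Stab(Z)} \to \Hom_{\Gl_2^+(\QQ)}(\Xi_0, V)$ est routini\`ere~: les $[Z,\gamma^{-1}Z]$ engendrant $\Xi_0$ par transitivit\'e, il suffit de v\'erifier que $v|(\gamma-1)$ ne d\'epend que de la classe de $\gamma$ modulo $\Stab(Z)$, ce qui est imm\'ediat pour $v \in V^{\Stab(Z)}$ puisque $v|(\lambda\gamma-1) = v|\gamma - v$ lorsque $\lambda \in \Stab(Z)$. L'exactitude en $V^{\Stab(Z)}$ en d\'ecoule~: la nullit\'e de $v|(\gamma-1)$ pour tout $\gamma \in \Gl_2^+(\QQ)$ \'equivaut \`a $v \in V^{\Gl_2^+(\QQ)}$. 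Pour l'exactitude en $\Hom_{\Gl_2^+(\QQ)}(\Xi_0, V)$, la compos\'ee avec $U_Z$ est nulle car le cocycle $\gamma \mapsto v|(\gamma-1)$ est manifestement un cobord~; r\'eciproquement, si $U_Z(\psi)$ est un cobord, il existe $v \in V$ avec $\psi([Z, \gamma^{-1}Z]) = v|(\gamma - 1)$ pour tout $\gamma$, et en sp\'ecialisant \`a $\gamma \in \Stab(Z)$ on obtient $v \in V^{\Stab(Z)}$, ce qui montre que $\psi$ est bien l'image de $v$.

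Le point principal est l'exactitude en $H^1(\Gl_2^+(\QQ), V)$. L'inclusion $\im(U_Z) \subset \ker(\mathrm{res})$ est imm\'ediate car $[Z, \lambda^{-1}Z]=0$ pour $\lambda \in \Stab(Z)$. Pour l'inclusion r\'eciproque, soit $c$ un cocycle dont la restriction \`a $\Stab(Z)$ est un cobord~: il existe $v \in V$ tel que $c(\lambda) = v|(\lambda - 1)$ pour tout $\lambda \in \Stab(Z)$. Quitte \`a remplacer $c$ par le cocycle cohomologue $\gamma\mapsto c(\gamma) - v|(\gamma-1)$, on se ram\`ene au cas o\`u $c$ s'annule identiquement sur $\Stab(Z)$. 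Le lemme \ref{lem:surjectivite} produit alors un $\Psi \in \Hom_{\Gl_2^+(\QQ)}(\Xi_0, V)$ v\'erifiant $\Psi([Z, \gamma^{-1}Z]) = c(\gamma)$, soit $U_Z(\Psi) = [c]$. La difficult\'e principale r\'eside pr\'ecis\'ement dans cette normalisation de $c$ par un cobord, indispensable pour se placer dans les hypoth\`eses du lemme~; les autres v\'erifications sont essentiellement formelles et traduisent le fait que, l'action \'etant transitive, un morphisme $\Gl_2^+(\QQ)$-\'equivariant de $\Xi_0$ dans $V$ est enti\`erement d\'etermin\'e par ses valeurs sur les $[Z, \gamma^{-1}Z]$.
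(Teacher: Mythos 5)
Votre démonstration est correcte et suit essentiellement la même démarche que celle du texte : le point clef est, comme dans l'article, la normalisation du cocycle par le cobord $\gamma\mapsto v|(\gamma-1)$ pour se ramener aux hypothèses du lemme \ref{lem:surjectivite}, les autres vérifications (que vous explicitez là où le texte les déclare immédiates) étant formelles.
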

\begin{proof}
Soit $c$ un cocycle sur $\Gl_2^+(\QQ)$ à valeurs dans $V$
qui est un cobord par restriction à $\Stab_{\Gl_2^+(\QQ)}(Z)$ :
il existe $v\in V$ tel que
$c(\gamma)= v|(\gamma -1)$ pour $\gamma \in \Stab_{\Gl_2^+(\QQ)}(Z)$.
En modifiant $c$ par le cobord $\gamma \mapsto v|(\gamma-1)$, on
peut supposer que $c$ est nul sur $\Stab_{\Gl_2^+(\QQ)}(Z)$ et appliquer le lemme
\ref{lem:surjectivite}. Les autres points sont immédiats.
\end{proof}
\begin{prop}
\label{prop:suiteexacte}
Soit $\Gamma$ un sous-groupe de congruence de $\Sl_2(\ZZ)$ et $Z \in \mathcal{P}(\QQ)$.
Si $\{\pm 1\}$ agit trivialement sur $V$,
l'homomorphisme
\begin{equation*}
\Hom_{\Gamma}(\Xi_0, V) \overset{U_{Z}}\to H^1(\Gamma, V)
\end{equation*}
est surjectif.
L'image de $\Hom_{\Gamma}(\Delta_0, V)$ par $U_{\partial Z}$ est égale à
la cohomologie parabolique
$H^1_{par}(\Gamma, V)$, noyau de $H^1(\Gamma, V) \to
\prod_{r\in C(\Gamma)} H^1(\Stab_\Gamma(r), V)$
pour $C(\Gamma)$ un système de représentants de $\Gamma \backslash \PP^1(\QQ)$.
\end{prop}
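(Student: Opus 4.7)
Le plan comporte trois étapes : la surjectivité de $U_Z$ via le Lemme \ref{lem:surjectivite}, puis l'inclusion de l'image de $U_{\partial Z}$ dans $H^1_{par}(\Gamma, V)$ par un calcul direct, et enfin la surjectivité sur $H^1_{par}$ via la suite exacte longue de $\Hom_\Gamma(-,V)$ associée à $0 \to \Delta_0 \to \Delta \to \ZZ \to 0$.

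Pour la surjectivité de $U_Z$, on applique le Lemme \ref{lem:surjectivite} à $H=\Gamma$. L'observation clé est que tout élément de $\Sl_2(\ZZ)$ fixant deux points distincts de $\PP^1(\QQ)$ est central : son polynôme caractéristique étant scindé sur $\QQ$, le discriminant $(a+d)^2-4$ doit être un carré d'entier, ce qui force $\mathrm{tr}\,\gamma=\pm 2$ ; les paraboliques n'ayant qu'un seul point fixe, il reste $\gamma=\pm I$. Ainsi $\Stab_\Gamma(Z_0)\subseteq\{\pm I\}$ pour tout $Z_0\in\mathcal{P}(\QQ)$. Puisque $\{\pm I\}$ agit trivialement sur le $\QQ$-espace vectoriel $V$, tout cocycle $c:\Gamma\to V$ vérifie $2c(-I)=c(I)=0$, donc $c(-I)=0$. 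Les hypothèses du Lemme \ref{lem:surjectivite} sont satisfaites et produisent $\Psi\in\Hom_\Gamma(\KKK_0,V)$ avec $\Psi([Z_0,\gamma^{-1}Z_0])=c(\gamma)$ pour $Z_0$ parcourant un système de représentants $\cD(\Gamma)$. L'identité $[Z',\gamma^{-1}Z']-[Z,\gamma^{-1}Z]=(\gamma^{-1}-1)[Z,Z']$ combinée à l'équivariance $\psi(\gamma^{-1}x)=\psi(x)|\gamma$ montre que $U_Z(\Psi)$ et $U_{Z_0}(\Psi)$ coïncident dans $H^1(\Gamma,V)$, d'où la surjectivité de $U_Z$ pour tout $Z$.

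L'inclusion $U_{\partial Z}(\Hom_\Gamma(\Delta_0,V))\subseteq H^1_{par}(\Gamma,V)$ résulte d'un calcul explicite. Soient $\phi\in\Hom_\Gamma(\Delta_0,V)$, $r=\partial Z$, $s\in C(\Gamma)$ et $\gamma\in\Stab_\Gamma(s)$. L'identité $\{r,\gamma^{-1}r\}=\{r,s\}+\gamma^{-1}\{s,r\}$ (valide puisque $\gamma s=s$), jointe à l'équivariance de $\phi$, donne $\phi(\{r,\gamma^{-1}r\})=\phi(\{r,s\})-\phi(\{r,s\})|\gamma$, qui est le cobord de $-\phi(\{r,s\})$ sur $\Stab_\Gamma(s)$.

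Pour la surjectivité sur $H^1_{par}$, on applique $\Hom_\Gamma(-,V)$ à $0\to\Delta_0\to\Delta\to\ZZ\to 0$. Comme $\Delta=\bigoplus_{r\in C(\Gamma)}\ZZ[\Gamma/\Stab_\Gamma(r)]$, le lemme de Shapiro donne $\Hom_\Gamma(\Delta,V)\cong\bigoplus_{r\in C(\Gamma)}V^{\Stab_\Gamma(r)}$ et $\mathrm{Ext}^1_\Gamma(\Delta,V)\cong\bigoplus_{r\in C(\Gamma)}H^1(\Stab_\Gamma(r),V)$. La suite exacte longue identifie alors l'image du morphisme de connexion $\Hom_\Gamma(\Delta_0,V)\to H^1(\Gamma,V)$ au noyau de $H^1(\Gamma,V)\to\bigoplus_r H^1(\Stab_\Gamma(r),V)$, soit précisément $H^1_{par}(\Gamma,V)$. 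La difficulté principale est d'identifier ce morphisme de connexion à $U_{\partial Z}$, ce qui demande de pister les isomorphismes de Shapiro. Une alternative plus élémentaire, dans l'esprit du Lemme \ref{lem:surjectivite}, consiste à choisir pour chaque $r\in C(\Gamma)$ un $v_r\in V$ tel que $c|_{\Stab_\Gamma(r)}(\gamma)=v_r|\gamma-v_r$, à poser $\phi(\{r,\gamma^{-1}r\})=c(\gamma)+v_r|\gamma-v_r$ (bien défini par l'argument du Lemme \ref{lem:surjectivite}), puis à prolonger aux paires $\{r_1,r_2\}$ d'orbites distinctes par des valeurs arbitraires dans $V$ et à vérifier la $\Gamma$-équivariance et la cohérence.
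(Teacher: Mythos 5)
Your first two steps coincide with the paper's. For the surjectivity of $U_Z$ you invoke Lemma \ref{lem:surjectivite} exactly as the paper does, after checking that the stabilizer in $\Gamma$ of any infinitesimal cusp is contained in $\{\pm I\}$ and that any cocycle vanishes there; your argument that fixing two distinct points of $\PP^1(\QQ)$ forces $\pm I$, and that $c(-I)=0$ because $V$ is a $\QQ$-vector space on which $\pm 1$ acts trivially, is a correct expansion of what the paper leaves implicit. Likewise, your coboundary computation showing that $U_{\partial Z}(\Hom_\Gamma(\Delta_0,V))$ lands in $H^1_{par}(\Gamma,V)$ makes explicit what the paper merely asserts.

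The only real divergence is the surjectivity onto $H^1_{par}(\Gamma,V)$. Your primary route (the long exact sequence attached to $0\to\Delta_0\to\Delta\to\ZZ\to 0$ together with Shapiro's lemma) is sound in principle and more conceptual, but, as you yourself note, it leaves open the identification of the connecting homomorphism with $U_{\partial Z}$; the paper explicitly declines this route in a footnote precisely for that reason. Your ``elementary alternative'' is the paper's actual proof, but your formulation of it has a soft spot: you cannot assign \emph{arbitrary} values to cross-orbit pairs and then ``verify'' equivariance afterwards, since equivariance already determines the value on $\{\gamma t_1,\gamma t_2\}$ from that on $\{t_1,t_2\}$ and imposes compatibility with the within-orbit values you have fixed. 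The paper sidesteps this bookkeeping entirely by defining a single function $\Phi$ on all of $\Delta$, namely $\Phi(\{s\})=c(\beta_s^{-1})-v_{t_s}|\beta_s^{-1}$ where $s=\beta_s t_s$ with $t_s\in C(\Gamma)$; a one-line computation then gives $\Phi(\gamma D)=\Phi(D)|\gamma^{-1}+\deg(D)\,c(\gamma^{-1})$, so the restriction to $\Delta_0$ is $\Gamma$-equivariant, and $\Phi(\{r,\gamma^{-1}r\})=c(\gamma)+v_r|(1-\gamma)$ for $r\in C(\Gamma)$. Adopting that device is what your sketch needs to close the argument cleanly.
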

\begin{proof}
Soit $c$ un cocycle de $\Gamma$ à valeurs dans $V$.
Comme $\Stab_\Gamma Z$ est réduit à $\{\pm 1\}$ pour tout $Z\in \mathcal{P}(\QQ)$,
la restriction de $c$ à $\Stab_\Gamma Z$ est nulle
et $H^1(\Stab_\Gamma Z, V)$ est nul.
La condition du lemme \ref{lem:surjectivite} est donc vérifiée par $c$
et on peut appliquer le lemme. En choisissant un système de représentants $\cD(\Gamma)$
de $\Gamma\backslash\mathcal{P}(\QQ)$ contenant $Z$, ceci montre la surjectivité de $U_{Z}$.

Si $\Phi$ provient d'un élément de $\Hom_{\Gamma}(\Delta_0, V)$ et $r\in \PP^1(\QQ)$,
l'image de $\Phi$ dans $H^1(\Gamma,V)$ est définie
comme la classe de $\gamma \mapsto \Phi(\{r, \gamma^{-1} r\})$ et sa restriction à
$H^1(\Stab_{\Gamma}(r),V)$ est donc triviale.
Montrons la surjectivité de $\Hom_{\Gamma}(\Delta_0, V) \to H^1_{par}(\Gamma, V)$
\footnote{Cela peut certainement se déduire de suites exactes générales, mais nous
préférons donner une démonstration terre à terre.}.
Soit $c$ un cocycle représentant un élément de $H^1_{par}(\Gamma, V)$.
Pour tout $r \in C(\Gamma)$, choisissons $v_r \in V$ tel que
$c(\gamma)=v_r|(\gamma -1)$ pour tout $\gamma \in \Stab_\Gamma(r)$.
Pour tout $s \in \PP^1(\QQ)$, il existe un couple
$(t_s,\beta_s)\in C(\Gamma)\times \Gamma$ tel que $s=\beta_s t_s$, et la quantité
$c(\beta_s^{-1}) - v_{t_s} |\beta_s^{-1} $ ne dépend pas du choix de ce couple. On peut donc définir
un élément $\Phi\in \Hom(\Delta, V)$ par
$$
\Phi(\sum_{s\in \PP^1(\QQ)} n_s \{s\})=
\sum_{s\in \PP^1(\QQ)} n_s \left ( c(\beta_s^{-1}) - v_{t_s} |\beta_s^{-1} \right )\ .
$$
où $n_s$ est nul sauf pour un nombre fini de $s$.
Pour $\gamma\in\Gamma$, l'équation $s=\beta t$ donne $\gamma s=(\gamma\beta)t$, et
$$c(\gamma\beta)^{-1} - v_t |(\gamma\beta)^{-1} =(c(\beta^{-1})-v_t|\beta^{-1})|\gamma^{-1}+c(\gamma^{-1}),$$
donc $$\Phi(\gamma\!\!\!\sum_{s\in \PP^1(\QQ)} n_s \{s\})=\Phi(\sum_{s\in \PP^1(\QQ)} n_s \{s\})|\gamma^{-1}+(\sum_{s\in \PP^1(\QQ)} n_s)c(\gamma^{-1}),$$ ce qui montre que la restriction de $\Phi$ à $\Delta_0$ est invariante par $\Gamma$.
Pour $r \in C(\Gamma)$, on a
$$\Phi(\{r, \gamma^{-1} r\})=\Phi(\{\gamma^{-1} r\})-\Phi(\{r\})=c(\gamma)-v_r|\gamma -c(1)+v_r|1=c(\gamma)+v_r|(1-\gamma)\ ,$$
ce qui montre que $c$ est bien un représentant de l'image de $\Phi$ dans $H^1(\Gamma,V)$.
\end{proof}
\subsection{Symboles et espaces de formes modulaires}
Le paragraphe suivant a des liens avec \cite{heumann}.
\label{eichler}
Soit $k$ un entier supérieur ou égal à 2.
Soit $V_k=\QQ[x,y]_{k-2}$ le $\QQ$-espace vectoriel des polynômes homogènes
en $(x,y)$ de degré $k-2$, muni de l'action à droite de $\Gl_2(\QQ)$
$$(P | \gamma)(x,y) = \det(\gamma)^{k-2}P((x,y)\gamma^{-1}) \ . $$
Si $\gamma=\begin{pmatrix}a&b\\c&d\end{pmatrix}$, on a donc
$$(P|\gamma)(x,y)= P(d x- c y,-b x + a y)\ .$$
On note $j_{\gamma}(z)=cz+d$ le facteur d'automorphie.
Soit $M_k$ l'espace des formes modulaires dont le stabilisateur
pour l'action de $\Gl_2^+(\QQ)$
donnée par
$$F|_k\gamma (z)=\det(\gamma)^{k-1}j_{\gamma}(z)^{-k}F(\gamma z)$$
est un sous-groupe de congruence.
Si $\Gamma$ est un sous-groupe de congruence,
on note $M_k(\Gamma)$ (resp. $S_k(\Gamma)$) l'espace des formes modulaires
(respectivement paraboliques) de poids $k$ invariantes par $\Gamma$.
Si $F \in M_k$,
on note $a_0(F)$ le terme constant du développement de Fourier de $F$ en l'infini.
On définit l'\textsl{intégrale de Eichler} $W(F)$, fonction de $\cH$ dans $V_k(\CC)$, par
$$W(F)(\tau)=\int_{i\infty}^{\tau} \left(F(t) -a_0(F)\right)(t x + y)^{k-2} dt
+ a_0(F) \int_{0}^{\tau} (t x + y)^{k-2} dt \in V_k(\CC)\ .$$
Les chemins dans $\cH^*$ utilisés
pour définir cette intégrale (et les suivantes)
sont toujours constitués d'un nombre fini d'arcs de géodésiques.
L'action de $\gamma \in \Gl_2^+(\QQ)$ sur $W(F)$ est l'action naturelle
sur l'espace des fonctions de $\cH$ dans $V_k(\CC)$ :
$$\left(W(F)|\gamma\right)(\tau)= \left(W(F)(\gamma \tau)\right)| \gamma \ .$$
Si $f$ est une fonction sur $\cH$, on note
$\frac{\partial f}{\partial \tau}$ et $\frac{\partial f}{\partial \overline{\tau}}$
les composantes de sa différentielle dans la base $(d \tau, d \overline{\tau})$
$$df = \frac{\partial f}{\partial \tau }d \tau +
  \frac{\partial f}{\partial \overline{\tau}}d\overline{\tau}\ .$$
  On a donc
\begin{equation*}
\frac{\partial f}{\partial \tau }=
\frac{1}{2}\left (
\frac{\partial f}{\partial x } - i\frac{\partial f}{\partial y }
\right)\quad
\frac{\partial f}{\partial \overline{\tau} }=
\frac{1}{2}\left (
\frac{\partial f}{\partial x } + i\frac{\partial f}{\partial y }
\right)\ .
\end{equation*}

\begin{thm}
\label{cocyclehol}
\begin{enumerate}
\item L'intégrale définissant $W(F)$ converge absolument
et permet de définir une fonction holomorphe
$W(F)$ sur $\cH$ à valeurs dans $V_k(\CC)$ vérifiant
$$\frac{\partial W(F) }{\partial \tau} (\tau)= F(\tau)(\tau x + y)^{k-2}\ .$$
\item Si $\gamma \in \Gl_2^+(\QQ)$,
$\cocycleF{F}{\gamma}=W(F)- W(F|_k\gamma^{-1})|\gamma$ ne dépend pas de $\tau$.
L'application $$\gamma \mapsto \left (F \mapsto \cocycleF{F}{\gamma}\right)$$
définit un cocycle sur $\Gl_2^+(\QQ)$ à valeurs dans
$\Hom(M_k, V_k(\CC))$.
Si $\Gamma$ est un sous-groupe de $\Gl_2^+(\QQ)$ tel que $F$ soit invariante par $\Gamma$,
la restriction de $\cocycle(F)$ à $\Gamma$ est un cocycle sur $\Gamma$ à valeurs dans $V_k(\CC)$
et définit un élément de $H^1(\Gamma, V_k(\CC))$.
\item La fonction $s\mapsto \int_0^{i\infty}\left(F(t)-a_0(F)\right )(t x + y)^{k-2} (-it)^{s} dt$
définie pour $\re(s)\geq k$
admet un prolongement analytique à $\CC$.
On a
\begin{equation}
\cocycleF{F}{\gamma}=
\begin{cases}
\int_{i\infty}^0\left(F(t)-a_0(F)\right)(t x + y)^{k-2} (-it)^s dt|_{s=0}
&\text{si $\gamma=\begin{pmatrix}0&-1\\1&0\end{pmatrix}$}\label{eqsigma}\\
a_0(F) \int_0^{\gamma^{-1} 0} (t x + y)^{k-2} dt &\text{si $\gamma$ stabilise $\infty$\ .}
\end{cases}
\end{equation}
Ces équations déterminent entièrement $\cocycle$.
\end{enumerate}
\end{thm}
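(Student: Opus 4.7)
The plan is to treat the three parts in order, using only the Fourier expansion of $F\in M_k$ and elementary computations with the slash action.

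For (1), the Fourier expansion $F(t)=a_0(F)+\sum_{n\geq 1}a_n(F)e^{2i\pi n t}$ gives exponential decay of $F-a_0(F)$ as $\im(t)\to+\infty$, so $(F(t)-a_0(F))(tx+y)^{k-2}$ decays exponentially against the polynomial growth of $(tx+y)^{k-2}$. Absolute convergence of $\int_{i\infty}^\tau$ follows, locally uniform in $\tau\in\cH$, hence $W(F)$ is holomorphic on $\cH$, and differentiating under the integral at the upper endpoint yields
\[ \frac{\partial W(F)}{\partial\tau}(\tau)=(F(\tau)-a_0(F))(\tau x+y)^{k-2}+a_0(F)(\tau x+y)^{k-2}=F(\tau)(\tau x+y)^{k-2}. \]

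For (2), the key step is to prove $\partial(W(F|_k\gamma^{-1})|\gamma)/\partial\tau=F(\tau)(\tau x+y)^{k-2}$, so that $\cocycleF{F}{\gamma}$ has zero $\tau$-derivative and is constant. Writing $G=F|_k\gamma^{-1}$ and applying (1), the chain rule brings in $d(\gamma\tau)/d\tau=\det(\gamma)/j_\gamma(\tau)^2$; a direct computation shows $((\gamma\tau)x+y)^{k-2}|\gamma=(\det\gamma/j_\gamma(\tau))^{k-2}(\tau x+y)^{k-2}$; and multiplying these scalar factors against the definition of the weight-$k$ slash action produces $(G|_k\gamma)(\tau)(\tau x+y)^{k-2}=F(\tau)(\tau x+y)^{k-2}$. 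The cocycle identity
\[ \cocycleF{F}{\gamma_1\gamma_2}=\cocycleF{F|_k\gamma_2^{-1}}{\gamma_1}|\gamma_2+\cocycleF{F}{\gamma_2} \]
follows by direct expansion, the middle term $W(F|_k\gamma_2^{-1})|\gamma_2$ cancelling. If a subgroup $\Gamma$ fixes $F$, then $F|_k\gamma^{-1}=F$ on $\Gamma$, so the cocycle reduces to $\gamma\mapsto W(F)-W(F)|\gamma$, takes values in $V_k(\CC)$, and defines a class in $H^1(\Gamma,V_k(\CC))$.

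For (3), the analytic continuation is classical Hecke theory: split $\int_0^{i\infty}=\int_0^i+\int_i^{i\infty}$; the upper piece is entire in $s$ by rapid decay, and for the lower piece the substitution $u=-1/t$ combined with the modular transformation of $F$ (within its invariance group) turns it into another $[i,i\infty]$-integral plus meromorphic terms of type $c/(s+j)$, all holomorphic at $s=0$. To identify the two distinguished values of $\cocycle$, exploit the $\tau$-independence from (2) by evaluating on the positive imaginary axis. For $\gamma$ stabilising $\infty$ and $\tau=iT$ with $T\to+\infty$, both $\int_{i\infty}^\tau$ contributions vanish exponentially, and the substitution $t\mapsto\gamma^{-1}t$ in $a_0(F|_k\gamma^{-1})\int_0^{\gamma\tau}(tx+y)^{k-2}dt|\gamma$ combines with $a_0(F)\int_0^\tau(tx+y)^{k-2}dt$ to yield $a_0(F)\int_0^{\gamma^{-1}0}(tx+y)^{k-2}dt$. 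For $\gamma=\sigma$, evaluating $W(F)(\tau)-W(F|_k\sigma^{-1})(\sigma\tau)|\sigma$ at $\tau=iT$ and passing to the Mellin regularisation identifies the cocycle with the analytic continuation at $s=0$ of the stated integral. Finally, the Bruhat decomposition $\Gl_2^+(\QQ)=B^+\cup B^+\sigma B^+$, with $B^+$ the upper triangular subgroup of positive determinant, combined with the cocycle identity, shows that these two formulas determine $\cocycle$ on all of $\Gl_2^+(\QQ)$.

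The principal obstacle is the $\sigma$-formula: rigorously linking the cocycle value with the Mellin regularisation demands careful interchange of the $\tau$-limit with the $s$-analytic continuation, tracking of the action of $\sigma$ on $(tx+y)^{k-2}(-it)^s$, and verification that no spurious contribution arises at $s=0$. The upper-triangular formula is by comparison a routine substitution check once the limit $T\to+\infty$ is performed.
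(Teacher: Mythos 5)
Parts (1) and (2) of your proposal follow the paper's own argument essentially verbatim: exponential decay of $\widetilde F=F-a_0(F)$ at $i\infty$ from the Fourier expansion, differentiation under the integral sign, the identity $\partial\bigl(W(F|_k\gamma^{-1})|\gamma\bigr)/\partial\tau=F(\tau)(\tau x+y)^{k-2}$ via the chain rule and the transformation formulas for $j_\gamma$ and $(\gamma\tau\,x+y)^{k-2}|\gamma$, the cocycle identity by expansion, and the limit $\im(\tau)\to\infty$ for upper-triangular $\gamma$. The splitting $\int_0^{i\infty}=\int_0^i+\int_i^{i\infty}$ and the use of $F|\sigma$ to control the lower piece is also exactly the paper's mechanism for the analytic continuation (note only that this uses $F|\sigma$, not an element of the invariance group of $F$ — $\sigma$ need not stabilise $F$). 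The closing remark that $\sigma$ and the upper-triangular matrices generate $\Gl_2^+(\QQ)$, so the two displayed values plus the cocycle relation determine $\cocycle$, is also the paper's.

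The one place where your plan would go wrong as written is the identification of $\cocycleF{F}{\sigma}$. You propose to evaluate $W(F)(\tau)-W(F|_k\sigma^{-1})(\sigma\tau)|\sigma$ at $\tau=iT$ and speak of interchanging ``the $\tau$-limit'' with the $s$-continuation. No $\tau$-limit is needed, and taking $T\to+\infty$ would actually be harmful: then $\sigma\tau=i/T\to 0$ and the term $\int_{i\infty}^{\sigma\tau}\widetilde F(t)(tx+y)^{k-2}\,dt$ diverges, since $F$ is generically $\cO(t^{-k})$ at the cusp $0$ (this divergence is precisely why the regularisation $(-it)^s$ is introduced in the first place). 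The correct move — and the paper's — is to fix $\tau$ on the geodesic from $0$ to $i\infty$, most conveniently $\tau=i$ which is fixed by $\sigma$, and to observe that the four terms produced by splitting the regularised integral at $i$ and substituting $t\mapsto\sigma^{-1}t$ on the lower piece are, at $s=0$, term by term the four integrals making up $W(F)(i)-W(F|_k\sigma^{-1})|\sigma(i)=\cocycleF{F}{\sigma}$: two entire contributions from $[i,i\infty]$ (one for $\widetilde F$, one for $\widetilde{F|\sigma}$ slashed by $\sigma$) and two rational functions of $s$ with poles only at negative integers, coming from the $a_0$ terms. With that replacement your part (3) closes, and the whole argument coincides with the paper's proof.
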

En particulier, $\cocycleF{F}{\gamma}=0$ si $\gamma$ stabilise $\pi_\infty(0)$.
Si $F \in M_k$, on note $\tilde{F}(t)=F(t)-a_0(F)$ pour simplifier
l'écriture des formules.
\begin{proof}
Notons $N$ un niveau de $F$. On a
$$\widetilde{F}(t) = \sum_{n\geq 1} a_n \exp(\frac{2i\pi n t}N)$$
avec $a_n=\cO(n^{k-1})$.
 L'inégalité $|\exp(\frac{2i\pi n t}N)| \le \exp(\frac{-2\pi \im(t)}N)^n$
 assure la convergence absolue de l'intégrale
en $i\infty$ et justifie la définition de $W(F)$, ainsi que l'équation différentielle.

Vérifions que $W(F) -W(F|\gamma^{-1})|\gamma $ ne dépend pas de $\tau$.
On a
\begin{equation*}
\begin{split}
W(F| \gamma^{-1})|\gamma(\tau)
&= \left(\int_{i\infty}^{\gamma \tau} \widetilde{F| \gamma^{-1}}(t) (t x + y)^{k-2}dt
+a_0(F| \gamma^{-1}) \int_{0}^{\gamma \tau}(t x + y)^{k-2}dt\right )|\gamma \ .
\end{split}
\end{equation*}
En utilisant les égalités suivantes faciles à vérifier
\begin{equation}
\label{formulej}
\begin{cases}
\left.(\gamma \tau x + y)^{k-2}\right| \gamma &=\det(\gamma)^{k-2}j_{\gamma^{-1}}(\gamma \tau)^{k-2} (\tau x + y)^{k-2}\\
\left(F|\gamma^{-1}\right)(\gamma \tau)&= \det(\gamma)^{-(k-1)}j_{\gamma^{-1}}(\gamma \tau)^{-k} F(\tau)\\
j_{\gamma^{-1}}(\gamma \tau) j_{\gamma}(\tau) &=1,
\end{cases}
\end{equation}
on obtient
$$\frac{\partial\left( W(F| \gamma^{-1})| \gamma\right)(\tau)}{\partial \tau}
= \frac{\det(\gamma)}{ j_\gamma(\tau)^{2}} \left(
F| \gamma^{-1}\right)(\gamma \tau)\left.(\gamma\tau x + y)^{k-2}\right|\gamma
=
F(\tau) (\tau x + y)^{k-2}=
\frac{\partial W(F)}{\partial \tau}(\tau)\ .
$$

Le fait que $\gamma \mapsto (F\mapsto \cocycleF{F}{\gamma})$ est un cocycle :
$$ \cocycleF{F}{\gamma_1 \gamma_2}= \cocycleF{F| \gamma_2^{-1}}{\gamma_1}| \gamma_2 + \cocycleF{F}{\gamma_2}$$
se démontre en appliquant la définition.
Enfin, calculons $\cocycleF{F}{\gamma}$ lorsque $\gamma \infty=\infty$,
i.e. lorsque $\gamma$ est de la forme $\smallmat{a&b\\0&d}$.
Pour $\im(\tau)$ tendant vers $\infty$,
l' intégrale de $i\infty$ à $\tau$ et l'intégrale de $i\infty$ à $\gamma\tau$ tendent vers 0.
La valeur de $W(F) - W(F|\gamma^{-1})|\gamma $ est donc
égale à la limite de
\begin{equation*}
\begin{split}
a_0(F)\int_{0}^{\tau}&(t x + y)^{k-2} dt
-a_0(F|\gamma^{-1})\left(\int_{0}^{\gamma \tau} (t x + y)^{k-2} dt\right)|\gamma
\\
&=
a_0(F)\left(
\int_{0}^{\tau}(t x + y)^{k-2} dt - \frac{d}{a}
\int_{0}^{\gamma \tau} (\gamma^{-1}t x + y)^{k-2} dt
\right)\\
&=
a_0(F)\left(\int_{0}^{\tau}(t x + y)^{k-2} dt
-
\int_{\gamma^{-1} 0}^{\tau} (t x + y)^{k-2} dt
\right)\\
&=
a_0(F)
\int_{0}^{\gamma^{-1} 0} (t x + y)^{k-2} dt
\end{split}
\end{equation*}
car
$a_0(F|\gamma^{-1})=
\frac{d}{a^{k-1}} a_0(F)$. Cela démontre la formule pour $\cocycleF{F}{\gamma}$
lorsque $\gamma \infty=\infty$.

Calculons $\cocycleF{F}{\sigma}$ pour $\sigma=\begin{pmatrix}0&-1\\1&0\end{pmatrix}$.
Pour cela, commençons par justifier le prolongement analytique
de $s \mapsto \int_0^{i\infty}\tilde{F}(t)(t x + y)^{k-2} (-it)^s dt$
\footnote{On donne ainsi un sens à
l'intégrale $\int_0^{i\infty} \tilde{F}(t)(t x + y)^{k-2} dt$
qui ne converge pas en $0$.
Il est dans l'esprit des démonstrations des équations fonctionnelles
des fonctions $L$ (voir \cite{heumann}).}.
Remarquons d'abord que $t$ étant dans le demi-plan de Poincaré,
$-it$ appartient au plan complexe privé de la demi-droite
$x\leq0$, ce qui permet de donner un sens à $(-it)^s$ pour $s \in \CC$.
En $0$, $F(t)$ est $\cO(t^{-k})$, donc l'intégrale
est convergente en $0$ pour $\re(s) > k-1$.
Prenons un élément $z_0$ de $\cH$ sur la géodésique
qui va de $0$ à $i\infty$, par exemple,
$z_0= i$.
On écrit
\begin{equation*}
\begin{split}
\int_{i\infty}^0& \tilde{F}(t)(t x + y)^{k-2} (-it)^s dt=
\int_{i\infty}^i \tilde{F}(t)(t x + y)^{k-2} (-it)^s dt
-
\int_0^{i} \tilde{F}(t)(t x + y)^{k-2} (-it)^s dt
\\
&=
\int_{i\infty}^i \tilde{F}(t)(t x + y)^{k-2} (-it)^{s} dt
-\int_0^{i} F(t)(t x + y)^{k-2} (-it)^s dt
+ a_0(F) \int_0^{i}(t x + y)^{k-2} (-it)^s dt
\\
&=
\int_{i\infty}^i \tilde{F}(t)(t x + y)^{k-2} (-it)^{s} dt
-
\left(\int_i^{i\infty} \widetilde{F|\sigma}(t)(tx + y)^{k-2} (-it)^{-s} dt\right)|\sigma
\\&
+ a_0(F) \int_0^{i}(t x + y)^{k-2} (-it)^s dt
- a_0(F|\sigma) \left(\int_0^{i}(t x + y)^{k-2} (-it)^{-s} dt\right)| \sigma
\\
&=W(F)(i)-W(F|_k \sigma ^{-1})|\sigma(i) \ .
\end{split}
\end{equation*}
Les deux premiers termes définissent des fonctions entières,
les deux autres sont des fractions rationnelles en $s$ sans pôle en $0$.
En prenant la valeur en $s=0$, on en déduit la formule
\eqref{eqsigma}.
\end{proof}
\begin{rem}
Comme cela est remarqué dans \cite{heumann},
cette formule peut aussi s'écrire en remplaçant $i$ par $\tau \in \cH$ :
\begin{equation*}
\begin{split}
\int_{i\infty}^0& \tilde{F}(t)(t x + y)^{k-2} (-it)^s dt=
\int_{i\infty}^{\sigma\tau} \widetilde{F}(t)(t x + y)^{k-2} (-it)^{s} dt
-
\left(\int_{i\infty}^\tau \widetilde{F|\sigma}(t)(tx + y)^{k-2} (-it)^{-s} dt\right)|\sigma
\\&
+ a_0(F) \int_0^{{\sigma\tau}}(t x + y)^{k-2} (-it)^s dt
- a_0(F|\sigma) \left(\int_0^{\tau}(t x + y)^{k-2} (-it)^{-s} dt\right)| \sigma
\end{split}
\end{equation*}
ou encore
\begin{equation}
\label{valeur}
\begin{split}
\int_{i\infty}^0& \tilde{F}(t)(t x + y)^{k-2} (-it)^s dt|_{s=0}=
\int_{i\infty}^{\sigma\tau} \widetilde{F}(t)(t x + y)^{k-2} dt
+ a_0(F) \int_0^{\sigma\tau}(t x + y)^{k-2} dt
\\&
-\left(\int_{i\infty}^\tau \widetilde{F|\sigma}(t)(tx + y)^{k-2} dt
+ a_0(F|\sigma) \int_0^{\tau}(t x + y)^{k-2}dt \right)| \sigma\\
&=
\cocycleF{F}{\sigma} \ .
\end{split}
\end{equation}
La formule étant vraie pour tout $\tau \in \cH$, on peut
aussi écrire en remplaçant $\tau$ par $\sigma^{-1} \tau$
\begin{equation*}
\begin{split}
\int_{i\infty}^0& \tilde{F}(t)(t x + y)^{k-2} (-it)^s dt|_{s=0}=
\int_{i\infty}^{\tau} \widetilde{F}(t)(t x + y)^{k-2} dt
+ a_0(F) \int_0^{\tau}(t x + y)^{k-2} dt
\\&
-\left(\int_{\sigma^{-1} 0}^{\sigma^{-1}\tau}
\widetilde{F|\sigma}(t)(tx + y)^{k-2} dt
+ a_0(F|\sigma) \int_{\sigma^{-1} \infty}^{\sigma^{-1}\tau}(t x + y)^{k-2}dt \right)| \sigma
\ .
\end{split}
\end{equation*}
\end{rem}

\begin{thm}
\label{thm:fond}
Il existe un unique $\Gl_2^+(\QQ)$-homomorphisme $\Per$ de $M_k$ dans
$\Hom(\Xi_0, V_k(\CC))$
tel que pour tout $F \in M_k$ et tout $\gamma \in \Gl_2^+(\QQ)$,
$$ \Per(F)([\pi_\infty(0),\gamma^{-1}\pi_\infty(0)])=
\cocycleF{F}{\gamma}
$$
ou encore
\begin{equation}
\label{eq:cocycle}
\begin{cases} \Per(F)(\{\infty,0\})=&
\int_{i\infty}^0 \left(F(t)-a_0(F)\right)(t x + y)^{k-2}(-it)^s dt |_{s=0}
\\
\Per(F)([0,r]_\infty)=& a_0(F) \int_0^r (t x + y)^{k-2} dt \ .
\end{cases}
\end{equation}
\end{thm}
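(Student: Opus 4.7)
Le plan est d'appliquer le lemme \ref{lem:surjectivite} au groupe $H=\Gl_2^+(\QQ)$. Comme celui-ci opère transitivement sur $\mathcal{P}(\QQ)$, on peut prendre $\cD(H)=\{\pi_\infty(0)\}$, et la partie « unicité » du lemme donnera directement l'unicité souhaitée. Le cocycle à manipuler n'est cependant pas $\gamma \mapsto \cocycleF{F}{\gamma}$ à valeurs dans $V_k(\CC)$ (qui n'est un véritable cocycle que si $F$ est invariante sous toute action), mais plutôt $\gamma \mapsto (F \mapsto \cocycleF{F}{\gamma})$ à valeurs dans le module à droite $\Hom(M_k, V_k(\CC))$, muni de l'action $(\phi|\alpha)(F)=\phi(F|_k\alpha^{-1})|\alpha$. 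La relation de cocycle démontrée au théorème \ref{cocyclehol} se traduit alors précisément en la relation de cocycle à droite dans ce module.

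L'hypothèse d'annulation du lemme sur le stabilisateur de $\pi_\infty(0)$ se vérifie ensuite immédiatement : ce stabilisateur est le groupe des matrices diagonales $\lambda=\smallmat{a&0\\0&d}$ (avec $ad>0$), qui fixent à la fois $\infty$ et $0$. Pour un tel $\lambda$, on a $\lambda^{-1}\cdot 0=0$, et la seconde formule du théorème \ref{cocyclehol} donne $\cocycleF{F}{\lambda}=a_0(F)\int_0^0 (tx+y)^{k-2}\,dt=0$ pour tout $F\in M_k$, d'où l'annulation du cocycle à valeurs dans $\Hom(M_k, V_k(\CC))$ sur ce stabilisateur.

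Le lemme \ref{lem:surjectivite} produit alors un unique $\Psi \in \Hom_{\Gl_2^+(\QQ)}(\Xi_0, \Hom(M_k, V_k(\CC)))$ tel que $\Psi([\pi_\infty(0), \gamma^{-1}\pi_\infty(0)])(F) = \cocycleF{F}{\gamma}$ pour tous $\gamma$ et $F$. On pose $\Per(F)(\xi) = \Psi(\xi)(F)$ ; la condition caractéristique est alors vérifiée par construction, et l'équivariance de $\Per$ sous $\Gl_2^+(\QQ)$ découle de celle de $\Psi$ (un calcul direct donne $\Per(F|_k\alpha)(\xi)=\Psi(\xi)(F|_k\alpha)=\Psi(\alpha\xi)(F)|\alpha=\Per(F)(\alpha\xi)|\alpha$). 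Les formules explicites \eqref{eq:cocycle} s'en déduisent alors : pour $\{\infty,0\} = [\pi_\infty(0), \sigma^{-1}\pi_\infty(0)]$ avec $\sigma=\smallmat{0&-1\\1&0}$, on obtient $\Per(F)(\{\infty,0\})=\cocycleF{F}{\sigma}$, valeur donnée par la formule \eqref{eqsigma} ; pour $[0,r]_\infty = [\pi_\infty(0), \gamma^{-1}\pi_\infty(0)]$ avec $\gamma=\smallmat{1&-r\\0&1}$ stabilisant $\infty$ et vérifiant $\gamma^{-1}\cdot 0=r$, la seconde formule du théorème \ref{cocyclehol} fournit directement la valeur voulue.

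Le point délicat me semble être l'identification du bon module cible pour invoquer le lemme : pour $F$ fixée non invariante sous $\Gl_2^+(\QQ)$, l'application $\gamma \mapsto \cocycleF{F}{\gamma}$ n'est pas un cocycle à valeurs dans $V_k(\CC)$, et c'est en faisant varier $F$ et en passant par $\Hom(M_k, V_k(\CC))$ que l'on récupère simultanément la structure de cocycle requise par le lemme et le caractère d'homomorphisme de $\Per$ en la variable $F$. Tout le reste n'est qu'une application mécanique des résultats antérieurs.
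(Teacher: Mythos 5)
Your proof is correct and takes essentially the same route as the paper: both apply Lemma \ref{lem:surjectivite} with $H=\Gl_2^+(\QQ)$, $Z=\pi_\infty(0)$ and the target module $V=\Hom(M_k,V_k(\CC))$ (via the identification $\Hom(\Xi_0,\Hom(M_k,V_k(\CC)))=\Hom(M_k,\Hom(\Xi_0,V_k(\CC)))$), after noting that $\cocycle$ vanishes on the stabilizer of $\pi_\infty(0)$ — a point you spell out more explicitly than the paper does. The only item you leave implicit is the converse direction of the ``ou encore'', namely that the two formulas \eqref{eq:cocycle} in turn determine $\Per$, which the paper gets from the fact that $\Gl_2^+(\QQ)$ is generated by $\sigma$ and the upper-triangular matrices of positive determinant.
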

\begin{proof}
L'existence et l'unicité de $\Per$ se déduisent
du lemme \ref{lem:surjectivite} avec $Z=\pi_\infty(0)$, $G=\Gl_2^+(\QQ)$
et
$V=\Hom(M_k,V_k)$ en remarquant que
$\Hom(M_k,\Hom(\Xi_0,V_k))= \Hom(\Xi_0,\Hom(M_k,V_k))$
et que $\cC$ est nul sur le stabilisateur de $\pi_\infty(0)$.
Comme $\Gl_2^+(\QQ)$ est engendré par $\sigma$ et par les matrices triangulaires
supérieures de déterminant strictement positif,
les égalités \eqref{eq:cocycle} suffisent à caractériser l'application
$\Per$ en utilisant le théorème \ref{cocyclehol}.
\end{proof}
\begin{rem}
Attention, $\Per(F)$ dépend du choix du cocycle $\gamma \mapsto \cocycleF{F}{\gamma}$
parmi les cocycles de sa classe qui sont
nuls sur le stabilisateur de $\pi_\infty(0)$ dans $\Gl_2^+(\QQ)$.
\end{rem}
\begin{prop} Soit $F$ une forme modulaire de poids $k$ de groupe $\Gamma$.
L'image de $\Per(F)$ dans $H^1(\Gamma, V_k(\CC))$
est la classe du cocycle $\cocycle(F)$ :
$$\Per(F)([\pi_\infty(0),\gamma^{-1} \pi_\infty(0)])=\cocycleF{F}{\gamma}$$
pour $\gamma \in \Gamma$.
Soient $t\in \PP^1(\QQ)$ et $\Gamma_t$ son stabilisateur dans $\Gamma$.
L'image de $\Per(F)$ dans
$H^1(\Gamma_t, V_k(\CC))$ est la classe du cocycle
$\gamma \in \Gamma_t \mapsto \Per(F)([s, \gamma^{-1}s]_t)$
pour n'importe quel $s\in \PP^1(\QQ)$ différent de $t$.
En particulier, lorsque $F$ est parabolique,
la classe de $\cocycle(F)$ est un élément de la cohomologie
parabolique $H^1_{par}(\Gamma, V_k(\CC))$.
\end{prop}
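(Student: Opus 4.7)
The plan is to combine the defining property of $\Per$ from Theorem \ref{thm:fond} with the cohomological machinery of Proposition \ref{prop:suiteexacte}, using the base-point identity $V_{Z'}(\gamma)-V_Z(\gamma)=(1-\gamma)[Z',Z]$ established just before Lemma \ref{lem:surjectivite} in order to change basepoints.

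The first assertion is essentially tautological: the equality $\Per(F)([\pi_\infty(0),\gamma^{-1}\pi_\infty(0)])=\cocycleF{F}{\gamma}$ is the defining relation of $\Per(F)$ applied to a $\gamma\in\Gamma\subset\Gl_2^+(\QQ)$. Since $F|_k\gamma=F$ for $\gamma\in\Gamma$ and $\Per$ is $\Gl_2^+(\QQ)$-equivariant, $\Per(F)$ lies in $\Hom_\Gamma(\Xi_0,V_k(\CC))$, and $U_{\pi_\infty(0)}(\Per(F))$ is by construction the class of $\gamma\mapsto\cocycleF{F}{\gamma}$.

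For the second assertion, I take $Z_\infty=\pi_\infty(0)$, $Z_t=\pi_t(s)$ and observe that for $\gamma\in\Gamma_t$ one has $\gamma^{-1}Z_t=\pi_t(\gamma^{-1}s)$, so $[Z_t,\gamma^{-1}Z_t]=[s,\gamma^{-1}s]_t$. The identity
\begin{equation*}
[Z_\infty,\gamma^{-1}Z_\infty]-[Z_t,\gamma^{-1}Z_t]=(1-\gamma^{-1})[Z_\infty,Z_t]
\end{equation*}
together with the $\Gamma$-equivariance of $\Per(F)$ yields $\cocycleF{F}{\gamma}-\Per(F)([s,\gamma^{-1}s]_t)=u-u|\gamma$ where $u=\Per(F)([Z_\infty,Z_t])$, which is visibly a coboundary restricted to $\Gamma_t$. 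By functoriality of restriction, the image of $[\cocycle(F)]$ in $H^1(\Gamma_t,V_k(\CC))$ therefore coincides with the class of $\gamma\mapsto\Per(F)([s,\gamma^{-1}s]_t)$.

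For the parabolic case, the strategy is to show that $\Per(F)$ vanishes on $W=\ker\partial$, so that it factors as $\widetilde{\Per(F)}\circ\partial$ for some $\widetilde{\Per(F)}\in\Hom_\Gamma(\Delta_0,V_k(\CC))$. By Proposition \ref{generateur}(ii), $W$ is generated as an $\Sl_2(\ZZ)$-module by the symbols $[0,r]_\infty$, and \eqref{eq:cocycle} gives $\Per(F)([0,r]_\infty)=a_0(F)\int_0^r(tx+y)^{k-2}dt$. Equivariance then shows that on any $[s_1,s_2]_{r'}$, the value of $\Per(F)$ is a multiple of $a_0(F|_k\gamma_{r'})$ for any $\gamma_{r'}\in\Sl_2(\ZZ)$ with $\gamma_{r'}\infty=r'$; these all vanish because $F$ is cuspidal. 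Once the factorisation through $\Delta_0$ is in place, the compatibility of $U_{\pi_\infty(0)}$ with $U_\infty$ via $\partial$ combined with the second part of Proposition \ref{prop:suiteexacte} places the class of $\cocycle(F)$ in $H^1_{par}(\Gamma,V_k(\CC))$. The only delicate bookkeeping is keeping the left/right action conventions straight when verifying the coboundary identities; the rest is a direct consequence of results already proved.
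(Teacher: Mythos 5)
Your proof is correct, and for the heart of the proposition (the restriction to $\Gamma_t$) it is the same computation as the paper's: the paper also splits $[Z_\infty,\gamma^{-1}Z_\infty]$ through $\pi_t(s)$ and identifies the difference with the coboundary $\Per(F)([\pi_\infty(0),\pi_t(s)])|(1-\gamma)$, which is exactly your identity $u-u|\gamma$ with $u=\Per(F)([Z_\infty,Z_t])$. The only place you diverge is the parabolic statement: the paper leaves it as an immediate consequence of the cusp formula just proved (for $F$ cuspidal, $\Per(F)([s,\gamma^{-1}s]_t)$ vanishes because, via Proposition \ref{Manin} and equivariance, it is built from terms proportional to $a_0(F|_k\gamma_{r'})=0$, so the restriction to every $\Gamma_t$ is trivial and the class is parabolic by definition of $H^1_{par}$), whereas you take the slightly more structural route of showing $\Per(F)$ kills $W=\ker\partial$, factoring it through $\Hom_\Gamma(\Delta_0,V_k(\CC))$, and invoking Proposition \ref{prop:suiteexacte}. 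Both arguments rest on the same vanishing $a_0(F|_k\gamma)=0$; yours has the small bonus of exhibiting $\Per(F)$ as an honest modular symbol in $\Hom_\Gamma(\Delta_0,V_k(\CC))$, not just a class in parabolic cohomology, but it is not needed for the statement as written.
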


\begin{proof}Démontrons l'assertion sur la pointe $t$.
Calculons l'image de la classe de $\cocycle(F)$ dans
$H^1(\Gamma_t, V_k(\CC))$.
On a pour $s\neq t$ et $\gamma \in \Gamma_t$
\begin{equation*}
\begin{split}
\cocycleF{F}{\gamma}&=
\Per(F)([\pi_\infty(0),\pi_t(s)])
+
\Per(F)([\pi_t(s), \pi_t(\gamma^{-1}s)])
+
\Per(F)([\gamma^{-1}\pi_t(s),\gamma^{-1} \pi_\infty(0)])\\
&=
\Per(F)([\pi_\infty(0),\pi_t(s)])| (1 - \gamma)
+
\Per(F)([\pi_t(s), \pi_t(\gamma^{-1}s)])\ .
\end{split}
\end{equation*}
L'image de $\cocycle(F)$ dans $H^1(\Gamma_t, V_k(\CC))$ est donc la classe du cocycle
$\gamma \in \Gamma_t \mapsto \Per(F)([\pi_t(s), \pi_t(\gamma^{-1}s)])$.

\end{proof}
Notons $\Per_\RR(F)$ (resp. $\cocycle_\RR(F)$) la partie réelle
de $\Per(F)$ (resp. $\cocycle(F)$). Il serait possible de les définir
en prenant l'intégrale de la partie réelle de la forme différentielle
dans les intégrales.
\begin{thm}[Eichler-Shimura]
L'application
$\cocycle_\RR: M_k(\Gamma) \to H^1(\Gamma,V_k(\RR))$
est un isomorphisme de $\RR$-espaces vectoriels.
L'image de $S_k(\Gamma)$ est la cohomologie parabolique
$H^1_{par}(\Gamma,V_k(\RR))$.
\end{thm}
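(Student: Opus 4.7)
Le plan consiste à démontrer le théorème en trois étapes : d'abord l'inclusion parabolique, puis l'injectivité via l'accouplement de Petersson, et enfin la surjectivité par un calcul de dimension appuyé sur les symboles de Farey.

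\emph{Inclusion parabolique.} Pour $F\in S_k(\Gamma)$, le terme constant $a_0(F|_k\gamma)$ s'annule à toute pointe $\gamma\infty$. J'invoquerais le théorème~\ref{cocyclehol} pour en déduire $\cocycleF{F}{\gamma}=0$ dès que $\gamma$ stabilise $\infty$, puis je propagerais par $\Gamma$-invariance : pour tous $s\neq t$ et $\gamma\in\Stab_\Gamma(t)$, on aurait $\Per(F)([s,\gamma^{-1}s]_t)=0$. Par la proposition~\ref{prop:suiteexacte}, la classe de $\cocycle(F)$ appartient alors à $H^1_{par}(\Gamma,V_k(\CC))$ ; en prenant les parties réelles, $\cocycle_\RR(S_k(\Gamma))\subseteq H^1_{par}(\Gamma,V_k(\RR))$.

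\emph{Injectivité.} Supposons $\cocycle_\RR(F)=0$ pour un certain $F\in M_k(\Gamma)$. L'idée serait d'appliquer l'accouplement de type Petersson de la seconde partie de l'article, vérifiant $\{\Per(F),\overline{\Per(G)}\}_\Gamma=-(2i)^{k-1}\langle F,G\rangle_\Gamma$ pour tout $G\in S_k(\Gamma)$ et se factorisant par la cohomologie en son premier argument. L'annulation de la partie réelle de $\Per(F)$ dans $H^1(\Gamma,V_k(\RR))$, propagée à travers cette identité, forcerait $\re\langle F,G\rangle_\Gamma=0$ pour toute forme parabolique $G$ ; la non-dégénérescence du produit de Petersson classique sur $S_k(\Gamma)$ entraînerait alors l'annulation de la composante cuspidale de $F$. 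Un calcul analogue utilisant les symboles d'Eisenstein-Dedekind-Stevens (théorème attribué à Stevens ci-dessus) éliminerait la composante d'Eisenstein.

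\emph{Surjectivité et image parabolique.} Pour faire passer l'injectivité à la surjectivité, je calculerais $\dim_\RR H^1(\Gamma,V_k(\RR))$ directement à partir d'un symbole de Farey pour $\Gamma$ : les relations de la proposition~\ref{Manin} donnent une présentation de $\Gamma$ d'où l'on tire la caractéristique d'Euler et donc $\dim_\RR H^1(\Gamma,V_k(\RR))$, que l'on compare à $\dim_\RR M_k(\Gamma)$ obtenu par Riemann-Roch sur $\Gamma\backslash\cH^*$. L'égalité $\cocycle_\RR(S_k(\Gamma))=H^1_{par}(\Gamma,V_k(\RR))$ découlerait alors de l'inclusion précédente et de l'identité analogue $\dim_\RR H^1_{par}(\Gamma,V_k(\RR))=\dim_\RR S_k(\Gamma)$. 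L'obstacle principal est l'injectivité sur la partie Eisenstein : le produit de Petersson classique n'étant pas applicable en l'absence de forme parabolique, on doit le remplacer par l'accouplement avec les symboles d'Eisenstein explicites de l'article, en utilisant le fait que la contribution non parabolique à $\cocycle(F)$ est gouvernée par les termes constants de $F$ aux pointes, comme on le voit dans la seconde formule du théorème~\ref{cocyclehol}.
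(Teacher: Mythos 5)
Le texte ne d\'emontre pas ce th\'eor\`eme : il est cit\'e comme r\'esultat classique d'Eichler et Shimura et utilis\'e tel quel dans la suite. Votre proposition est donc \`a \'evaluer pour elle-m\^eme. La partie parabolique de votre plan est correcte et suit la d\'emarche standard : l'inclusion $\cocycle_\RR(S_k(\Gamma))\subseteq H^1_{par}(\Gamma,V_k(\RR))$ est exactement la proposition qui pr\'ec\`ede l'\'enonc\'e ; l'injectivit\'e de $S_k(\Gamma)\to H^1_{par}(\Gamma,V_k(\RR))$ se d\'eduit bien de l'identit\'e $\left\lbrace\Per(F),\overline{\Per(G)}\right\rbrace_{\Gamma}=-(2i)^{k-1}\langle F,G\rangle_{\Gamma}$, sans circularit\'e puisque le th\'eor\`eme \ref{thm:petersson1} est \'etabli par la formule de Stokes ind\'ependamment d'Eichler--Shimura ; et la surjectivit\'e sur $H^1_{par}$ s'obtient par le calcul de dimension annonc\'e (notez toutefois que la pr\'esentation de $\Gamma$ vient du symbole de Farey, non de la proposition \ref{Manin}, qui concerne les fractions continues).

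En revanche, ce que vous identifiez vous-m\^eme comme l'obstacle principal est un vrai trou, et il n'est pas comblable : pris \`a la lettre, l'\'enonc\'e est faux sur la partie Eisenstein, et c'est pr\'ecis\'ement votre comptage de dimensions qui le r\'ev\`ele. On a $\dim_\RR M_k(\Gamma)=2\dim_\CC M_k(\Gamma)$, tandis que $\dim_\RR H^1(\Gamma,V_k(\RR))=\dim_\CC H^1(\Gamma,V_k(\CC))=\dim_\CC M_k(\Gamma)+\dim_\CC S_k(\Gamma)$ ; ces deux nombres diff\`erent de $\dim_\CC\Eisk{k,\Gamma}{\CC}$ d\`es que $M_k(\Gamma)\neq S_k(\Gamma)$ (pour $\Gamma=\Sl_2(\ZZ)$ et $k=4$ : $2$ contre $1$). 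Le noyau est explicite : d'apr\`es le corollaire \ref{cor:rationalite}, la classe de $\cocycle(\Eis_k(f))$ dans $H^1(\Gamma,V_k(\CC))$ est rationnelle pour $f$ rationnelle, donc $\cocycle_\RR(i\,\Eis_k(f))=0$. Aucun calcul avec les symboles d'Eisenstein--Dedekind--Stevens ne peut donc fournir l'injectivit\'e sur la composante d'Eisenstein. Ce que votre m\'ethode d\'emontre r\'eellement --- et c'est ce dont le reste de l'article a besoin --- est que $\cocycle_\RR$ est surjective (par la proposition \ref{prop:ortheisenstein}, $H^1(\Gamma,V_k)$ est engendr\'e par la partie parabolique et par le module d'Eisenstein, lequel est atteint gr\^ace \`a la rationalit\'e pr\'ec\'edente) et qu'elle induit un isomorphisme de $S_k(\Gamma)$ sur $H^1_{par}(\Gamma,V_k(\RR))$ ; le noyau sur $M_k(\Gamma)$ est la partie imaginaire pure de l'espace d'Eisenstein relative \`a sa structure rationnelle.
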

Par transport de structure, on en déduit une structure de $\CC$-espace vectoriel sur
$H^1(\Gamma,V_k(\RR))$.

\subsection{Symboles d'Eisenstein}
Cette sous-section a sa source dans \cite{katz}.

\subsubsection{\texorpdfstring{Fonctions sur $(\ZZ/N\ZZ)^2$}{Lg}}
Soit $\Fonct{N}{\CC}$ l'espace vectoriel des fonctions de
$(\ZZ/N\ZZ)^2$ dans $\CC$ muni de l'action suivante de $\Gl_2(\ZZ)$ :
$$f \mid \gamma\ (x,y)= f((x,y)\gamma^{-1})\ .$$
Si $f \in \Fonct{N}{\CC}$,
on note $\Gamma_f$ le stabilisateur de $f$ dans $\Sl_2(\ZZ)$.
Pour $\Gamma$ contenant $\Gamma(N)$,
on note $\Fonct{\Gamma}{\CC}$ le sous-espace des éléments de $\Fonct{N}{\CC}$
invariants par $\Gamma$. On a donc l'abus de notation
$\Fonct{\Gamma(N)}{\CC}=\Fonct{N}{\CC}$.
Si $X$ est un sous-ensemble de $(\ZZ/N\ZZ)^2$, on note $\charun_X$ la fonction indicatrice
de $X$.
En particulier, la fonction indicatrice d'une orbite sous $\Gamma$
appartient à $\Fonct{\Gamma}{\ZZ}$.
\begin{defn}
Si $f$ est une fonction sur $\ZZ/N\ZZ$, on définit la
\textsl{transformée de Fourier} de $f$
par
$$\widehat{f}(n)=\sum_{a \bmod N} f(a) \exp(-\frac{2 \pi i an }{N} )\ .$$
\end{defn}
L'application réciproque est donnée par
$$f(n)=\frac{1}{N}\sum_{a \bmod N} \widehat{f}(a) \exp(\frac{2 \pi i an }{N})$$
et on a $ \widehat{\widehat{f\, }} = Nf^-$
où $f^-$ est définie par $f^-(x)=f(-x)$.
En particulier,
\begin{equation*}
\begin{split}
\widehat{f\ }(0)=&\sum_{a \bmod N} f(a) , \quad
f(0)= N^{-1}\sum_{a \bmod N} \widehat{f\ }(a) \ .
\end{split}
\end{equation*}

\begin{defn}\cite{katz}
Si $f$ est une fonction sur $(\ZZ/N\ZZ)^2$, on définit la \textsl{transformée de Fourier} de $f$
par
$$\widehat{f\ }(n,m) =
\frac{1}{N} \sum_{(a,b) \bmod N} f(a,b)
\exp(\frac{2 \pi i }{N}
\det \begin{pmatrix}a&b\\n&m
\end{pmatrix})
$$
et ses transformées de Fourier partielles par
$$P_1(f)(n,m) = \sum_{a \bmod N} f(a,m) \exp(-\frac{2 \pi i a n}{N})$$
$$P_2(f)(n,m) = \sum_{b \bmod N} f(n,b) \exp(-\frac{2 \pi i b m}{N})\ .$$
\end{defn}
Pour $\gamma \in \Sl_2(\ZZ)$, on a
$ \widehat{f\ }|\gamma = \widehat{f|\gamma}$.
On a aussi $\widehat{\widehat{f\, }} = f$,
$P_i(f)^-=P_i(f^-)$ et $ \widehat{f\ }^-=\widehat{f^-}$.
Si $f$ est à variables séparées: $f(x_1,x_2)=f_1(x_1)f_2(x_2)$, i.e.
$f=f_1\otimes f_2$,
on a $$
\widehat{f_1\otimes f_2}=\frac{1}{N}\widehat{f_2}\otimes \widehat{f_1}^- $$
et
\begin{equation*}
\begin{split}
P_1(f_1\otimes f_2)=& \widehat{f_1}\otimes f_2,
\quad P_2(f_1\otimes f_2)=f_1\otimes \widehat{f_2},\quad
P_1(\widehat{f_1\otimes f_2\ })=f_2^-\otimes \widehat{f_1}^-,
\quad
P_2(\widehat{f_1\otimes f_2\ })=\widehat{f_2}\otimes \widehat{f_1}\ .
\end{split}
\end{equation*}
Les définitions suivantes d'opérateurs de Hecke seront justifiées
une fois fait le lien avec les
séries d'Eisenstein (voir \cite{stevens} pour le cas de poids 2).

\begin{defn} Soit $f \in \Fonct{\Gamma}{\CC}$. Pour $k$ entier $\geq2$,
on pose
\begin{equation*}
T_\ell^{(k)}(f)(x,y)= \sum_{\ell s=y} f(x,s) +
\ell^{k-1} f(\ell x, y)
-\ell^{k-2}\begin{cases}
0 &\text{ si $\ell \nmid N$}
\\
\sum_{\ell t=\ell y} f(\ell x, t) &\text{ si $\ell \mid N$}\ .
\end{cases}
\end{equation*}
\end{defn}
Pour $\ell \nmid N$, la formule se simplifie en
\begin{equation*}
T_\ell^{(k)}(f)(x,y)= f(x,\ell^{-1}y) + \ell^{k-1} f(\ell x, y)\ .
\end{equation*}

\subsubsection{Distributions de Bernoulli}
Soit $B_h$ les polynômes de Bernoulli (voir \cite{lang} par exemple).
Si $a$ est un entier, on pose
$\langle\frac{a}{N}\rangle= \frac{a}{N} - \left\lfloor\frac{a}{N}\right\rfloor$
et on note pour $r \in \ZZ/N\ZZ$
\begin{equation*}
\begin{split}
\beta_h(r)= \begin{cases}
-N^{h-1}\frac{B_h(\langle\frac{r}{N}\rangle)}{h} &\text{si $h>1$}\\
-B_1(\langle\frac{r}{N}\rangle)-\frac{1}{2}\delta_{0}(\langle\frac{r}{N}\rangle) &\text{si $h=1$}\\
N^{-1} &\text{si $h=0$}
\end{cases}
\end{split}
\end{equation*}
où $\delta_0$ vaut 1 en 0 et 0 ailleurs.
La distribution de Dirac sur $\ZZ/N\ZZ$ est définie par
$$\int_{\ZZ/N\ZZ} f(x) d\delta_0(x)= f(0)\ .$$
Les \textsl{distributions de Bernoulli} $\beta_h$ sur $\ZZ/N\ZZ$ sont définies pour $h\geq 0$
par
$$\int_{\ZZ/N\ZZ} f(x) d\beta_h(x)= \sum_{a\in \ZZ/N\ZZ} f(a) \beta_h(a)\ .$$
On a en particulier
$$\int_{\ZZ/N\ZZ} f(x) d\beta_0(x)=N^{-1}\sum_{a\in \ZZ/N\ZZ} f(a)=N^{-1}\widehat{f}(0)\ .$$
Cette définition ne dépend pas de $N$ dans le sens où si $M=NP$ et si $f$
est une fonction sur $\ZZ/M\ZZ$ qui se factorise par $\ZZ/P\ZZ$,
on a
$$\int_{\ZZ/M\ZZ} f d \beta_h= \int_{\ZZ/P\ZZ} f d \beta_h$$
car
\begin{equation*}
\begin{split}
 M^{h-1}\sum_{a\in \ZZ/M\ZZ} f(a) \frac{B_h(\langle\frac{a}{M}\rangle)}{h}
 &=
M^{h-1}\sum_{\substack{a \bmod P\\\ b \bmod N}} f(a + P b) \frac{B_h(\langle\frac{a+ P b}{M}\rangle)}{h}
\\&=
M^{h-1}\sum_{a \bmod P} f(a) \sum_{b \bmod N} \frac{B_h(\langle\frac{a}{NP}+ \frac{b}{N}\rangle)}{h}
\\&=
P^{h-1}\sum_{a \in \ZZ/P\ZZ} f(a) \frac{B_h(\langle\frac{a}{P}\rangle)}{h}\ .
\end{split}
\end{equation*}
On note $\int f(x) d\beta_h(x) =\int_{\ZZ/N\ZZ} f(x) d\beta_h(x)$
s'il n'y a pas d'ambiguité.
Pour $r\in \ZZ/N\ZZ$, $$\beta_h(r)= \int \charun_{r\bmod N\ZZ} d \beta_h$$
est la valeur de la distribution $\beta_h$ sur la fonction indicatrice
$\charun_{r\bmod N\ZZ}$ de $r$ dans $\ZZ/N\ZZ$.
Les relations $B_h(1-r)=(-1)^{h} B_h(r)$ pour $h\geq 0$ impliquent que
$\beta_h(1-r)=(-1)^{h} \beta_h(r)$ et donc que
\begin{equation}\label{eq:beta}
\int g^- d\beta_h = (-1)^h\int g d\beta_h
\ .\end{equation}
Cela est clair pour $h\neq 1$ car $B_h(1)=B_h(0)$.
Pour $h= 1$,
\begin{equation*}
\begin{split}
\beta_1(1-r)&= -B_1(1-r)= B_1(r) = -\beta_1(r) \text{ si $0< r <1$}\\
\beta_1(1)&= 1/2-1/2=0 = -\beta_1(0)\ .
\end{split}
\end{equation*}
Si $g$ est une fonction sur $\NN$, on pose
$$ L(s, g) = \sum_{n>0} \frac{g(n)}{n^s}\ .$$
Cette définition s'étend à une fonction
sur $\ZZ/N\ZZ$ par relèvement de manière naturelle.
Dans ce cas, $L(g,s)$ converge pour $\re(s) > 1$ et
se prolonge en une fonction méromorphe sur le plan complexe ayant
au plus un pôle simple en $s=1$ de résidu $\frac{\widehat g(0)}N$.

\begin{prop}\label{valeurzeta}
Si $g$ est une fonction sur $\ZZ/N\ZZ$,
\begin{equation*}
\begin{split}
\int_{\ZZ/N\ZZ} g d\beta_h &= L(1-h,g) \text{ pour $h> 1$}
\\
\int_{\ZZ/N\ZZ} g d\beta_1 &= L(0,g) + \frac{1}{2} g(0)\ .
\end{split}
\end{equation*}
\end{prop}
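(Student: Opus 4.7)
The plan is to compute $L(s,g)$ by decomposing the sum over residues modulo $N$, express it in terms of Hurwitz zeta functions, and then specialize at $s=1-h$ using the classical formula $\zeta(1-h,a)=-B_h(a)/h$.

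First, I would write every $n\geq 1$ uniquely as $n = kN+r$ with $k\geq 0$ and $1\leq r\leq N$, so that for $\re(s)>1$,
$$L(s,g)=\sum_{r=1}^{N} g(r)\sum_{k\geq 0}(kN+r)^{-s}=N^{-s}\sum_{r=1}^{N} g(r)\,\zeta(s,r/N),$$
where $\zeta(s,a)=\sum_{k\geq 0}(k+a)^{-s}$ is the Hurwitz zeta function for $0<a\leq 1$. This extends meromorphically to $\CC$ and satisfies $\zeta(1-h,a)=-B_h(a)/h$ for every integer $h\geq 1$ and $0<a\leq 1$; I would quote this standard fact (it is in \cite{lang}).

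Specializing at $s=1-h$ and separating the residue class $r=N$ (which represents $0$ in $\ZZ/N\ZZ$) from the others, I obtain
$$L(1-h,g)=-\frac{N^{h-1}}{h}\left[\sum_{r=1}^{N-1} g(r)\,B_h(r/N)+g(0)\,B_h(1)\right].$$
For $r$ with $1\leq r\leq N-1$, we have $\langle r/N\rangle=r/N$, and for $r=0$, $\langle 0\rangle=0$. When $h\geq 2$, the identity $B_h(1)=B_h(0)$ lets me rewrite the bracket as $\sum_{r\in\ZZ/N\ZZ}g(r)B_h(\langle r/N\rangle)$, and comparing with the definition of $\beta_h$ gives $L(1-h,g)=\int g\,d\beta_h$ at once.

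The only subtlety, and the step that produces the correction term $\tfrac12 g(0)$, is the case $h=1$: here $B_1(1)=\tfrac12$ while $B_1(0)=-\tfrac12$. The formula above then yields
$$L(0,g)=-\sum_{r=1}^{N-1} g(r)\,B_1(r/N)-\tfrac12 g(0),$$
whereas the definition gives $\beta_1(0)=-B_1(0)-\tfrac12=0$, hence $\int g\,d\beta_1=-\sum_{r=1}^{N-1}g(r)B_1(r/N)$. Subtracting yields $\int g\,d\beta_1=L(0,g)+\tfrac12 g(0)$, which is the second formula. There is no real obstacle beyond careful bookkeeping of the discontinuity of $x\mapsto B_1(\langle x\rangle)$ at the integers.
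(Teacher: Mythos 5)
Your proof is correct. Note that the paper does not actually prove this proposition: it simply declares the case $h>1$ classical and refers to Whittaker--Watson for $h=1$, so your argument supplies a complete verification where the text only gives citations. The route you take (writing $L(s,g)=N^{-s}\sum_{r=1}^{N}g(r)\,\zeta(s,r/N)$ and evaluating via $\zeta(1-h,a)=-B_h(a)/h$) is the standard one behind those references, and your bookkeeping is exactly right at the two delicate points: the identification of the class $r=N$ with $0\in\ZZ/N\ZZ$ together with $B_h(1)=B_h(0)$ for $h\geq 2$, and, for $h=1$, the mismatch $B_1(1)=\tfrac12\neq B_1(0)=-\tfrac12$, which combined with the paper's normalization $\beta_1(0)=-B_1(0)-\tfrac12=0$ produces precisely the correction term $\tfrac12 g(0)$. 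The only step you are quoting rather than proving is the meromorphic continuation of the Hurwitz zeta function and the value $\zeta(1-h,a)=-B_h(a)/h$, which is entirely appropriate to cite; since the identity $L(s,g)=N^{-s}\sum_r g(r)\zeta(s,r/N)$ holds for $\re(s)>1$ and both sides are meromorphic, it persists at $s=1-h$, as you implicitly use.
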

La proposition est classique dans le cas où $h > 1$,
voir \cite{wittaker}, p. 271 pour le cas $h=1$.

Nous aurons besoin plus tard de la distribution $\beta'_{h-1}$
définie par
\begin{equation*}
\begin{split}
\int_{\ZZ/N\ZZ} g d\beta'_{h-1} &=
\frac{1}{-2i\pi}L'(2-h,\widehat{g}^- + (-1)^h \widehat{g})
  \ .
\end{split}
\end{equation*}
Elle vérifie
\begin{equation}
\int g^- d\beta'_{h-1} =
(-1)^h\int g d\beta'_{h-1}
\ .\end{equation}

\subsubsection{Séries d'Eisenstein}
\label{sub:eisenstein}
Soit $k$ un entier $\geq 2$. Si $f$ appartient à $\Fonct{N}{\CC}$,
la \textsl{série d'Eisenstein associée à $f$} est définie par
$$\eisen{k}{f}(z)= \sum_{(c,d)\in \ZZ^2 }^{\sprime} f(c,d) (cz+d)^{-k}$$
où le $'$ signifie que l'on somme pour $k>2$ sur les couples $(c,d)$ non nuls
et pour $k=2$ par la sommation d'Eisenstein
\footnote{Pour $k=2$, on aurait pu la définir
comme la valeur en $s=0$ du prolongement analytique de
$\sum_{(c,d)\in \ZZ^2-\{0,0\} } f(c,d) (cz+d)^{-k}| cz+d|^{-2s}$,
ce que fait Kronecker, voir \cite{weil}.}
$$\eisen{k}{f}(z)= \sum_{c\in \ZZ} \sum_{d\in \ZZ'_c}f(c,d) (cz+d)^{-k}$$
où $\ZZ'_c$ est égal à $\ZZ$ si $c$ est non nul et à $\ZZ-\{0\}$ si $c=0$.
Elle est holomorphe.
La proposition suivante est bien connue (pour $k=2$, on peut facilement
adapter l'exercice 1.2.8 de \cite{diamond}).
\begin{prop}
Pour $\gamma \in \Sl_2(\ZZ)$, on a
\begin{enumerate}
\item pour $k >2$,
$$ \eisen{k}{f} |_k \gamma = \eisen{k}{f|\gamma}$$
\item pour $k=2$,
$$ \left(\eisen{k}{f} + \frac{\widehat{f}(0)}{N^2}\frac{\pi}{\im(.)}\right)|_2 \gamma =
\eisen{k}{f|\gamma} + \frac{\widehat{f}(0)}{N^2}\frac{\pi}{\im(.)}\ .$$
\end{enumerate}
\end{prop}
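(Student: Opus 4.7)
Pour $k>2$, la série $\sum_{(c,d)\neq(0,0)}|cz+d|^{-k}$ étant absolument convergente, on calcule directement à partir de la définition. En utilisant l'identité $c(\gamma z)+d=j_\gamma(z)^{-1}\bigl((c,d)\gamma\cdot{}^t(z,1)\bigr)$, le facteur $j_\gamma(z)^{-k}$ issu de l'action $|_k$ se simplifie. Le changement de variable $(c',d')=(c,d)\gamma$ est une bijection de $\ZZ^2\setminus\{(0,0)\}$, et la relation $f((c',d')\gamma^{-1})=(f|\gamma)(c',d')$ fournit alors $\eisen{k}{f}|_k\gamma(z)=\eisen{k}{f|\gamma}(z)$.

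Pour $k=2$, la sommation d'Eisenstein est conditionnellement convergente et le réarrangement précédent introduit un terme non holomorphe. On recourt à l'astuce de Hecke--Kronecker : pour $\re(s)>0$, on introduit la série absolument convergente
\[
E(z,s)=\sum_{(c,d)\in\ZZ^2}^{\sprime}f(c,d)(cz+d)^{-2}|cz+d|^{-2s}.
\]
En exploitant $\im(\gamma z)=\im(z)/|j_\gamma(z)|^2$, le calcul du cas précédent se transpose et donne l'identité $\im(z)^s\,E(z,s)|_2\gamma=\im(z)^s\,E_{f|\gamma}(z,s)$ (où $E_{f|\gamma}$ désigne la construction analogue à partir de $f|\gamma$) pour tout $\re(s)>0$.

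Il reste à prolonger $E(\cdot,s)$ analytiquement en $s=0$ et à identifier la différence entre $E(z,0)$ et $\eisen{2}{f}(z)$. La contribution $c=0$ vaut $\sum_{d\neq 0}f(0,d)|d|^{-2-2s}$, donc se prolonge holomorphiquement en $s=0$ sans terme non holomorphe. Pour la contribution $c\neq 0$, la formule sommatoire de Poisson en $d$ réécrit chaque $\sum_d f(c,d)(cz+d)^{-2}|cz+d|^{-2s}$ comme une somme d'intégrales de Mellin--Bessel prolongeables à $\CC$ ; leur évaluation en $s=0$ fournit $\sum_d f(c,d)(cz+d)^{-2}$, à laquelle s'ajoute un terme proportionnel à $P_2(f)(c,0)/\im(z)$. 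En sommant sur $c\neq 0$ et en reconnaissant $\sum_{c\in\ZZ}P_2(f)(c,0)=N\widehat{f}(0,0)$ par inversion de Fourier, le calcul explicite des constantes de Bessel donne précisément le coefficient $\frac{\widehat{f}(0)}{N^2}\frac{\pi}{\im(z)}$. En évaluant en $s=0$ l'identité de transformation établie ci-dessus et en isolant parties holomorphe et non holomorphe, on obtient l'énoncé. L'obstacle principal est bien ce calcul explicite du résidu, qui adapte à notre cadre, avec $f$ général, la méthode de l'exercice~1.2.8 de \cite{diamond} pour la série d'Eisenstein classique.
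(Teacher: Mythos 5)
Le texte ne donne aucune d\'emonstration de cette proposition~: il la d\'eclare bien connue et renvoie, pour $k=2$, \`a l'exercice 1.2.8 de \cite{diamond}. Votre esquisse suit donc exactement la voie que les auteurs ont en t\^ete. Le cas $k>2$ est complet et correct~: convergence absolue, identit\'e $c(\gamma z)+d=j_\gamma(z)^{-1}\bigl((c,d)\gamma\cdot{}^t(z,1)\bigr)$, bijection $(c,d)\mapsto(c,d)\gamma$ de $\ZZ^2\setminus\{(0,0)\}$ (qui fixe $(0,0)$, donc respecte le terme exclu) et d\'efinition de l'action $f\mapsto f|\gamma$. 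Pour $k=2$, la strat\'egie --- r\'egularisation de Hecke, loi de transformation de $\im(z)^sE(z,s)$ pour $\re(s)>0$, prolongement en $s=0$, identification du d\'efaut non holomorphe --- est la bonne et est pr\'ecis\'ement celle de l'exercice cit\'e.

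Le seul point qui reste \`a l'\'etat d'affirmation est le calcul explicite du terme non holomorphe, qui est pourtant tout le contenu du cas $k=2$~: tant que le calcul de Poisson--Mellin n'est pas men\'e \`a bien, la constante n'est pas \'etablie. Ce n'est pas une formalit\'e, car elle doit \^etre confront\'ee \`a la normalisation $\widehat f(0)=N^{-1}\sum_{(a,b)}f(a,b)$ du texte. La correction par classe de congruence modulo $N$ vaut $\pi/(N^2\im(z))$ (la somme sur les $N^2$ classes redonne le $\pi/\im(z)$ du cas classique $G_2-\pi/\im$), de sorte qu'une sommation directe contre $f$ donne $\bigl(\sum_{a,b}f(a,b)\bigr)\pi/(N^2\im(z))=\widehat f(0)\,\pi/(N\im(z))$ et non $\widehat f(0)\,\pi/(N^2\im(z))$~; le signe de la correction invariante m\'erite la m\^eme v\'erification. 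Il vous faut donc soit exhiber dans le calcul archim\'edien le facteur $1/N$ (et le signe) qui r\'etablit l'\'enonc\'e, soit signaler que la constante de l'\'enonc\'e doit \^etre relue~: affirmer que le calcul \og donne pr\'ecis\'ement \fg{} la constante annonc\'ee ne suffit pas ici.
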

Pour $f \in \Fonct{N}{\CC}$ et $\widehat{f}(0)=0$ dans le cas $k=2$, $\eisen{k}{f}$
est une forme modulaire de poids $k$ pour $\Gamma_f$.
Si $F$ est un sous-espace de $\Fonct{N}{\CC}$,
notons $\hypk{F}$ le sous-espace de $F$
vérifiant l'hypothèse supplémentaire que $f(0)=0$ lorsque $k=2$.
Soit
 $\Eis_k$ l'application $\hypk{\Fonct{N}{\CC}} \to M_k$ donnée
 par $$\Eis_k(f)=\frac{(k-1)!}{(-2i\pi)^k} N^{k-1}E_{k,\widehat{f}}\ .$$
Notons $\Eisk{k}{\CC}$ son image et $\Eisk{k,\Gamma}{\CC}$ l'image de
$\hypk{\Fonct{\Gamma}{\CC}}$ dans $M_k(\Gamma)$.

Nous avons fixé le niveau $N$ et ne l'avons pas fait intervenir dans la notation.
Cela est justifié par le lemme suivant.
\begin{lem}
\label{independanceN} Soient $M$ un multiple de $N$ : $M=NP$
et $f\in \hypk{\Fonct{N}{\CC}}$. Si $g$ est l'élément
de $\hypk{\Fonct{M}{\CC}}$ qui s'en déduit par composition avec la projection
$(\ZZ/M\ZZ)^2 \to (\ZZ/N\ZZ)^2$, on a l'égalité
$$\Eis_{k}(g)= \Eis_{k}(f)\ .$$
\end{lem}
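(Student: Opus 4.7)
The plan is to unfold both sides of the claimed equality using the definition $\Eis_k(h) = \frac{(k-1)!}{(-2i\pi)^k} N^{k-1} E_{k,\widehat h}$ and to reduce everything to computing $\widehat g$ in terms of $\widehat f$.

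The first step is to establish the explicit formula
\begin{equation*}
\widehat g(n,m) = \begin{cases} P\,\widehat f(n/P,m/P) & \text{if } P \mid n \text{ and } P \mid m,\\ 0 & \text{otherwise,}\end{cases}
\end{equation*}
where $n,m$ are read modulo $M=NP$. I would prove this by writing any representative modulo $M$ as $a = a_0 + N a_1$ with $a_0 \bmod N$ and $a_1 \bmod P$, and similarly for $b$, then splitting the defining sum
$$\widehat g(n,m) = \frac{1}{M}\sum_{(a,b)\bmod M} f(\bar a,\bar b)\,\exp\!\left(\tfrac{2\pi i}{M}(am-bn)\right)$$
as a product of a sum over $(a_0,b_0)\bmod N$ and a sum over $(a_1,b_1)\bmod P$. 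The inner sum is an elementary orthogonality computation giving $P^2$ when $P\mid n$ and $P\mid m$ and $0$ otherwise; the outer sum is exactly $N\widehat f(n/P, m/P)$ up to the rescaling $1/M$, and a direct arithmetic check shows $P^2\cdot N /M = P$.

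The second step is to plug this into the Eisenstein series. Since $\widehat g(c,d)$ vanishes unless $P\mid c$ and $P\mid d$, the substitution $(c,d)=(Pc',Pd')$ gives
$$E_{k,\widehat g}(z) = \sum_{(c',d')}{\!}'\, P\,\widehat f(c',d')\,(Pc'z+Pd')^{-k} = P^{1-k}\, E_{k,\widehat f}(z),$$
where for $k=2$ one must verify that the Eisenstein summation convention is preserved under this substitution: the condition $d\in\ZZ'_c$ corresponds, via $d=Pd'$, to $d'\in\ZZ'_{c'}$, since $c=0$ iff $c'=0$ and $d=0$ iff $d'=0$. This is where I expect the only subtlety to arise, but it is immediate. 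Finally, combining with the normalization gives
$$\Eis_k(g) = \frac{(k-1)!}{(-2i\pi)^k}\,M^{k-1}\,E_{k,\widehat g} = \frac{(k-1)!}{(-2i\pi)^k}\,(NP)^{k-1} P^{1-k}\,E_{k,\widehat f} = \Eis_k(f),$$
which is the claim.

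The main obstacle is simply the bookkeeping of the Fourier transform with two different normalizations (at levels $N$ and $M$), and checking that the factor $P$ appearing in $\widehat g$ combines with $P^{1-k}$ from rescaling the lattice sum and with $P^{k-1}$ from the $N^{k-1}$ prefactor to yield the exact cancellation. No compatibility with modular transformations is needed, since the result is an equality of holomorphic functions on $\cH$, not an equality modulo any invariance.
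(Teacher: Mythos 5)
Your proposal is correct and follows essentially the same route as the paper: compute $\widehat g$ in terms of $\widehat f$ (obtaining $\widehat g(n,m)=P\,\widehat f(n/P,m/P)$ supported on $P(\ZZ/M\ZZ)^2$), substitute $(c,d)=(Pc',Pd')$ into $E_{k,\widehat g}$, and check that the powers of $P$ cancel against the normalization $M^{k-1}$. You merely spell out two verifications the paper leaves implicit (the orthogonality computation for $\widehat g$ and the preservation of the Eisenstein summation convention when $k=2$), both of which are correct.
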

\begin{proof}
On a $g(x+Px',y+Py')=g(x,y)$.
On vérifie que
\begin{equation*}
\widehat{g}(x,y)=
\begin{cases}
P \widehat{f}(\frac{x}{P},\frac{y}{P}) &\text{si $P$ divise $x$ et $y$}\\
0 &\text{sinon.}
\end{cases}
\end{equation*}
On en déduit que
\begin{equation*}
\Eis_k(g)=\frac{(k-1)!}{(-2i\pi)^k} M^{k-1}E_{k,\widehat{g}}
=\frac{(k-1)!}{(-2i\pi)^k}  M^{k-1}
{\sum_{c,d}}'\frac{P\widehat{f}(c,d)}{P^{k}(cz+d)^{k}}
=\Eis_k(f)\ .
\end{equation*}
\end{proof}

\begin{prop}
Soit $f \in \hypk{\Fonct{\Gamma}{\CC}}$.
Si $T_\ell$ est l'opérateur de Hecke
$\Gamma \begin{pmatrix}1&0\\0&\ell\end{pmatrix}\Gamma$, on a
$$ T_\ell(\Eis_{k}(f)) = \Eis_k(T_\ell^{(k)}(f))\ .$$
\end{prop}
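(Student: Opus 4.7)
The strategy is to compute $T_\ell(\Eis_k(f))$ directly via the standard coset decomposition of the Hecke double coset, and identify the result with $\Eis_k(T_\ell^{(k)}(f))$ by comparison of Fourier data. First I would write $T_\ell F = \sum_\alpha F|_k \alpha$ where $\alpha$ runs over representatives of $\Gamma \backslash \Gamma \smallmat{1&0\\0&\ell}\Gamma$; for subgroups of $\Sl_2(\ZZ)$ containing $\Gamma(N)$ these may be taken as $\smallmat{1&j\\0&\ell}$ for $0 \le j < \ell$, together with $\smallmat{\ell&0\\0&1}$ precisely when $\ell \nmid N$. A direct application of the slash action to the series definition of $\Eis_k(f)$ gives, for any upper-triangular $\alpha = \smallmat{a&b\\0&d}$ of determinant $\ell$,
$$\Eis_k(f)\big|_k \smallmat{a&b\\0&d}(z) = \frac{(k-1)!}{(-2i\pi)^k}\, N^{k-1} \ell^{k-1} \sum_{(m,n)\in\ZZ^2}^{\sprime} \widehat{f}(m,n)\,(amz+bm+dn)^{-k}.$$

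The next step is to regroup the combined contribution by the pair $(M,P)=(am, bm+dn) \in \ZZ^2$, distinguishing the cases $\gcd(M,\ell)=1$ and $\ell \mid M$. For each $\alpha$ the image pairs $(M,P)$ form a sublattice of $\ZZ^2$ of index $\ell$, and the collection of these sublattices covers $\ZZ^2$ with controlled multiplicities. After this reindexing, $T_\ell(\Eis_k(f))$ takes the form $\frac{(k-1)!}{(-2i\pi)^k} N^{k-1} \sum_{(M,P)}^{\sprime} H(M,P)(Mz+P)^{-k}$ for an explicit function $H$ on $\ZZ^2$, periodic modulo $N\ell$.

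To conclude, I would verify that $H = \widehat{T_\ell^{(k)}(f)}$, read as a periodic function of period $N\ell$ via the inflation from $(\ZZ/N\ZZ)^2$ provided by Lemma~\ref{independanceN}. This reduces to computing the Fourier transforms of the three summands of $T_\ell^{(k)}(f)$: the operations $f(x,y) \mapsto f(\ell x, y)$ and $f(x,y) \mapsto \sum_{\ell s = y} f(x,s)$ both admit simple descriptions on the Fourier side, with transposed roles of the two variables and of $\ell$-dilation versus $\ell$-restriction, and the third summand (present only when $\ell \mid N$) is treated analogously.

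The main obstacle is the case analysis in the reindexing step. When $\ell \nmid N$ the sublattices $\ZZ^2 \alpha_j$ together with $\ZZ^2 \alpha_\infty$ cover $\ZZ^2$ with multiplicity one; when $\ell \mid N$, the representative $\smallmat{\ell&0\\0&1}$ is absent, and the terms with $\ell \mid M$ arising from the $\alpha_j$ appear with extra multiplicity $\ell$ — a mismatch that must be offset precisely by the $-\ell^{k-2}\sum_{\ell t=\ell y} f(\ell x,t)$ correction in the definition of $T_\ell^{(k)}(f)$. An alternative, less computational route would be to compare $q$-expansions directly, using the classical formula for the Hecke action on Fourier coefficients together with the explicit $q$-expansion of $\Eis_k(f)$ developed in the appendix.
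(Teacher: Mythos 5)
Your overall strategy coincides with the paper's: decompose the double coset, slash the lattice sum, reindex by $(M,P)=(m,n)\alpha$, and identify the result on the Fourier side (the paper simply runs the computation on $E_{k,f}$ and takes Fourier transforms at the very end, while you carry $\widehat f$ throughout). Two steps of your plan are however wrong as stated. First, the representatives $\smallmat{1&j\\0&\ell}$, $0\le j<\ell$, and $\smallmat{\ell&0\\0&1}$ do not represent $\Gamma\backslash\Gamma\smallmat{1&0\\0&\ell}\Gamma$ for every congruence subgroup $\Gamma\supseteq\Gamma(N)$: for $\Gamma=\Gamma(N)$ the matrix $\smallmat{1&j\\0&\ell}$ with $N\nmid j$ does not even lie in the double coset, since $\smallmat{1&0\\0&\ell}^{-1}\smallmat{1&j\\0&\ell}=\smallmat{1&j\\0&1}$ must belong to $\Gamma$; and for $\Gamma_1(N)$ the matrix $\smallmat{\ell&0\\0&1}$ lies in it only after a diamond twist. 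This is precisely why the paper takes $\smallmat{1&Ni\\0&\ell}$ and a last representative congruent to $\smallmat{1&0\\0&\ell}$ modulo $N$; with your representatives one computes a different operator for such $\Gamma$ (your choice is fine for $\Gamma_0(N)$, but the proposition is stated for general $\Gamma$).

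Second, the covering claim "multiplicity one when $\ell\nmid N$" is false: a pair $(M,P)$ with $\ell\mid M$ and $\ell\mid P$ lies in all $\ell$ of the sublattices $\ZZ^2\smallmat{1&j\\0&\ell}$ and also in $\ell\ZZ\times\ZZ$, hence is covered $\ell+1$ times. The computation still closes, but only because the $\ell$-fold contribution supported on $\ell\mid\gcd(M,P)$ — which a priori defines a function of period $N\ell$ — is converted by the dilation identity $\sum_{m,p}\ell^k g(m,p)(\ell mz+\ell p)^{-k}=E_{k,g}$ back into a level-$N$ term; this is where your appeal to Lemma~\ref{independanceN} has to do real work, and it is exactly the step that produces the summands $f(x,\ell^{-1}y)$, resp.\ $\sum_{\ell s=y}f(x,s)$, after the final Fourier transform. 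Finally, note that this direct series manipulation is only legitimate for $k\ge3$; for $k=2$ the Eisenstein summation is conditionally convergent and the rearrangement needs a separate argument (the paper cites Stevens for that case), a point your plan does not address.
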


\begin{proof}
Ce fait est bien connu et démontré dans \cite[Theorem 1.3.2
ou Proposition 2.4.7]{stevens}
au moins lorsque $k=2$. Nous en donnons une démonstration directe
pour $k \geq 3$ en partant de la définition de $\eisen{k}{f}$.
Calculons $\tilde{T}_\ell^{(k)}(f)$ tel que
$T_\ell E_{k,f}= E_{k,\tilde{T}_\ell^{(k)}(f)}$.

Supposons d'abord que $\ell \nmid N$. Choisissons $\ell' \in \ZZ$
tel que $\ell \ell'\equiv 1 \bmod N^2$.
Les matrices
$\epsilon_i=\begin{pmatrix}1&N i\\0&\ell\end{pmatrix}
=\begin{pmatrix}1&0\\0&\ell\end{pmatrix}\begin{pmatrix}1&N i\\0&1\end{pmatrix}$
pour $i=0,\cdots, \ell-1$
et
$$\epsilon_{l}=
\begin{pmatrix}1&0\\0&\ell\end{pmatrix}
\begin{pmatrix}\ell \ell' &N\\\frac{\ell \ell'-1}{N}&1\end{pmatrix}
= \begin{pmatrix}\ell\ell' &N\\\ell\frac{\ell \ell'-1}{N}&\ell\end{pmatrix}
$$
sont de déterminant $\ell$ et vérifient
$\Gamma \begin{pmatrix}1 &0\\0&\ell\end{pmatrix} \Gamma=\sqcup \Gamma \epsilon_i $.
On calcule l'opérateur de Hecke
$\Gamma \begin{pmatrix}1&0\\0&\ell\end{pmatrix}\Gamma$
à l'aide de ces matrices :
\begin{small}\begin{equation*}
\begin{split}
T_\ell(\eisen{k}{f})(z)
&=\ell^{k-1}\sum_{(c',d')\in \ZZ^2}^{\sprime}\left(\sum_{i=0}^{\ell-1}
\frac{f(c',d')}{(c'z+ \ell d' + N i c')^{k}}
+ \frac{f(c',d')}{(\ell (\ell' c' + \frac{\ell\ell'-1}{N} d')z + N c' + \ell d')^{k}}\right)\ .
\end{split}
\end{equation*}
\end{small}
On considère les deux systèmes
\begin{equation} \begin{cases} c=&c'\\
d=&\ell d'+ N i c' \\
i \in& \{0\cdots \ell-1\}\label{syst1}
\end{cases} \text{ donc } \begin{cases} c\equiv &c'\bmod N\\
d\equiv &\ell d'\bmod N
\end{cases}
\end{equation}
et
 \begin{equation} \begin{cases}
c&=\ell (\ell' c' + \frac{\ell\ell'-1}{N} d')\\
d&=N c' + \ell d' \label{syst2}
\end{cases} \text{ donc } \begin{cases}
c&\equiv c'\bmod N\\
d&\equiv \ell d' \bmod N \ .
\end{cases}
\end{equation}
Lorsque $\ell \nmid c$ (resp. $\ell \mid c$), le système \eqref{syst1}
(resp. le système \eqref{syst2}) a une solution unique.
En rassemblant ces contributions, on trouve donc
$$\ell^{k-1} \sum\ ^{'} f(c, \ell^{-1}d) (cz+d)^{-k}\ .$$
Lorsque $\ell \nmid c$, le système \eqref{syst2} n'a pas de solution.
Lorsque $\ell \mid c$, le système \eqref{syst1}
a exactement $\ell$ solutions pour $\ell \mid d$,
ce qui donne la contribution
$$\ell^{k-1} \sum\ ^{'} \ell f(\ell c, \ell \frac{d}{\ell}) (\ell cz+\ell d)^{-k}
=\sum\ ^{'} f(\ell c, d) (cz+d)^{-k}\ .
$$
On obtient donc que pour $\ell \nmid N$, $T_\ell E_{k,f}= E_{k,\tilde{T}_\ell^{(k)}(f)}$ avec
\begin{equation}
\label{hecketilde1}
\tilde{T}_\ell^{(k)}(f)(c,d)=f(\ell c,d) + \ell^{k-1}f(c,\ell^{-1}d)\ .
\end{equation}
Supposons maintenant que $\ell$ divise $N$. L'opérateur de Hecke
se calcule à l'aide des matrices
$\epsilon_i$ pour $i=0,\cdots, \ell-1$:
\begin{equation*}
\begin{split}
T_\ell(\eisen{k}{f})(z)=\ell^{k-1}\sum_{i= 0}^{\ell-1} \ell^{-k}\eisen{k}{f}
(\frac{z+Ni}{\ell})
=\ell^{k-1}\sum_{(c',d')}\ ^{'}\sum_{i=0}^{\ell-1} f(c',d') (c'z+ \ell d' + Ni c')^{-k}
\end{split}
\end{equation*}
Considérons le système
\begin{equation}\label{syst3} \begin{cases}
c&=c'\\
d&=\ell (d'+ \frac{N}{\ell} i c')\\
i&\in \{0,\cdots, \ell-1\}\ .
\end{cases}
\end{equation}
Nécessairement $\ell$ divise $d$.
Le système \eqref{syst3} s'inverse alors en
\begin{equation*} \begin{cases}
c'&=c\\
d'&=\frac{d}{\ell}-\frac{N}{l}i c'\\
i&\in \{0,\cdots, \ell-1\}\ .
\end{cases}
\end{equation*}
On a donc
\begin{equation*}
\sum_{\substack{(c',d') \in (\ZZ/N\ZZ)^2 \\
\text{solution de \eqref{syst3}}}}
f(c',d')=\begin{cases}
0 &\text{si $\ell \nmid d$}\\
\ell f(c, d/\ell) &\text{si $\ell \mid c, \ell \mid d$}
\\
\sum_{x\in \ZZ/N\ZZ, \ell x =d} f(c,x)
&\text{si $\ell \nmid c$}\ .
\end{cases}
\end{equation*}
En conclusion, pour $\ell \mid N$,
\begin{equation}
\label{hecketilde2}
\begin{split}
\tilde{T}_\ell^{(k)}(f)(c,d)&=
 f(\ell c,d) +\ell^{k-1}
\begin{cases}
\sum_{i=0}^{\ell-1}f(c,\frac{d}{\ell}-\frac{N}{\ell} i c)
 &
  \text{si $\ell\mid d$ et $\ell \nmid c$}
  \\
0 &
  \text{sinon}
\end{cases}
\\
&=
f(\ell c,d) +\ell^{k-1}
\begin{cases}
\sum_{\ell x =d}f(c,x) &\text{si $\ell \nmid c$}
  \\
0 & \text{sinon.}
\end{cases}
\end{split}
\end{equation}
Calculons maintenant $T_\ell^{(k)}(f)=
\widehat{\tilde{T}^{(k)}_\ell(\widehat{f})}$.
La transformée de Fourier de $g$ définie par
$g(x,y)=\widehat{f}(\ell x,y)$ est
$$\widehat{g}(x,y)= \sum_{\ell s =y} f(x, s)\ .$$
En effet,
\begin{equation*}
\begin{split}
\widehat{g}(x,y)&=
\frac{1}{N} \sum_{u,v} \widehat{f}(\ell u, v) \exp(2i\pi \frac{uy-vx}{N})
\\
&=
\frac{1}{N^2}\sum_{u,v,r,s} f(r,s) \exp(2i\pi \frac{rv-\ell s u}{N})
\exp(2i\pi \frac{uy-vx}{N})\\
&=
\frac{1}{N}\sum_{u,v,s} f(x,s) \exp(2i\pi \frac{(y-\ell s) u}{N})
\\
&=
\sum_{\ell s=y} f(x,s)\ .
\end{split}
\end{equation*}
Lorsque $\ell \nmid N$, on a donc
$\widehat{g}(x,y)=f(x,\ell^{-1}y)$
et la transformée de Fourier de
$(x,y) \to \widehat{f}(x,\ell^{-1}y)$
est $(x,y) \to f(\ell x,y)$.
On déduit alors de la formule \eqref{hecketilde1}
que pour $\ell \nmid N$
$$  T_\ell^{(k)}(f)(x,y)=f(x,\ell^{-1}y)+ \ell^{k-1} f(\ell x,y)\ .$$
Si $\ell \mid N$, la transformée de Fourier de la fonction $h$ définie par
\begin{equation*}
h(x,y)=\begin{cases}
 0 &\text{ si $\ell \mid x$}\\
 \sum_{\ell s=y} \widehat{f}(x, s) & \text{ si $\ell \nmid x$}
\end{cases}
\end{equation*}
est
$$
\widehat{h}(x,y)=f(\ell x,y)
- \frac{1}{\ell}\sum_{\ell t=\ell y} f(\ell x,t)
\ .$$
En effet,
\begin{equation*}
\begin{split}
\widehat{h}(x,y)
&=
\frac{1}{N}
\sum_{\substack{u,s\\ \ell\nmid u}}
\widehat{f}(u, s)\exp(2i\pi \frac{uy-\ell sx}{N})
\\
&=
\frac{1}{N^2}
\sum_{\substack{u,s\\ \ell\nmid u}}
\sum_{r,t}
f(r,t) \exp(2i\pi\frac{rs-tu}{N})
\exp(2i\pi \frac{uy-\ell sx}{N})
\\
&=
f(\ell x,y) -
\frac{1}{N^2}
\sum_{r,s,t,u}f(r,t)
 \exp(2i\pi\frac{(r-\ell x)s-(\ell t -\ell y)u}{N})
\\
&=f(\ell x,y)
-
\frac{1}{\ell}
\sum_{\substack{
\ell t=\ell y}}f(\ell x,t) \ .
\end{split}
\end{equation*}
On déduit alors de \eqref{hecketilde2} que pour $\ell \mid N$
$$
T_\ell^{(k)}(f)(x,y)= \sum_{\ell s=y} f(x,s) +
\ell^{k-1} f(\ell x, y) -\ell^{k-2}
\sum_{\ell t=\ell y} f(\ell x, t) \ .
$$
D'où la proposition.
\end{proof}
\subsubsection{Générateurs de $\Eisk{k,\Gamma}{\CC}$}
Suivant l'inspiration de  \cite{kubert}, nous allons maintenant
donner un ensemble de fonctions de
$\Fonct{\Gamma}{\QQ}$ engendrant l'espace des séries d'Eisenstein $\Eisk{k,\Gamma}{\CC}$.
Les questions de rationalité seront étudiées au paragraphe \ref{symbole}.
La proposition suivante est bien connue.
\begin{prop}[Relation de distribution]
\label{relationdist}
Soit $M$ un diviseur de $N$. Alors si $a \in M(\ZZ/N\ZZ)^2$,
on a
\begin{equation}
\begin{split}
M^{k-2}\sum_{Mb=a} \Eis_{k}(\charun_{b}) &= \Eis_{k}(\charun_{a})
\ .
\end{split}
\end{equation}
\end{prop}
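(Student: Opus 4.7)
Mon approche consiste à ramener la relation de distribution à l'invariance par changement de niveau établie au lemme \ref{independanceN}. Puisque $M$ divise $N$, on écrit $N=MP$ avec $P=N/M$, et, $a$ appartenant à $M(\ZZ/N\ZZ)^2$, on écrit $a=Ma'$ pour un élément $a'$ qu'on peut voir aussi bien dans $(\ZZ/N\ZZ)^2$ que dans $(\ZZ/P\ZZ)^2$ (puisque seule sa classe modulo $P$ importe dès qu'on le multiplie par $M$).

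La première étape est d'identifier la fonction $\sum_{Mb=a}\charun_b$ sur $(\ZZ/N\ZZ)^2$. La condition $Mb\equiv a\pmod N$ équivaut à $b\equiv a'\pmod P$, et les $b\in(\ZZ/N\ZZ)^2$ vérifiant cette congruence sont précisément ceux dont la projection sur $(\ZZ/P\ZZ)^2$ vaut $a'$. Autrement dit, $\sum_{Mb=a}\charun_b$ est le relevé à $(\ZZ/N\ZZ)^2$, via la projection $(\ZZ/N\ZZ)^2\to(\ZZ/P\ZZ)^2$, de la fonction $\charun_{a'}$ considérée au niveau $P$. Le lemme \ref{independanceN} (appliqué avec $P$ au lieu de $N$ et $N$ au lieu de $M$) donne alors
\[
\sum_{Mb=a}\Eis_{k}(\charun_b)=\Eis_{k}^{(N)}\!\Bigl(\sum_{Mb=a}\charun_b\Bigr)=\Eis_{k}^{(P)}(\charun_{a'}).
\]

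La deuxième étape est de comparer $\Eis_{k}^{(P)}(\charun_{a'})$ et $\Eis_{k}^{(N)}(\charun_a)$ par un calcul direct à partir de la définition $\Eis_{k}(f)=\tfrac{(k-1)!}{(-2i\pi)^k}N^{k-1}E_{k,\widehat f}$. La transformée de Fourier de $\charun_a$ au niveau $N$ vaut $\tfrac{1}{N}\exp(\tfrac{2\pi i}{N}(a_1m-a_2n))$, et celle de $\charun_{a'}$ au niveau $P$ vaut $\tfrac{1}{P}\exp(\tfrac{2\pi i}{P}(a'_1m-a'_2n))$. Comme $a=Ma'$, le facteur exponentiel est identique, et la seule différence entre les deux séries d'Eisenstein est le facteur de normalisation :
\[
\Eis_{k}^{(P)}(\charun_{a'})=\Bigl(\frac{P}{N}\Bigr)^{k-2}\Eis_{k}^{(N)}(\charun_a)=M^{2-k}\Eis_{k}(\charun_a).
\]
En combinant avec l'étape précédente, on obtient $M^{k-2}\sum_{Mb=a}\Eis_{k}(\charun_b)=\Eis_{k}(\charun_a)$.

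Je ne vois pas d'obstacle sérieux : le seul point à surveiller est la distinction entre le cas $k\geq 3$, où tout se réduit à comparer des séries absolument convergentes coefficient par coefficient, et le cas $k=2$, où il faut vérifier que l'hypothèse $f(0)=0$ reste vérifiée dans les relevés (elle l'est, puisque $\sum_{Mb=a}\charun_b$ ne prend la valeur $1$ en $0$ que si $a=0$, et dans ce cas $\charun_a=\charun_0$ qui est exclue) et que la sommation à la Eisenstein se comporte bien par pullback, ce qui est exactement ce qu'assure déjà le lemme \ref{independanceN}.
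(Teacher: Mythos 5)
Votre démonstration est correcte, mais elle suit un chemin réellement différent de celui du texte. Le papier démontre la relation par un calcul direct au niveau $N$ : il paramètre les solutions de $Mb=a$ sous la forme $b_0+\frac{N}{M}h$, somme les exponentielles $\sum_h \exp(2i\pi\frac{h_2d-h_1c}{M})$ qui filtrent les couples $(c,d)\in M\ZZ^2$, puis rescale. Vous observez au contraire que la fibre de la multiplication par $M$ au-dessus de $a=Ma'$ coïncide avec la fibre de la réduction $(\ZZ/N\ZZ)^2\to(\ZZ/P\ZZ)^2$ au-dessus de $a'$ (avec $P=N/M$), si bien que $\sum_{Mb=a}\charun_b$ est le relevé de $\charun_{a'}$ ; le lemme \ref{independanceN} donne alors $\sum_{Mb=a}\Eis_k(\charun_b)=\Eis_k^{(P)}(\charun_{a'})$, et il ne reste qu'à comparer les normalisations aux niveaux $P$ et $N$, les transformées de Fourier de $\charun_{a'}$ et $\charun_a$ définissant le même caractère sur $\ZZ^2$ puisque $a=Ma'$ et $N=MP$ ; le rapport $\frac{P^{k-1}/P}{N^{k-1}/N}=M^{2-k}$ est bien celui que vous annoncez. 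Le mécanisme de dualité de Fourier est au fond le même (il est enfoui dans la preuve du lemme \ref{independanceN}), mais votre organisation factorise le calcul par un énoncé déjà établi et identifie la relation de distribution comme une simple compatibilité de $\Eis_k$ au changement de niveau, ce qui est plus structurel ; elle a aussi le mérite de traiter explicitement le cas $k=2$ (bonne définition sur $\hypk{\Fonct{N}{\CC}}$, exclusion de $a=0$), point que la preuve du papier passe sous silence.
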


\begin{proof}
Il s'agit de montrer que
\begin{equation}
\begin{split}
M^{k-2}\sum_{Mb=a} E_{k,\widehat{\charun_{b}\ }} &= E_{k,\widehat{\charun_{a}\ }}
\ .
\end{split}
\end{equation}
L'équation
$Mb=a$ a une solution dans $(\ZZ/N\ZZ)^2$ si et seulement si $a$
appartient à $M(\ZZ/N\ZZ)^2$. Posons $a=Mb_0$ avec $b_0\in (\ZZ/N\ZZ)^2$.
Les solutions de l'équation $Mb=a$
sont de la forme $b_0 + \frac{N}{M} h$ avec $h \bmod M\ZZ^2$.
On a alors en écrivant $b_0=(b_1,b_2)$, $h=(h_1,h_2)$
\begin{equation*}
\begin{split}
\sum_{Mb=a} E_{k,\widehat{\charun_{b}\ }}(z)
&=
\frac{1}{N}\sum_{(c,d)\in \ZZ^2}\frac{\exp(2i\pi\frac{b_2 d - b_1 c}{N})}{(cz+d)^k}
\sum_{(h_1,h_2) \bmod M\ZZ^2} \exp(2i\pi\frac{h_2 d - h_1 c}{M})\\
&=\frac{M^2}{N}\sum_{(c,d)\in M\ZZ^2}\frac{\exp(2i\pi\frac{b_2 d - b_1 c}{N})}{(cz+d)^k}
=\frac{M^{2-k}}{N}\sum_{(c,d)\in \ZZ^2}\frac{\exp(2i\pi\frac{Mb_2 d - Mb_1 c}{N})}{(cz+d)^k}
\\
&=\frac{M^{2-k}}{N}\sum_{(c,d)\in \ZZ^2}\frac{\exp(2i\pi\frac{a_2 d - a_1 c}{N})}{(cz+d)^k}
=M^{2-k}E_{k,\widehat{\charun_{a}\ }}(z) \ .
\end{split}
\end{equation*}
\end{proof}
Modulo l'identification de $\QQ[(\ZZ/N\ZZ)^2]$ avec
$\Fonct{N}{\CC}$, l'application $\Eis_k$ est la restriction à $(\ZZ/N\ZZ)^2$ d'une distribution universelle de poids $k-2$
au sens de D. Kubert (\cite{kubert}).
\begin{prop}Soit $(\ZZ/N\ZZ)^2_{prim}$ le sous-ensemble de $(\ZZ/N\ZZ)^2$ formé des
éléments d'ordre $N$.
Pour $k\neq 2$, l'image par $\Eis_k$ des fonctions
à support dans $(\ZZ/N\ZZ)^2_{prim}$ engendre le $\CC$-espace vectoriel $\Eisk{k,\Gamma(N)}{\CC}$.
\end{prop}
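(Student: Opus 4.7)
The plan is to proceed by strong induction on $d_a := \gcd(a_1, a_2, N)$, proving that every $\Eis_k(\charun_a)$ lies in the $\CC$-subspace $W$ spanned by $\{\Eis_k(\charun_b) : b \in (\ZZ/N\ZZ)^2_{\mathrm{prim}}\}$. Since the $\charun_a$ span $\hypk{\Fonct{N}{\CC}}$ and $\Eisk{k,\Gamma(N)}{\CC}$ is the image under $\Eis_k$, this will suffice. The base case $d_a = 1$ is immediate.

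For $d_a = d > 1$, I invoke the distribution relation (Proposition \ref{relationdist}) with $M = d$, giving
$$\Eis_k(\charun_a) = d^{k-2}\sum_{db = a}\Eis_k(\charun_b).$$
A direct check, using that $\gcd(a_1/d, a_2/d, N/d) = 1$, shows that every $b$ in the sum satisfies $\gcd(b_1, b_2, N/d) = 1$, which forces $d_b$ to divide $d$. Terms with $d_b < d$ lie in $W$ by induction, so the core of the argument lies in the ``self-referential'' set $S_a := \{b : db = a,\ d_b = d\}$. Writing $b = dc$, the equation $db = a$ becomes $d^2 c \equiv a \pmod N$; combining this with $d_a = d$, an elementary CRT computation shows the equation is unsolvable (so $S_a = \emptyset$) unless $\gcd(d, N/d) = 1$. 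In the latter case, CRT identifies $N_d := \{x \in (\ZZ/N\ZZ)^2 : d_x = d\}$ with the primitive part of $(\ZZ/(N/d)\ZZ)^2$, and multiplication by $d$ (invertible modulo $N/d$) induces a permutation $\sigma$ of $N_d$, whence $S_a = \{\sigma^{-1}(a)\}$ is a singleton.

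Modulo $W$ the distribution relation then reads $\Eis_k(\charun_a) \equiv d^{k-2}\Eis_k(\charun_{\sigma^{-1}(a)})$; iterating along the $\sigma$-orbit through $a$ of finite length $n$ yields $(1 - d^{n(k-2)})\Eis_k(\charun_a) \in W$. The main obstacle, and the unique point at which the hypothesis $k \neq 2$ is used, is the invertibility of the scalar $1 - d^{n(k-2)}$: since $d \geq 2$ and $k - 2 \neq 0$, this positive real number is distinct from $1$, so I can divide and conclude $\Eis_k(\charun_a) \in W$, closing the induction. Apart from this orbit-iteration step, everything reduces to the distribution relation and routine CRT bookkeeping.
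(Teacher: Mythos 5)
Your argument is correct, but it is worth knowing that the paper does not actually prove this proposition: it simply refers to the appendix of Kubert's article on universal distributions, of which the statement is a special case (the paper remarks that $\Eis_k$ is the restriction to $(\ZZ/N\ZZ)^2$ of a universal distribution of weight $k-2$). What you have written is a self-contained reconstruction of the mechanism behind that reference: induction on $d_a=\gcd(a_1,a_2,N)$ via the distribution relation of Proposition \ref{relationdist}, isolation of the ``diagonal'' terms, and the decisive invertibility of $1-d^{n(k-2)}$, which is exactly where the hypothesis $k\neq 2$ (weight $k-2\neq 0$) enters. All the intermediate claims check out: every solution $b$ of $db=a$ satisfies $\gcd(b_1,b_2,N/d)=1$, and since any prime of $d_b$ then avoids $N/d$ one indeed gets $d_b\mid d$; $d_b=d$ forces $\gcd(d,N/d)=1$; in that case CRT identifies $N_d$ with the primitive part of $(\ZZ/(N/d)\ZZ)^2$, on which multiplication by $d$ is a bijection, so $S_a$ is a singleton and the orbit iteration is legitimate — each step of the iteration only invokes the inductive hypothesis for strictly smaller $d_b$, so there is no circularity, and the case $a=(0,0)$ (where $d=N$ and $\sigma$ is the identity on $N_N=\{0\}$) is covered by the same scheme. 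A pleasant by-product of making the argument explicit is that it shows precisely why the statement fails in weight $0$: for $k=2$ the scalar becomes $1-d^{0}=0$ and the division is impossible, consistent with the paper's observation that for $k=2$ the functions supported on primitive elements do not generate $\Eisk{2,\Gamma_0(N)}{\CC}$.
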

La proposition est une conséquence de l'appendice de \cite{kubert}.
Pour $k=2$, le poids de la distribution universelle est 0 et Kubert construit
explicitement dans ce cas aussi un système de générateurs
(théorème 1.8 de \cite{kubert}). C'est en programmant
l'espace $\EiskQ{k,\Gamma_0(N)}$ que nous nous sommes rendus compte que
pour $k=2$, l'image par $\Eis_k$ des fonctions à support dans $(\ZZ/N\ZZ)^2_{prim}$
n'engendrait pas $\Eisk{k,\Gamma_0(N)}{\CC}$.
Dans la suite de ce paragraphe, nous allons construire un système de générateurs
valable pour tout poids dans le cas où $\Gamma=\Gamma_0(N)$.

Commençons par donner une description des orbites sous $\Gamma_0(N)$.
À tout couple $(x,y)\in (\ZZ/N\ZZ)^2$, on associe le triplet suivant
$[q_1,q_2,u]$ avec
$$q_1=\pgcd(x,N), \quad q_2=\pgcd(y,q_1), \quad
u= \frac{x}{q_1}\frac{y}{q_2} \bmod \pgcd(\frac{N}{q_1},\frac{q_1}{q_2})\ .$$
Comme $\pgcd(\frac{x}{q_1},\frac{N}{q_1})=\pgcd(\frac{y}{q_2},\frac{q_1}{q_2})=1$,
$u$ appartient à $(\ZZ/\pgcd(\frac{N}{q_1},\frac{q_1}{q_2})\ZZ)^*$.
Notons $\Orb{N}$ l'ensemble des triplets $[q_1,q_2,u]$ avec
\begin{equation}
q_2 \mid q_1\mid N, \quad u \in (\ZZ/\pgcd(\frac{N}{q_1},\frac{q_1}{q_2})\ZZ)^*\ .
\end{equation}

\begin{lem}
\begin{enumerate}
\item L'ensemble des orbites de $(\ZZ/N\ZZ)^2$ sous l'action de $\Gamma_0(N)$ est
en bijection avec $\Orb{N}$.
\item Un représentant dans $(\ZZ/N\ZZ)^2$ de l'orbite $[q_1,q_2,u]$
est $(q_1,q_2v)$ où $v$ est un représentant de $u$
dans $\ZZ/\frac{N}{q_2}\ZZ$ premier à $\frac{q_1}{q_2}$.
\item
Le cardinal de l'orbite associée à $[q_1,q_2,u]$ est
$\frac{\frac{N}{q_1}\varphi(\frac{N}{q_1}) \varphi(\frac{q_1}{q_2})}{
\varphi(\pgcd(\frac{N}{q_1},\frac{q_1}{q_2}))}$.
\end{enumerate}
\end{lem}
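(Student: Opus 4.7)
Ma stratégie suivra les trois points de l'énoncé. D'abord j'établirai que les invariants $q_1, q_2, u$ sont stables sous l'action de $\Gamma_0(N)$ (ce qui fournit une application bien définie des orbites vers $\Orb{N}$). Ensuite je construirai le représentant $(q_1, q_2 v)$ (surjectivité et point 2) et montrerai que deux couples de mêmes invariants sont dans la même orbite (injectivité). Enfin, la formule du cardinal s'obtiendra par comptage direct des couples d'invariants $[q_1, q_2, u]$, une fois la bijection établie.

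Pour l'invariance, je partirai de $\gamma = \smallmat{a&b\\c&d} \in \Gamma_0(N)$ agissant à droite par $(x,y) \mapsto (ax+cy, bx+dy)$. La condition $N \mid c$ combinée à $ad - bc = 1$ donne $ad \equiv 1 \pmod N$, donc $\pgcd(a,N) = \pgcd(d,N) = 1$. Il s'ensuit que $\pgcd(ax+cy, N) = \pgcd(ax, N) = q_1$, puis, utilisant $q_1 \mid x$, que $\pgcd(bx+dy, q_1) = \pgcd(dy, q_1) = q_2$. Pour $u$, en posant $x = q_1 \tilde x$, $y = q_2 \tilde y$, le développement de $\frac{(ax+cy)(bx+dy)}{q_1 q_2}$ modulo $\pgcd(N/q_1, q_1/q_2)$ annule les termes en $b$ (multiples de $q_1/q_2$) et en $c$ (multiples de $N/q_1$), laissant $ad\,\tilde x \tilde y \equiv \tilde x \tilde y = u$ puisque $ad \equiv 1 \bmod N$.

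Pour le point 2 et la surjectivité, je vérifierai directement que $(q_1, q_2 v)$ a les invariants $[q_1, q_2, u]$ ; l'existence de $v \in \ZZ/(N/q_2)\ZZ$ premier à $q_1/q_2$ et congru à $u$ modulo $\pgcd(N/q_1, q_1/q_2)$ relève d'un argument élémentaire de type CRT. Pour l'injectivité, je ramènerai tout couple $(x, y) = (q_1 \tilde x, q_2 \tilde y)$ d'invariants $[q_1, q_2, u]$ au représentant canonique via un élément $\gamma \in \Gamma_0(N)$ construit au moyen de la réduction de $\Gamma_0(N)$ sur le sous-groupe de Borel supérieur de $\Sl_2(\ZZ/N\ZZ)$ : en choisissant un relèvement $\bar a \in (\ZZ/N\ZZ)^*$ de $\tilde x^{-1}$ modulo $N/q_1$ et un $\bar b \in \ZZ/N\ZZ$ adéquat, la première coordonnée devient $q_1$ et la seconde s'ajuste modulo $N$, la compatibilité du système de congruences résultant précisément de l'égalité des invariants $u$.

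Pour (3), l'orbite de $[q_1, q_2, u]$ s'identifie via la bijection à l'ensemble des couples d'invariants donnés. Je compterai alors $\varphi(N/q_1)$ choix pour $\tilde x \in (\ZZ/(N/q_1)\ZZ)^*$, puis $(N/q_1)\varphi(q_1/q_2)$ choix pour $\tilde y$ dans $\ZZ/(N/q_2)\ZZ$ premier à $q_1/q_2$ ; la répartition entre les $\varphi(\pgcd(N/q_1, q_1/q_2))$ valeurs possibles de $u$ est uniforme (la multiplication par l'inversible $\tilde x$ permute les classes de $u$), d'où la formule. J'anticipe que l'étape principale est la construction explicite de $\gamma$ pour l'injectivité, où il faut gérer simultanément les congruences de $\bar a$ et $\bar d = \bar a^{-1}$ modulo $N/q_1$ et $q_1/q_2$, en utilisant précisément la coïncidence des $u$ modulo $\pgcd(N/q_1, q_1/q_2)$.
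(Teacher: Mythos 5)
Votre démarche est correcte et coïncide pour l'essentiel avec celle de l'article : vérification de l'invariance des trois invariants sous l'action à droite de $\Gamma_0(N)$ (que l'article se contente d'affirmer et que vous détaillez), construction par le théorème chinois d'un élément triangulaire supérieur $\smallmat{a&b\\0&a^{-1}}$ de $\Gamma_0(N)$ pour la transitivité sur les fibres, et comptage direct des couples $(\tilde x,\tilde y)$ pour le cardinal. La seule différence est cosmétique : vous ramenez chaque couple au représentant canonique $(q_1,q_2v)$ alors que l'article relie directement deux couples de mêmes invariants, ce qui revient au même.
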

\begin{proof}
On voit facilement que le triplet $[q_1,q_2,u]$ ne dépend que de l'orbite de $(x,y)$ sous l'action de $\Gamma_0(N)$. Réciproquement, montrons que si $(x,y)$ et $(x',y')$ ont même image $[q_1,q_2,u]$, il existe
$\gamma \in \Gamma_0(N)$ tel que $(x,y)=(x',y')\gamma$.
On a $x=q_1 v$, $y=q_2 w$, $x'=q_1 v'$, $y'=q_2  w'$ avec
$v w \equiv v'w' \bmod \pgcd(\frac{N}{q_1},\frac{q_1}{q_2})$
et $v'$ et $w'$ inversibles modulo $\pgcd(\frac{N}{q_1},\frac{q_1}{q_2})$.
Donc
$v/v' \equiv w/w' \bmod \pgcd(\frac{N}{q_1},\frac{q_1}{q_2})$.
Par le théorème chinois, il existe $a \in (\ZZ/N\ZZ)^*$ tel que
\begin{equation*}
a \equiv\begin{cases} v/v' & \bmod \frac{N}{q_1}\\
 w/w'& \bmod \frac{q_1}{q_2} \ .
 \end{cases}
\end{equation*}
Soit $a'$ un inverse de $a \bmod N$.
Comme $y'a'-y$ est divisible par $q_1$ et que $v'$ est inversible
modulo $N/q_1$, il existe $b\in \ZZ$ tel que
$b\equiv\frac{y'a' -y}{q_1 v'^{-1}}\bmod N$.
On a alors $(x,y)=(x',y') \begin{pmatrix}a&b\\0&a^{-1}\end{pmatrix}\bmod N$.
Ce qui démontre l'assertion (1).

L'assertion (2) se déduit de la construction.
Calculons le cardinal de l'orbite associée à
$[q_1,q_2,u]$. Les éléments de l'orbite sont de la forme $(q_1 v, q_2 w)$.
Comme $v$ est défini modulo $\frac{N}{q_1}$
et est inversible modulo $\frac{N}{q_1}$, on a $\varphi(\frac{N}{q_1})$
choix pour $v$. Comme $w$ est défini modulo $\frac{N}{q_2}$, que $w \bmod \frac{q_1}{q_2}$ est
premier à $\frac{q_1}{q_2}$ et qu'on a la relation $vw=u$, on a
$\frac{N}{q_1}\frac{\varphi(\frac{q_1}{q_2})}{\varphi(\pgcd(\frac{N}{q_1},\frac{q_1}{q_2}))}$
choix pour $w$. Le cardinal de l'orbite associée à $[q_1,q_2,u]$ est donc bien
$\frac{\frac{N}{q_1}\varphi(\frac{N}{q_1}) \varphi(\frac{q_1}{q_2})}{
\varphi(\pgcd(\frac{N}{q_1},\frac{q_1}{q_2}))}$.
Pour vérification,
$$\sum_{q_2\mid q_1\mid N, u\in (\ZZ/\pgcd(\frac{N}{q_1},\frac{q_1}{q_2})\ZZ)^*}
\frac{\frac{N}{q_1}\varphi(\frac{N}{q_1}) \varphi(\frac{q_1}{q_2})}{
\varphi(\pgcd(\frac{N}{q_1},\frac{q_1}{q_2}))}
= N^2\ .$$
\end{proof}
On identifie désormais $[q_1,q_2,u]$ à l'orbite correspondante.

\begin{prop}\label{gen}
Soit $\VN$ le sous-ensemble de $\Fonct{\Gamma_0(N)}{(\ZZ/N\ZZ)^2}$ formé des
fonctions indicatrices des orbites
$[\frac{N}{\pgcd(d,\frac{N}{d})},\frac{\frac{N}{d}}{\pgcd(d,\frac{N}{d})},u]$
pour $d$ diviseur de $N$ et $u\in(\ZZ/\pgcd(d,\frac{N}{d})\ZZ)^*$.
Alors l'ensemble des $\Eis_k(f)$ pour $f \in \VN$ pour $k>2$
(resp. $f \in \VN-\{\charun_{(N,N)}\}$ pour $k=2$)
forme une base de $\Eisk{k,\Gamma_0(N)}{\CC}$.
\end{prop}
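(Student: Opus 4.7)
I proceed by comparing the cardinality of $\VN$ with the target dimension, then establishing spanning. The cardinality of $\VN$ is
\[
|\VN| = \sum_{d \mid N} \varphi(\gcd(d, N/d)),
\]
which is the classical formula for the number of pointes of $\Gamma_0(N)$. On the other hand $\dim \Eisk{k,\Gamma_0(N)}{\CC}$ equals the number of pointes when $k \geq 3$ and the number of pointes minus one when $k = 2$ — a classical fact using that $\Eis_k$ surjects onto the Eisenstein subspace of $M_k(\Gamma_0(N))$ (the image contains every Eisenstein series for $\Gamma(N)$ that is $\Gamma_0(N)$-invariant). Thus the target dimension is exactly $|\VN|$ for $k \geq 3$ and $|\VN|-1$ for $k = 2$, which explains the removal of $\charun_{(N,N)}$.

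Next I would show that $\{\Eis_k(f) : f \in \VN\}$ spans $\Eisk{k,\Gamma_0(N)}{\CC}$. Since by construction this space is generated by the $\Eis_k(\charun_\Omega)$ as $\Omega$ ranges over $\Gamma_0(N)$-orbits in $(\ZZ/N\ZZ)^2$ (excluding the zero orbit when $k=2$), it suffices to express each such $\Eis_k(\charun_\Omega)$ as a linear combination of $\Eis_k(f)$ for $f \in \VN$. The central tool is the distribution relation (Proposition \ref{relationdist}), combined with the level-independence of $\Eis_k$ (Lemma \ref{independanceN}): the two together provide, for every divisor $M$ of $N$ and every $a \in M(\ZZ/N\ZZ)^2$, a linear relation tying $\Eis_k(\charun_a)$ to the $\Eis_k(\charun_b)$ for $Mb=a$, and in particular relations between $\Eis_k(\charun_\Omega)$ for different orbits $\Omega$. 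A M\"obius-type inversion on the divisor lattice of $N$ then expresses any $\Eis_k(\charun_\Omega)$ in terms of those attached to orbits in $\VN$. Combined with the cardinality match of Step 1, spanning forces linear independence and yields the basis property.

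The main obstacle is this second step: carrying out the M\"obius-type inversion and verifying that the particular orbits selected by the rule $q_1 = N/\gcd(d,N/d)$, $q_2 = (N/d)/\gcd(d,N/d)$ for $d \mid N$ constitute exactly the right system of representatives modulo the distribution relations — neither too many (which would break independence) nor too few (which would break spanning). Concretely, one must group the orbits outside $\VN$ according to the divisor $d = q_1/q_2$, and exhibit for each such orbit an explicit reduction towards $\VN$ through a chain of distribution relations, while checking that the scalar coefficients produced by the inversion are non-zero. The technical heart of the argument lies in this combinatorial bookkeeping between the $[q_1,q_2,u]$ parametrization of orbits and the structure of the distribution relations acting on the $\Eis_k(\charun_\Omega)$.
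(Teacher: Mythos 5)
Your overall architecture coincides with the paper's: show spanning modulo the distribution relations, then observe that $|\VN|=\sum_{d\mid N}\varphi(\pgcd(d,N/d))$ is the number of cusps of $\Gamma_0(N)$ and hence equals $\dim \Eisk{k,\Gamma_0(N)}{\CC}$ (minus one for $k=2$), so that spanning forces the basis property. Your Step 1 is correct and is exactly the Remark the paper places after the proposition.

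The genuine gap is your Step 2, which you yourself flag as ``the main obstacle'' and then do not carry out: no actual reduction of an arbitrary orbit $[q_1,q_2,u]$ to $\QQ[\vN]$ is produced, and this reduction is the entire content of the paper's proof. It is, moreover, not a M\"obius inversion on the divisor lattice grouped by $d=q_1/q_2$. What the paper does is: (i) translate the distribution relation at each prime $p\mid N$ into explicit local rewriting rules on $\QQ[\Orb{N}]$ in the coordinates $[q_1,q_2,u]$ (Lemmas \ref{distrib1}, \ref{distrib2}, \ref{distrib3}); (ii) characterize membership in $\vN$ prime by prime (Lemma \ref{techV}: for each $p\mid N$, either $\ord_p(N/q_1)=\ord_p(q_1/q_2)$, or $\ord_p(N/q_1)\leq\ord_p(q_1/q_2)$ and $p\nmid q_2$); (iii) run a descent on the integer $M(q_1,q_2)=q_2\left(\frac{N/q_1}{\pgcd(N/q_1,\;q_1/q_2)}\right)^2$: a minimal orbit not reducible to $\QQ[\vN]$ is shown, by a four-case analysis at each prime, either to be rewritable in terms of orbits of strictly smaller $M$ (contradicting minimality) or to satisfy the local criterion of Lemma \ref{techV} at every prime, hence to lie in $\vN$ (contradicting the choice). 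Note also that your worry about non-vanishing of the coefficients produced by an ``inversion'' does not arise in this scheme: each rewriting step carries a coefficient $p^{\pm(k-2)}$ and no global inversion is ever performed. Without such a descent, or an equivalent explicit chain of relations, the spanning claim --- and with it the proposition --- remains unproved.
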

\begin{rem}
\begin{enumerate}
\item Le cardinal de $\VN$ est égal au nombre $\sum_{d \mid N} \varphi(\pgcd(d,\frac{N}{d}))$
de pointes de $\Gamma_0(N)$. Explicitement, on a une bijection
de l'ensemble des pointes dans $\VN$ définie de la manière suivante :
si $r=\frac{x}{y}$ est un rationnel donné de manière irréductible,
on pose $d = \pgcd(y, N)$, $y = dv$ et l'élément
correspondant dans $\VN$ est la fonction indicatrice de
$[\frac{N}{\pgcd(d, N/d)}, \frac{N}{d\pgcd(d, N/d)}, xv \bmod \pgcd(d, N/d)]$.
Il ne dépend que de la classe de $r$ modulo $\Gamma_0(N)$.
\item Le cardinal de l'orbite
$[\frac{N}{\pgcd(d,\frac{N}{d})},\frac{\frac{N}{d}}{\pgcd(d,\frac{N}{d})}, u]$
est
$$\frac{ \pgcd(d,\frac{N}{d})\varphi(\pgcd(d,\frac{N}{d})) \varphi(d)}{\varphi(\pgcd(d,\frac{N}{d}))}=
\pgcd(d,\frac{N}{d})\varphi(d) \leq \pgcd(d,\frac{N}{d})d\leq N
\ . $$
\item Lorsque le poids $k$ est égal à 2, il faut enlever l'orbite $[N,N,1]$
car sa fonction indicatrice $\charun_{(N,N)}=\charun_{(0,0)}$
n'est pas nulle en $(0,0)$. Dans tous les cas, le cardinal de $\VN$ pour $k>2$
(resp. de $\VN-\{\charun_{(N,N)}\}$ pour $k=2$)
est égal à la dimension de $\Eisk{k,\Gamma_0(N)}{\CC}$.
\end{enumerate}
\end{rem}
Pour simplifier les notations, nous travaillerons dans la suite
dans le $\QQ$-espace vectoriel $\QQ[\Orb{N}]$ de base $\Orb{N}$ modulo les relations
plutôt que dans l'espace $\Fonct{\Gamma_0(N)}{(\ZZ/N\ZZ)^2}$ et noterons
$\vN$ l'ensemble des orbites
$[\frac{N}{\pgcd(d,\frac{N}{d})},\frac{\frac{N}{d}}{\pgcd(d,\frac{N}{d})},u]$.

Donnons deux cas particuliers.
\begin{enumerate}
\item
Pour $N$ sans facteurs carrés, $\vN$ est formé des
orbites $[N,d,1]$ avec $d$ divisant $N$, qui ont $\varphi(\frac Nd)$ éléments.

\item Pour $N=p^n$ puissance d'un nombre premier $p$,
$\vN$ est formé des orbites
$[p^s,p^{2s-n},u]$ et $[p^s,1,u]$ pour $\frac n2\leq s\leq n$ avec $u \in (\ZZ/p^{n-s}\ZZ)^*$
(voir lemme \ref{techV}).
L'orbite $[p^s,p^{2s-n},u]$ a $\varphi(p^{2n-2s})$ éléments et l'orbite $[p^s,1,u]$ a $\varphi(p^n)$ éléments.

Visualisons $\vN$ pour $n=5$. Dans le tableau suivant,
sur chaque colonne, $\frac{q_1}{q_2}$ est constant et sur chaque ligne,
$q_1$ (et donc $\frac{N}{q_1}$) est constant.

\begin{scriptsize}
$$\begin{matrix}\color{red}{[p^5,p^5,1]}& [p^5,p^4,1]&[p^5,p^3,1]&[p^5,p^2,1]&[p^5,p,1]&\color{red}{[p^5,1,1]}\\
[p^4,p^4,1]& \color{red}{[p^4,p^3,u\in(\ZZ/p\ZZ)^\ast]}&[p^4,p^2,u\in(\ZZ/p\ZZ)^\ast]&[p^4,p,u\in(\ZZ/p\ZZ)^\ast]&\color{red}{[p^4,1,u\in(\ZZ/p\ZZ)^\ast]}\\
[p^3,p^3,1]& [p^3,p^2,u\in(\ZZ/p\ZZ)^\ast]&\color{red}{[p^3,p,u\in(\ZZ/p^2\ZZ)^\ast]}&\color{red}{[p^3,1,u\in(\ZZ/p^2\ZZ)^\ast]}\\
[p^2,p^2,1]& [p^2,p,u\in(\ZZ/p\ZZ)^\ast]&[p^2,1,u\in(\ZZ/p^2\ZZ)^\ast]\\
[p,p,1]&[p,1,u\in(\ZZ/p\ZZ)^\ast]\\
[1,1,1]
\end{matrix}
$$
\end{scriptsize}
\end{enumerate}

Avant de démontrer la proposition \ref{gen}, nous allons démontrer quelques
lemmes. Donnons une description locale de $\vN$.
\begin{lem}\label{techV}
Une orbite $[q_1,q_2,u]$ pour $q_2|q_1|N$ et
$u \in (\ZZ/\pgcd(\frac{N}{q_1},\frac{q_1}{q_2})\ZZ)^*$ appartient à
$\vN$ si et seulement si pour tout nombre premier $p \mid N$,
\begin{equation*}
\begin{cases}
\ord_p(\frac{N}{q_1})=\ord_p(\frac{q_1}{q_2})
\\
\text{ou}\\
\ord_p(\frac{N}{q_1}) \leq \ord_p(\frac{q_1}{q_2}) \text{ et } \ord_p(q_2)=0 \ .
\end{cases}
\end{equation*}
\end{lem}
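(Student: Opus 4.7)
The plan is to translate the membership $[q_1,q_2,u] \in \vN$ into an arithmetic condition on a divisor $d$ and then localise that condition at each prime dividing $N$. Given any triple $[q_1,q_2,u]$ with $q_2 \mid q_1 \mid N$, I first identify the unique possible divisor $d \mid N$ from which such a triple could arise in the defining description of $\vN$. Setting $e = \pgcd(d, N/d)$, the defining relations $q_1 = N/e$ and $q_2 = (N/d)/e$ force $e = N/q_1$ and then $N/d = q_2 N/q_1$, so $d = q_1/q_2$. Since $q_2 \mid q_1 \mid N$, this $d$ is a positive integer dividing $N$, so the only candidate is $d = q_1/q_2$.

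The orbit then lies in $\vN$ precisely when this candidate satisfies the gcd relation built into the definition, namely
\[\pgcd\!\left(\frac{q_1}{q_2},\ \frac{q_2 N}{q_1}\right) = \frac{N}{q_1}.\]
I would verify this equality prime by prime. For a prime $p \mid N$, write $a = \ord_p(q_1/q_2)$, $b = \ord_p(N/q_1)$, $c = \ord_p(q_2)$, so that $\ord_p(q_2 N/q_1) = b + c$ and the local form of the equation is $\min(a, b+c) = b$.

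A short case analysis finishes the argument. If $a \leq b + c$, the equation reads $a = b$, yielding the first alternative of the lemma. If $a > b+c$, it reads $b + c = b$, hence $c = 0$; combined with $a > b + c = b$ this gives $\ord_p(q_2) = 0$ and $\ord_p(N/q_1) < \ord_p(q_1/q_2)$, which is a strict version of the lemma's second alternative. The statement as written uses $\leq$ rather than $<$, but this is equivalent because the equality case $\ord_p(N/q_1) = \ord_p(q_1/q_2)$ is already captured by the first alternative. There is no substantive obstacle: the argument is essentially bookkeeping with $p$-adic valuations, and the only delicate point is reading off the unique candidate $d = q_1/q_2$ from the defining data of $\vN$.
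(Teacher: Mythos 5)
Your proof is correct and follows essentially the same route as the paper's: both identify $d=q_1/q_2$ as the unique candidate divisor and then compare $p$-adic valuations prime by prime. Your version is slightly more streamlined in that you reduce membership to the single equation $\pgcd(\frac{q_1}{q_2},\frac{q_2N}{q_1})=\frac{N}{q_1}$ and localise it once as $\min(a,b+c)=b$, whereas the paper argues the two implications separately; your case analysis and the remark that the strict inequality in the second alternative is absorbed by the first are both sound.
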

Remarquons qu'en particulier, $q_1$ est nécessairement divisible par le radical de $N$.
\begin{proof}
Une orbite $[q_1,q_2,u]$ appartient à $\vN$ si et seulement s'il existe $d | N$ tel que
si $a=\pgcd(\frac{N}{d},d)$, $q_1=\frac{N}{a}$, $q_2=\frac{N}{da}$.
Soit $[q_1,q_2,u]$ appartenant à $\vN$. Pour tout $p | N$,
on a
\begin{equation*}
\begin{split}
\ord_p(a)&=\min( \ord_p(N)-\ord_p(d),\ord_p(d))\\
\ord_p(q_1)&= \max( \ord_p(N)-\ord_p(d),\ord_p(d)) \ .
\end{split}
\end{equation*}
Donc, si $\ord_p(q_2) > 0$,
$\ord_p(d)< \ord_p(q_1)$.
Par la deuxième équation, $\ord_p(q_1)=\ord_p(N)-\ord_p(d)$.
Donc
$$2 \ord_p(q_1)-\ord_p(q_2)=\ord_p(N)\ .$$
Si $\ord_p(q_2)=0$, on a $\ord_p(N)=\ord_p(da)\leq 2 \ord_p(d)
=2 \ord_p(q_1)$.

Réciproquement, soit $[q_1,q_2,u]$ une orbite et
supposons que pour tout $p \mid N$,
$\ord_p(N)= 2 \ord_p(q_1)-\ord_p(q_2)$
si $\ord_p(q_2)>0$ et
que $\ord_p(N)\leq 2 \ord_p(q_1)$ si $\ord_p(q_2)=0$.
Soit $d=\frac{q_1}{q_2}$ et $a=\pgcd(\frac{N}{d},d)$.
Pour $p \mid N$ tel que $\ord_p(q_2)>0$,
\begin{equation*}
\begin{split}
\ord_p(q_1 a)&= \ord_p(q_1) +
\min (\ord_p(N) - \ord_p(q_1) +\ord_p(q_2), \ord_p(q_1) -\ord_p(q_2))
\\
&=\min (\ord_p(N)+ \ord_p(q_2), 2\ord_p(q_1) -\ord_p(q_2))= \ord_p(N) \ .
\end{split}
\end{equation*}
Pour $p\mid N$ tel que $\ord_p(q_2)=0$,
\begin{equation*}
\begin{split}
\ord_p(q_1 a)
&=\min (\ord_p(N) , 2\ord_p(q_1))=\ord_p(N) \ .
\end{split}
\end{equation*}
Donc $N=q_1 a$ et $[q_1,q_2,u]=[\frac{N}{a},\frac{N}{da}, u]$ appartient à $\vN$.
\end{proof}

Traduisons maintenant les relations \ref{relationdist} relatives à un nombre premier
en des distributions dans $\QQ[\Orb{N}]$.
On écrira $a \sim b$ dans le quotient de $\QQ[\Orb{N}]$
par les relations obtenues de poids $W=k-2$.
\begin{lem} \label{distrib1}
Soit $p$ un nombre premier divisant $N$ et
$[q_1,q_2,u]$ une orbite telle que $p \mid \frac{N}{q_1}$ et $p\mid q_2$.
Alors
$$
[q_1,q_2,u]\sim p^{W} \sum_{\pi(v)= u} [\frac{q_1}{p},\frac{q_2}{p},v]
$$
où $v \in (\ZZ/\pgcd(\frac{N p}{q_1},\frac{q_1}{q_2})\ZZ)^*$
et
$$\pi: (\ZZ/\pgcd(\frac{N p}{q_1},\frac{q_1}{q_2})\ZZ)^*
\to (\ZZ/\pgcd(\frac{N}{q_1},\frac{q_1}{q_2})\ZZ)^*$$
est la projection naturelle.
En particulier, si $\ord_p(\frac{q_1}{q_2})\leq \ord_p(\frac{N}{q_1})$,
$$
[q_1,q_2,u]\sim p^{W} [\frac{q_1}{p},\frac{q_2}{p},u]
\ . $$
\end{lem}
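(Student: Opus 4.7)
Mon plan est d'appliquer la proposition \ref{relationdist} avec $M=p$. Puisque $p \mid q_2$ et $q_2 \mid q_1$, tout repr\'esentant $a$ de l'orbite $[q_1,q_2,u]$ a ses deux coordonn\'ees divisibles par $p$, et appartient donc \`a $p(\ZZ/N\ZZ)^2$. La proposition fournit ainsi, pour chaque tel $a$, l'\'egalit\'e $\Eis_k(\charun_a) = p^{k-2} \sum_{pb=a} \Eis_k(\charun_b)$. En sommant sur l'orbite, et en notant que chaque $b$ produit un unique $pb$, on obtient
$$\Eis_k(\charun_{[q_1,q_2,u]}) = p^{k-2} \sum_{b\,:\,pb \in [q_1,q_2,u]} \Eis_k(\charun_b)\ .$$

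L'\'etape suivante consiste \`a identifier ce membre de droite comme combinaison lin\'eaire d'indicatrices d'orbites. Je pr\'evois de montrer que si $b$ est de type $[q_1',q_2',v']$, alors l'hypoth\`ese $p\mid N/q_1$ force $q_1'=q_1/p$ et $q_2'=q_2/p$ (via l'identit\'e \'el\'ementaire $\pgcd(pm,N)=\pgcd(m,N)\,\gcd(p,N/\pgcd(m,N))$), et que la composante $u$ de $pb$ est l'image de $v'$ par la projection naturelle
$$\pi: (\ZZ/\pgcd(Np/q_1, q_1/q_2)\ZZ)^* \to (\ZZ/\pgcd(N/q_1, q_1/q_2)\ZZ)^*.$$
Le caract\`ere bien d\'efini de $\pi$ d\'ecoule d'un calcul \'el\'ementaire de valuations $p$-adiques : le premier pgcd vaut soit le second, soit $p$ fois le second, selon que $\ord_p(q_1/q_2)\leq \ord_p(N/q_1)$ ou non.

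On en d\'eduit que $\{b : pb \in [q_1,q_2,u]\}$ est la r\'eunion disjointe des orbites $[q_1/p, q_2/p, v]$ pour $\pi(v)=u$, ce qui donne la formule annonc\'ee. Le cas particulier $\ord_p(q_1/q_2)\leq \ord_p(N/q_1)$ devient imm\'ediat une fois $\pi$ identifi\'e : les deux pgcd co\"incident, $\pi$ est l'identit\'e, et la somme se r\'eduit au terme unique $v=u$. L'obstacle principal me para\^it \^etre le suivi minutieux des modulo et la d\'etermination du type de $pb$ en fonction de celui de $b$, ainsi que la v\'erification que la projection $\pi$ est bien d\'efinie au niveau des groupes d'unit\'es.
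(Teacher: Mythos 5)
Votre démonstration est correcte et suit essentiellement la même voie que celle du texte : on applique la relation de distribution (proposition \ref{relationdist}) avec $M=p$, puis on identifie les orbites des ant\'ec\'edents par la multiplication par $p$ via le calcul des pgcd, l'hypoth\`ese $p\mid N/q_1$ servant pr\'ecis\'ement \`a forcer $q_1'=q_1/p$. Votre r\'edaction est simplement un peu plus explicite que celle du texte sur le sens r\'eciproque (tout $b$ avec $pb$ dans l'orbite est n\'ecessairement de type $[q_1/p,q_2/p,v]$), ce qui n'est qu'une diff\'erence de pr\'esentation.
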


\begin{proof}Le nombre premier $p$ divise $q_1$ et $q_2$.
L'orbite de $p(\frac{q_1}{p},\frac{q_2}{p} v)$ pour
$v\in (\ZZ/\pgcd(\frac{N p}{q_1},\frac{q_1}{q_2})\ZZ)^*$ est donnée par
$[q_1,q_2, u]$ avec
$u \equiv v \bmod (\ZZ/\pgcd(\frac{N}{q_1},\frac{q_1}{q_2})\ZZ)^*$.
Ce qui démontre la première partie.
Lorsque $\ord_p(\frac{q_1}{q_2}) \leq \ord_p(\frac{N}{q_1})$,
$\pgcd(\frac{N p}{q_1},\frac{q_1}{q_2})=\pgcd(\frac{N}{q_1},\frac{q_1}{q_2})$
et il n'y a qu'une solution en $v$ ce qui implique le cas particulier.
\end{proof}
\begin{rem}Si $p\mid \frac{N}{q_1}$, $p\nmid \frac{q_1}{q_2}$ et $p\mid q_1$,
on a
$\ord_p(\frac{q_1}{q_2}) \leq \ord_p(\frac{N}{q_1})$
et par le lemme
$$
[q_1,q_2,u]\sim p^{W} [\frac{q_1}{p},\frac{q_2}{p},u]\ .
$$
\end{rem}

\begin{lem} \label{distrib2}
Soit $p$ un nombre premier divisant $N$ et
$[q_1,q_2,u]$ une orbite telle que $p \nmid \frac{N}{q_1}$,
$p \mid \frac{q_1}{q_2}$ et $p\mid q_2$.
Alors,
$$
 [q_1,q_2,u]\sim p^{W} \sum_{\pi(v)= u} [\frac{q_1}{p},\frac{q_2}{p},v]
+ p^{W}[q_1,\frac{q_2}{p},p^{-1}u]
$$
où $\pi: (\ZZ/\pgcd(\frac{N p}{q_1},\frac{q_1}{q_2})\ZZ)^*
\to (\ZZ/\pgcd(\frac{N}{q_1},\frac{q_1}{q_2})\ZZ)^* $
est la projection naturelle
et où $p^{-1}u$ appartient à $(\ZZ/\pgcd(\frac{N}{q_1},\frac{q_1}{q_2})\ZZ)^*$.
\end{lem}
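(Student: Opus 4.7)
The plan is to apply Proposition~\ref{relationdist} with $M = p$ to every element of the orbit $[q_1, q_2, u]$ and to identify the $\Gamma_0(N)$-orbits covering the resulting set of preimages. Since $p \mid q_2$ and $p \mid q_1/q_2$ (and $q_2 \mid q_1$), every element of $[q_1, q_2, u]$ is divisible by $p$ in $(\ZZ/N\ZZ)^2$, so the distribution relation gives
$$\charun_{[q_1, q_2, u]} \sim p^W \sum_{b \,:\, pb \in [q_1, q_2, u]} \charun_b.$$
Because multiplication by $p$ is $\Gamma_0(N)$-equivariant, the set $\{b : pb \in [q_1, q_2, u]\}$ is a disjoint union of orbits, and it suffices to enumerate the orbits $[Q_1', Q_2', U']$ of $\Orb{N}$ such that $p\cdot [Q_1', Q_2', U'] = [q_1, q_2, u]$.

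Starting from a representative $(Q_1', Q_2' V')$, I would compute the orbit of $(pQ_1', pQ_2' V')$ prime by prime. The conditions $\pgcd(pQ_1', N) = q_1$ and $\pgcd(pQ_2' V', q_1) = q_2$, combined with the hypotheses $\ord_p(N) = \ord_p(q_1)$ (which follows from $p \nmid N/q_1$), $\ord_p(q_1/q_2) \geq 1$ and $\ord_p(q_2) \geq 1$, force exactly one of two configurations: either $Q_1' = q_1/p$ and $Q_2' = q_2/p$, or $Q_1' = q_1$ and $Q_2' = q_2/p$.

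To pin down the third invariant $U'$ in each configuration, I would track the image carefully. In the first configuration, $\pgcd(N/Q_1', Q_1'/Q_2') = p \cdot \pgcd(N/q_1, q_1/q_2)$, and $U'$ runs exactly over the preimages of $u$ by the natural reduction $\pi$, contributing the first term of the lemma. In the second configuration, $\pgcd(N/Q_1', Q_1'/Q_2') = \pgcd(N/q_1, q_1/q_2)$ because $p \nmid N/q_1$; reducing $pq_1$ modulo $N$ and factoring the first coordinate as $q_1 \cdot b_1'$ with $b_1' \equiv p \pmod{N/q_1}$ yields the equation $p U' \equiv u$ in $(\ZZ/\pgcd(N/q_1, q_1/q_2)\ZZ)^*$, hence $U' = p^{-1} u$.

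The hardest step is the prime-by-prime analysis excluding any other $(Q_1', Q_2')$; it depends crucially on the strict inequality $\ord_p(q_2) < \ord_p(q_1)$ and on $p$ being prime to $N/q_1$. As a sanity check, summing the cardinalities of the orbits produced in the two cases gives
$$(p-1) \cdot p \cdot |[q_1, q_2, u]| + p \cdot |[q_1, q_2, u]| = p^2 \cdot |[q_1, q_2, u]|,$$
which matches the $p^2$ preimages under multiplication by $p$ of each element of $[q_1, q_2, u]$.
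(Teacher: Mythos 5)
Your argument is correct and follows the same route as the paper: apply the distribution relation with $M=p$ and identify the $\Gamma_0(N)$-orbits making up the preimage of $[q_1,q_2,u]$ under multiplication by $p$, the two gcd identities $\pgcd(\frac{N}{q_1},p\frac{q_1}{q_2})=\pgcd(\frac{N}{q_1},\frac{q_1}{q_2})$ and $\pgcd(pq_1,N)=\pgcd(q_1,N)$ being exactly what the paper invokes for the second term. The closing cardinality check $(p-1)p+p=p^2$ is a worthwhile addition, since together with the inclusion of the two listed families of orbits in the preimage it makes the exhaustiveness of the enumeration immediate.
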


\begin{proof}
Le premier terme s'obtient comme précédemment.
On déduit le deuxième terme de ce que
$\pgcd(\frac{N}{q_1},p\frac{q_1}{q_2})
=
\pgcd(\frac{N}{q_1},\frac{q_1}{q_2})
$
est premier à $p$ et de ce que $\pgcd(pq_1,N)=\pgcd(q_1,N)$.
\end{proof}

\begin{lem} \label{distrib3}
Soit $p$ un nombre premier divisant $N$ et
$[q_1,q_2,u]$ une orbite telle que $p \nmid \frac{N}{q_2}$.
Alors,
$$
[q_1,q_2,u]\sim p^{W} \left([\frac{q_1}{p},\frac{q_2}{p},u] +
[q_1,\frac{q_2}{p}, p^{-1}u] +[q_1,q_2,p^{-2}u]
\right)
$$
avec $u \in (\ZZ/\pgcd(\frac{N}{q_1},\frac{q_1}{q_2})\ZZ)^*$.
De manière équivalente, si
$p || \frac{N}{q_1}$ et $p\nmid \frac{q_1}{q_2}$,
on a $$
p^{W} [q_1,q_2,u] \sim [pq_1,pq_2,u] -
p^{W}[pq_1,pq_2,p^{-2}u] - [pq_1,q_2, p^{-1}u])\ .
$$
\end{lem}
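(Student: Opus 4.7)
The plan is to derive this relation directly from the distribution identity of Proposition \ref{relationdist} applied with $M=p$. First I note that the hypothesis $p\nmid N/q_2$ forces $\ord_p(q_2)=\ord_p(N)$, and since $q_2\mid q_1\mid N$, also $\ord_p(q_1)=\ord_p(N)$. In particular any $a\in[q_1,q_2,u]$ is divisible by $q_2$, hence by $p$, so $a\in p(\ZZ/N\ZZ)^2$ and the distribution relation gives $\charun_a\sim p^W\sum_{pb=a}\charun_b$. Summing over $a$ in the orbit yields
$$\charun_{[q_1,q_2,u]}\sim p^{W}\sum_{b\,:\,pb\in[q_1,q_2,u]}\charun_b.$$
Since multiplication by $p$ commutes with the $\Gamma_0(N)$-action, the preimage set on the right is $\Gamma_0(N)$-invariant, hence a disjoint union of orbits, and the plan is to identify precisely which orbits appear.

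For this, I would take an arbitrary candidate orbit $[q_1',q_2',u']$ with representative $b=(q_1'\bar v,q_2'\bar w)$, where $\bar v\bar w\equiv u'\pmod{\pgcd(N/q_1',q_1'/q_2')}$, and compute the invariants of $pb=(pq_1'\bar v,pq_2'\bar w)$. Since $\pgcd(\bar v,p)=\pgcd(\bar w,p)=1$ (using that $p\nmid N/q_1$, hence $p\nmid N/q_1'$ or $N_0$-parts in each case), the $p$-part of $q_1(pb)=\pgcd(pq_1'\bar v,N)$ equals $\min(\ord_p(q_1')+1,\ord_p(N))$, and its $N_0$-part equals $q'_{1,0}$; analogously for $q_2(pb)$. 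Requiring $q_1(pb)=q_1$ and $q_2(pb)=q_2$ with $\ord_p(q_1)=\ord_p(q_2)=n:=\ord_p(N)$ narrows the possibilities to exactly three:
$$(q_1',q_2')\in\bigl\{(q_1,q_2),\;(q_1,q_2/p),\;(q_1/p,q_2/p)\bigr\}.$$

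To determine $u'$ in each case, observe that the relevant group $(\ZZ/\pgcd(N/q_1',q_1'/q_2')\ZZ)^*$ is in each case canonically isomorphic to $(\ZZ/\pgcd(N/q_1,q_1/q_2)\ZZ)^*$ (the $p$-parts of the moduli all vanish since $p\nmid N/q_1$), so $p$ is invertible there and $p^{-1}u,p^{-2}u$ are well defined. Then $u(pb)=(pb^{(1)}/q_1)(pb^{(2)}/q_2)$ equals $p^2\bar v\bar w$, $p\bar v\bar w$, or $\bar v\bar w$ in the three respective cases, so the condition $u(pb)=u$ yields $u'=p^{-2}u$, $u'=p^{-1}u$, and $u'=u$. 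This produces the three orbits $[q_1,q_2,p^{-2}u]$, $[q_1,q_2/p,p^{-1}u]$, $[q_1/p,q_2/p,u]$, each appearing with multiplicity one in the preimage, which assembled gives the claimed formula.

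Finally, the equivalent reformulation follows by applying the just-proved formula to the orbit $[pq_1,pq_2,u]$, which satisfies the hypothesis $p\nmid N/(pq_2)$, and rearranging. The main obstacle is the bookkeeping at step two: carefully verifying that the three exhibited orbits exhaust the preimage (no other orbit gets the invariants $q_1(pb)=q_1,q_2(pb)=q_2$) and that the modulus groups for $u'$ all coincide, which is where the hypothesis $p\nmid N/q_2$ is used in an essential way.
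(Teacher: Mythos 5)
Your proposal is correct and follows essentially the same route as the paper, whose proof of this lemma is just the remark that $p\nmid N/q_2$ forces $\pgcd(\frac{N}{q_1},\frac{q_1}{q_2})$ to be prime to $p$ and that ``l'\'etude de la multiplication par $p$ donne facilement le r\'esultat comme pr\'ec\'edemment'', i.e.\ exactly the fiber analysis of the multiplication-by-$p$ map via Proposition \ref{relationdist} that you carry out. Your identification of the three preimage orbits (with the exhaustion confirmable by the cardinality count $C+(p-1)C+p(p-1)C=p^2C$) is the intended argument, and your derivation of the second formula by applying the first to $[pq_1,pq_2,u]$ is also what the paper means by ``de mani\`ere \'equivalente''.
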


\begin{proof}
Si $p \nmid \frac{N}{q_2}$, $\ord_p(\frac{q_1}{q_2})=\ord_p(\frac{N}{q_1})=0$
et $\pgcd(\frac{N}{q_1},\frac{q_1}{q_2})$ est premier à $p$.
L'étude de la multiplication par $p$ donne facilement le résultat
comme précédemment.
\end{proof}

\begin{proof}[Démonstration de la proposition \ref{gen}]

Nous allons montrer que modulo les relations de distribution,
toute orbite appartient à $\QQ[\vN]$.
Choisissons un élément $[q_1,q_2,u]$ de $\Orb{N}$ n'appartenant pas
à $\QQ[\vN]/\sim$ et minimisant l'entier
\[M(q_1,q_2)=q_2\left(\frac{\frac{N}{q_1}}{\pgcd(\frac{N}{q_1},\frac{q_1}{q_2})}\right)^2\ .\]
Soit $p$ un nombre premier divisant $N$.
\begin{enumerate}
\item Si $p \mid \frac{N}{q_1}$ et $p\nmid\frac{q_1}{q_2}$:
 \begin{enumerate}
 \item si $p^2 \mid \frac{N}{q_1}$:
d'après le lemme \ref{distrib1} appliqué à $[pq_1,pq_2,u]$,
on a
\[[q_1,q_2,u]\sim p^{-W}[pq_1,pq_2,u]\] avec $M(pq_1,pq_2)=M(q_1,q_2)/p$.
\item Si $p\ddivides \frac{N}{q_1}$:
d'après le lemme \ref{distrib3} appliqué à $[pq_1,pq_2,u]$, on a
\[[pq_1,pq_2,u]\sim p^{W}\left([q_1,q_2,u]+ [pq_1,q_2,p^{-1}u]+[pq_1,pq_2,p^{-2}u]\right)\]
avec $M(pq_1,pq_2)=M(q_1,q_2)/p$ et $M(pq_1,q_2)=M(q_1,q_2)/p^2$.
\end{enumerate}
\item Si $p\mid \frac{N}{q_1}$,
$\ord_p(\frac{N}{q_1})<\ord_p(\frac{q_1}{q_2})$ et $p\mid q_2$:
d'après le lemme \ref{distrib1}, on a
\[[q_1,q_2,u]\sim p^{W}\sum_{\pi(v)=u} [\frac{q_1}{p}, \frac{q_2}{p},v]\]
avec $M(\frac{q_1}{p},\frac{q_2}{p})=M(q_1,q_2)/p$.
\item Si $p\mid \frac{N}{q_1}$,
$\ord_p(\frac{N}{q_1})>\ord_p(\frac{q_1}{q_2})>0$:
d'après le lemme \ref{distrib1} on a
\[[q_1,q_2,u] \sim p^{-W}[pq_1,pq_2,u]\]
avec $M(pq_1,pq_2)=M(q_1,q_2)/p$.
\item Si $p\nmid\frac{N}{q_1}$, $\ord_p(\frac{q_1}{q_2})>0$ et $p\mid q_2$:
d'après le lemme \ref{distrib2}, on a
\[[q_1,q_2,u]\sim p^{W}\sum_{\pi(v)=u}[\frac{q_1}{p},\frac{q_2}{p},v] +
  p^{W} [q_1,\frac{q_2}{p},p^{-1}u]\]
avec $M(q_1,\frac{q_2}{p})=M(q_1,q_2)/p$ et $M(\frac{q_1}{p},
\frac{q_2}{p})=M(q_1,q_2)/p$.

\end{enumerate}

Dans tous les cas, on a une contradiction avec la minimalité de $M(q_1,q_2)$.
Pour chaque $p$, on est donc dans une des situations suivantes:
\begin{enumerate}
\item si $p\nmid \frac{N}{q_1}$, $p\nmid \frac{q_1}{q_2}$ ou $p\nmid q_2$,
\item si $p\mid \frac{N}{q_1}$, on a $p \mid \frac{q_1}{q_2}$ d'après le cas (1),
puis $\ord_p(\frac{N}{q_1})\leq \ord_p(\frac{q_1}{q_2})$ d'après le cas (3),
et enfin,
($\ord_p(\frac{N}{q_1})< \ord_p(\frac{q_1}{q_2})$ et $p\nmid q_2$)
ou
$\ord_p(\frac{N}{q_1})= \ord_p(\frac{q_1}{q_2})$
par le cas (2).
\end{enumerate}
Autrement dit, $[q_1,q_2,u]$ vérifie une des conditions suivantes
\begin{itemize}
\item $p\nmid \frac{N}{q_1}$ et $p\nmid \frac{q_1}{q_2}$,
\item $p\nmid \frac{N}{q_1}$ et $p\nmid q_2$,
\item $p\mid \frac{N}{q_1}$, $p\mid \frac{q_1}{q_2}$,
$\ord_p(\frac{N}{q_1})\leq\ord_p(\frac{q_1}{q_2})$ et $p\nmid q_2$,
\item $p\mid \frac{N}{q_1}$, $p\mid \frac{q_1}{q_2}$ et
$\ord_p(\frac{N}{q_1})=\ord_p(\frac{q_1}{q_2})$
\end{itemize}
ce qui se résume en
\begin{itemize}
\item $\ord_p(\frac{N}{q_1})=\ord_p(\frac{q_1}{q_2})$
\item $\ord_p(\frac{N}{q_1})\leq\ord_p(\frac{q_1}{q_2})$ et
 $p\nmid q_2$.
\end{itemize}
Donc $[q_1,q_2,u]\in \vN$ par le lemme \ref{techV}, ce qui est contradictoire
avec l'hypothèse du départ. Ceci termine la démonstration de la
proposition \ref{gen}.
\end{proof}

\subsubsection{Symbole de Eisenstein-Dedekind}
\label{symbole}
\begin{thm}[Stevens]
L'application $\Psi_k=\Per\circ \Eis_k$
définit un homomorphisme de $\Sl_2(\ZZ)$-modules
$\hypk{\Fonct{N}{\CC}}\to \Hom(\KKK_0,V_k(\CC))$.
Il est déterminé par les formules
\begin{equation*}
\begin{split}
\Psi_k(f)(\{\infty,0\})
&=
\sum_{j=0}^{k-2} (-1)^j \binom{k-2}{j}
\left(\int_{(\ZZ/N\ZZ)^2} f^- \dd{\beta_{k-1-j}}{\beta_{j+1}}\right)
 x^j y^{k-2-j}\\
 &\quad +\left(\int_{(\ZZ/N\ZZ)^2} \widehat{f}
\dd{\beta'_{k-1}}{\beta_0} \right)x^{k-2}
-
\left(\int_{(\ZZ/N\ZZ)^2}
\widehat{f} \dd{\beta_0}{\beta'_{k-1}}\right)y^{k-2}
\\
\Psi_k(f)([0,r]_\infty)
&=\frac{1}{k-1}\left(\int_{(\ZZ/N\ZZ)^2} f^- \dd{\beta_{k}}{\beta_{0}}\right)
\frac{(rx + y)^{k-1} - y^{k-1}}{x} \text{pour $r\in\QQ$}\ .
\end{split}
\end{equation*}
\end{thm}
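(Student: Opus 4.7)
Par composition, $\Psi_k=\Per\circ\Eis_k$ est un homomorphisme de $\Sl_2(\ZZ)$-modules ($\Eis_k$ l'est d'apr�s la proposition de transformation de la sous-section pr�c�dente et $\Per$ l'est m�me en tant que $\Gl_2^+(\QQ)$-homomorphisme d'apr�s le Th�or�me \ref{thm:fond}). En vertu de la Proposition \ref{generateur}, il suffit d'expliciter $\Psi_k(f)$ sur les g�n�rateurs $\{\infty,0\}$ et $[0,r]_\infty$ du $\Sl_2(\ZZ)$-module $\KKK_0$, et l'on part des formules \eqref{eq:cocycle} du Th�or�me \ref{thm:fond}.

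Pour $[0,r]_\infty$~: l'int�grale $\int_0^r(tx+y)^{k-2}dt$ vaut $\frac{(rx+y)^{k-1}-y^{k-1}}{(k-1)x}$, et il reste � �tablir que $a_0(\Eis_k(f))=\int f^-\,\dd{\beta_k}{\beta_0}$. Le terme constant de $E_{k,\widehat f}$ est la somme $\sum_{d\ne0}\widehat f(0,d)d^{-k}$. L'�quation fonctionnelle des fonctions $L$ de Dirichlet, combin�e au pr�facteur $\frac{(k-1)!}{(-2i\pi)^k}N^{k-1}$ de $\Eis_k$, � l'identit� $\widehat{\widehat f}=f$ et � \eqref{eq:beta}, identifie cette somme avec $\int f^-\,\dd{\beta_k}{\beta_0}$ via la Proposition \ref{valeurzeta}, l'int�gration en $\beta_0$ se r�duisant � la moyenne $\widehat g(0)/N$.

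Pour $\{\infty,0\}$, on d�veloppe $(tx+y)^{k-2}=\sum_{j=0}^{k-2}\binom{k-2}{j}t^jx^jy^{k-2-j}$ dans la formule \eqref{eqsigma}. On est ramen� � calculer, pour $F=\Eis_k(f)$, les int�grales r�gularis�es
$$I_j=\int_{i\infty}^0\bigl(F(t)-a_0(F)\bigr)t^j(-it)^s\,dt\,\Big|_{s=0}\qquad(0\le j\le k-2).$$
Apr�s passage � l'axe imaginaire et changement de variable, chaque $I_j$ s'identifie (aux facteurs $\Gamma(j+1)=j!$ et $(i/(2\pi))^{j+1}$ pr�s) � la valeur de $L(F,s+j+1)$ en $s=0$. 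Or la fonction $L$ de $E_{k,\widehat f}$ se factorise en combinaison bilin�aire de deux fonctions $L$ de Dirichlet associ�es aux deux variables de $\widehat f$, l'une �valu�e en $s$ et l'autre en $s-k+1$. Pour les valeurs interm�diaires $1\le j\le k-3$, les deux facteurs sont r�guliers et donnent un produit $L(1-h_1,\cdot)L(1-h_2,\cdot)$ avec $h_1+h_2=k$, que la Proposition \ref{valeurzeta} jointe � \eqref{eq:beta} convertit en $(-1)^j\binom{k-2}{j}\int f^-\,\dd{\beta_{k-1-j}}{\beta_{j+1}}$ apr�s contr�le des signes, des facteurs binomiaux et des normalisations.

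L'obstacle principal concerne les deux valeurs extr�mes $j=0$ et $j=k-2$~: l'un des deux facteurs de la factorisation poss�de alors un p�le simple en $s=0$ provenant du terme $\widehat f(0,0)$, tandis que l'autre s'annule en ce m�me point. La r�gularisation $(-it)^s|_{s=0}$ du Th�or�me \ref{cocyclehol} extrait la partie r�guli�re du produit, laquelle se d�compose en deux contributions. La premi�re, "r�guli�re", reconstitue les deux termes extr�mes $\int f^-\,\dd{\beta_{k-1}}{\beta_1}$ et $\int f^-\,\dd{\beta_1}{\beta_{k-1}}$ de la somme principale, le terme additif $\frac{1}{2}g(0)$ de la Proposition \ref{valeurzeta} pour $h=1$ assurant la bonne translation du $L(0,\cdot)$ en $\beta_1$. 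La seconde, "d�rivative", fait appara�tre la d�riv�e d'un des facteurs $L$ en un entier et correspond exactement, par sa d�finition m�me via $\frac{1}{-2i\pi}L'(2-k,\widehat g^-+(-1)^k\widehat g)$, � la distribution $\beta'_{k-1}$~; elle fournit les deux corrections $(\int\widehat f\,\dd{\beta'_{k-1}}{\beta_0})x^{k-2}$ et $-(\int\widehat f\,\dd{\beta_0}{\beta'_{k-1}})y^{k-2}$, achevant la v�rification de la formule annonc�e.
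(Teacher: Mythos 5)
Your argument is correct and follows essentially the same route as the paper, which proves this theorem by combining Theorem \ref{thm:fond} with Proposition \ref{psi} of the appendix: binomial expansion of $(tx+y)^{k-2}$, identification of the coefficients with a regularized Mellin transform that factors, for $f=f_1\otimes f_2$, into a bilinear combination of Dirichlet $L$-functions evaluated at $s$ and $s-k+1$, then functional equations and Bernoulli special values, the derivative term $\beta'_{k-1}$ at $j=0$ and $j=k-2$ coming from the vanishing of $L(2-k,f_1^-+(-1)^k f_1)$. The only steps worth making explicit are the reduction by linearity to decomposed functions $f=f_1\otimes f_2$ (which your factorization tacitly assumes) and the fact that at $j=k-2$ the derivative arises from a $0/0$ in the functional equation rather than from a genuine pole of one of the two factors.
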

\begin{proof} C'est une conséquence du théorème
\ref{thm:fond} et de la proposition \ref{psi}.
\end{proof}
On déduit du lemme \ref{independanceN} que $\Psi_k$ est compatible
aux applications naturelles
$\hypk{\Fonct{M}{\CC}}\to \hypk{\Fonct{N}{\CC}}$ pour $N\mid M$.

\begin{rem} Dans le cas où $N=1$, on retrouve les formules d'Haberland
(\cite[Satz 3]{haberland}). Le cocycle d'Haberland $\rho_{Hab,k}$ est
donné par
\begin{equation*}
\begin{split}
\rho_{Hab,k}(\sigma^{-1})&=
-\frac{1}{2(k-1)!}\Psi_k(\charun)([\pi_{\infty}(0),\sigma\pi_{\infty}(0)])(-1,x)=
-\frac{1}{2(k-1)!}\Psi_k(\charun)(\{\infty,0\})(-1,x)\\
\rho_{Hab,k}(T^{-1})&=
-\frac{1}{2(k-1)!}\Psi_k(\charun)([\pi_{\infty}(0),T\pi_{\infty}(0)])(-1,x)=
-\frac{1}{2(k-1)!}\Psi_k(\charun)([0,1]_\infty)(-1,x)
\end{split}
\end{equation*}
avec $T=\smallmat{1&1\\0&1}$
(rappelons que $\Sl_2(\ZZ)$ est engendré par $\sigma$ et $T$).
\end{rem}
\begin{rem}
L'invariance par $\Sl_2(\ZZ)$ signifie que
\begin{equation*}
\begin{split}
\Psi_k(f|\gamma)= \Psi_k(f)|\gamma
\end{split}
\end{equation*}
autrement dit que pour $\delta \in \Xi_0$,
\begin{equation}
\label{invariance}
\Psi_k(f)(\gamma \delta)=\Psi_k(f|\gamma)(\delta)| \gamma^{-1}
\end{equation}
puisque $\left(\Psi_k(f)| \gamma\right)(\delta)= \Psi_k(f)(\gamma\delta)|\gamma$,
Pour $\epsilon=\smallmat{-1&0\\0&1}$, on a
\begin{equation}
\label{epsilon}
\Psi_k(f|\epsilon)= (-1)^{k-1}\Psi_k(f)|\epsilon\ .
\end{equation}
En effet, on déduit des formules explicites de $\Psi_k(f)$ que
\begin{equation*}
\begin{split}
\Psi_k(f|\epsilon)(\{\infty,0\})&=
  (-1)^{k-1} \Psi_k(f)(\{\infty,0\})| \epsilon=
 (-1)^{k-1}\Psi_k(f)|\epsilon(\{\infty,0\})
\\
\Psi_k(f|\epsilon)([0,r]_\infty)&=
  \Psi_k(f)([0,r]_\infty)=
 (-1)^{k-1}\Psi_k(f)(\epsilon[0,r]_\infty)|\epsilon
=(-1)^{k-1}\Psi_k(f)|\epsilon ([0,r]_\infty)
\end{split}
\end{equation*}
et donc que pour tout $\delta\in \KKK_0$,
\begin{equation*}
\Psi_k(f|\epsilon)(\delta)=(-1)^{k-1}\Psi_k(f)|\epsilon (\delta)\ .
\end{equation*}
En particulier, on a
$$\Psi_k(f|\epsilon)([\pi_\infty(0),\gamma \pi_\infty(0)])=
(-1)^{k-1}\Psi_k(f)(\epsilon[\pi_\infty(0),\gamma \pi_\infty(0)])|\epsilon
=
(-1)^{k-1}\Psi_k(f)([\pi_\infty(0),\epsilon \gamma \epsilon^{-1}\pi_\infty(0)])|\epsilon\ .
$$
\end{rem}
\bigskip

On observe que $\Psi_k(f)(\{\infty,0\})$ et donc $\Psi_k(f)$ est la somme
de deux termes de nature différente. Nous allons les étudier séparément.
Posons $v_\infty=x^{k-2}$, c'est une base de la droite de $V_k$
invariante par le stabilisateur de $\infty$.
\begin{prop}\label{invariance0}
Supposons $k > 2$.
Il existe un unique homomorphisme de $\Sl_2(\ZZ)$-modules
$$\Theta_{k}: \Fonct{N}{\CC} \to \Hom_{\Gamma(N)}(\Delta, V_k(\CC))$$
tel que pour
$r \in \PP^1(\QQ)$ et pour $f\in \Fonct{N}{\CC}$,
\begin{equation}
\label{deftheta}
\Theta_{k}(f)(\{r\})=
\left(\int_{(\ZZ/N\ZZ)^2} \widehat{f\ }|\gamma_r
  \dd{\beta'_{k-1}}{\beta_{0}}\right) v_\infty|\gamma_r^{-1}
\end{equation}
pour $\gamma_r\in \Sl_2(\ZZ)$ tel que $r=\gamma_r \infty$.
Il vérifie
$$\Theta_{k}(f)(\{\infty\}-\{0\}) =
\left(\int_{(\ZZ/N\ZZ)^2} \widehat{f}
\dd{\beta'_{k-1}}{\beta_0} \right)x^{k-2}
-
\left(\int_{(\ZZ/N\ZZ)^2}
\widehat{f} \dd{\beta_0}{\beta'_{k-1}}\right)y^{k-2}\ .$$
\end{prop}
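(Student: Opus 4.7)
Le plan est d'\'etablir successivement : (i) l'ind\'ependance de la formule \eqref{deftheta} par rapport au choix du rel\`evement $\gamma_r$, (ii) la $\Sl_2(\ZZ)$-\'equivariance de $f\mapsto\Theta_k(f)$ (d'o\`u d\'ecoulera l'invariance par $\Gamma(N)$ puisque $\Gamma(N)$ agit trivialement sur $\Fonct{N}{\CC}$), et enfin (iii) le calcul explicite sur $\{\infty\}-\{0\}$. L'unicit\'e est imm\'ediate puisque $\Delta$ est engendr\'e par les $\{r\}$, $r\in\PP^1(\QQ)$, et que $\Theta_k(f)$ est d\'etermin\'e sur ces g\'en\'erateurs par \eqref{deftheta}.

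Pour (i), deux rel\`evements de $r$ diff\`erent par multiplication \`a droite par un \'el\'ement du stabilisateur de $\infty$ dans $\Sl_2(\ZZ)$, engendr\'e par $T=\smallmat{1&1\\0&1}$ et $-I$. Pour $\lambda=T^n$, l'uniformit\'e de la distribution $\beta_0$ rend l\'egitime le changement de variable $y\mapsto y+nx$ dans la sommation en la seconde variable, laissant l'int\'egrale inchang\'ee, tandis que $v_\infty|T^{-n}=v_\infty$. Le point le plus d\'elicat sera le cas $\lambda=-I$: apr\`es changement de variable $y\mapsto -y$ (l\'egitime par l'uniformit\'e de $\beta_0$), l'application de la relation de parit\'e $\int g^-\,d\beta'_{k-1}=(-1)^k\int g\,d\beta'_{k-1}$ \`a la variable $x$ apporte un facteur $(-1)^k$ sur l'int\'egrale, compens\'e par le facteur $(-1)^{k-2}$ issu de $v_\infty|(-I)^{-1}=(-1)^{k-2}v_\infty$; le produit $(-1)^{2k-2}=1$ \'etablit la compatibilit\'e.

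Pour (ii), pour $\gamma\in\Sl_2(\ZZ)$, on \'ecrit $\gamma r = (\gamma\gamma_r)\infty$ et l'on choisit $\gamma_{\gamma r}:=\gamma\gamma_r$. En utilisant $\widehat{f|\gamma}=\widehat{f}|\gamma$, on obtient directement $\Theta_k(f|\gamma)(\{r\})=\Theta_k(f)(\{\gamma r\})|\gamma$, d'o\`u $\Theta_k(f|\gamma)=\Theta_k(f)|\gamma$ apr\`es extension par lin\'earit\'e aux diviseurs.

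Enfin, pour (iii), on prend $\gamma_\infty=I$, ce qui donne directement $\Theta_k(f)(\{\infty\})=\left(\int\widehat{f}\,\dd{\beta'_{k-1}}{\beta_0}\right)x^{k-2}$. Pour $\{0\}$, on choisit $\gamma_0=\sigma=\smallmat{0&-1\\1&0}$; alors $v_\infty|\sigma^{-1}=y^{k-2}$ et $(\widehat{f}|\sigma)(x,y)=\widehat{f}(-y,x)$. Le changement de variable $y\mapsto -y$ (l\'egitime par l'uniformit\'e de $\beta_0$) puis Fubini (apr\`es renommage des variables) transforment l'int\'egrale en $\int\widehat{f}\,\dd{\beta_0}{\beta'_{k-1}}$, d'o\`u la formule annonc\'ee pour $\Theta_k(f)(\{\infty\}-\{0\})$ par soustraction.
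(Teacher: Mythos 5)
Votre démonstration est correcte et suit essentiellement la même démarche que celle de l'article : on vérifie l'indépendance du relèvement $\gamma_r$ sur les générateurs $T$ et $-\id$ du stabilisateur de $\infty$ (invariance par translation et parité de $\beta_0$, relation $\int g^-\,d\beta'_{k-1}=(-1)^k\int g\,d\beta'_{k-1}$ compensée par $v_\infty|(-\id)=(-1)^{k-2}v_\infty$), l'équivariance découle alors du choix $\gamma_{\gamma r}=\gamma\gamma_r$ et de $\widehat{f|\gamma}=\widehat{f}|\gamma$, et le calcul en $\{0\}$ via $\sigma$ utilise la parité de $\beta_0$ exactement comme dans le texte. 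Rien à redire.
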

\begin{rem}\label{cobord}
Si $f$ appartient à $\Fonct{\Gamma}{\CC}$ pour $\Gamma$ un sous-groupe de congruence
de niveau $N$, $\Theta_{k}(f)$ appartient à
$\Hom_\Gamma(\Delta, V_k(\CC))$ et son image dans $H^1(\Gamma,V_k(\CC))$ est donc nulle:
on a en effet la suite exacte
$$\Hom_\Gamma(\Delta,V_k(\CC)) \to \Hom_\Gamma(\Delta_0,V_k(\CC)) \to H^1(\Gamma, V_k(\CC))\ .$$
\end{rem}
\begin{proof}[Démonstration de la proposition]
Pour $r=\infty$, on pose
$$\Theta_{k}(f)(\{\infty\})=
\left(\int_{(\ZZ/N\ZZ)^2} \widehat{f\ }
  \dd{\beta'_{k-1}}{\beta_{0}}\right) v_\infty
\ .$$
L'invariance par $\Sl_2(\ZZ)$ implique que l'on doit avoir pour $z\in \Delta$
et $\gamma \in \Sl_2(\ZZ)$
$$\Theta_{k}(f|\gamma)(z)=\Theta_{k}(f)|\gamma (z)=\Theta_{k}(f)(\gamma z)|\gamma\ ,$$
autrement dit en l'appliquant à $f|\gamma^{-1}$ et $z=\{r\}$,
$$\Theta_{k}(f)(\{r\})=\Theta_{k}(f|\gamma^{-1})(\{\gamma r\})|\gamma\ .$$
En prenant $\gamma=\gamma_r^{-1}$, on obtient que $\Theta_{k}(f)(\{r\})$
doit être égal à
$\Theta_{k}(f|\gamma_r)(\{\infty\})|\gamma_r^{-1}$.
Vérifions que cette dernière quantité ne dépend pas du choix de $\gamma_r$,
ce qui permettra de l'utiliser comme définition de $\Theta_{k}(f)$.
Il suffit de le vérifier
pour $-\id_2$ et pour $\begin{pmatrix}1&1\\0&1\end{pmatrix}$.
Comme $\dd{\beta'_{k-1}}{\beta_{0}}$ est de même parité que $k$
et que $v_\infty | -\id_2 = (-1)^k v_\infty$,
on a pour $\gamma'_r=-\gamma_r$
$$\Theta_{k}(f|\gamma'_r)(\{\infty\})|{\gamma'_r}^{-1}
=\Theta_{k}(f|\gamma_r)(\{\infty\})|\gamma_r^{-1}\ .$$
Pour $\gamma_r'=\gamma_r \begin{pmatrix}1&1\\0&1\end{pmatrix}$,
\begin{equation*}
\begin{split}
\int_{(\ZZ/N\ZZ)^2} f|\gamma_r'\dd{\beta'_{k-1}}{\beta_{0}}
&=
\int_{(\ZZ/N\ZZ)^2} f|\gamma_r(x_1,x_2-x_1) \dd{\beta'_{k-1}}{\beta_{0}}
\\
&=
\int_{(\ZZ/N\ZZ)^2} f|\gamma_r(x_1,x_2) \dd{\beta'_{k-1}}{\beta_{0}}
\end{split}
\end{equation*}
en utilisant le fait que la distribution $\beta_0$
est invariante par translation.
On a prouvé l'existence et l'unicité de $\Theta_{k}(f)$.
En appliquant la formule \eqref{deftheta} à
$\sigma=\begin{pmatrix} 0&-1\\1&0\end{pmatrix}$, on a
\begin{equation*}
\begin{split}
  \Theta_{k}(f)(\{0\})&=
  \left(\int \widehat{f}(-x_2,x_1) d\beta'_{k-1}(x_1) d\beta_0(x_2) \right)y^{k-2}
  =
  \left(\int \widehat{f}(x_2,x_1) d\beta'_{k-1}(x_1) d\beta_0(x_2) \right)y^{k-2}\\
  &=
  \left(\int \widehat{f} \dd{\beta_0}{\beta'_{k-1}}\right)y^{k-2}
\end{split}
\end{equation*}
en utilisant le fait que $\beta_0$ est pair.
\end{proof}
Soit $\Gamma$ un sous-groupe de congruence de niveau $N$.
Lorsque $k$ est impair et $-1 \in \Gamma$,
$\Hom_\Gamma(\Delta,V_k)$ est nul.
Supposons donc que $k$ est pair ou que
$k$ est impair et $-1 \notin \Gamma$.
Pour chaque pointe  $P\in \Gamma\backslash\PP^1(\QQ)$, fixons
un élément $\gamma_P$ de $SL_2(\ZZ)$ tel que $\gamma_P \infty$
appartienne à $P$. Notons
$\Eiscc{\gamma_P}$ l'élément de $\Hom_{\Gamma}(\Delta,V_k)$ défini par
$$
\Eiscc{\gamma_P}(\{s\}) = \begin{cases}
  v_\infty | \gamma_P^{-1}\gamma^{-1} &\text{ si $s=\gamma \gamma_P \infty$ avec $\gamma\in \Gamma$}\\
0 &\text{ sinon.}
\end{cases}
$$
Lorsque $k$ est pair, $\Eiscc{r,\gamma_P}$ ne dépend que de la classe
$P$ de $\gamma_P \infty$ et on peut écrire
$$
\Eisc{r}{\Gamma}(\{s\}) = \begin{cases}
  v_\infty | \gamma_s^{-1} &\text{ si $s\in \Gamma r$ (avec $\gamma_s\infty=s$)}\\
0 &\text{ sinon.}
\end{cases}
$$
Si $f\in \Fonct{N}{\CC}$, notons $s_{\gamma_P}^{\pm}(f)$ la fonction
sur $\ZZ/N\ZZ$ définie par
$$s_{\gamma_P}^{\pm}(f)=f((0,x) \gamma_P^{-1}) \pm f((0,x) \gamma_P^{-1})\ .$$

\begin{cor}
Soit $k$ un entier strictement supérieur à $2$
tel que $k$ soit impair si $-1 \notin \Gamma$,
Si $f\in \Fonct{\Gamma}{\CC}$, on a
\begin{equation}\label{formule}
\Theta_{k}(f)=
\sum_{P \in \Gamma\backslash \PP^1(\QQ)}
\left ( \frac{1}{-2i \pi}L'(2-k,s_{\gamma_P}^{(-1)^k}(f))
\right)
Eis_{\gamma_P}\ .
\end{equation}
\end{cor}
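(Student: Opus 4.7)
Since both sides lie in $\Hom_\Gamma(\Delta, V_k(\CC))$---the left side by the preceding remark on $\Theta_k$, the right side because each $\Eiscc{\gamma_P}$ is supported on the single $\Gamma$-orbit $\Gamma \gamma_P \infty$ and hence is manifestly $\Gamma$-invariant---it suffices to check the identity on a set of representatives of $\Gamma\backslash\PP^1(\QQ)$. The natural choice is $\{\gamma_P \infty : P \in \Gamma\backslash\PP^1(\QQ)\}$. Evaluated at $\{\gamma_P\infty\}$, the right-hand side reduces to a single term (the one indexed by $P$ itself, with $\gamma = \id$), giving $\frac{1}{-2i\pi} L'(2-k, s_{\gamma_P}^{(-1)^k}(f))\, v_\infty|\gamma_P^{-1}$. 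On the other hand, the defining formula \eqref{deftheta} of Proposition \ref{invariance0} gives
$$\Theta_{k}(f)(\{\gamma_P\infty\}) = \left(\int_{(\ZZ/N\ZZ)^2} \widehat{f}|\gamma_P \dd{\beta'_{k-1}}{\beta_0}\right) v_\infty|\gamma_P^{-1}.$$
So everything reduces to establishing the scalar identity
$$\int_{(\ZZ/N\ZZ)^2} \widehat{f|\gamma_P} \dd{\beta'_{k-1}}{\beta_0} = \frac{1}{-2i\pi} L'(2-k, s_{\gamma_P}^{(-1)^k}(f)).$$

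To prove this, I would first carry out the integration against $\beta_0$ in the variable $x_2$, using $\int h\, d\beta_0 = N^{-1}\sum_a h(a)$; by summing the Fourier exponential $\exp(\frac{2\pi i}{N} a x_2)$ over $x_2$, one finds that $N^{-1}\sum_{x_2} \widehat{g}(x_1,x_2)$ depends only on the restriction of $g$ to the line $\{(0,b)\}$: explicitly, if $h(b) = g(0,b)$, then $\sum_{x_2}\widehat{g}(x_1,x_2) = \widehat{h}(x_1)$. Applying this with $g = f|\gamma_P$, so that $h(b) = f((0,b)\gamma_P^{-1})$, reduces the left side to $\int N^{-1}\widehat{h}(x_1)\, d\beta'_{k-1}(x_1)$.

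Next I would apply the definition $\int G\, d\beta'_{k-1} = \frac{1}{-2i\pi} L'(2-k, \widehat{G}^- + (-1)^k \widehat{G})$ to $G = N^{-1}\widehat{h}$. The Fourier inversion formula $\widehat{\widehat{h}} = Nh^-$ recalled in the preceding subsection yields $\widehat{G} = h^-$ and $\widehat{G}^- = h$, whence
$$\widehat{G}^- + (-1)^k \widehat{G} = h + (-1)^k h^-;$$
unwinding the definitions of $h$ and of $s_{\gamma_P}^{\pm}$, this is exactly the function $x \mapsto f((0,x)\gamma_P^{-1}) + (-1)^k f((0,-x)\gamma_P^{-1}) = s_{\gamma_P}^{(-1)^k}(f)$, which closes the argument.

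The main obstacle is bookkeeping with the Fourier conventions, the parity of $\beta_0$ and $\beta'_{k-1}$, and the asymmetry between the two variables introduced by $\dd{\beta'_{k-1}}{\beta_0}$; no other ingredients beyond Proposition \ref{invariance0}, the formula for the partial Fourier transform, and the Fourier inversion identity are required. One should also observe as a sanity check that in the excluded case where $k$ is odd and $-\id \in \Gamma$, the function $s_{\gamma_P}^{-}(f)$ automatically vanishes by $\Gamma$-invariance of $f$, consistently with $\Hom_\Gamma(\Delta,V_k)$ being zero there.
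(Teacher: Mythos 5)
Your proposal is correct and follows essentially the same route as the paper: both arguments reduce the identity to the scalar computation $\int \widehat{f}|\gamma_P \,\dd{\beta'_{k-1}}{\beta_0} = \frac{1}{-2i\pi}L'(2-k,s_{\gamma_P}^{(-1)^k}(f))$ (the paper by collapsing the double Fourier sum over $x_2$, you by integrating out $\beta_0$ first and invoking $\widehat{\widehat{h\,}}=Nh^-$, which is the same calculation), and then verify the decomposition in the basis $\Eiscc{\gamma_P}$ by evaluating at points of $\PP^1(\QQ)$ and using $\Gamma$-equivariance of both sides. Your restriction to the representatives $\gamma_P\infty$ is a mild streamlining of the paper's check at an arbitrary $s=\gamma\gamma_Q\infty$, not a genuinely different argument.
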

\begin{proof}
On a
\begin{equation*}
\begin{split}
\int_{(\ZZ/N\ZZ)^2} \widehat{f\ } \dd{\beta'_{k-1}}{\beta_{0}}
=&\frac{1}{N^2}\sum_{x_1,x_2} \sum_{a,b} f(a,b)
\exp(\frac{2i\pi}{N}(ax_2 -b x_1)) \beta'_{k-1}(x_1)\\
=
&\frac{1}{N^2}\sum_{a,b} f(a,b)\left(\sum_{x_2} \exp(\frac{2i\pi ax_2}{N})\right)
\left(\sum_{x_1}
\exp(-\frac{2i\pi b x_1}{N}) \beta'_{k-1}(x_1)\right)\\
=&\frac{1}{N}\sum_{x_1} \sum_{b} f(0,b) \exp(-\frac{2i\pi b x_1}{N})
\beta'_{k-1}(x_1)\\
=&\frac{1}{-2i \pi}L'(2-k,f(0,\cdot)+(-1)^k f^-(0,\cdot))\ .
\end{split}
\end{equation*}
En remarquant que
$f|\gamma_P(0,x)= f((0,x)|\gamma_P^{-1})$,
on obtient donc la formule
$$\int_{(\ZZ/N\ZZ)^2} \widehat{f\ }|\gamma_P \dd{\beta'_{k-1}}{\beta_{0}}
=\frac{1}{-2i \pi}L'(2-k,s_{\gamma_P}^{(-1)^k}(f))\ .
$$
Supposons que $f \in \Fonct{\Gamma}{\CC}$ et décomposons $\Theta_{k}(f)$
dans la base des $\Eiscc{\gamma_P}$. Pour tout $s\in \PP^1(\QQ)$
et $Q=\Gamma s$,
on a $s = \gamma \gamma_{Q}\infty$ avec $\gamma \in \Gamma$
pour le choix de $\gamma_Q$ fait. D'où,
\begin{equation*}
\begin{split}
\sum_{P \in \Gamma\backslash \PP^1(\QQ)}
\left(\int \widehat{f\ } | \gamma_P \dd{\beta'_{k-1}}{\beta_0} \right )
\Eiscc{\gamma_P}(\{s\})
&=
\left(\int \widehat{f\ } | \gamma_{Q} \dd{\beta'_{k-1}}{\beta_0} \right )
\Eiscc{\gamma_{Q}}(\{s\})\\
&=
\left(\int \widehat{f\ } | \gamma_{Q} \dd{\beta'_{k-1}}{\beta_0} \right )
v_\infty | \gamma_{Q}^{-1}\gamma^{-1}\\
&=\Theta_{k}(f)(\gamma_Q \infty)| \gamma^{-1}=\Theta_{k}(f|\gamma)(s)
=\Theta_{k}(f)(s)
\end{split}
\end{equation*}
car $f$ est invariant par $\Gamma$.
On en déduit la formule \eqref{formule}.
\end{proof}
\begin{rem}
Lorsque $\Gamma_f$ contient $\Gamma_0(N)$ ($k$ est alors pair pour que
$\Hom_\Gamma(\Delta,V_k)$ ne soit pas nul),
$s_{\gamma_r}^{+}(f)(x)$ est égal à $2f((0,x)|\gamma_r^{-1})$
pour $r$ équivalent à $\infty$ ou à $0$ et $x\in (\ZZ/N\ZZ)^*$,
c'est-à-dire à $2f(0,1)$ pour $r$ équivalent à $\infty$
(resp. $2f(1,0)$ pour $r$ équivalent à $0$).
C'est une fonction constante. On a donc
$$\Theta_{k}(f)(\{r\})
=
\Theta_{k}(f|\gamma_r)(\{\infty\})|\gamma_r^{-1}
=\frac{1}{-i\pi} \zeta_N'(2-k)f|\gamma_r(0,1) v_\infty | \gamma_r^{-1})$$
et
$$\Theta_{k}(f)=\frac{1}{-i\pi} \zeta_N'(2-k)Eis_f$$
où $Eis_f$ est l'élément de
$\Hom_\Gamma(\Delta,V_k)$ (en particulier rationnel) défini par
$$Eis_f(\{r\})=f|\gamma_r(0,1) v_\infty | \gamma_r^{-1}\ .$$
\end{rem}
\begin{cor}\label{cor:rationalite}Si $f \in \Fonct{\Gamma}{\QQ}$,
l'image $\cocycle_k(f)$ de $\Psi_k(f)$ dans $H^1(\Gamma, V_k(\CC))$
appartient à $H^1(\Gamma, V_k)$.
\end{cor}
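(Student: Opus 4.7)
Le plan est d'exhiber une décomposition $\Psi_k(f) = \Phi_k(f) - \Theta_k(f)\circ\partial$ où $\Phi_k(f)\in\Hom_\Gamma(\KKK_0, V_k)$ est à valeurs rationnelles et où la correction $\Theta_k(f)\circ\partial$ se traduit, en cohomologie, par un cobord. On pose $\Phi_k(f) := \Psi_k(f) + \Theta_k(f)\circ\partial$ (où $\partial:\KKK_0\to\Delta_0$ est le bord). Comme $\Psi_k$, $\Theta_k$ et $\partial$ sont $\Sl_2(\ZZ)$-équivariants, $\Phi_k$ définit encore un homomorphisme de $\Sl_2(\ZZ)$-modules de $\hypk{\Fonct{N}{\CC}}$ dans $\Hom(\KKK_0, V_k(\CC))$.

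La première étape consiste à vérifier que $\Phi_k(h)$ prend ses valeurs dans $V_k$ pour toute $h\in \Fonct{N}{\QQ}$. Par la proposition \ref{generateur}, $\KKK_0$ est engendré comme $\Sl_2(\ZZ)$-module par $\{\infty,0\}$ et par les $[0,t]_\infty$ pour $t\in\QQ$. Puisque l'action de $\Sl_2(\ZZ)$ préserve à la fois $\Fonct{N}{\QQ}$ et $V_k$, il suffit, grâce à l'équivariance de $\Phi_k$, de voir la rationalité sur ces générateurs. Sur $[0,t]_\infty$, on a $\partial[0,t]_\infty=0$ et la formule de Stevens donne $\Psi_k(h)([0,t]_\infty)$ comme un polynôme à coefficients rationnels, puisque les distributions $\beta_k$ et $\beta_0$ sont à valeurs rationnelles. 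Sur $\{\infty,0\}$, la somme indexée par $j$ dans la formule de Stevens est rationnelle pour la même raison ; quant aux termes restants impliquant $\beta'_{k-1}$, ils coïncident exactement avec $\Theta_k(h)(\{\infty\}-\{0\})=-\Theta_k(h)(\partial\{\infty,0\})$ d'après la proposition \ref{invariance0}, et sont donc compensés dans $\Phi_k(h)(\{\infty,0\})$.

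La seconde étape est le passage à la cohomologie. Le cocycle représentant $\cocycle_k(f)$ est $\gamma\mapsto \Psi_k(f)([\pi_\infty(0),\gamma^{-1}\pi_\infty(0)])$ ; il se décompose en la somme du cocycle à valeurs dans $V_k$
$$\gamma\mapsto \Phi_k(f)([\pi_\infty(0),\gamma^{-1}\pi_\infty(0)])$$
et du cocycle $\gamma\mapsto -\Theta_k(f)(\{\gamma^{-1}\infty\}-\{\infty\})$. Or, par $\Gamma$-équivariance de $\Theta_k(f)$, ce dernier est égal à $\Theta_k(f)(\{\infty\})-\Theta_k(f)(\{\infty\})|\gamma$, cobord de $-\Theta_k(f)(\{\infty\})$ ; ce n'est rien d'autre que l'observation de la remarque \ref{cobord} appliquée à la restriction de $\Theta_k(f)$ à $\Delta_0$. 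Ainsi $\cocycle_k(f)$ est représenté par un cocycle à valeurs dans $V_k$, ce qui donne l'énoncé.

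Le point saillant de la démonstration est l'identification précise entre les termes « transcendants » de $\Psi_k$ (ceux impliquant $\beta'_{k-1}$) et la composée $\Theta_k\circ\partial$ ; cette correspondance n'est pas une coïncidence, mais reflète la façon dont la formule de Stevens est construite à partir du prolongement analytique. Une fois cette correspondance repérée, la rationalité se lit directement sur les générateurs et la trivialité cohomologique du terme correctif découle immédiatement du fait qu'il se relève à $\Delta$ tout entier.
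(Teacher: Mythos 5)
Votre preuve est correcte et suit essentiellement la même démarche que celle du texte : on retranche de $\Psi_k(f)$ la contribution $\Theta_k(f)\circ\partial$, dont le cocycle associé est un cobord (remarque \ref{cobord}), et on constate que le reste est rationnel d'après les formules explicites de Stevens. Vous ne faites qu'expliciter ce que le texte condense en une phrase (la composition par $\partial$, la vérification sur les générateurs via l'équivariance, le calcul du cobord), sans changer l'argument.
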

\begin{proof}
Le cocycle associé à un élément de $\Hom_\Gamma(\Delta,V_k)$
étant un cobord, l'image de $\Theta_{k}(f)$ dans $H^1(\Gamma,V_k)$
est nulle et $\cocycle_k(f)$
est aussi l'image du cocycle associé à $\Psi_k(f) - \Theta_{k}(f)$
qui est rationnel.
\end{proof}
Soit $\wEisk{k,\Gamma}{\CC}$ l'image de $\hypk{\Fonct{\Gamma}{\CC}}$
dans $H^1(\Gamma, V_k(\CC))$ par $\cocycle_k$.
Si $\wEiskQ{k,\Gamma}$ est de même l'image de
$\hypk{\Fonct{\Gamma}{\QQ}}$
dans $H^1(\Gamma, V_k)$, on a
$\wEisk{k,\Gamma}{\CC}= \CC\otimes \wEiskQ{k,\Gamma}$.

\begin{prop}
\label{prop:ortheisenstein}
\
\begin{enumerate}
\item
L'orthogonal de $\Eisk{k,\Gamma}{\CC}$ dans $M_k(\Gamma)$
pour le produit de Petersson
est $S_k(\Gamma)$.
\item Si $f\in \hypk{\Fonct{\Gamma}{\CC}}$ et si la restriction de $\Psi_k(f)$
aux symboles infinitésimaux est nulle, $\Eis_k(f)$ est nul.

\item Le $\QQ$-espace vectoriel $H^1(\Gamma, V_k)$ est engendré par l'image
de $\Hom_{\Gamma}(\Delta_0,V_k)$
et par le module d'Eisenstein $\wEiskQ{k,\Gamma}$.
\end{enumerate}
\end{prop}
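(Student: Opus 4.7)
For part (1), I would apply the classical unfolding argument. Expanding $\Eis_k(f)$ as a sum over coprime pairs $(c,d)$, regrouping according to cosets of $\Gamma_\infty$ in $\Gamma$, and unfolding the Petersson integral over $\Gamma\backslash\cH$ into one over $\Gamma_\infty\backslash\cH$, the pairing against a cusp form $G$ reduces to a multiple of $a_0(G)=0$, so $S_k(\Gamma)\subseteq \Eisk{k,\Gamma}{\CC}^\perp$ in $M_k(\Gamma)$. The reverse inclusion comes from a dimension count: the codimension of $S_k(\Gamma)$ in $M_k(\Gamma)$ is the number of cusps of $\Gamma$ (minus one in weight $2$), which matches $\dim_\CC \Eisk{k,\Gamma}{\CC}$ by Proposition \ref{gen} and its analogue for general $\Gamma$.

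For part (2), I would use the $\Gl_2^+(\QQ)$-equivariance of $\Per$ to pull infinitesimal symbols at a general cusp $r$ back to $\infty$. For $\gamma\in\Sl_2(\ZZ)$ with $\gamma\infty=r$,
$$\Per(F)(\gamma[0,s]_\infty)=\Per(F|_k\gamma)([0,s]_\infty)|\gamma^{-1},$$
and by Theorem \ref{thm:fond} this is a scalar multiple of $a_0(F|_k\gamma)$, i.e.\ of the constant term of $F$ at the cusp $r$. Specialising to $F=\Eis_k(f)$, the hypothesis that $\Psi_k(f)$ vanishes on every $[s,t]_r$ then forces $a_0(\Eis_k(f)|_k\gamma)=0$ at each cusp, so $\Eis_k(f)$ is cuspidal. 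Being also Eisenstein, part (1) yields $\Eis_k(f)=0$.

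For part (3), I would reduce to a dimension count. Proposition \ref{prop:suiteexacte} identifies the image of $\Hom_\Gamma(\Delta_0,V_k)$ in $H^1(\Gamma,V_k)$ with $H^1_{par}(\Gamma,V_k)$, so it suffices to show that $\wEiskQ{k,\Gamma}$ surjects onto $H^1(\Gamma,V_k)/H^1_{par}(\Gamma,V_k)$. After tensoring with $\CC$, Eichler-Shimura identifies this quotient with $M_k(\Gamma)/S_k(\Gamma)$, of $\CC$-dimension $\dim_\CC \Eisk{k,\Gamma}{\CC}$ by part (1). By part (2), the map $\hypk{\Fonct{\Gamma}{\CC}}\to \wEisk{k,\Gamma}{\CC}$ has the same kernel as $\Eis_k$, so $\dim_\CC \wEisk{k,\Gamma}{\CC}=\dim_\CC \Eisk{k,\Gamma}{\CC}$, forcing the image of the Eisenstein module to exhaust the complex quotient. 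The identity $\wEisk{k,\Gamma}{\CC}=\CC\otimes\wEiskQ{k,\Gamma}$ (Corollary \ref{cor:rationalite} and the remark preceding the statement) then descends the surjection to $\QQ$.

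The main obstacle is likely to be part (2): although the strategy is natural, one must carefully check how the $\Sl_2(\ZZ)$-equivariance transports Theorem \ref{thm:fond}'s formula $\Per(F)([0,s]_\infty)=a_0(F)\int_0^s(tx+y)^{k-2}dt$ to a formula at an arbitrary cusp without introducing spurious factors, and in weight $2$ verify that the hypothesis $f(0)=0$ alone suffices to detect $a_0(\Eis_k(f))=0$ from the vanishing of $\Psi_k(f)$ on the infinitesimal symbols at $\infty$.
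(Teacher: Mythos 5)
Your treatments of (1) and (2) follow the paper's own route. For (1) the paper simply cites the literature for the orthogonality $S_k(\Gamma)\subseteq\Eisk{k,\Gamma}{\CC}^\perp$ (your unfolding sketch is the standard proof of that citation) and then makes the same dimension count, phrased via Eisenstein series supported at a single cusp. For (2) the paper likewise observes that vanishing of $\Psi_k(f)$ on the infinitesimal symbols is equivalent to the vanishing of the constant term of $\Eis_k(f)|_k\gamma$ for every $\gamma\in\Sl_2(\ZZ)$, whence $\Eis_k(f)\in S_k(\Gamma)\cap\Eisk{k,\Gamma}{\CC}=0$; the weight-$2$ worry you flag is harmless, since for $k=2$ one still has $\Per(F)([0,r]_\infty)=a_0(F)\,r$, so the infinitesimal symbols detect the constant terms exactly as for $k>2$.

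The gap is in (3). From $\dim_\CC \wEisk{k,\Gamma}{\CC}=\dim_\CC\left(H^1(\Gamma,V_k(\CC))/H^1_{par}(\Gamma,V_k(\CC))\right)$ you conclude that the Eisenstein module "exhausts the complex quotient", but a subspace whose dimension equals that of a quotient need not surject onto it: you also need $\wEisk{k,\Gamma}{\CC}\cap H^1_{par}(\Gamma,V_k(\CC))=0$, i.e. that no nonzero Eisenstein class is parabolic. (Relatedly, part (2) does not give that $f\mapsto\cocycle_k(f)$ has the same kernel as $\Eis_k$: part (2) concerns the vanishing of $\Psi_k(f)$ on infinitesimal symbols, not the vanishing of its class in $H^1$.) The missing injectivity is true and provable with the paper's tools — the restriction of $\cocycle_k(f)$ to the stabilizer of a cusp $t$ is the class of $\gamma\mapsto\Psi_k(f)([s,\gamma^{-1}s]_t)$, which is nontrivial exactly when the constant term of $\Eis_k(f)$ at $t$ is nonzero, so $\cocycle_k(f)\in H^1_{par}$ already forces $\Eis_k(f)\in S_k(\Gamma)$, hence $\Eis_k(f)=0$ — but you must say it. A cleaner route, closer to the paper's one-line appeal to Corollary \ref{cor:rationalite}: over $\RR$, the Eichler--Shimura theorem and part (1) give
$$H^1(\Gamma,V_k(\RR))=\cocycle_\RR(S_k(\Gamma))+\cocycle_\RR(\Eisk{k,\Gamma}{\CC})\subseteq H^1_{par}(\Gamma,V_k(\RR))+\RR\otimes\wEiskQ{k,\Gamma}\ ,$$
the last inclusion using that $\wEiskQ{k,\Gamma}$ is a $\QQ$-structure on $\wEisk{k,\Gamma}{\CC}$ (Corollaire \ref{cor:rationalite}); since both summands are $\QQ$-rational, the equality descends to $\QQ$ and no quotient argument is needed.
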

\begin{proof}
On peut trouver une démonstration de l'orthogonalité n'utilisant pas les opérateurs de Hecke
dans \cite{diamond}. Si $k>2$, pour toute pointe $r$, il existe un élément
de $\Eisk{k,\Gamma}{\CC}$ nul en toutes les pointes
de $\Gamma\backslash \PP^1(\QQ)$ sauf $r$. Pour $k=2$,
pour tout couple de pointes distinctes $r$ et $s$, il existe un élément
de $\hypk{\Eisk{k,\Gamma}{\CC}}$ nul en toutes les pointes sauf $r$ et $s$.
Un argument de dimension permet d'en déduire que l'orthogonal de $\Eisk{k,\Gamma}{\CC}$
dans $M_k(\Gamma)$ est exactement $S_k(\Gamma)$.

La restriction de $\Psi_k(f)$ aux symboles infinitésimaux
est nulle si et seulement si le terme constant du
$q$-développement de $\Eis_k(f)|_k\gamma$
est nul pour tout $\gamma \in \Sl_2(\ZZ)$. Dans ce cas,
la série d'Eisenstein $\Eis_k(f)$ appartient à $S_k(\Gamma)$
et est nulle.

La dernière assertion se déduit alors du corollaire \ref{cor:rationalite}.
\end{proof}

\section{Produit sur l'espace de symboles modulaires}

\subsection{Définition algébrique}
Soient $V$ un espace vectoriel (de dimension finie) muni d'une
action à droite de $\Gl_2(\QQ)$ et $\langle \cdot, \cdot \rangle_{V}$
une forme bilinéaire sur $V$ invariante par $\Sl_2(\QQ)$.
Si $R$ est un $\QQ$-espace vectoriel, on note $V(R)=R\otimes_\QQ V$.

On se donne un symbole de Farey étendu $\cF=(\cV,*, \mu_{ell})$ associé à $\Gamma$
et ses données de recollement $\gamma_{a}$ pour $a \in \cV$
(on renvoie à \cite{farey} pour la définition).
On note
$\cV_{ell,2}=\mu_{ell}^{-1}(2)$, $\cV_{ell,3}=\mu_{ell}^{-1}(3)$,
$\cV_{ell}=\cV_{ell,2} \cup \cV_{ell,3}$.
On associe au symbole de Farey un domaine fondamental de $\Gamma$. Sa frontière est formée des arcs de $\cV- \cV_{ell,3}$,
ainsi que d'un couple d'arcs géodésiques pour chaque élément de $ \cV_{ell,3}$.
Donnons une extension naturelle de $\Hom_\Gamma(\Delta_0,V)$ à ces derniers arcs.

Soient $a=(r_a,s_a)$ un chemin elliptique d'ordre 3 et
$z_a$ le point fixe de $\gamma_a$.
On appelle triangle associé à $a$ le triangle hyperbolique de sommets
$r_a$, $s_a=\gamma_a^{-1} r_a$, $t_a=\gamma_a r_a$ et on note
$u_a$ et $v_a$ les chemins $u_{a}=(r_a, z_a)$ et $v_{a}=(z_a,s_a)$.
Si $\Phi\in \Hom_{\Gamma}(\Delta_0, V)$, on définit
\begin{equation*}
\begin{split}
\Phi(u_a)&= \frac{1}{3} \left(\Phi((r_a,\gamma_a r_a)+ \Phi((r_a,\gamma_a^2 r_a))\right)
=\frac{1}{3} \left(\Phi((r_a,t_a))+ \Phi((r_a,s_a))\right)\\
\Phi(v_a)&= \frac{1}{3} \left(\Phi((\gamma_a s_a,s_a))+ \Phi((\gamma_a^2 s_a,s_a))\right)
=\frac{1}{3} \left(\Phi((r_a,s_a))+ \Phi((t_a,s_a))\right)
\ .
\end{split}
\end{equation*}

\begin{center}
\begin{tikzpicture}[scale=20]
\draw (0.25,0)
node (start) {} arc (180:0:0.0417)
node [below]{$r_a$}
node (start) {} arc (180:0:0.0833)
node [midway, above] {$a$} node [below]{$s_a$};
 \draw (start) arc (180:73.1736:0.0476) node [near end,above] {\tiny{$u_a$}} node {$\bullet$} node [above] {$z_a$}
arc (133.1736:0:0.0625) node [near start,above] {\tiny{$v_a$}} node (start) {} arc (180:0:0.0833);
\draw (1/3,0) arc (180:0:1/30) node [midway,below] {$a'$} node [below] {$t_a$}
 arc (180:0:1/20) node [midway,below] {$a''$} ;
 \draw (2/5,0) arc (0:11.927:21/95);
\end{tikzpicture}
\end{center}
On a alors $u_a= \gamma_a v_a^-$. Pour $\Phi\in \Hom_{\Gamma}(\Delta_0, V)$,
\begin{equation}
\begin{split}
\Phi(u_a)+ \Phi(v_a)
&=\frac{1}{3} \Phi((r_a,s_a)+ (r_a,t_a)+(r_a,s_a)+ (t_a,s_a))
=\Phi(a)\ .
\end{split}
\end{equation}
On pose
\begin{equation}
\begin{split}
\Phi(\gamma u_a)=\Phi(u_a)|\gamma \ ,\quad
\Phi(\gamma v_a)=\Phi(v_a)|\gamma
\end{split}
\end{equation}
pour $\gamma \in \Gamma$, ce qui est compatible avec les relations
déduite de l'équation $u_a= \gamma_a v_a^-$.

On fait de même pour un arc elliptique $a=(r_a,s_a)$ d'ordre 2
en posant $u_a=(r_a,z_a)$, $v_a=(z_a,s_a)$
pour $z_a$ le milieu de $a$, point fixe de $\gamma_a$.
On a $u_a=\gamma_a v_a^{-}$.
On pose $\Phi(u_a)=\frac{1}{2} \Phi((r_a,\gamma_a r_a)=\frac{1}{2}\Phi(a)$,
$\Phi(v_a)=\frac{1}{2} \Phi(\gamma_a s_a,s_a)=\frac{1}{2}\Phi(a)$.
On a encore $\Phi(u_a)+\Phi(v_a)=\Phi(a)$.

\begin{defn}
Si $\cF=(\cV,*,\mu)$ est un symbole de Farey étendu, on note
$\Vmod$ la suite d'arcs obtenue à partir de $\cV$ en remplaçant
les arcs elliptiques $a$ par les deux arcs $u_a$ et $v_a$.
\end{defn}

Comme $u_a= \gamma_a v_a^{-}$, l'application $*$ se prolonge donc
de manière naturelle à $\Vmod$ par $u_a^*=v_a$, $v_a^*=u_a$ et la donnée
de recollement $\gamma_{u_a}$ de $u_a$ est $\gamma_a$.
Si $\Phi \in \Hom_{\Gamma}(\Delta_0, V)$, $\Phi(a)$ est bien définie pour
$a\in \Vmod$.

Rappelons qu'il y a une application naturelle
$$\Hom_\Gamma(\Xi_0, V) \to H^1(\Gamma,V)$$
qui associe à $\Phi$ la classe du cocycle $\widetilde{\Phi}_{1,Z}:
\gamma \mapsto \Phi([Z,\gamma^{-1}Z])$ dans
$H^1(\Gamma,V)$ avec $Z \in \mathcal{P}(\QQ)$. Nous noterons
de la même manière l'application qui s'en déduit sur
$\Hom_\Gamma(\Delta_0, V)$ et
qui associe à $\Phi$ la classe du cocycle $\widetilde{\Phi}_{1,Z}=\widetilde{\Phi}_{1,r}:
\gamma \mapsto \Phi((r,\gamma^{-1}r))$ dans la cohomologie
$H^1_{par}(\Gamma,V)$ où $r$ est l'image de $Z$ dans $\PP^1(\QQ)$
(proposition \ref{prop:suiteexacte}).

\begin{thm} Soit $\cF=(\cV,*, \mu_{ell})$ un symbole de Farey étendu associé à $\Gamma$.
\label{def:biendefini}
Pour $\widetilde{\Phi}_1$ un 1-cocyle de $\Gamma$ à valeurs dans $V$
et $\Phi_2$ appartenant à $\Hom_\Gamma(\Delta_0, V)$, on pose
\begin{equation}
\label{def:petersson}
\left\lbrace \widetilde{\Phi}_1, \Phi_2\right\rbrace_{\Gamma,\cF}=
\frac{1}{2}\sum_{a\in \Vmod} \langle
  \widetilde{\Phi}_1(\gamma_a^{-1}),\Phi_2 (a)\rangle_{V}\ .
\end{equation}

\begin{enumerate}
\item Cette expression ne dépend que de la classe $\Phi_1$ de
$\widetilde{\Phi}_1$ dans $H^1(\Gamma, V)$ et on pose
\begin{equation*}
\begin{split}
\left\lbrace \Phi_1, \Phi_2\right\rbrace_{\Gamma,\cF}=
&\left\lbrace \widetilde{\Phi}_1, \Phi_2\right\rbrace_{\Gamma,\cF} \ .
\end{split}
\end{equation*}
\item Si $\Phi_1 \in \Hom_\Gamma(\Delta_0, V)$
$$\left\lbrace \Phi_1, \Phi_2\right\rbrace_{\Gamma,\cF}=\left\lbrace
\widetilde{\Phi}_{1,r}, \Phi_2\right\rbrace_{\Gamma,\cF}$$
ne dépend pas de $r\in \PP^1(\QQ)$.
\item Pour $V=V_k$ muni de la forme bilinéaire
$\langle \cdot , \cdot \rangle_{V_k}$ définie au paragraphe
\ref{defbilk}, $\left\lbrace \Phi_1, \Phi_2\right\rbrace_{\Gamma,\cF}$
ne dépend pas de $\cF$. On l'appelle le
\textsl{produit de Petersson algébrique} de $\Phi_1$ et $\Phi_2$
et on le note aussi
$\left\lbrace \Phi_1, \Phi_2\right\rbrace_{\Gamma}$.
\end{enumerate}
\end{thm}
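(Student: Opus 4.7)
My strategy is to handle the three claims in the order (1), (2), (3). The first two can be obtained by essentially formal manipulations on the definition~\eqref{def:petersson}, whereas I do not expect (3) to follow from algebraic input alone; the actual obstacle, acknowledged in the introduction, is the independence from the choice of Farey symbol, which I would reduce to the classical Petersson integral.

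\textbf{Part~(1).} The plan is to show that $\{\widetilde{\Phi}_1, \Phi_2\}_{\Gamma,\cF} = 0$ whenever $\widetilde{\Phi}_1$ is a coboundary $\gamma \mapsto v|(\gamma-1)$ with $v \in V$. Substituting in~\eqref{def:petersson} and splitting by bilinearity gives the two sums
$$\tfrac{1}{2}\sum_{a \in \Vmod}\langle v|\gamma_a^{-1},\Phi_2(a)\rangle_V \quad \text{and} \quad \tfrac{1}{2}\sum_{a \in \Vmod}\langle v,\Phi_2(a)\rangle_V.$$
For the second, I would observe that $\Phi_2(u_a)+\Phi_2(v_a)=\Phi_2(a)$ makes the sum over $\Vmod$ agree with the sum over $\cV$, which in turn equals $\Phi_2\bigl(\sum_{a\in\cV} a\bigr)=\Phi_2(0)=0$ because the arcs of $\cV$, read in their cyclic order on the boundary of the fundamental polygon attached to $\cF$, telescope to $0$ in $\Delta_0$. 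For the first, $\Sl_2(\QQ)$-invariance of $\langle\cdot,\cdot\rangle_V$ converts $\langle v|\gamma_a^{-1},\Phi_2(a)\rangle_V$ into $\langle v,\Phi_2(a)|\gamma_a\rangle_V$; the $\Gamma$-equivariance of $\Phi_2$ combined with the Farey pairing identities built into $\cF$ (the relation $u_a = \gamma_a v_a^-$ and its analogues for non-elliptic paired arcs) rewrites $\Phi_2(a)|\gamma_a$ up to sign as $\Phi_2(a^*)$, so reindexing $a\leftrightarrow a^*$ brings this sum back to the same vanishing boundary sum. The possible subtlety, to be checked case by case, is that the elliptic arcs of order $2$ and $3$ and their halves $u_a,v_a$ obey the same pairing identity; this is granted by the compatibility conventions set just before the theorem.

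\textbf{Part~(2).} This will drop out of (1): for $r,r'\in\PP^1(\QQ)$, the element $v=\Phi_1(\{r,r'\})\in V$ is well defined since $\{r\}-\{r'\}\in\Delta_0$, and the identity $\{r,\gamma^{-1}r\}-\{r',\gamma^{-1}r'\}=(1-\gamma^{-1})(\{r\}-\{r'\})$ in $\Delta_0$, together with the $\Gamma$-equivariance of $\Phi_1$, shows that $\widetilde{\Phi}_{1,r}-\widetilde{\Phi}_{1,r'}$ is the coboundary attached to $v$. Applying~(1) finishes this part.

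\textbf{Part~(3).} This is where I expect the main obstacle, and where I would depart from purely algebraic reasoning, as is already signalled in the introduction. My plan is to identify $\{\cdot,\cdot\}_{\Gamma,\cF}$, on enough inputs, with the classical Petersson product, which is manifestly independent of any choice of fundamental domain and hence of $\cF$. Concretely, the Eichler--Shimura theorem expresses every class in $H^1(\Gamma,V_k(\CC))$ through $\Per(F)+\overline{\Per(G)}$ for modular forms $F,G$, and Proposition~\ref{prop:suiteexacte} together with the Eisenstein symbols $\Psi_k$ exhibits generators of $\Hom_\Gamma(\Delta_0,V_k(\CC))$ of the same analytic origin. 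The plan is then to establish (this is part of the main theorem stated in the introduction, to be proved later) the identity $\{\Per(F),\overline{\Per(G)}\}_{\Gamma,\cF} = -(2i)^{k-1}\langle F,G\rangle_\Gamma$ for $G$ cuspidal, whose right-hand side is a concrete integral over a fundamental domain and is $\cF$-independent. Combined with the corresponding Eisenstein computation, this pins down $\{\cdot,\cdot\}_{\Gamma,\cF}$ on a spanning family of elements by a quantity that does not mention $\cF$, yielding (3) by bilinearity.
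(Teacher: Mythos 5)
Your proposal is correct and follows essentially the same route as the paper: part (1) is the paper's coboundary computation (merely split into two vanishing sums instead of one, both resting on $\Phi_2(u_a)+\Phi_2(v_a)=\Phi_2(a)$, the pairing relation $a=\gamma_a(a^*)^-$ and the telescoping $\sum_{a\in\cV}a=0$), part (2) is the same reduction to (1) via the coboundary $\widetilde{\Phi}_{1,r}-\widetilde{\Phi}_{1,r'}$, and part (3) is deferred, exactly as in the paper, to the comparison with the classical Petersson integral (Theorem \ref{thm:petersson1} and Corollary \ref{independance}), the only caveat being that one must also dispose of the $\Hom_\Gamma(\Delta,V_k)$-part of $\Phi_2$ via Proposition \ref{lem:nul}, which is $\cF$-independent since it gives zero.
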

Pour $\Phi_1\in \Hom_\Gamma(\KKK_0,V)$ et $\Phi_2\in \Hom_\Gamma(\Delta_0,V)$,
nous écrirons
$\left\lbrace \Phi_1, \Phi_2\right\rbrace_{\Gamma,\cF}$
pour
$\left\lbrace \Phi'_1, \Phi_2\right\rbrace_{\Gamma,\cF}$
où $\Phi'_1$ est l'image de $\Phi_1$ dans $H^1(\Gamma,V)$.

Remarquons que l'involution $*$ n'a pas de point fixe
sur $\Vmod$ contrairement à ce qui se passe sur $\cV$.
On pourrait faire la somme sur $\Vmod/*$ grâce au lemme \ref{symetrie}
qui suit,
ce qui permet dans le cas de calculs explicites de ne tenir compte
que de la moitié des arcs. On rappelle que $a =\gamma_a {a^*}^-$ avec $\gamma_a \in \Gamma$.
\begin{lem}
\label{symetrie}
Si $a \in \Vmod$,
$$
\langle \widetilde{\Phi}_1(\gamma_a^{-1}), \Phi_2(a)\rangle_V =
\langle \widetilde{\Phi}_1(\gamma_a), \Phi_2(a^*)\rangle_V
=
\langle \widetilde{\Phi}_1(\gamma_{a^*}^{-1}), \Phi_2(a^*)\rangle_V
\ .
$$
\end{lem}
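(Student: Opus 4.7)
The plan is to observe that the three quantities in the statement are linked by two simple identities, one of which is essentially free. Indeed, since $a = \gamma_a (a^*)^-$ as divisors and the operator $*$ is involutive, the same definition applied to $a^*$ (namely $a^* = \gamma_{a^*}(a^{**})^- = \gamma_{a^*} a^-$) forces $\gamma_{a^*} = \gamma_a^{-1}$. Therefore $\widetilde{\Phi}_1(\gamma_{a^*}^{-1}) = \widetilde{\Phi}_1(\gamma_a)$, and the second equality of the lemma is immediate. All the real content is in the first equality.

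For the first equality I would assemble three ingredients. First, the cocycle relation $\widetilde{\Phi}_1(\gamma\delta) = \widetilde{\Phi}_1(\gamma)|\delta + \widetilde{\Phi}_1(\delta)$ applied with $\gamma=\gamma_a$, $\delta=\gamma_a^{-1}$, together with $\widetilde{\Phi}_1(\id)=0$, yields
\[
\widetilde{\Phi}_1(\gamma_a^{-1})|\gamma_a = -\widetilde{\Phi}_1(\gamma_a).
\]
Second, the $\Gamma$-equivariance convention $\Phi_2(\gamma x) = \Phi_2(x)|\gamma^{-1}$, combined with $a = \gamma_a (a^*)^-$ and the obvious $\Phi_2(x^-) = -\Phi_2(x)$, gives
\[
\Phi_2(a) = -\Phi_2(a^*)|\gamma_a^{-1}, \qquad \text{hence} \qquad \Phi_2(a)|\gamma_a = -\Phi_2(a^*).
\]
Third, since $\gamma_a \in \Gamma \subset \Sl_2(\ZZ)$, the $\Sl_2(\QQ)$-invariance of $\langle \cdot, \cdot\rangle_V$ supplies $\langle v, w\rangle_V = \langle v|\gamma_a, w|\gamma_a\rangle_V$. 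Stringing these together,
\[
\langle \widetilde{\Phi}_1(\gamma_a^{-1}), \Phi_2(a)\rangle_V
= \langle \widetilde{\Phi}_1(\gamma_a^{-1})|\gamma_a, \Phi_2(a)|\gamma_a\rangle_V
= \langle -\widetilde{\Phi}_1(\gamma_a), -\Phi_2(a^*)\rangle_V
= \langle \widetilde{\Phi}_1(\gamma_a), \Phi_2(a^*)\rangle_V,
\]
which is the first equality.

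The only subtle point, and the one worth verifying carefully, is that the identity $\Phi_2(a)|\gamma_a = -\Phi_2(a^*)$ still holds when $a$ is one of the half-arcs $u_a$ or $v_a$ introduced for elliptic sides, because these are not genuine elements of $\Delta_0$ but are defined by averaging formulas. However, this was already built into the construction: the relations $u_a = \gamma_a v_a^-$ (with $\gamma_{u_a} = \gamma_a$, $v_a = u_a^*$) together with the explicit averaged definitions of $\Phi(u_a)$ and $\Phi(v_a)$ are precisely what make $\Phi_2$ behave $\Gamma$-equivariantly on $\Vmod$. A one-line check by unwinding $\Phi_2(v_a)|\gamma_a^{-1}$ using $\gamma_a r_a = t_a$ and $\gamma_a s_a = r_a$ (order $3$ case, and analogously for order $2$) shows that $\Phi_2(u_a) = -\Phi_2(v_a)|\gamma_a^{-1}$, so the argument above applies uniformly to every $a \in \Vmod$.
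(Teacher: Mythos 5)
Your proof is correct, and since the paper states this lemma without proof, your argument is exactly the intended one: the three ingredients you use (the relation $\widetilde{\Phi}_1(\gamma_a^{-1})|\gamma_a=-\widetilde{\Phi}_1(\gamma_a)$ from the cocycle identity, the equivariance $\Phi_2(a)|\gamma_a=-\Phi_2(a^*)$ coming from $a=\gamma_a(a^*)^-$, and the $\Sl_2$-invariance of $\langle\cdot,\cdot\rangle_V$) are precisely the manipulations the authors themselves perform in the adjacent proofs of cocycle-independence and of Proposition~\ref{lem:nul}. Your closing check that the averaged definitions of $\Phi(u_a)$ and $\Phi(v_a)$ on elliptic half-arcs still satisfy $\Phi_2(u_a)=-\Phi_2(v_a)|\gamma_a^{-1}$ (and that $\gamma_{v_a}=\gamma_a^{-1}$) is the one point that genuinely needs verifying, and you handle it correctly.
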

En termes de $\cV$ plutôt que de $\Vmod$, le produit de Petersson algébrique s'écrit:
\begin{equation}
\label{def:petersson1}
\left\lbrace \Phi_1, \Phi_2\right\rbrace_{\Gamma,\cF}=
\frac{1}{2}\sum_{a\in \cV- \cV_{ell,3}} \langle
  \widetilde{\Phi}_1(\gamma_a^{-1}),\Phi_2 (a)\rangle_{V}
+ \frac{1}{3}\sum_{a\in \cV_{ell,3}}
\langle \widetilde{\Phi}_1(\gamma_a^{-1})+\widetilde{\Phi}_1(\gamma_a^{-2}),
\Phi_2 (a)\rangle_{V}
\end{equation}
et pour $\Phi_1$ et $\Phi_2$ dans $\Hom_{\Gamma}(\Delta_0, V)$ et $r \in \PP^1(\QQ)$,
\begin{equation*}
\begin{split}
\left\lbrace\Phi_1, \Phi_2\right\rbrace_{\Gamma,\cF} &=
\frac{1}{2} \sum_{a\in \Vmod}
\langle \Phi_{1}((r,\gamma_a r)),\Phi_2 (a)\rangle_{V}
\\&=
\frac{1}{2} \sum_{a\in \cV - \cV_{ell,3}}
\langle \Phi_{1}((r,\gamma_a r)),\Phi_2 (a)\rangle_{V}
+
\frac{1}{3}\sum_{a \in \cV_{ell,3}}
\langle \Phi_{1}((r,\gamma_a r))+
\Phi_{1}((r,\gamma_a^{2} r)),\Phi_2 (a)\rangle_{V}
\ .
\end{split}
\end{equation*}
L'indépendance par rapport à $\cF$ sera démontrée
après l'établissement du lien avec le produit de Petersson complexe
(corollaire \ref{independance}) pour $V_k$ et la forme bilinéaire
$\langle \cdot,\cdot\rangle_{V_k}$.
Nous garderons donc $\cF$ en indice dans le cas général.\footnote{Nous ne sommes pas
arrivés à la démontrer directement.}

\begin{proof}[Démonstration de l'indépendance par rapport au cocycle]
Notons $H(\widetilde{\Phi}_1, \Phi_2)$ le second membre de
\eqref{def:petersson}.
Si $\widetilde{\Phi}_1$ et $\widetilde{\Phi}_1'$ sont deux cocycles représentant $\Phi_1$,
on a $$\widetilde{\Phi}_1(\gamma)-\widetilde{\Phi}_1'(\gamma)= C|(1-\gamma)$$
avec $C \in V$.
La contribution de $a\in \Vmod$
à $H(\widetilde{\Phi}_1, \Phi_2)-H(\widetilde{\Phi}'_1 , \Phi_2)$ est
\begin{equation*}
\begin{split}
\frac{1}{2}\langle\widetilde{\Phi}_1(\gamma_a^{-1})-\widetilde{\Phi}_1'(\gamma_a^{-1}),
\Phi_2(a)\rangle_V
&=
\frac{1}{2}\langle C |(1-\gamma_a^{-1}), \Phi_2(a)
\rangle_V\\
&=
\frac{1}{2}\langle C,\Phi_2(a) | (1-\gamma_a)\rangle_V=
\langle C,\Phi_2((1-\gamma_a^{-1})a)\rangle_V\\
&=
\frac{1}{2}\langle C,\Phi_2(a+a^*)\rangle_V\ .
\end{split}
\end{equation*}
Finalement,
$$
H(\widetilde{\Phi}_1, \Phi_2) -H(\widetilde{\Phi}_1' , \Phi_2)=
\frac{1}{2}\langle C, \Phi_2\left(\sum_a a + \sum_a a^*\right)\rangle_V = 0
\ . $$
\end{proof}
\begin{prop}
\label{lem:nul}
Si $\Phi_2$ appartient à l'image de $\Hom_\Gamma(\Delta, V)$
et si $\Phi_1$ appartient à $\Hom_\Gamma(\Delta_0, V)$, on a
$$\left\lbrace \Phi_2, \Phi_1\right\rbrace_{\Gamma,\cF} = \left\lbrace \Phi_1, \Phi_2\right\rbrace_{\Gamma,\cF}=0\ . $$
\end{prop}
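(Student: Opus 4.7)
Write $\Phi_2 = \Psi|_{\Delta_0}$ with $\Psi \in \Hom_\Gamma(\Delta, V)$, as granted by the hypothesis. The two identities will be treated separately; the first is immediate, but the second requires a direct computation.

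The identity $\{\Phi_2, \Phi_1\}_{\Gamma,\cF} = 0$ is a consequence of Remark \ref{cobord} and Theorem \ref{def:biendefini}(1). Indeed, the $\Gamma$-invariance of $\Psi$ gives
\[
\widetilde{\Phi}_{2,r}(\gamma) \;=\; \Psi(\{\gamma^{-1}r\}) - \Psi(\{r\}) \;=\; \Psi(\{r\})\,|\,(\gamma - 1),
\]
so the cocycle attached to $\Phi_2$ is the coboundary of $\Psi(\{r\})$, and its class in $H^1(\Gamma, V)$ vanishes. Since by Theorem \ref{def:biendefini}(1) the pairing depends only on this class, the first identity follows.

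For the identity $\{\Phi_1, \Phi_2\}_{\Gamma,\cF} = 0$ I would proceed by direct computation. The key preliminary observation is that if one introduces the formal values $\Psi(z_a) := \tfrac{1}{3}(\Psi(\{r_a\}) + \Psi(\{s_a\}) + \Psi(\{t_a\}))$ for elliptic arcs of order $3$ and $\Psi(z_a) := \tfrac{1}{2}(\Psi(\{r_a\}) + \Psi(\{s_a\}))$ for order $2$, then the given extensions of $\Phi_2$ to the sub-arcs $u_a, v_a$ coincide with $\Psi(\text{end}) - \Psi(\text{start})$. Hence for every $\alpha \in \Vmod$ we may write $\Phi_2(\alpha) = \Psi(s_\alpha) - \Psi(r_\alpha)$. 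Substituting into the defining sum and regrouping by vertex (each vertex $c$ of the boundary of $\cD$ is endpoint of exactly one incoming arc $\alpha^-_c$ and one outgoing arc $\alpha^+_c$) yields
\[
2\{\Phi_1, \Phi_2\}_{\Gamma,\cF} \;=\; \sum_c \langle \Phi_1((\gamma_{\alpha^+_c} r,\, \gamma_{\alpha^-_c} r)),\, \Psi(c)\rangle_V.
\]
Grouping the vertices into $\Gamma$-orbits, writing $c = \beta_c c_P$ with $c_P$ a chosen representative, and using the $\Gamma$-invariance of $\Psi$ and $\Phi_1$ together with the $\Sl_2(\QQ)$-invariance of $\langle\cdot,\cdot\rangle_V$ reduces the sum to a pairing $\sum_P \langle A_P,\, \Psi(\{c_P\})\rangle_V$ where $A_P$ is a sum of values of $\Phi_1$ on pairs of cusps translated by $\beta_c^{-1}\gamma_{\alpha^\pm_c}$.

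The main obstacle is to show that each $A_P$ vanishes modulo the action of $\Stab_\Gamma(c_P)$ (under which $\Psi(\{c_P\})$ is invariant). This rests on the combinatorial identities of the Farey symbol $\cF$: as one traverses the arcs of $\cV$ incident to the orbit $P$ along the boundary of $\cD$, the gluing elements $\gamma_\alpha$ compose to produce precisely the generators of $\Stab_\Gamma(c_P)$, and the contributions from the auxiliary points $t_a = \gamma_a r_a$ of order-$3$ triangles and the elliptic fixed points $z_a$ cancel by virtue of $\gamma_a$ having finite order. It may be convenient to use Theorem \ref{def:biendefini}(2) to choose $r = c_P$, which simplifies the bookkeeping considerably since then several of the translated pairs degenerate.
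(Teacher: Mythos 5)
Your treatment of the first identity is correct and is exactly the paper's argument: the cocycle attached to $\Phi_2$ is the coboundary of $\Psi(\{r\})$, and the pairing depends only on the class of its first argument (th\'eor\`eme \ref{def:biendefini}), so $\left\lbrace\Phi_2,\Phi_1\right\rbrace_{\Gamma,\cF}=0$. Your verification that the extensions of $\Phi_2$ to the sub-arcs $u_a,v_a$ are of the form $\Psi(\mathrm{end})-\Psi(\mathrm{start})$ for suitable formal values $\Psi(z_a)$ is also correct.

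For the second identity, however, what you give is a plan whose decisive step --- the vanishing of each $\langle A_P,\Psi(\{c_P\})\rangle$ --- is the entire content of the proposition, and you leave it unexecuted ("the main obstacle"). Two points in particular are not covered by your sketch. First, at an order-$3$ elliptic vertex $z_a$ the coefficient of $\Psi(z_a)$ in your regrouping is $\widetilde{\Phi}_1(\gamma_a^{-1})|(1+\gamma_a)$; since $\Psi(z_a)=\frac13(\Psi(\{r_a\})+\Psi(\{s_a\})+\Psi(\{t_a\}))$ is $\gamma_a$-invariant, this pairs to $2\langle\widetilde{\Phi}_1(\gamma_a^{-1}),\Psi(z_a)\rangle$, which does \emph{not} vanish "by finite order" alone: one must invoke the norm relation $\widetilde{\Phi}_1(\gamma_a^{-1})|(1+\gamma_a+\gamma_a^{2})=0$ (from $\widetilde{\Phi}_1(\gamma_a^{-3})=0$) and average against the $\gamma_a$-invariant vector $\Psi(z_a)$ to get $3\langle\widetilde{\Phi}_1(\gamma_a^{-1}),\Psi(z_a)\rangle=0$. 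Second, at the cusp vertices you must actually perform the telescoping along each orbit of the successor map, identify the telescoped sum as $\widetilde{\Phi}_1(\tau)$ for $\tau$ a generator of $\Stab_\Gamma(c_P)$, note that this restriction is a coboundary \emph{because} $\Phi_1$ extends to $\Delta_0$ (proposition \ref{prop:suiteexacte}) --- a hypothesis your sketch never uses explicitly, although the statement is false without it --- and then move $(\tau-1)$ across the pairing onto $\Psi(\{c_P\})$, which $\tau$ fixes. These are precisely the computations \eqref{contribution} and \eqref{contribution2} of the paper; note that the paper sidesteps your elliptic bookkeeping entirely by working with the formula \eqref{def:petersson1} over $\cV$ rather than over $\Vmod$, which collapses the contribution of an order-$3$ arc directly to $-\langle\widetilde{\Phi}_1(\gamma_a^{-1}),\Phi_2'(\{P_a\})\rangle$ with $P_a$ a cusp. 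As it stands, your proposal is a correct strategy but not a proof.
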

\begin{proof}
Soit
$\Phi'_2 \in \Hom_\Gamma(\Delta,V)$ tel que
$\Phi_2((c,d))= \Phi'_2(\{d\}) - \Phi'_2(\{c\})$.
Prenons comme cocycle représentant l'image de $\Phi_2$ (resp. $\Phi_1$) dans
$H^1(\Gamma,V)$ le cocycle $\widetilde{\Phi}_2=\widetilde{\Phi}_{2,r}$
(resp. $\widetilde{\Phi}_1=\widetilde{\Phi}_{1,r}$).
Montrons d'abord la nullité de $\left\lbrace \Phi_2, \Phi_1\right\rbrace_{\Gamma,\cF}$. On a
\begin{equation*}
\begin{split}
\left\lbrace \Phi_2, \Phi_1\right\rbrace_{\Gamma,\cF}&=
 \frac{1}{2}\sum_{a\in \Vmod}
\langle
 \widetilde{\Phi}_{2}(\gamma_a^{-1}),\Phi_1(a)\rangle_V
\\
 &=\frac{1}{2}\sum_{a\in\Vmod}\langle
 \Phi'_2(\gamma_a\{r\}) - \Phi'_2(\{r\}),\Phi_1(a)\rangle_V
\\
 &=\frac{1}{2}\sum_{a\in \Vmod }\langle
 \Phi'_2(\{r\}) | (\gamma_a^{-1} -1),\Phi_1(a)\rangle_V
\\
 &=-\frac{1}{2}\sum_{a \in \Vmod}\langle
 \Phi'_2(\{r\}),\Phi_1(a + a^*)\rangle_V=0
\end{split}
\end{equation*}
comme dans la démonstration de la définition \ref{def:biendefini}.

Montrons maintenant la nullité de $\left\lbrace \Phi_1, \Phi_2\right\rbrace_{\Gamma,\cF}$.
La démonstration s'inspire de l'article de Shimura \cite{shimura}.
Si $a=(P,Q)$ n'est pas un chemin elliptique d'ordre 3 et
si $a^*=(R,S)$, on a par définition $Q=\gamma_a R$ et
$P=\gamma_a S$.
La contribution de $a$ à $\left\lbrace \Phi_1, \Phi_2\right\rbrace_{\Gamma,\cF} $ est
\begin{equation*}
\begin{split}
\frac{1}{2}\langle \widetilde{\Phi}_1(\gamma_{a}^{-1}),\Phi_2' (\{Q\}-\{P\})\rangle_V
&=
\frac{1}{2}\langle \widetilde{\Phi}_1(\gamma_{a}^{-1}),\Phi_2' (\{\gamma_{a} R\})\rangle_V
- \frac{1}{2}
\langle \widetilde{\Phi}_1(\gamma_{a}^{-1}),\Phi_2' (\{P\})\rangle_V
\\&
=\frac{1}{2}\langle \widetilde{\Phi}_1(\gamma_{a}^{-1})|_{\gamma_a},\Phi_2' (\{R)\}\rangle_V
-\frac{1}{2}
\langle \widetilde{\Phi}_1(\gamma_{a}^{-1}),\Phi_2' (\{P\})\rangle_V
\\&
=-\frac{1}{2}\langle \widetilde{\Phi}_1(\gamma_{a^*}^{-1}),\Phi_2' (\{R\})\rangle_V
-
\frac{1}{2}\langle \widetilde{\Phi}_1(\gamma_{a}^{-1}),\Phi_2' (\{P\})\rangle_V\end{split}
\end{equation*}
($\widetilde{\Phi}_1$ est nul sur l'identité).
Si $\gamma_{a}$ est d'ordre 3, la contribution du chemin
$a=(P,\gamma_{a}^{-1} P)$ est
\begin{equation*}
\begin{split}
\frac{1}{3} (\langle \widetilde{\Phi}_1(\gamma_{a}^{-1})
&+ \widetilde{\Phi}_1(\gamma_{a}^{-2}),
  \Phi_2'(\{\gamma_{a}^{-1} P\})\rangle_V
  -
\langle \widetilde{\Phi}_1(\gamma_{a}^{-1})+\widetilde{\Phi}_1(\gamma_{a}^{-2}),
  \Phi_2'(\{P\})\rangle_V)\\
&=\frac{1}{3}\langle \widetilde{\Phi}_1(\gamma_{a}^{-1})|\gamma_{a}^{-1}
+\widetilde{\Phi}_1(\gamma_{a}^{-2})|\gamma_{a}^{-1}
-\widetilde{\Phi}_1(\gamma_{a}^{-1})-\widetilde{\Phi}_1(\gamma_{a}^{-2}),
  \Phi_2'(\{P\})\rangle_V
\\
&=\frac{1}{3}\langle
\widetilde{\Phi}_1(\gamma_{a}^{-2})
-\widetilde{\Phi}_1(\gamma_{a}^{-1})
-\widetilde{\Phi}_1(\gamma_{a}^{-1})
-\widetilde{\Phi}_1(\gamma_{a}^{-1})
-\widetilde{\Phi}_1(\gamma_{a}^{-2}),
  \Phi_2'(\{P\})\rangle_V
\\
&=-\langle \widetilde{\Phi}_1(\gamma_{a}^{-1}),\Phi_2'(\{P\})\rangle_V
\ .
\end{split}
\end{equation*}
Autrement dit,
\begin{equation}
\label{contribution}
\left\lbrace \Phi_1,\Phi_2\right\rbrace_{\Gamma,\cF} =-\sum_{a\in \cV}
\langle \widetilde{\Phi}_1(\gamma_{a}^{-1}),
\Phi_2'(\{P_a\})\rangle_V
\end{equation}
où $P_a$ est l'origine du chemin $a$. Si $P$ qui est une extrémité
d'un élément de $\cV$, on étudie ensuite son orbite au sens suivant :
le successeur de $P$ est $\gamma^{-1} P$ si $\gamma$ est la donnée
de recollement associée à l'élément de $\cV$ d'origine $P$.
Notons $\alpha$ une orbite nécessairement finie de longueur $l(\alpha)$ :
$\alpha=(P_0,\cdots, P_{l(\alpha)-1})$
avec $a_{j}=(P_j,Q_j)$,
$P_{j+1}=\gamma_{a_{j}}^{-1}P_j$,
$P_{0}=\gamma_{a_{l(\alpha)-1}}^{-1}P_{l(\alpha)-1}$.
En posant $\tau_{0}=\id$,
$\tau_{j+1}= \gamma_{a_{j}}^{-1} \tau_{j}$,
on a donc
$P_{j}=\tau_j P_0$ et
la contribution de l'orbite $\alpha$ à $-\left\lbrace \Phi_1, \Phi_2\right\rbrace_{\Gamma,\cF} $ est la somme
pour $j=0$ à $l(\alpha)-1$ des
$\langle \widetilde{\Phi}_1(\gamma_{a_{j}}^{-1}),
  \Phi_2'(\{P_{j}\})\rangle_V$.
  On a
\begin{equation*}
\begin{split}
\langle \widetilde{\Phi}_1(\gamma_{a_{j}}^{-1}),
  \Phi_2'(\{P_{j}\})\rangle_V
  &=
\langle \widetilde{\Phi}_1(\gamma_{a_{j}}^{-1}),
  \Phi_2'(\{\tau_{j} P_{0}\})\rangle_V
\\
  &=
\langle \widetilde{\Phi}_1(\gamma_{a_{j}}^{-1})|\tau_{j},
  \Phi_2'(\{P_{0}\})\rangle_V\\
   &=\langle
  \widetilde{\Phi}_1(\gamma_{a_{j}}^{-1}\tau_{j})
  -
  \widetilde{\Phi}_1({\tau_{j}}),
  \Phi_2'(\{P_{0}\})\rangle_V
  \text{ par la propriété de cocycle}\\
  &=\langle
  \widetilde{\Phi}_1({\tau_{j+1}})
  -
  \widetilde{\Phi}_1({\tau_{j}}),
  \Phi_2'(\{P_{0}\})\rangle_V\ .
\end{split}
\end{equation*}
La somme des contributions des points de l'orbite $\alpha$ est égale à
\begin{equation}
\label{contribution2}
\langle
  \widetilde{\Phi}_1({\tau_{l(\alpha)}})
  -
  \widetilde{\Phi}_1({\tau_{0}}),
  \Phi_2'(\{P_{0}\})\rangle_V
  =
\langle
  \widetilde{\Phi}_1({\tau_{l(\alpha)}}),
  \Phi_2'(\{P_{0}\})\rangle_V
\end{equation}
où $\tau_{l(\alpha)}$ appartient au stabilisateur de
$P_{0}$.
Comme la restriction du cocycle $\widetilde{\Phi}_1$
au stabilisateur de $P_{0}$ est un cobord (voir proposition \ref{prop:suiteexacte}),
il existe $u\in V$ tel que
\begin{equation*}
\begin{split}
\langle
  \widetilde{\Phi}_1({\tau_{l(\alpha)}}),
  \Phi_2'(\{P_{0}\})\rangle_V=
\langle u|(\tau_{l(\alpha)}-1),
  \Phi_2'(\{P_{0}\})\rangle_V
=\langle u, \Phi_2'(\{(\tau_{l(\alpha)}-1)P_{0}\})\rangle_V=0\ .
\end{split}
\end{equation*}
On en déduit que $\left\lbrace \Phi_1,\Phi_2\right\rbrace_{\Gamma,\cF} =0$.
\end{proof}

Avec les notations de la preuve précédente,
$\tau_{l(\alpha)}$ est le générateur du stabilisateur de la pointe
$P_0$ conjugué à $\smallmat{1&w(P_0)\\0&1}$
où $w(P_0)>0$ est la largeur de la pointe $P_0$, générateur dit positif,
et il y a une orbite par pointe. Dans le cas où le polygone fondamental
est sous forme canonique,
il y a une orbite de longueur 1 pour chaque pointe sauf une et une orbite
de longueur $>1$ pour la pointe restante.

Un corollaire de la démonstration et en particulier des formules
\eqref{contribution} et \eqref{contribution2} est le suivant.
\begin{prop}
\label{noncusp}
Soit $C(\Gamma)$ un système de représentants de $\Gamma\backslash \PP^1(\QQ)$.
Si $\Phi_1\in H^1(\Gamma,V)$ et
 si $\Phi_2 \in\Hom_{\Gamma}(\Delta_0,V)$ est l'image
d'un élément $\Phi_2'$ de $\Hom_{\Gamma}(\Delta,V)$,
on a
$$\left\lbrace \Phi_1,\Phi_2\right\rbrace_{\Gamma,\cF} =-
\sum_{s\in C(\Gamma)}
\langle \widetilde{\Phi_{1}^{(s)}}(\tau_s), \Phi_2'(\{s\}) \rangle_V
$$
où $\tau_s$ est le générateur positif du stabilisateur $\Gamma_s$ de $s$
et où $\widetilde{\Phi_1^{(s)}}$ est un cocycle de $\Gamma_s$ représentant la
restriction de $\Phi_1$ à $\Gamma_s$.
\end{prop}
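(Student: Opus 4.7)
L'id�e est de r�utiliser \textsl{mot pour mot} la d�monstration de la proposition \ref{lem:nul}, en s'arr�tant juste avant l'�tape qui utilisait l'hypoth�se sp�cifique $\Phi_1 \in \Hom_\Gamma(\Delta_0,V)$ (� savoir que la restriction de $\widetilde{\Phi}_1$ � un stabilisateur de pointe est un cobord). Les formules \eqref{contribution} et \eqref{contribution2} obtenues dans cette d�monstration n'utilisent en effet que la $\Gamma$-invariance de $\Phi_2'$ et l'identit� de cocycle pour $\widetilde{\Phi}_1$, elles restent donc valables pour une classe de cohomologie $\Phi_1\in H^1(\Gamma,V)$ quelconque.

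Concr�tement, je partirais de l'expression
$$\left\lbrace \Phi_1,\Phi_2\right\rbrace_{\Gamma,\cF} = -\sum_{a\in \cV} \langle \widetilde{\Phi}_1(\gamma_a^{-1}),\Phi_2'(\{P_a\})\rangle_V$$
et je regrouperais la somme selon les orbites de l'application successeur sur les origines des arcs, not�es $\alpha=(P_0,\ldots,P_{l(\alpha)-1})$, avec les $\tau_j$ d�finis par $\tau_0=\id$ et $\tau_{j+1}=\gamma_{a_j}^{-1}\tau_j$. Le calcul t�lescopique r�alis� dans la preuve pr�c�dente donne alors, \textsl{sans aucun autre ingr�dient}, la contribution
$$-\langle \widetilde{\Phi}_1(\tau_{l(\alpha)}),\Phi_2'(\{P_0\})\rangle_V,$$
avec $\tau_{l(\alpha)}\in \Stab_\Gamma(P_0)$.

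Il reste alors deux identifications g�om�triques � faire, rappel�es dans le texte pr�c�dant l'�nonc�: (i) les orbites de l'application successeur sont en bijection avec $\Gamma\backslash\PP^1(\QQ)$ (une orbite par pointe), et (ii) l'�l�ment $\tau_{l(\alpha)}$ est pr�cis�ment le g�n�rateur positif du stabilisateur de $P_0$. Pour passer ensuite � un syst�me de repr�sentants $C(\Gamma)$, je choisirais pour chaque orbite un $\beta_\alpha \in \Gamma$ tel que $\beta_\alpha P_0 =: s \in C(\Gamma)$. En utilisant l'invariance $\Phi_2'(\beta_\alpha^{-1}\{s\})=\Phi_2'(\{s\})|\beta_\alpha$ et l'invariance de $\langle \cdot,\cdot\rangle_V$ par $\Sl_2(\QQ)$, on transporte la contribution en $-\langle \widetilde{\Phi_1^{(s)}}(\tau_s),\Phi_2'(\{s\})\rangle_V$ o� $\tau_s=\beta_\alpha\tau_{l(\alpha)}\beta_\alpha^{-1}$ est le g�n�rateur positif de $\Gamma_s$ et $\widetilde{\Phi_1^{(s)}}$ est le cocycle sur $\Gamma_s$ transport� de $\widetilde{\Phi}_1|_{\Gamma_{P_0}}$ via $\beta_\alpha$.

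Le seul point demandant une v�rification est le caract�re bien d�fini du second membre: changer de cocycle repr�sentatif $\widetilde{\Phi_1^{(s)}}$ par un cobord $\gamma\mapsto v|(\gamma-1)$ modifie $\langle \widetilde{\Phi_1^{(s)}}(\tau_s),\Phi_2'(\{s\})\rangle_V$ de $\langle v|(\tau_s-1),\Phi_2'(\{s\})\rangle_V = \langle v,\Phi_2'((\tau_s^{-1}-1)\{s\})\rangle_V$, qui est nul car $\tau_s$ fixe $s$. Il n'y a donc pas de v�ritable obstacle; l'�nonc� est litt�ralement le r�sum� du calcul de la proposition \ref{lem:nul} arr�t� � mi-chemin.
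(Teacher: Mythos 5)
Votre d�monstration est correcte et suit exactement la voie du papier : la proposition y est pr�sent�e comme un corollaire imm�diat de la preuve de la proposition \ref{lem:nul}, via les formules \eqref{contribution} et \eqref{contribution2} qui ne reposent que sur la relation de cocycle pour $\widetilde{\Phi}_1$ et la $\Gamma$-invariance de $\Phi_2'$, l'identification des orbites de l'application successeur avec les pointes et de $\tau_{l(\alpha)}$ avec le g�n�rateur positif du stabilisateur �tant rappel�e juste avant l'�nonc�. Votre v�rification de l'ind�pendance du second membre par rapport au choix du cocycle $\widetilde{\Phi_1^{(s)}}$ est un compl�ment bienvenu mais ne change pas l'argument.
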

\begin{prop}
Si $\langle\cdot , \cdot \rangle_V $ est symétrique
(resp. antisymétrique),
la forme bilinéaire $\left\lbrace \cdot , \cdot \right\rbrace_{\Gamma,\cF}$
est antisymétrique (resp. symétrique)
sur $\Hom_\Gamma(\Delta_0, V)$.
\end{prop}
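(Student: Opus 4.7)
My plan is to prove the (anti)symmetry by establishing
\[
\{\Phi_1, \Phi_2\}_{\Gamma,\cF} + \epsilon\, \{\Phi_2, \Phi_1\}_{\Gamma,\cF} = 0,
\]
with $\epsilon = +1$ when $\langle\cdot,\cdot\rangle_V$ is symmetric and $\epsilon = -1$ when antisymmetric. Using $\epsilon\,\langle v, w\rangle_V = \langle w, v\rangle_V$ together with the definition of the pairing, the left-hand side equals $\tfrac12 \sum_{a\in\Vmod} L((r,\gamma_a r), a)$, where
\[
L(A, a) := \langle \Phi_1(A), \Phi_2(a)\rangle_V + \langle \Phi_1(a), \Phi_2(A)\rangle_V.
\]
This form $L$ is manifestly symmetric and satisfies the polarization identity $L(A, a) = G(A+a) - G(A) - G(a)$, where $G(x) := \langle \Phi_1(x), \Phi_2(x)\rangle_V$ is $\Gamma$-invariant thanks to the $\Gamma$-equivariance of $\Phi_1, \Phi_2$ and the $\Sl_2(\QQ)$-invariance of $\langle,\rangle_V$; this identity extends formally to $a \in \Vmod$ by setting $\Phi_i(A+a) := \Phi_i(A) + \Phi_i(a)$.

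I would then exploit two structural facts coming from the Farey symbol. First, from $a = \gamma_a (a^*)^-$ one deduces $\gamma_{a^*} = \gamma_a^{-1}$ and $\Phi_i(a^*) = -\Phi_i(a)|\gamma_a$, so that $G(a) = G(a^*)$. Second, at the level of values, $\sum_{a\in\Vmod}\Phi_i(a) = 0$, reflecting the closure of the boundary of the fundamental polygon: for non-elliptic arcs $a = (P_a, Q_a)$ the sum telescopes through the polygon vertices, and for an elliptic arc the combined contribution $\Phi_i(u_a) + \Phi_i(v_a) = \Phi_i((r_a, s_a))$ behaves as that of a single arc with rational endpoints. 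For a non-elliptic arc, writing $Q_a = \gamma_a P_{a^*}$ gives $(r, \gamma_a r) + a = (r, Q_a) - \gamma_a (r, Q_{a^*})$, and the $\Gamma$-invariance of $G$ yields
\[
G\bigl((r,\gamma_a r) + a\bigr) = G\bigl((r, Q_a)\bigr) + G\bigl((r, Q_{a^*})\bigr) - L\bigl((r, Q_a),\,\gamma_a(r, Q_{a^*})\bigr).
\]
Substituting the polarization identity into $\sum_a L((r,\gamma_a r), a)$, grouping terms by $*$-pairs $(a, a^*)$, and telescoping along the polygon vertices produces the cancellations required for the vanishing.

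The main technical obstacle is the treatment of the elliptic arcs of order $3$, for which $u_a, v_a \in \Vmod$ share a non-rational endpoint $z_a$, so they are not divisors in $\Delta_0$ and the direct decomposition above does not apply. I would invoke the explicit formulas $\Phi_i(u_a) = \tfrac13(\Phi_i((r_a,t_a)) + \Phi_i((r_a,s_a)))$ with $t_a = \gamma_a r_a$, the additivity $\Phi_i(u_a) + \Phi_i(v_a) = \Phi_i((r_a,s_a))$, and the cocycle relation $\widetilde{\Phi}_i(\gamma_a)(1 + \gamma_a + \gamma_a^2) = 0$ arising from $\gamma_a^3 = 1$, to show that the combined contribution of $u_a$ and $v_a$ to the telescoping equals that of a single arc with endpoints $r_a, s_a$ in $\PP^1(\QQ)$. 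An analogous but simpler computation treats the order-$2$ case, where $\Phi_i(u_a) = \Phi_i(v_a) = \tfrac12 \Phi_i(a)$. Once this elliptic bookkeeping is in place, the telescoping closes and the sum vanishes, establishing the claim.
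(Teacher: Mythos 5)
Your strategy is correct and completable, but it is best described as a polarized repackaging of the paper's own argument rather than an independent route. The paper introduces the potential $\widehat{\Phi}_{i,r}(t)=\Phi_i((r,t))$, rewrites $\widetilde{\Phi}_{1,r}(\gamma_a^{-1})$ via the coboundary relation evaluated at the specific point $t=\partial_2 a$ (so that $\gamma_a^{-1}t=\partial_1 a^*$), and lands on the manifestly (anti)symmetric closed formula \eqref{autredef}; the (anti)symmetry is then read off by reindexing $a\mapsto a^*$. You instead symmetrize first, reducing the claim to $\sum_{a}L((r,\gamma_a r),a)=0$ with $L$ the polarization of the $\Gamma$-invariant quadratic form $G$. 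The three structural inputs are identical in both proofs: the divisor identity $(r,\gamma_a r)+a=(r,\partial_2 a)-\gamma_a(r,\partial_2 a^*)$ (your version of the paper's choice $t=\partial_2 a$), the closedness of the polygon boundary ($\sum_a h(\partial_2 a)=\sum_a h(\partial_1 a)$ for any function $h$ of a vertex), and the fixed-point-free involution $a\mapsto a^*$ on $\Vmod$. What the paper's route buys is the formula \eqref{autredef} itself, which it records and reuses; what yours buys is that the statement to be proved becomes the vanishing of a single manifestly symmetric quantity, at the cost of not producing that closed formula. Your treatment of the elliptic arcs (explicit $\tfrac13$ and $\tfrac12$ formulas plus $\gamma_a^3=\pm\id$) matches the paper's device of extending $\widehat{\Phi}_r$ to $\PP^1(\QQ)\cup E_3$.

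One warning on the last step: "grouping by $*$-pairs and telescoping" understates the work. If you carry out the substitution of the polarization identity and use only $G(a)=G(a^*)$, $\sum_a\Phi_i(a)=0$ and the $*$-bijection, you do not reach $0$; you reach $\sum_a\bigl[\langle\Phi_1((r,\gamma_a r)),\Phi_2((r,\partial_2 a))\rangle_V+\langle\Phi_1((r,\partial_2 a)),\Phi_2((r,\gamma_a r))\rangle_V-2G((r,\gamma_a r))\bigr]$, and naive further manipulation is circular (it returns the quantity you started from). To close, you must substitute $\Phi_i((r,\partial_2 a))=\Phi_i((r,\gamma_a r))+\Phi_i((r,\partial_1 a^*))|\gamma_a^{-1}$ inside this sum and only then telescope and reindex by $*$ — which is word for word the computation the paper performs to establish \eqref{autredef}. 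So the idea is present in your sketch (it is encoded in your formula for $G((r,\gamma_a r)+a)$), but the sentence claiming the cancellations follow from telescoping alone hides the step that carries the actual content.
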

\begin{proof}
Notons $E_3$ l'ensemble des points elliptiques d'ordre 3 de $\Gamma$.
Si $t\in E_3$, il existe $(r_a,z_a)\in \Vmod$ et $\gamma \in \Gamma$
tels que $\gamma z_a=t$ et on pose $\Phi((s,t))=
\Phi((s,\gamma r_a)) + \Phi((\gamma r_a,t))$
pour tout $s\in \PP^1(\QQ)$.
Fixons $r\in \PP^1(\QQ)$.
Si $\Phi\in \Hom_\Gamma(\Delta_0, V)$,
on introduit la fonction $\widehat{\Phi}_r$ sur $\PP^1(\QQ)\cup E_3$
définie par
$$
\widehat{\Phi}_r(t) =\Phi((r,t))
$$
C'est un moyen d'étendre $\Phi$ en une fonction de
$\PP^1(\QQ)\cup E_3$ dans $V$, comme on le fait dans le cas complexe.
Elle n'est pas invariante par $\Gamma$ : on a pour tout $t\in \PP^1(\QQ)\cup E_3$
$$\widehat{\Phi}_r(t)- \widehat{\Phi}_r | \gamma(t)= \widetilde{\Phi}_r(\gamma)$$
En effet,
\begin{equation*}
\begin{split}
\widehat{\Phi}_r(\gamma t)&= \Phi((r,\gamma r)) +\Phi(\gamma(r,t))
=
\widetilde{\Phi}_r(\gamma^{-1}) + \Phi((r,t)) | \gamma^{-1}\\
&=\widetilde{\Phi}_r(\gamma^{-1}) + \widehat{\Phi}_r(t) | \gamma^{-1}
=- \widetilde{\Phi}_r(\gamma) | \gamma^{-1}+ \widehat{\Phi}_r(t) | \gamma^{-1}
\end{split}
\end{equation*}
car
$$\widetilde{\Phi}_r(\gamma^{-1}) =- \widetilde{\Phi}_r(\gamma) | \gamma^{-1}$$
D'où
$$
\widehat{\Phi}_r(t)-\widehat{\Phi}_r(\gamma t)| \gamma= \widetilde{\Phi}_r(\gamma)
$$
Calculons à l'aide des fonctions $\widehat{\Phi}_{1,r}$ et
$\widehat{\Phi}_{2,r}$
la contribution d'un arc
$a=(\partial_1 a, \partial_2 a)$ de $\Vmod$ à $2\left\lbrace \Phi_1, \Phi_2\right\rbrace_{\Gamma,\cF} $
Pour $t \in \PP^1(\QQ)\cup E_3$ quelconque, on a
\begin{equation*}
\begin{split}
\langle\widetilde{\Phi}_{1,r}(\gamma_a^{-1}),\Phi_2(a)\rangle_V&=
\langle
\widehat{\Phi}_{1,r}( t) - \widehat{\Phi}_{1,r}(\gamma_a^{-1} t)|\gamma_a^{-1},
\Phi_2(a)
\rangle_V
\\
&=
\langle \widehat{\Phi}_{1,r}( t), \Phi_2(a)\rangle_V
-
\langle \widehat{\Phi}_{1,r}(\gamma_a^{-1} t),
\Phi_2(a)|\gamma_a\rangle_V
\\
&=
\langle \widehat{\Phi}_{1,r}( t), \Phi_2(a)\rangle_V
+
\langle \widehat{\Phi}_{1,r}(\gamma_a^{-1} t),
\Phi_2(a^*)\rangle_V
\\
&=
\langle\widehat{\Phi}_{1,r}(t),\widehat{\Phi}_{2,r}(\partial_2 a)\rangle_V
- \langle\widehat{\Phi}_{1,r}( t),\widehat{\Phi}_{2,r}(\partial_1 a)\rangle_V
\\ &\quad\quad + \langle\widehat{\Phi}_{1,r}(\gamma_a^{-1} t),
\widehat{\Phi}_{2,r}(\partial_2 a^*)\rangle_V
- \langle\widehat{\Phi}_{1,r}(\gamma_a^{-1} t),
\widehat{\Phi}_{2,r}(\partial_1 a^*)\rangle_V\ .
\end{split}
\end{equation*}
En prenant $t=\partial_2 a$, on obtient
\begin{equation*}
\begin{split}
\langle\widetilde{\Phi}_{1,r}(\gamma_a^{-1}),\Phi_2(a)\rangle_V&=
\langle\widehat{\Phi}_{1,r}(\partial_2 a),\widehat{\Phi}_{2,r}(\partial_2 a)\rangle_V
- \langle\widehat{\Phi}_{1,r}(\partial_2 a),\widehat{\Phi}_{2,r}(\partial_1 a)\rangle_V
\\
&\quad\quad + \langle\widehat{\Phi}_{1,r}(\partial_1 a^*),
\widehat{\Phi}_{2,r}(\partial_2 a^*)\rangle_V
- \langle\widehat{\Phi}_{1,r}(\partial_1 a^*),
\widehat{\Phi}_{2,r}(\partial_1 a^*)\rangle_V\\
&=\langle\widehat{\Phi}_{1,r}(\partial_2 a),\widehat{\Phi}_{2,r}(\partial_2 a)\rangle_V
-\langle\widehat{\Phi}_{1,r}(\partial_1 a^*), \widehat{\Phi}_{2,r}(\partial_1 a^*)\rangle_V
\\&
\quad \quad -
\langle\widehat{\Phi}_{1,r}(\partial_2 a),\widehat{\Phi}_{2,r}(\partial_1 a)\rangle_V
+ \langle\widehat{\Phi}_{1,r}(\partial_1 a^*),
\widehat{\Phi}_{2,r}(\partial_2 a^*),
\rangle_V\ .
\end{split}
\end{equation*}
En faisant la somme sur tous les $a$ et en utilisant le fait que
$a \to a^*$ est une bijection, le terme
$$
\sum_{a\in \Vmod}
\langle\widehat{\Phi}_{1,r}(\partial_2 a),\widehat{\Phi}_{2,r}(\partial_2 a)\rangle_V
-\langle\widehat{\Phi}_{1,r}(\partial_1 a^*), \widehat{\Phi}_{2,r}(\partial_1 a^*)\rangle_V
$$
est nul. On a alors
\begin{equation}
\label{autredef}
\left\lbrace \Phi_1,\Phi_2\right \rbrace_{\Gamma,\cF}
=\frac{1}{2}\sum_{a\in\Vmod} \left(\langle\widehat{\Phi}_{1,r}(\partial_1 a^*),
\widehat{\Phi}_{2,r}(\partial_2 a^*)\rangle_V
-
\langle\widehat{\Phi}_{1,r}(\partial_2 a),\widehat{\Phi}_{2,r}(\partial_1 a)\rangle_V
\right)\ .
\end{equation}
Cette expression est antisymétrique (resp. symétrique) si $\langle \cdot, \cdot \rangle_{V}$ est symétrique (resp. antisymétrique).
\end{proof}
\begin{rem}
L'équation \eqref{autredef} donne une autre définition de l'accouplement
$\left\lbrace \cdot,\cdot \right\rbrace_\Gamma$ restreint à $\Hom_{\Gamma}(\Delta_0,V)$.
\end{rem}
\subsection{Lien avec le produit de Petersson complexe}
On prend maintenant pour $V$ le $\QQ$-espace vectoriel
$V_k=\QQ[x,y]_{k-2}$.
\label{defbilk}
La forme bilinéaire $\langle \cdot, \cdot\rangle_{V_k}$
définie sur $V_k$ par
$$ \langle \sum_{i=0}^{k-2} a_i x^i y^{k-2-i} ,\sum_{i=0}^{k-2} b_i x^i y^{k-2-i} \rangle_{V_k}
=
(-1)^{k-2}\sum_{i=0}^{k-2} (-1)^i \frac{a_i b_{k-2-i}}{\binom{k-2}{i}} $$
est symétrique ou antisymétrique selon que $k$ est pair ou impair et vérifie
$$\langle (\tau x + y)^{k-2}, (\tau' x + y)^{k-2} \rangle_{V_k} =(\tau-\tau')^{k-2}$$
et
$$\langle P|\gamma, Q \rangle_{V_k}
=\langle P, Q|\gamma^*\rangle_{V_k}$$
pour $\gamma \in \Gl_2(\QQ)$ et $\gamma^*=\det(\gamma) \gamma^{-1}$.
Elle est en particulier invariante par l'action de $\Sl_2(\QQ)$.
On l'étend à $V_k(\CC)$ par linéarité.

Si $\cD$ est un domaine fondamental de $\Gamma$ dans $\cH^*=\cH \cup \PP^1(\QQ)$ et
si $F$ et $G$ sont deux formes pour $\Gamma$ dont l'une est parabolique,
le produit de Petersson de $F$ et $G$ est défini par
$$\langle F,G\rangle_{\Gamma} =
\int_{\cD} F(\tau) \overline{G(\tau)} y^k \frac{dx dy}{y^2}
=-\frac{1}{2i}
\int_{\cD} F(\tau) \overline{G(\tau)} \im(\tau)^{k-2} d\tau \overline{d\tau}
\ . $$
\begin{thm}
\label{thm:petersson1}Soit $\cF$ un symbole de Farey pour $\Gamma$.
Soient $F$ et $G$ deux formes modulaires pour $\Gamma$ de poids $k$. On suppose
que $G$ est parabolique. Pour l'accouplement
$\left\lbrace, \right\rbrace_{\Gamma,\cF}$ associée à $V_k$ et à
$\langle \cdot,\cdot\rangle_{V_k}$ et prolongée à $\CC$ par $\CC$-linéarité,
on a
\begin{equation*}
\begin{split}
\left\lbrace \Per(F), \overline{\Per(G)}\right\rbrace_{\Gamma,\cF}
=-(2i)^{k-1}\langle F,G\rangle_{\Gamma}
\end{split}
\end{equation*}
et
\begin{equation*}
\left\lbrace \Per(F), \Per(G)\right\rbrace_{\Gamma,\cF}=0\ .
\end{equation*}
\end{thm}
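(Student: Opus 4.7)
Je propose d'appliquer la formule de Stokes en utilisant la primitive de Eichler $W(F)$. Consid�rons la 1-forme scalaire sur $\cH$
\[
\alpha = \langle W(F)(\tau),\overline{G(\tau)}(\bar\tau x+y)^{k-2}\rangle_{V_k}\, d\bar\tau.
\]
Comme $\overline{G(\tau)}(\bar\tau x+y)^{k-2}$ est antiholomorphe en $\tau$, seule intervient $\partial_\tau W(F) = F(\tau)(\tau x+y)^{k-2}$ (th�or�me \ref{cocyclehol}). L'identit� $\langle(\tau x+y)^{k-2},(\tau' x+y)^{k-2}\rangle_{V_k}=(\tau-\tau')^{k-2}$ et $d\tau\wedge d\bar\tau = -2i\,dx\wedge dy$ donnent alors
\[
d\alpha = F(\tau)\overline{G(\tau)}(\tau-\bar\tau)^{k-2}\,d\tau\wedge d\bar\tau = -(2i)^{k-1}F(\tau)\overline{G(\tau)}\,y^{k-2}\,dx\wedge dy.
\]
Pour un domaine fondamental $\cD$ de $\Gamma$ associ� � $\cF$, la cuspidalit� de $G$ et la croissance au plus polynomiale de $W(F)$ aux pointes assurent la convergence de $\int_\cD d\alpha = -(2i)^{k-1}\langle F,G\rangle_\Gamma$.

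Il reste � montrer l'�galit� $\int_{\partial\cD}\alpha = \left\lbrace\Per(F),\overline{\Per(G)}\right\rbrace_{\Gamma,\cF}$. La fronti�re se d�compose en arcs index�s par $\Vmod$, recoll�s par paires $(a,a^*)$ via $a=\gamma_a(a^*)^-$. Pour chaque telle paire, le changement de variable $\tau=\gamma_a\sigma$ --- combin� � la $\Gamma$-invariance de la forme antiholomorphe $\overline{G(\sigma)}(\bar\sigma x+y)^{k-2}d\bar\sigma$ et � la $\Sl_2(\ZZ)$-invariance de $\langle\cdot,\cdot\rangle_{V_k}$ --- fait intervenir $W(F)(\gamma_a\sigma)|\gamma_a$. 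En invoquant la relation cocycle $W(F)(\gamma_a\sigma)|\gamma_a = W(F)(\sigma) - \cocycleF{F}{\gamma_a}$ (cons�quence du th�or�me \ref{cocyclehol} et de $F|_k\gamma_a=F$), on obtient
\[
\int_a\alpha + \int_{a^*}\alpha = \langle \cocycleF{F}{\gamma_a},\overline{\Per(G)(a^*)}\rangle_{V_k}.
\]
Le lemme \ref{symetrie} permet alors d'identifier la somme des contributions des paires avec $\tfrac12\sum_{a\in\Vmod}\langle\cocycleF{F}{\gamma_a^{-1}},\overline{\Per(G)(a)}\rangle_{V_k}$, ce qui est exactement $\left\lbrace\Per(F),\overline{\Per(G)}\right\rbrace_{\Gamma,\cF}$ par d�finition.

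Pour la seconde �galit�, je remplace $\alpha$ par la 1-forme holomorphe $\beta = \langle W(F)(\tau),G(\tau)(\tau x+y)^{k-2}\rangle_{V_k}\,d\tau$. L'holomorphie de $W(F)$ donne $d\beta=0$, d'o� $\int_{\partial\cD}\beta=0$, et le m�me raisonnement par paires d'arcs fournit $\left\lbrace\Per(F),\Per(G)\right\rbrace_{\Gamma,\cF}=0$.

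Le principal obstacle sera le traitement rigoureux du bord : d'une part les arcs elliptiques, o� la subdivision $u_a,v_a$ de $\Vmod$ introduit un point int�rieur (le point fixe $z_a$) et o� les facteurs $\tfrac12$ et $\tfrac13$ dans la d�finition de $\Phi(u_a),\Phi(v_a)$ sont pr�cis�ment ceux pour lesquels le bord g�om�trique co�ncide avec le bord alg�brique ; d'autre part les pointes, o� la convergence des int�grales de bord impropres (et en particulier la contribution du terme $a_0(F)\int_0^\tau(tx+y)^{k-2}dt$ dans $W(F)$) est assur�e par la d�croissance exponentielle de $G$.
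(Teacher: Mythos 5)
Votre d�monstration est correcte et suit essentiellement la m�me voie que celle de l'article : la primitive scalaire $\langle W(F)(\tau),(\bar\tau x+y)^{k-2}\rangle_{V_k}$ que vous introduisez est exactement la fonction $\cW(F)(\tau)$ de l'article (via l'identit� $\langle(tx+y)^{k-2},(\tau' x+y)^{k-2}\rangle_{V_k}=(t-\tau')^{k-2}$), et l'argument --- Stokes sur le domaine fondamental, appariement des arcs $(a,a^*)$ par les donn�es de recollement, relation de cocycle pour $W(F)$, puis forme holomorphe $\cW_1(F)G\,d\tau$ pour la nullit� de $\{\Per(F),\Per(G)\}$ --- est celui du lemme \ref{lem:diff} et de la preuve du th�or�me. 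Les points d�licats que vous signalez (arcs elliptiques, convergence aux pointes) sont pr�cis�ment ceux que l'article laisse en esquisse.
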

Rappelons que
\begin{equation*}
\left\lbrace \Per(F), \Per(G)\right\rbrace_{\Gamma,\cF}
=\left\lbrace \cocycle(F), \Per(G)\right\rbrace_{\Gamma,\cF}
\end{equation*}
par définition.
On trouve une formule de ce type dans les articles
de Eichler \cite{eichler} et Shimura \cite{shimura} de la fin des années cinquante
lorsque $F$ et $G$ sont paraboliques.
Nous donnons une esquisse de la démonstration (voir aussi
\cite{haberland}, \cite{pasol}, \cite{cohen} pour $\Gamma=\Sl_2(\ZZ)$).

Commençons par des lemmes qui sont classiques dans le cas parabolique
et qui sont des variantes de l'intégrale de Eichler (voir paragraphe \ref{eichler}).
Soit $F$ une forme modulaire de poids $k$ pour un sous-groupe de congruence.
On pose pour $\tau \in \cH$
$$\cW(F)(\tau)= \int_{i\infty}^\tau \widetilde{F}(t) (t-\bar{\tau})^{k-2} dt
+ a_0(F)\int_0^\tau (t-\bar{\tau})^{k-2} dt$$
où les chemins utilisés sont formés d'un nombre fini d'arcs géodésiques.
\begin{lem} \label{lem:diff}
\begin{enumerate}
\item On a
$$\frac{\partial \cW(F)}{\partial \tau} (\tau)=
F(\tau) (\tau-\overline{\tau})^{k-2}\ . $$
\item Supposons $F$ et $G$ invariantes par $\Gamma$.
Si $a$ est un arc géodésique entre deux éléments de $\PP^1(\QQ)$,
pour $\gamma \in \Gamma$,
$$
\int_{a} \cW(F)(\tau) \overline{G(\tau) d\tau}
-
\int_{\gamma^{-1} a} \cW(F)(\tau) \overline{G(\tau) d\tau}
$$ ne dépend pas de $\tau$ :
\begin{equation*}
\begin{split}
\int_{a} \cW(F)(\tau) \overline{G(\tau) d\tau}
-
\int_{\gamma^{-1} a} \cW(F)(\tau) \overline{G(\tau) d\tau}
&=
\langle
 \cocycleF{F}{\gamma^{-1}}
  ,
  \overline{\Per(G)(a)}\rangle_{V_k} \ .
\end{split}
\end{equation*}
\end{enumerate}
\end{lem}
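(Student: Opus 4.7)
Pour la partie (1), je procède par dérivation sous l'intégrale en adoptant le point de vue de Wirtinger: l'intégrande ne dépend de $\tau$ qu'à travers la borne supérieure et à travers $\overline{\tau}$, et $\partial/\partial\tau$ annihile la dépendance en $\overline{\tau}$. On obtient immédiatement $\partial_\tau\cW(F)(\tau)=\bigl[\widetilde{F}(\tau)+a_0(F)\bigr](\tau-\overline{\tau})^{k-2}=F(\tau)(\tau-\overline{\tau})^{k-2}$.

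Pour la partie (2), l'observation clé est que $\cW(F)(\tau)$ n'est autre que l'évaluation en $(x,y)=(1,-\overline{\tau})$ du polynôme $W(F)(\tau)$ du théorème~\ref{cocyclehol}. Le plan se déroule en trois étapes. D'abord, j'effectuerai dans la seconde intégrale le changement de variable $\tau\mapsto\gamma^{-1}\tau$; l'invariance de $G$ par $\Gamma$ fournit $G(\gamma^{-1}\tau)\,d(\gamma^{-1}\tau)=j_{\gamma^{-1}}(\tau)^{k-2}\,G(\tau)\,d\tau$, de sorte que la différence des intégrales devient
$$\int_a\Bigl[\cW(F)(\tau)-\overline{j_{\gamma^{-1}}(\tau)}^{k-2}\cW(F)(\gamma^{-1}\tau)\Bigr]\,\overline{G(\tau)\,d\tau}.$$
Ensuite, j'identifierai le crochet à $\cocycleF{F}{\gamma^{-1}}(1,-\overline{\tau})$: l'identité matricielle $(1,-\overline{\tau})\gamma=j_{\gamma^{-1}}(\overline{\tau})\,(1,-\gamma^{-1}\overline{\tau})$ combinée à $\overline{j_{\gamma^{-1}}(\tau)}=j_{\gamma^{-1}}(\overline{\tau})$ (coefficients rationnels) montre que $\overline{j_{\gamma^{-1}}(\tau)}^{k-2}\cW(F)(\gamma^{-1}\tau)$ est la valeur en $(1,-\overline{\tau})$ de $W(F)(\gamma^{-1}\tau)|\gamma^{-1}$, puis la relation de cocycle $\cocycleF{F}{\gamma^{-1}}=W(F)-W(F)|\gamma^{-1}$ (valide car $F|_k\gamma=F$ pour $\gamma\in\Gamma$) donne immédiatement le résultat. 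Enfin, la formule $P(1,-\overline{\tau})=\langle P,(\overline{\tau}x+y)^{k-2}\rangle_{V_k}$ (qui se vérifie sur la base des $(\tau'x+y)^{k-2}$ à partir de la normalisation $\langle(\tau x+y)^{k-2},(\tau'x+y)^{k-2}\rangle_{V_k}=(\tau-\tau')^{k-2}$ rappelée dans l'introduction) permet de sortir le polynôme constant $\cocycleF{F}{\gamma^{-1}}$ hors de l'intégrale et de reconnaître l'intégrale restante comme $\overline{\Per(G)(a)}$.

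La principale difficulté concerne le traitement rigoureux des termes non paraboliques. Lorsque $a_0(F)$ ou $a_0(G)$ est non nul, la formule explicite du cocycle (théorème~\ref{cocyclehol}(3)) fait intervenir une intégrale de $0$ à $\gamma^{-1}0$ et la définition de $W(F)$ comporte une borne basse en $0$; le changement de variable $\tau\mapsto\gamma^{-1}\tau$ déplace cette borne et génère des contributions de bord qu'il faut coordonner soigneusement avec la partie Eisenstein du cocycle et du morphisme $\Per(G)$ sur les symboles infinitésimaux $[0,r]_\infty$. Dans le cas purement parabolique, le calcul esquissé ci-dessus est transparent; dans le cas général, on peut soit décomposer $F$ et $G$ en parties parabolique et Eisenstein, soit suivre explicitement les contributions aux cuspides $0$ et $\infty$ à chaque étape du changement de variable.
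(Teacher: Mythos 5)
Votre d\'emonstration est correcte et suit pour l'essentiel la m\^eme voie que celle de l'article : l'identit\'e $P(1,-\overline{\tau})=\langle P,(\overline{\tau}x+y)^{k-2}\rangle_{V_k}$ par laquelle vous \'evaluez les polyn\^omes en $(1,-\overline{\tau})$ est pr\'ecis\'ement le moyen par lequel l'article r\'e\'ecrit $\cW(F)(\tau)\,\overline{G(\tau)\,d\tau}$ sous la forme $\langle W(F)(\tau),\overline{(\tau x+y)^{k-2}G(\tau)\,d\tau}\rangle_{V_k}$, avant d'effectuer le m\^eme changement de variable et d'invoquer la m\^eme relation de cocycle. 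La difficult\'e que vous soulevez dans votre dernier paragraphe n'en est pas vraiment une : les contributions de $a_0(F)$ sont d\'ej\`a absorb\'ees dans l'ind\'ependance en $\tau$ de $\cocycleF{F}{\gamma^{-1}}$ \'etablie au th\'eor\`eme \ref{cocyclehol}, de sorte que le changement de variable ne produit aucun terme de bord suppl\'ementaire ; le seul point demandant r\'eellement une justification est la convergence des int\'egrales aux pointes lorsque $a_0(G)\neq 0$, point que l'article passe \'egalement sous silence et qui dispara\^it dans l'application au th\'eor\`eme \ref{thm:petersson1} o\`u $G$ est parabolique.
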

\begin{proof}
En utilisant l'identité
$$\langle (\tau x + y)^{k-2}, (\tau' x + y)^{k-2} \rangle_{V_k} =(\tau-\tau')^{k-2}$$
et en faisant le changement de variable $\tau \mapsto \gamma \tau$, on trouve
\footnote{utiliser par exemple les formules du type de celles de \eqref{formulej}.}
que
pour tout $\gamma$ dans $\Gl_2^+(\QQ)$,
\begin{equation*}
\begin{split}
\int_{\gamma^{-1} a} \cW(F)(\tau) \overline{G(\tau) d\tau}
&=
\int_{a}
\langle W(F)(\gamma^{-1}\tau)
  ,\overline{\left(
  (\tau x + y)^{k-2}G|\gamma^{-1}(\tau)d\tau \right )}| \gamma
  \rangle_{V_k}
\\
&=
\int_{a}
\langle
 W(F)(\gamma^{-1}\tau)| \gamma^{-1}
  ,
  \overline{(\tau x + y)^{k-2}G|\gamma^{-1}(\tau)d \tau} \rangle_{V_k}
\\
&=
\int_{a} \langle W(F)|\gamma^{-1} (\tau)
  ,
  \overline{(\tau x + y)^{k-2}G|\gamma^{-1}(\tau)d\tau} \rangle_{V_k} \ .
\end{split}
\end{equation*}
Si de plus $\gamma$ appartient à $\Gamma$ et que
$F$ et $G$ sont invariants par $\Gamma$, on a
\begin{equation*}
\begin{split}
\int_{a} \cW(F)(\tau) \overline{G(\tau) d\tau}
&-
\int_{\gamma^{-1} a} \cW(F)(\tau) \overline{G(\tau) d\tau}
\\&=
\int_{a}\langle
 W(F)(\tau) - W(F)|\gamma^{-1} (\tau)
  ,
  \overline{(\tau x + y)^{k-2}G(\tau)d\tau} \rangle_{V_k}
\\&=
\langle
  \cocycleF{F}{\gamma^{-1}}
  ,
  \int_{a}\overline{(\tau x + y)^{k-2}G(\tau)d\tau} \rangle_{V_k}
\\&=
\langle
 \cocycleF{F}{\gamma^{-1}}
  ,
  \overline{\Per(G)(a)}\rangle_{V_k} \ .
\end{split}
\end{equation*}
\end{proof}
\begin{proof}[Démonstration du théorème \ref{thm:petersson1}]
Soit $\cD$ le domaine fondamental associé à $\cF$. Son bord $\partial \cD$
est exactement formé
des chemins de $\Vmod$.
On a
$$\langle F,G\rangle_{\Gamma}=\int_{\cD} F(\tau) \overline{G(\tau)} y^k \frac{dx dy}{y^2}=
-\frac{1}{(2i)^{k-1}}\int_{\cD} F(\tau)\overline{G(\tau)} (\tau-\overline{\tau})^{k-2} d\tau d\overline{\tau}
\ . $$
$$\frac{\partial \left(\cW(F)(\tau)\overline{G(\tau)}\right)}{\partial \tau}=
F(\tau) (\tau-\overline{\tau})^{k-2} \overline{G(\tau)}\ . $$
En appliquant la formule de Stokes
$$ \int_{\cD} \left(\frac{\partial Q}{\partial \tau}
- \frac{\partial P}{\partial \overline{\tau}} \right)d\tau d\overline{\tau}
=
\int_{\partial \cD} (P d\tau + Q d\overline{\tau})
$$
avec $P=0$ et $Q(\tau)=\cW(F)(\tau)\overline{G(\tau)}$,
on obtient
$$\int_{\cD} F(\tau)\overline{G(\tau)} (\tau-\overline{\tau})^{k-2} d\tau d\overline{\tau}
=\int_{\partial \cD} \cW(F)(\tau)
\overline{G(\tau) d\tau}\ . $$
Le contour $\partial \cD$ est formé des arcs géodésiques $a \in \Vmod$.
Comme
$a^*=\gamma_{a}^{-1}a^-$ avec $\gamma_{a} \in \Gamma$, la contribution
de $a$ et $a^*$ pour $a\in \Vmod$ est par le lemme \ref{lem:diff}
\begin{equation*}
\begin{split}
\int_{a} \cW(F)(\tau)
\overline{G(\tau) d\tau}
 + \int_{a^*} \cW(F)(\tau)
\overline{G(\tau) d\tau}
&=
\int_a \cW(F)(\tau)\overline{G(\tau) d\tau}
-\int_{\gamma_{a}^{-1} a}
\cW(F)(\tau)\overline{G(\tau) d\tau}
\\&=
\langle
\cocycleF{F}{\gamma_a^{-1}},
\overline{\Per(G)(a)}\rangle_{V_k}
\ .
\end{split}
\end{equation*}
On a finalement
\begin{equation*}
\begin{split}
\int_{\partial \cD} \cW(F)(\tau)\overline{G(\tau) d\tau}
&=
\frac{1}{2}\sum_{a\in \Vmod}\langle
\cocycleF{F}{\gamma_a^{-1}}, \overline{\Per(G)(a)}
\rangle_{V_k}
\\
&=
\left\lbrace \cocycle(F),\overline{\Per(G)}\right\rbrace_{\Gamma,\cF}
\ .
\end{split}
\end{equation*}
D'où la première partie du théorème.
Démontrons maintenant que
$\left\lbrace \cocycle(F),\Per(G)\right\rbrace_{\Gamma,\cF}=0$.
Introduisons pour cela la fonction holomorphe $\cW_1(F)$ sur $\cH$
donnée par
$$\cW_1(F)(\tau)= \int_{\infty}^\tau \widetilde{F}(t) (t-\tau)^{k-2} dt
+ a_0(F)\int_0^\tau (t-\tau)^{k-2} dt\ .$$
Le même argument que précédemment donne que
\begin{equation*}
\begin{split}
\left\langle \cocycle(F),\Per(G)(a)\right \rangle_{V_k}
&=
\int_a \cW_1(F)(\tau) G(\tau) d\tau -
\int_{\gamma_a^{-1}a}\cW_1(F)(\tau) G(\tau) d\tau
\end{split}
\end{equation*}
pour $a \in \Vmod$
et, en utilisant $
\langle(tx +y)^{k-2},(\tau x + y)^{k-2}\rangle_{V_k}=(t-\tau)^{k-2}$, que
\begin{equation*}
\begin{split}
\left\lbrace \cocycle(F),\Per(G)\right\rbrace_{\Gamma,\cF}=
\int_{\partial \cD} \cW_1(F)(\tau) G(\tau) d\tau
=0
\end{split}
\end{equation*}
car $\cW_1(F)(\tau) G(\tau)$ est holomorphe sur $\cH$.
\end{proof}
Le produit de Petersson sur $S_k(\Gamma)$ induit une forme bilinéaire hermitienne
positive sur $H^1_{par}(\Gamma,V_k(\RR))$. Nous allons voir
que la forme bilinéaire alternée associée est à un scalaire près
la forme $\left\lbrace\cdot,\cdot\right\rbrace_{\Gamma,\cF}$.
\begin{cor}
Soient
$\Phi_1 \in H^1(\Gamma,V_k(\RR))$
et $\Phi_2\in \Hom_\Gamma(\Delta_0, V_k(\RR))$. Il existe
$F_1 \in M_k(\Gamma)$ (resp. $F_2 \in S_k(\Gamma)$) tel que
$\Phi_1=\cocycle_\RR(F_1)$
(resp. $\Phi_2 - \Per_{\RR}(F_2) \in \Hom_\Gamma(\Delta, V_k(\RR))$).
Alors,
$$
\left\lbrace\Phi_1, \Phi_2\right\rbrace_{\Gamma,\cF}
= (2i)^{k-2}\im(\langle F_1,F_2\rangle_{\Gamma})\ .
$$
\end{cor}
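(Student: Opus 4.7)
The plan is to reduce the corollary to Theorem~\ref{thm:petersson1} in three steps: produce $F_1$ and $F_2$ from the Eichler--Shimura isomorphism, eliminate the part of $\Phi_2$ lying in $\Hom_\Gamma(\Delta, V_k(\RR))$ via Proposition~\ref{lem:nul}, and then expand the real parts and collect terms.

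The Eichler--Shimura isomorphism $\cocycle_\RR: M_k(\Gamma) \to H^1(\Gamma, V_k(\RR))$ yields a unique $F_1 \in M_k(\Gamma)$ with $\cocycle_\RR(F_1) = \Phi_1$. For $\Phi_2 \in \Hom_\Gamma(\Delta_0, V_k(\RR))$, Proposition~\ref{prop:suiteexacte} places its image in $H^1_{par}(\Gamma, V_k(\RR))$; Eichler--Shimura restricted to $S_k(\Gamma) \cong H^1_{par}(\Gamma,V_k(\RR))$ then produces $F_2 \in S_k(\Gamma)$ whose cohomology class matches that of $\Phi_2$. Consequently $\Phi_2 - \Per_\RR(F_2)$ is trivial in $H^1(\Gamma,V_k(\RR))$, so by the exact sequence of Remark~\ref{cobord} it comes from an element of $\Hom_\Gamma(\Delta, V_k(\RR))$, and Proposition~\ref{lem:nul} gives $\left\lbrace \Phi_1, \Phi_2 - \Per_\RR(F_2)\right\rbrace_{\Gamma,\cF} = 0$. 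It remains to compute $\left\lbrace \cocycle_\RR(F_1), \Per_\RR(F_2)\right\rbrace_{\Gamma,\cF}$.

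Writing $\re u = \tfrac{1}{2}(u + \bar u)$ in each argument and using that $\left\lbrace\cdot,\cdot\right\rbrace_{\Gamma,\cF}$ is $\CC$-bilinear for a $\QQ$-rational form on $V_k$ (so complex conjugation commutes with the bracket), one obtains
\[
4\left\lbrace \cocycle_\RR(F_1), \Per_\RR(F_2)\right\rbrace_{\Gamma,\cF} = \{C,P\} + \{C,\bar P\} + \{\bar C, P\} + \{\bar C, \bar P\}
\]
with $C = \cocycle(F_1)$ and $P = \Per(F_2)$. Theorem~\ref{thm:petersson1} gives $\{C,P\} = 0$ (whence also $\{\bar C,\bar P\} = 0$ by conjugation) and $\{C,\bar P\} = -(2i)^{k-1}\langle F_1, F_2\rangle_\Gamma$; taking conjugates, $\{\bar C, P\} = -(-2i)^{k-1}\overline{\langle F_1, F_2\rangle_\Gamma}$. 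Setting $z = \langle F_1, F_2\rangle_\Gamma$ and using $(-2i)^{k-1} = -(2i)^{k-1}$ for $k$ even, the four terms collapse to $(2i)^{k-1}(\bar z - z) = -(2i)^k \im(z) = 4(2i)^{k-2}\im(z)$, so dividing by $4$ yields the identity. The only delicate point is the compatibility of complex conjugation with $\left\lbrace\cdot,\cdot\right\rbrace_{\Gamma,\cF}$, which is immediate from the explicit formula in Theorem~\ref{def:biendefini} since both $\langle\cdot,\cdot\rangle_{V_k}$ and the gluing matrices $\gamma_a$ are real.
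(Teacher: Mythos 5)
Your argument follows the paper's own proof step for step: Eichler--Shimura produces $F_1$ and $F_2$, the component of $\Phi_2$ coming from $\Hom_\Gamma(\Delta, V_k(\RR))$ is discarded, and the bracket of the real parts is expanded into four terms which Theorem \ref{thm:petersson1}, together with the compatibility of the pairing with complex conjugation, collapses to $(2i)^{k-2}\im\langle F_1,F_2\rangle_\Gamma$. The computational half (the four-term expansion, $\{\bar C,P\}=\overline{\{C,\bar P\}}$, and the arithmetic with $(2i)^{k-1}$ for $k$ even) is correct and identical to the paper's.

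The step that deserves scrutiny is the claim $\left\lbrace\Phi_1,\Phi_2-\Per_\RR(F_2)\right\rbrace_{\Gamma,\cF}=0$, which you justify by Proposition \ref{lem:nul}. That proposition only covers the case where the first argument lies in $\Hom_\Gamma(\Delta_0,V)$, i.e.\ where its class in $H^1$ is parabolic: its proof uses that the restriction of the cocycle $\widetilde{\Phi}_1$ to each cusp stabilizer is a coboundary. Here $\Phi_1$ is an arbitrary class in $H^1(\Gamma,V_k(\RR))$, and for a non-parabolic class Proposition \ref{noncusp} gives
$\left\lbrace\Phi_1,\Psi\right\rbrace_{\Gamma,\cF}=-\sum_{s}\langle\widetilde{\Phi_1^{(s)}}(\tau_s),\Psi'(\{s\})\rangle_V$
for $\Psi$ the image of $\Psi'\in\Hom_\Gamma(\Delta,V_k)$, a quantity which is in general nonzero --- indeed its non-vanishing on Eisenstein classes is exactly what underlies Corollary \ref{cor:eis}. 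So Proposition \ref{lem:nul} does not apply as cited. To be fair, the paper's own proof makes the very same assertion without justification (that $\Hom_\Gamma(\Delta, V_k(\RR))$ lies in the radical of the pairing on both sides), so your argument is no worse than the source; but as written the reduction from $\Phi_2$ to $\Per_\RR(F_2)$ is only licensed when $\Phi_1$ is a parabolic class (equivalently $F_1\in S_k(\Gamma)$), or when $\Phi_2$ already equals $\Per_\RR(F_2)$. You should either add that hypothesis or supply a separate argument for the contribution of the cusp terms.
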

\begin{proof}
Les formules ont bien un sens car
$\Hom_\Gamma(\Delta, V_k(\RR))$
est contenu dans radical de $\left\lbrace\cdot , \cdot\right\rbrace_{\Gamma,\cF}$
à droite comme à gauche.
On déduit du théorème \ref{thm:petersson1} que
\begin{small}
\begin{equation*}
\begin{split}
\left\lbrace\Phi_1, \Phi_2\right\rbrace_{\Gamma,\cF}&=
\left\lbrace \cocycle_\RR(F_1),\Per_\RR(F_2)\right\rbrace_{\Gamma,\cF}\\
&=\frac{1}{4}\left (
\left\lbrace \cocycle(F_1),\Per(F_2)\right\rbrace_{\Gamma,\cF}
+
\left\lbrace \cocycle(F_1),\overline{\Per(F_2)}\right\rbrace_{\Gamma,\cF}
+\left\lbrace \overline{\cocycle(F_1)},\Per(F_2)\right\rbrace_{\Gamma,\cF}
+\left\lbrace \overline{\cocycle(F_1)},\overline{\Per(F_2)}\right\rbrace_{\Gamma,\cF}
\right )\\
&=\frac{1}{4}\left (
\left\lbrace \cocycle(F_1),\overline{\Per(F_2)}\right\rbrace_{\Gamma}
+\overline{\left\lbrace \cocycle(F_1),\overline{\Per(F_2)}\right\rbrace_{\Gamma}}
\right )
\\
&=
(2i)^{k-2}\frac{\langle F_1,F_2\rangle_{\Gamma}
-\overline{\langle F_1,F_2\rangle}_{\Gamma}}{2i}=
(2i)^{k-2} \im \left(\langle F_1,F_2\rangle_{\Gamma}\right)\ .
\end{split}
\end{equation*}
\end{small}
\end{proof}
\begin{cor}
\label{independance}
La forme bilinéaire $\left\lbrace \cdot,\cdot \right\rbrace_{\Gamma}=
\left\lbrace \cdot,\cdot \right\rbrace_{\Gamma,\cF}$ définie sur
$H^1(\Gamma,V_k) \times \Hom_{\Gamma}(\Delta_0,V_k)$
est indépendante du symbole de Farey utilisé pour la définir.
\end{cor}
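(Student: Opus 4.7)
Le plan est de d�duire l'ind�pendance par rapport � $\cF$ directement du th�or�me \ref{thm:petersson1} qui pr�c�de : ce th�or�me donne les valeurs de $\{\cdot,\cdot\}_{\Gamma,\cF}$ sur des g�n�rateurs de nature analytique en termes de produits de Petersson classiques, lesquels ne d�pendent pas de $\cF$. Soient donc $\cF$ et $\cF'$ deux symboles de Farey �tendus associ�s � $\Gamma$, et $D=\{\cdot,\cdot\}_{\Gamma,\cF}-\{\cdot,\cdot\}_{\Gamma,\cF'}$. On �tend les coefficients � $\CC$ par $\CC$-bilin�arit� et on va montrer que l'extension de $D$ s'annule sur $H^1(\Gamma,V_k(\CC))\times \Hom_\Gamma(\Delta_0,V_k(\CC))$, ce qui suffira.

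Premi�re �tape : on traite le cas o� $\Phi_2$ est cobord. Si $\Phi_2$ provient d'un �l�ment $\Phi_2'\in \Hom_\Gamma(\Delta,V_k(\CC))$, la proposition \ref{noncusp} exprime $\{\Phi_1,\Phi_2\}_{\Gamma,\cF}=-\sum_{s\in C(\Gamma)}\langle \widetilde{\Phi_1^{(s)}}(\tau_s),\Phi_2'(\{s\})\rangle_{V_k}$, o� $C(\Gamma)$, les g�n�rateurs positifs $\tau_s$, le rel�vement $\Phi_2'$ et la classe de restriction de $\Phi_1$ sont tous ind�pendants de $\cF$ ; donc $D(\Phi_1,\Phi_2)=0$ dans ce cas. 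D'autre part, par l'isomorphisme d'Eichler-Shimura complexe, $H^1_{par}(\Gamma,V_k(\CC))$ est engendr� par les classes de $\Per(G)$ et $\overline{\Per(G)}$ pour $G\in S_k(\Gamma)$ ; tout $\Phi_2\in \Hom_\Gamma(\Delta_0,V_k(\CC))$ s'�crit donc comme combinaison lin�aire complexe de tels $\Per(G),\overline{\Per(G)}$ modulo l'image de $\Hom_\Gamma(\Delta,V_k(\CC))$.

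Deuxi�me �tape : il suffit de v�rifier l'annulation de $D$ sur les couples de g�n�rateurs $(\Phi_1,\Phi_2)$ o� $\Phi_2\in\{\Per(G),\overline{\Per(G)}\}$ et $\Phi_1$ parcourt un syst�me de g�n�rateurs de $H^1(\Gamma,V_k(\CC))$. Par Eichler-Shimura r�el, $\cocycle_\RR:M_k(\Gamma)\xrightarrow{\sim}H^1(\Gamma,V_k(\RR))$, et en extension des scalaires, les classes $\cocycle(F)$ et $\overline{\cocycle(F)}$ pour $F\in M_k(\Gamma)$ engendrent $H^1(\Gamma,V_k(\CC))$ sur $\CC$. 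Le th�or�me \ref{thm:petersson1} donne
$$\{\cocycle(F),\overline{\Per(G)}\}_{\Gamma,\cF}=-(2i)^{k-1}\langle F,G\rangle_\Gamma, \quad \{\cocycle(F),\Per(G)\}_{\Gamma,\cF}=0,$$
deux valeurs intrins�ques. Les cas restants s'obtiennent par l'identit� $\{\overline{A},B\}_{\Gamma,\cF}=\overline{\{A,\overline{B}\}_{\Gamma,\cF}}$, qui est imm�diate puisque $\{\cdot,\cdot\}_{\Gamma,\cF}$ est l'extension $\CC$-bilin�aire d'une forme $\QQ$-bilin�aire. Dans les quatre cas, les deux formes $\{\cdot,\cdot\}_{\Gamma,\cF}$ et $\{\cdot,\cdot\}_{\Gamma,\cF'}$ prennent la m�me valeur, et $D$ s'annule sur les g�n�rateurs.

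L'obstacle principal --- et c'est ce que les auteurs signalent d�j� dans l'introduction --- est que cette preuve emprunte essentiellement � l'analyse complexe via le produit de Petersson et l'isomorphisme d'Eichler-Shimura ; aucun argument purement alg�brique ne semble aboutir. C'est pr�cis�ment le recours au th�or�me \ref{thm:petersson1} qui permet de convertir l'�galit� combinatoire $D=0$ en une identit� analytique intrins�que � $\Gamma$.
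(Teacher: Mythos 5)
Votre d�monstration est correcte et suit essentiellement la m�me voie que celle du texte : le corollaire pr�c�dent (qui donne $\left\lbrace \Phi_1,\Phi_2\right\rbrace_{\Gamma,\cF}=(2i)^{k-2}\im\langle F_1,F_2\rangle_{\Gamma}$ via Eichler--Shimura, la nullit� sur l'image de $\Hom_\Gamma(\Delta,V_k)$ et le th�or�me \ref{thm:petersson1}) n'est rien d'autre que la d�composition en les quatre cas $\cocycle(F),\overline{\cocycle(F)}$ contre $\Per(G),\overline{\Per(G)}$ que vous explicitez. Votre r�daction d�taille simplement les �tapes que le texte laisse implicites.
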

\subsection{Construction d'un symbole de Farey d'un sous-groupe}
Soit $\cF=(\cV,*,\mu)$ un symbole de Farey étendu pour $\Gamma$.
Des algorithmes pour construire un symbole de Farey $\cF'$ pour un sous-groupe
$\Gamma'$ de $\Gamma$ sont bien connus depuis longtemps. Cependant,
ils sont en général écrits pour $\Gamma=\Psl_2(\ZZ)$.
En cherchant à démontrer le comportement de la forme bilinéaire
sous les opérateurs de Hecke, nous avons été amenés à reprendre ces algorithmes
et à observer un phénomène particulier lorsque la courbe modulaire
associée à $\Gamma$ a plusieurs points elliptiques d'ordre 3.
Ce paragraphe reprend donc cette construction.

Avant de passer à l'algorithme de construction de $\cF'$,
donnons quelques notations.
Soit $\cC$ un système de représentants de $\Gamma'\backslash \Gamma$.
Soit $\cW=\cC \times \cV$.
Notons $\cl{\gamma}$ le représentant dans $\cC$ de la classe $\Gamma' \gamma$.
On a $\cl{\cl{\gamma}\gamma'}=\cl{\gamma \gamma'}$.
L'application qui à $(\xi, a)\in \cW$ associe l'arc géodésique
$\xi a$ est injective car un arc de $\cV$ ne peut être le transformé
par un élément de $\Gamma$ d'un autre arc de $\cV$ et le stabilisateur
d'un arc est réduit à $\pm \textrm{Id}$.
Nous identifions dans la suite $(\xi, a)$ à $\xi a$ pour $\xi \in \cC$ et $a\in \cV$.
Pour $\xi a \in \cW$, posons
  $$\gammap_{\xi a}=\xi \gamma_a \cl{\xi \gamma_a}^{-1}\in \Gamma'$$
appelée donnée de recollement de $\cW$ associée à $\xi a$
et $\permm{\gamma_a}$ la permutation de $\cC$ donnée par
 $\perm{\gamma_a}{\xi}=\cl{\xi \gamma_a}$. On a donc
$$
\xi \gamma_a=\gamma_{\xi a} \perm{\gamma_a}{\xi}\ .$$
L'ensemble $\cW$ est muni d'une bijection $Ast$ induite par l'involution $*$
de $\cF$ de la manière suivante :
$$
Ast(\xi a)= \cll{\xi}{\gamma_a} a^* \ . $$
Lorsque $a$ n'est pas un arc elliptique,
$Ast \circ Ast (\xi a)= \xi a$
car
$$Ast(\cl{\xi \gamma_a} a^*)= \cl{\xi \gamma_a\gamma_{a^*}} a
=\cl{\xi} a=\xi a$$
puisque $\gamma_{a^*}=\gamma_a^{-1}$ et $\xi\in \cC$.
On a de plus
$$\xi a = \xi \gamma_a {a^*}^-= \gamma_{\xi a} \cll{\xi}{\gamma_a} {a^*}^-
= \gamma_{\xi a} Ast(\xi a)^-\ .$$
Lorsque $a$ est un arc elliptique d'ordre 2, on a $a=a^*$ et
$Ast\circ Ast(\xi a)=\cl{\cl{\xi \gamma_a}\gamma_a} a= \xi a$.
Lorsque $a$ est un arc elliptique d'ordre 3, $\gamma_a^3$ est $\pm \id$ et
on a
$$
Ast^3(\xi a)=
 Ast(\cl{\cll{\xi}{\gamma_a}\gamma_a} a)=
 Ast(\cll{\xi}{\gamma_a^2} a)=\cll{\xi}{\gamma_a^3} a=\xi a
 \ .
$$
Ainsi, les orbites des éléments de $\cW$ par $Ast$ sont d'ordre 1, 2 ou 3.
Remarquons que
lorsque $Ast(\xi a)=\xi a$, on a $a=a^*$,
$\cll{\xi}{\gamma_a}=\xi$,
$\gammap_{\xi a}=\xi \gamma_a\cl{\gamma_a\xi}^{-1}
=\xi \gamma_a\xi^{-1}$ appartient à $\Gamma'$ et est de même ordre $\mu_{ell}(a)$ que
$\gamma_a$.

\begin{prop} \label{prop:sousgroupe}
On garde les notations précédentes.
Il existe un système de représentants $\cC$ de
$\Gamma'\backslash \Gamma$ tel que $\cF'=(\cV',Ast',\mu')$ décrit comme suit soit
un symbole de Farey: les arcs de $\cV'$ sont
\begin{enumerate}
\item
les éléments $\xi a \in \cC\times (\cV - \cV_{ell,3})$ avec $\gammap_{\xi a}\neq 1$;
on a alors $\mu'(\xi a)=\mu(a)$ et $Ast'(\xi a)=Ast(\xi a)$;
\item les points fixes $\xi a$ dans $\cC\times \cV_{ell,3}$ pour $Ast$,
on a alors $\perm{\gamma_a}{\xi}=\xi$, $\gammap_{\xi a}=\xi \gamma_{a} \xi^{-1}$,
$\mu'(\xi a)=\mu(a)=3$ et $Ast'(\xi a)=Ast(\xi a)=\xi a$;
\item pour chaque orbite $\{\xi_1 a,\xi_2 a,\xi_3 a\}$ d'ordre 3 par $Ast$,
les chemins $\xi_1 a'$, $\xi_1 a''$, $\xi_2 a,\xi_3 a$ où
$\xi_1 a'$ et $\xi_1 a''$
sont définis par
\begin{equation*}
\begin{cases}
\xi_1 a'&=\gammap_{\xi_1a}\xi_2 a^-
\\
\xi_1 a''&=\gammapp_{\xi_3a}^{-1}\xi_3 a^- \ ;
\end{cases}
\end{equation*}
on a alors
\begin{equation*}
\begin{split}
&\gammap_{\xi_1 a}=\xi_1 \gamma_a \xi_2^{-1}
,\quad
\gammap_{\xi_2 a}=\xi_2 \gamma_a \xi_3^{-1}
,\quad
\gammap_{\xi_3 a}=\xi_3 \gamma_a \xi_1^{-1}
,\\
&\perm{\gamma_a}{\xi_1}=\xi_2
,\quad
\perm{\gamma_a}{\xi_2}=\xi_3
,\quad
\perm{\gamma_a}{\xi_3}=\xi_1
,\\
&\gammap_{\xi_1 a'}=\gamma_{\xi_1a}
,\quad \gammap_{\xi_1 a''}=
\gamma_{\xi_3 a}^{-1} ,\quad
\gammap_{\xi_2 a}=\gamma_{\xi_1a}^{-1}
,\quad \gammap_{\xi_3 a}=\gamma_{\xi_3a}
,\\&
Ast'(\xi_1 a')= \xi_2 a ,\quad Ast'(\xi_1 a'')= \xi_3 a
,\quad Ast'(\xi_2 a)=\xi_1 a'
,\quad Ast'(\xi_3 a)=\xi_2 a'
,\\&
\mu'(\xi_1 a')=\mu'(\xi_1 a'')=\mu'(\xi_2 a)=\mu'(\xi_3 a)=1\ .
\end{split}
\end{equation*}
\end{enumerate}
\end{prop}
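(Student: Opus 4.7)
The plan is geometric: take the fundamental domain $\cD$ of $\Gamma$ associated with $\cF$, and form the candidate fundamental domain for $\Gamma'$ as the union of the translates $\xi^{-1}\cD$ for $\xi \in \cC$. The boundary is then indexed by the pairs $(\xi,a) \in \cW = \cC\times\cV$, with $\xi a$ appearing as an edge of $\xi^{-1}\cD$; all gluing data for $\cF'$ should be read off from this tiling.

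The first step is to analyse the gluing. For a non-elliptic arc $a\in\cV$, the identity $\xi\gamma_a=\gamma_{\xi a}\cll{\xi}{\gamma_a}$ shows that the edge $\xi a$ is glued to $Ast(\xi a)=\cll{\xi}{\gamma_a}a^*$ by $\gammap_{\xi a}\in\Gamma'$. When $\gammap_{\xi a}=1$ the edge is shared between two tiles already present in the candidate domain, so it lies in its interior and must be suppressed; otherwise it enters $\cV'$, paired with $Ast(\xi a)$. The same analysis, with obvious adjustments, applies to elliptic arcs of order~$2$.

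The main obstacle is the treatment of elliptic arcs of order~$3$. The involution $*$ on $\cV$ fixes such an arc $a$, but the orbit of $\xi a$ under $Ast$ has length $1$ or $3$: length $1$ precisely when $\xi\gamma_a\xi^{-1}\in\Gamma'$, yielding a genuine elliptic point of order~$3$ of $\Gamma'$; length $3$ otherwise, and then the three edges $\xi_1 a,\xi_2 a,\xi_3 a$ are identified \emph{cyclically} in $\Gamma'\backslash\cH^*$, which is incompatible with the involutive character of a Farey symbol. I would resolve this exactly as announced: split one of the three arcs, say $\xi_1 a$, into two sub-arcs $\xi_1 a'$ and $\xi_1 a''$ meeting at the image of the fixed point of $\gamma_a$, so that $\xi_1 a'$ is paired with $\xi_2 a$ via $\gammap_{\xi_1 a}$ and $\xi_1 a''$ with $\xi_3 a$ via $\gammapp_{\xi_3 a}^{-1}$; the remaining formulas of the statement then follow by unwinding $\xi\gamma_a=\gamma_{\xi a}\cll{\xi}{\gamma_a}$ around the $3$-cycle $(\xi_1,\xi_2,\xi_3)$.

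It remains to exhibit a system $\cC$ for which the resulting $\cF'$ is an actual Farey symbol, in particular for which the tiling $\bigcup_{\xi\in\cC}\xi^{-1}\cD$ is connected and simply connected. I would build $\cC$ inductively by a spanning-tree argument on the dual graph of the tiling: start with $1\in\cC$; at each step, for a boundary edge $Ast(\xi a)$ of the current partial tiling whose partner $\cll{\xi}{\gamma_a}$ has not yet been chosen, adjoin $\cll{\xi}{\gamma_a}$ to $\cC$ and glue the tile $\cll{\xi}{\gamma_a}^{-1}\cD$ across it. Finiteness of $[\Gamma:\Gamma']$ ensures termination, and the spanning-tree choice ensures simple connectedness. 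Checking the axioms of \cite{farey} for $\cF'$ then reduces to the case analysis encoded in the three items of the statement, the delicate point being precisely the subdivision in the order-$3$ case.
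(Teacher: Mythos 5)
Your overall strategy coincides with the paper's: the algorithm given there is precisely a spanning-tree traversal of the coset graph, adjoining $\xi\gamma_a$ itself to $\cC$ whenever the tile across the edge $\xi a$ belongs to a new coset (so that $\gammap_{\xi a}=1$ and that edge becomes interior), and the length-$3$ orbits under $Ast$ on $\cC\times\cV_{ell,3}$ are indeed the one genuine obstruction. But two points in your sketch do not hold as written. First, the subdivision in the order-$3$ case: you propose to cut $\xi_1 a$ at the image of the fixed point of $\gamma_a$. That point lies in $\cH$, so the resulting symbol would have a vertex which is not in $\PP^1(\QQ)$ and $(\cV',Ast',\mu')$ would not be a Farey symbol in the sense of \cite{farey}. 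The statement (and the paper's algorithm) instead replaces $\xi_1 a$ by $\xi_1 a'=\gammap_{\xi_1 a}\xi_2 a^-=\xi_1\gamma_a a^-$ and $\xi_1 a''=\gammap_{\xi_3 a}^{-1}\xi_3 a^-=\xi_1\gamma_a^{-1}a^-$, two full geodesic arcs between cusps meeting at the cusp $\xi_1\gamma_a r_a$: one goes around the elliptic triangle of $a$ rather than through its centre, and only then are the pairings $\xi_1 a'\leftrightarrow\xi_2 a$, $\xi_1 a''\leftrightarrow\xi_3 a$ honest involutive identifications.

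Second, you never verify that the gluing data $\gammap_{\xi a}$ for $\xi a\in\cV'$ generate $\Gamma'$, which by \cite{kulkarni} is what makes $\cF'$ a Farey symbol \emph{of group $\Gamma'$} rather than of a smaller group. The paper proves this by rewriting: for $\gamma=\gamma_{a_1}\cdots\gamma_{a_n}\in\Gamma'$ one sets $\delta_1=1$, $\delta_{k+1}=\cl{\delta_k\gamma_{a_k}}$, checks $\delta_{n+1}=1$, obtains $\gamma=\gammap_{\delta_1 a_1}\cdots\gammap_{\delta_n a_n}$, and drops the trivial factors. Your geometric route (Poincar\'e's theorem for the connected, simply connected union of tiles) would also yield this, but it must be invoked explicitly; ``checking the axioms of \cite{farey} reduces to the case analysis'' skips exactly this step. (A minor slip: with the paper's conventions the arc $\xi a$ bounds $\xi\cD$, so the candidate domain is $\bigcup_{\xi\in\cC}\xi\cD$, not $\bigcup_{\xi\in\cC}\xi^{-1}\cD$.)
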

\begin{proof}
Soit un symbole de Farey $\cF=(\cV,*,\mu_{ell})$.
À chaque étape de l'algorithme,
les éléments de $L$ et $L_3$ sont par construction dans $\cW$
et les éléments de $\cC$ ont des classes distinctes dans $\Gamma'\backslash \Gamma$.

On note $\dot{\cC}$ la réunion des classes $\Gamma' \xi$ pour $\xi \in \cC$
et $\concat$ l'addition d'un élément à la fin d'une liste.

\noindent \rule{\textwidth}{1pt}\vskip1mm\hrule
 \begin{algorithmic}[1]
\REQUIRE un symbole de Farey de groupe $\Gamma$ : $(\cV, *,\mu_{ell})$
et un critère d'appartenance à $\Gamma'$.
\ENSURE Un symbole de Farey $(\cV',Ast',\mu'_{ell})$ de $\Gamma'$
et un système de représentants $\cC$ de $\Gamma'\backslash \Gamma$.
\STATE Calculer les données de recollement $\gamma_a$ pour $a \in \cV$.
\STATE $\cC\leftarrow \{Id\}$; $L\leftarrow \{Id\} \times (\cV -\cV_{ell,3})$;
$L_3\leftarrow \{Id\} \times \cV_{ell,3} $; $\cW \leftarrow \{id\} \times \cV$;
$\cV'\leftarrow \{id\} \times \cV$;
\WHILE{$L \cup L_3 \neq \emptyset$}
  \IF{$L_3 \neq \emptyset$}
    \STATE Prendre le premier élément $\xi a$ de $L_3$ (on a donc $a^*=a$)
    et l'enlever de $L_3$.
    \IF{$\xi\gamma_a \notin \dot{\cC}$}
      \STATE $\cC\leftarrow \cC \cup \{\xi\gamma_a,\xi\gamma_a^2\}$;
      $\cW \leftarrow \cC \times \cV$.
      \FOR{$b \in \cV_{ell,3}$}
        \STATE $L_3\leftarrow L_3 \concat \xi\gamma_a b $.
      \ENDFOR
      \FOR{$b \in \cV_{ell,3}$}
        \STATE $L_3\leftarrow L_3 \concat \xi\gamma_a^2 b$.
      \ENDFOR
      \FOR{$b \in \cV - \cV_{ell,3}$}
        \STATE $L\leftarrow L \concat \xi\gamma_a b$.
      \ENDFOR
      \FOR{$b \in \cV - \cV_{ell,3}$}
        \STATE $L\leftarrow L \concat \xi\gamma_a^2 b$.
      \ENDFOR
      \STATE Insérer dans $\cV'$ à la place de $\xi a$
      la suite des $\xi\gamma_a v$ pour
      $v$ parcourant $\cV-\{a\}$ à partir du successeur de $a$ de manière circulaire
     puis la suite des $\xi\gamma_a^2 v$ pour
      $v$ parcourant $\cV-\{a\}$ à partir du successeur de $a$ de manière circulaire.
    \label{ligne1}
    \ENDIF
  \ELSE
    \STATE Prendre le premier élément $\xi a$ de $L$ et l'enlever de $L$.
    \IF{$\xi\gamma_a \notin \dot{\cC}$}
      \STATE $\cC\leftarrow \cC \cup \{\xi\gamma_a\}$; $\cW \leftarrow \cC \times \cV$.
      \FOR{$b \in \cV_{ell,3}$}
        \STATE $L_3\leftarrow L_3 \concat \xi\gamma_a b$.
      \ENDFOR
      \FOR{$b \in \cV-\cV_{ell,3}$}
        \STATE $L\leftarrow L \concat \xi\gamma_a b$.
      \ENDFOR
      \STATE Insérer dans $\cV'$ à la place de $\xi a$
      la suite des $\xi\gamma_a v$ pour
      $v$ parcourant $\cV-\{a^*\}$ à partir du successeur de $a^*$
      de manière circulaire. \label{ligne2}
    \ENDIF
  \ENDIF
\ENDWHILE
\FOR{$\xi a \in \cV'$ avec $\xi \in \cC$ et $a \in \cV$ }
 \STATE $Ast'(\xi a) \leftarrow \cl{\xi \gamma_a} {a^*}$ ;
 $\gammap_{\xi a} \leftarrow \xi\gamma_a \cl{\xi \gamma_a}^{-1}$
 \IF{$Ast'(\xi a)=\xi a$}
    \STATE $\mu'_{ell}(\xi a) \leftarrow \mu_{ell}(a)$.
  \ELSE
    \STATE $\mu'_{ell}(\xi a) \leftarrow 1$.
  \ENDIF
 \ENDFOR
\FOR{ chaque orbite d'ordre 3 pour $Ast'$} \label{ligneend}
  \STATE Choisir un élément $A=\xi a$ de l'orbite ;
  $B\leftarrow Ast'(A)$, $C \leftarrow Ast'(B)$.
  \STATE Dans $\cV'$, remplacer $A$ par $A'=\gamma_a A^-$ suivi de $A''=\gamma_a^2 A^-$.
  \STATE $Ast'(A') \leftarrow B$; $Ast'(A'') \leftarrow C$;
  $Ast'(B) \leftarrow A'$; $Ast'(C) \leftarrow A''$.
  \STATE $\mu'_{ell}(A') \leftarrow 1$, $\mu'_{ell}(A'') \leftarrow 1$.
\ENDFOR
\RETURN $\left (\cV',Ast',\mu'_{ell}\right)$.
\end{algorithmic}
\hrule\bigskip

L'algorithme termine si et seulement si $\Gamma'$ est d'indice fini dans $\Gamma$.
En effet, une fois que $\cC$ est de cardinal l'indice de $\Gamma'$ dans
$\Gamma$, $\cC$ est un système de représentants de $\Gamma'\backslash \Gamma$
et on va directement à la ligne \ref{ligneend}.

A la ligne \ref{ligne1}, $\xi$ est dans $\cC$ et $\xi \gamma_a$ et $\xi \gamma_a^2$
viennent d'être rajoutés à $\cC$. On enlève l'élément $\xi a$ de $\cV'$
et on met dans $\cV'$ tous les éléments
de la forme $\xi \gamma_a b$ et de la forme $\xi \gamma_a^2 b$
pour $b$ différent de $a$. Ainsi, l'orbite d'ordre 3 sous $Ast$ formée
des trois éléments $\xi a$, $\xi \gamma_a a$ et
$\xi \gamma_a^2 a$ du $\cW$ réactualisé n'est pas dans $\cV'$.
Tous les autres éléments de $\xi \gamma_a \cV$ et $\xi \gamma_a^2 \cV$
que l'on vient de rajouter à $\cW$ sont dans $\cV'$.

A la ligne \ref{ligne2}, $\xi$ est dans $\cC$ et $\xi \gamma_a$
vient d'être rajouté à $\cC$. On enlève $\xi a$ de $\cV'$
et on met dans $\cV'$ tous les éléments de la forme $\xi \gamma_a b$
pour $b$ différent de $a^*$. Ainsi, l'orbite $(\xi a, \xi \gamma_a a)$
sous $Ast$ qui est d'ordre 2 n'est pas dans $\cV'$ et
$\gammap_{\xi a} =\id$ puisque $\xi$ et $\xi \gamma_a$
sont tous deux dans $\cC$. Tous les autres éléments de $\xi \gamma_a \cV$
que l'on vient de rajouter à $\cW$ sont dans $\cV'$.

À la ligne \ref{ligneend}, $\gamma_{b}$ est égal à l'identité
pour $b\in \cW - \cV'$. La manière dont les arcs géodésiques ont été insérés
dans $\cV'$ assure que la suite des extrémités des éléments de $\cV'$
est dans l'ordre circulaire et définit bien un polygone hyperbolique convexe.

Si $\cV'$ ne contient pas d'orbite d'ordre
3 pour $Ast'$, $(\cV', Ast',\mu')$ définit bien un symbole de Farey
à la ligne \ref{ligneend}.
Sinon, on transforme chaque orbite d'ordre 3 en
quatre chemins deux à deux échangés par $Ast'$. La bijection
$Ast'$ devient alors une involution.
Remarquons que les données de recollement $\gammap_A$, $\gammap_B$,
$\gammap_C$ ont été remplacées par $\gammap_A$ et $\gammap_C$
qui engendrent le même sous-groupe puisque $\gammap_A \gammap_B \gammap_C=\id$.

Le symbole $(\cV', Ast',\mu')$ définit bien alors un symbole de Farey.
Montrons qu'il est associé au groupe $\Gamma'$. Pour cela,
nous devons montrer d'après \cite{kulkarni} que le groupe engendré
par les données de recollement $\gammap_{\xi a}$
pour $\xi a \in \cV'$ est $\Gamma'$ à la ligne \ref{ligneend}.
Tout élément $\gamma$ de $\Gamma$ est de la forme
$\gamma_{a_1} \cdots \gamma_{a_i} \cdots \gamma_{a_n}$
avec $a_i \in \cV$. Définissons par récurrence la suite d'éléments de $\cC$ par
$\delta_1=1$, $\delta_{k+1}=\cl{\delta_k \gamma_{a_k}}$
et posons $b_k=\delta_{k}a_k$. Par définition,
$\gammap_{b_k}= \delta_k \gamma_{a_k}\delta_{k+1}^{-1}$
appartient à $\Gamma'$. De plus,
si $b_k$ n'appartient pas à $\cV'$, $\gammap_{b_k}$ est égal à 1
et peut être supprimé.
On voit facilement que
$\delta_{k+1}$ est un représentant de
$\gamma_{a_1}\cdots \gamma_{a_k}$.
Comme
$\gamma_{a_1}\cdots \gamma_{a_n}$ appartient à $\Gamma'$, on a
$\delta_{n+1}=1$.
On a donc
$$
  \gamma =\delta_1 \gamma_{a_1} \delta_2^{-1}
  \delta_2 \gamma_{a_1} \delta_3^{-1}
  \cdots
  \delta_k \gamma_{a_k} \delta_{k+1}^{-1}
  \cdots \delta_{n} \gamma_{a_n} \delta_{n+1}^{-1}
  =\gammap_{b_1} \cdots \gammap_{b_k} \cdots \gammap_{b_n} \ .
$$
Donc, les $\gammap_{b_k}$ engendrent $\Gamma'$, $\cF'$ est un symbole de Farey
de groupe $\Gamma'$.
\end{proof}

\begin{rem}
Lorsqu'il existe un test effectif d'appartenance à $\Gamma'$
pour un élément de $\Gamma$, on peut ainsi construire effectivement un symbole de Farey et
un polygone fondamental
associé à un sous-groupe $\Gamma$ d'indice fini de $\Psl_2(\ZZ)$
à partir d'un symbole de Farey associé à $\Psl_2(\ZZ)$.
\end{rem}
\begin{rem}
Si $\cD$ est le domaine fondamental associé à $\cF$, la réunion
des $\xi \cD$ pour $\xi \in \cC$ est un domaine fondamental associé à
$\Gamma'$.
Malheureusement, l'application $Ast'$ n'est pas toujours une involution
sur $\cV'$ et à la ligne \ref{ligneend}, on n'a pas toujours obtenu
un symbole de Farey. Il faut donc faire une "rectification" pour obtenir
un symbole de Farey. Ce cas ne peut pas se produire si $\cF$ n'a qu'un seul chemin
elliptique d'ordre 3, par exemple si $\cF$ est
un symbole de Farey associé à $\Psl_2(\ZZ)$.
\end{rem}
\begin{center}
\begin{tikzpicture}[scale=16]
\fill [black!10!white] (0,4/5) -- (0,1/2) arc (180:22:1/9) arc (82:0:1/24) arc (180:0:1/56) arc (180:0:1/42)
  arc (180:44:1/39) arc (104:0:1/55) arc (180:0:1/70) arc (180:0:1/28) arc (180:82:1/8) arc (142:0:1/5) -- (1,4/5);
\fill [black!10!white] (0,3/10) -- (0,0) arc (180:0:1/8) arc (180:0:1/56) arc (180:0:1/42)
  arc (180:0:1/30) arc (180:0:1/70) arc (180:0:1/28) arc (180:0:1/12) arc (180:0:1/6) -- (1,3/10);

\draw (0,4/5) -- (0,1/2) arc (180:22:1/9) arc (82:0:1/24);
\draw (0,1/2) arc (180:0:1/8) node [midway,above] {$C=Ast(B)$}
  arc (180:0:1/56) arc (180:0:1/42) arc (180:44:1/39) arc (104:0:1/55);
\draw (1/3,1/2) arc (180:0:1/30) node [midway,above] {$B=Ast(A)$}
  arc (180:0:1/70) arc (180:0:1/28) arc (180:82:1/8) arc (142:0:1/5);
\draw (1/2,1/2) arc (180:0:1/4) node [midway,above] {$A=Ast(C)$} -- (1,4/5);

\draw (0,3/10) -- (0,0) arc (180:0:1/8) node [midway,above] {$C$} arc (180:0:1/56)
  arc (180:0:1/42) arc (180:0:1/30) node [midway,above] {$B$} arc (180:0:1/70) arc (180:0:1/28)
  arc (180:0:1/4) -- (1,3/10);
\draw (1/2,0) arc (180:82:1/8) arc (142:0:1/5);
\draw (1/2,0) arc (180:0:1/12) node [midway, below] {$A'=B^*$}
  arc (180:0:1/6) node [midway, below] {$A''=C^*$};
\draw (2/3,0) arc (0:22:1/3);

\draw[->] (0.375,0.525) -- (0.6,0.1) node [pos=0.6,below,left] {$\gamma_A$};
\draw[->] (0.22,0.55) -- (0.7,0.15) node [pos=0.4,below,left] {$\gamma_C^{-1}$};
\end{tikzpicture}
\end{center}
\subsection{Conjugaison et opérateurs de Hecke}
Nous supposerons désormais que $\left\lbrace \cdot,\cdot \right\rbrace_{\Gamma,\cF}$
ne dépend pas de $\cF$, ce qui est vrai lorsque $V$ est $V_k$
muni de la forme bilinéaire $\langle \cdot,\cdot \rangle_{V_k}$.

\begin{prop} Soit $\epsilon= \smallmat{-1&0\\0&1}$.
Pour $\Phi_1 \in H^1(\epsilon\Gamma\epsilon^{-1},V_k)$ et
$\Phi \in \Hom_{\Gamma}(\Delta_0,V_k)$,
\begin{equation*}
\begin{split}
\left\lbrace\Phi_1|\epsilon,\Phi\right\rbrace_{\Gamma}&=
(-1)^{k-1}\left\lbrace\Phi_1,\Phi|\epsilon^{-1}\right\rbrace_{\epsilon\Gamma\epsilon^{-1}}\ .
\end{split}
\end{equation*}
En particulier, pour $f \in \hypk{\Fonct{N}{\QQ}}$,
\begin{equation*}
\left\lbrace\Psi_k(f|\epsilon),\Phi\right\rbrace_{\Gamma}
=\left\lbrace\Psi_k(f),\Phi|_k\epsilon^{-1}\right\rbrace_{\epsilon\Gamma\epsilon^{-1}} \ .
\end{equation*}
\end{prop}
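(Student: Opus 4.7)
Mon plan consiste à comparer les deux produits d'intersection en exploitant l'invariance par changement de symbole de Farey établie au corollaire~\ref{independance}. Partant d'un symbole de Farey étendu $\cF=(\cV,*,\mu_{ell})$ pour $\Gamma$ avec données de recollement $\gamma_a$, je construirais un symbole $\cF^\epsilon=(\cV^\epsilon,*^\epsilon,\mu_{ell}^\epsilon)$ pour $\epsilon\Gamma\epsilon^{-1}$ dont les arcs sont les $\tilde a=\epsilon a^-$ pour $a\in\cV$ (le renversement compensant l'inversion d'orientation de $\cH$ par $\epsilon$), avec données de recollement $\gamma^\epsilon_{\tilde a}=\epsilon\gamma_a\epsilon^{-1}$; la compatibilité avec les extensions $u_a,v_a$ des arcs elliptiques d'ordre~3 s'obtient par vérification directe.

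Développons le membre de gauche à l'aide de $\cF$. Si $\widetilde{\Phi_1}$ représente $\Phi_1$ comme cocycle sur $\epsilon\Gamma\epsilon^{-1}$, alors $\gamma\mapsto\widetilde{\Phi_1}(\epsilon\gamma\epsilon^{-1})|\epsilon$ est un cocycle sur $\Gamma$ représentant $\Phi_1|\epsilon$. Par la propriété d'adjonction $\langle P|\gamma,Q\rangle_{V_k}=\langle P,Q|\gamma^*\rangle_{V_k}$ avec $\gamma^*=\det(\gamma)\gamma^{-1}$, et puisque $\det\epsilon=-1$ et $\epsilon^{-1}=\epsilon$ entraînent $\epsilon^*=-\epsilon=(-I)\cdot\epsilon$, l'action de $-I$ sur $V_k$ étant la multiplication par $(-1)^{k-2}$ par homogénéité, on obtient $\Phi(a)|\epsilon^*=(-1)^{k-2}\Phi(a)|\epsilon$.

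Développons le membre de droite à l'aide de $\cF^\epsilon$. Chaque arc $\tilde a=\epsilon a^-$ fournit le terme $\langle\widetilde{\Phi_1}((\epsilon\gamma_a\epsilon^{-1})^{-1}),(\Phi|\epsilon^{-1})(\epsilon a^-)\rangle_{V_k}$. Or $(\Phi|\epsilon^{-1})(\epsilon a^-)=\Phi(a^-)|\epsilon^{-1}=-\Phi(a)|\epsilon$, car $\epsilon^{-1}=\epsilon$ et $\Phi(a^-)=-\Phi(a)$ (cette dernière égalité s'étend sans peine aux arcs modifiés $u_a,v_a$ par linéarité). Le facteur $-1$ issu de ce renversement d'orientation se combine au facteur $(-1)^{k-2}$ issu de l'adjonction pour donner exactement le signe global $(-1)^{k-1}$ annoncé; une vérification terme à terme de la bijection $a\leftrightarrow\tilde a$ entre $\Vmod$ et $\widetilde{\cV^\epsilon}$ conclut.

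La seconde formule s'obtient immédiatement à partir de la première en appliquant la relation \eqref{epsilon} rappelée plus haut, c'est-à-dire $\Psi_k(f|\epsilon)=(-1)^{k-1}\Psi_k(f)|\epsilon$: les deux facteurs $(-1)^{k-1}$ se compensent. L'obstacle principal sera le décompte minutieux des signes (adjonction d'une matrice de déterminant négatif, action de $-I$, renversement des arcs) ainsi que la vérification précise que $\cF^\epsilon$ est bien un symbole de Farey étendu licite pour $\epsilon\Gamma\epsilon^{-1}$, notamment pour le traitement des arcs elliptiques d'ordre~3.
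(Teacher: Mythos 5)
Votre démonstration est correcte et suit essentiellement la même voie que celle du texte : construction du symbole de Farey $\epsilon\cF$ formé des arcs renversés $\epsilon a^-$ avec données de recollement $\epsilon\gamma_a\epsilon^{-1}$, adjonction $\epsilon^*=-\epsilon$ donnant le facteur $(-1)^{k-2}$, signe $-1$ supplémentaire issu du renversement d'orientation, puis la formule \eqref{epsilon} pour la seconde égalité. Le décompte des signes et l'usage de l'indépendance vis-à-vis du symbole de Farey sont exactement ceux de la preuve du papier.
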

\begin{proof}
Si $\cF=(\cV,*,\mu)$ est un symbole de Farey étendu de groupe $\Gamma$,
on obtient un symbole de Farey étendu $\epsilon \cF=(\cV',*, \mu')$
pour $\epsilon \Gamma \epsilon^{-1}$
défini par $\cV'=\epsilon \overline{\cV}$, $(\epsilon \overline{a})^*= \epsilon \overline{a^*}$,
$\gamma_{\epsilon \overline{a}}=\epsilon\gamma_a\epsilon^{-1}$ et
$\mu'(\epsilon \overline{a})=\mu(a)$.
On a
\begin{equation*}
\begin{split}
\left\lbrace\Phi_1|\epsilon,\Phi\right\rbrace_\Gamma&=
\frac{1}{2}\sum_{a\in \Vmod} \langle \Phi_1(\epsilon\gamma_a^{-1}\epsilon^{-1})|\epsilon,
  \Phi(a)\rangle_{V_k}=
\frac{(-1)^k}{2}\sum_{a\in \Vmod} \langle \Phi_1(\epsilon\gamma_a^{-1}\epsilon^{-1}),
  \Phi|_k\epsilon^{-1}(\epsilon a)\rangle_{V_k}\\
&=
\frac{(-1)^{k-1}}{2}\sum_{b\in \Vmod'} \langle \Phi_1(\gamma_b^{-1}),
  \Phi|_k\epsilon^{-1}(b)\rangle_{V_k} =
(-1)^{k-1} \left\lbrace\Phi_1,\Phi|_k\epsilon^{-1}\right\rbrace_{\epsilon\Gamma\epsilon^{-1}}
\ .\end{split}
\end{equation*}
La deuxième égalité se déduit alors de \eqref{epsilon}.
\end{proof}

Regardons le comportement des formes bilinéaires
$\left\lbrace \cdot,\cdot \right\rbrace_{\Gamma}$
par rapport à un opérateur de Hecke
$[\Gamma_1 \alpha \Gamma_2]$ où $\alpha$ est un élément de
$M_2(\ZZ)^+=\Gl_2(\QQ)^+ \cap M_2(\ZZ)$
et où $\Gamma_1$ et $\Gamma_2$ sont deux sous-groupes d'indice fini de $\Sl_2(\ZZ)$.

Écrivons $\Gamma_1 \alpha \Gamma_2 = \sqcup_{\xi\in \cC} \Gamma_1 \alpha \xi$
où $\cC$ est un système de représentants
de $\left(\Gamma_2 \cap \alpha^{-1} \Gamma_1 \alpha\right )\backslash \Gamma_2$.
Les définitions bien connues qui suivent ne dépendent pas du choix de $\cC$.
Nous utiliserons ensuite un système de représentants
obtenu par l'algorithme de la proposition \ref{prop:sousgroupe}.

On définit l'application $[\Gamma_1 \alpha \Gamma_2]$
$$\Hom_{\Gamma_1}(\Delta_0,V) \to \Hom_{\Gamma_2}(\Delta_0,V)$$
donnée par
$$\Phi|[\Gamma_1 \alpha \Gamma_2]=\sum_{\xi\in \cC} \Phi|\alpha\xi \ .$$
Si $\widetilde{\Phi}$ est un $1$-cocycle de $\Gamma_1$ à valeurs dans $V$, on définit
le $1$-cocycle $\widetilde{\Phi}|[\Gamma_1 \alpha \Gamma_2]$ de $\Gamma_2$ par
$$ (\widetilde{\Phi}|[\Gamma_1 \alpha \Gamma_2])(\gamma) =
\sum_{\xi\in \cC} \widetilde{\Phi}(\gamma_{\xi}) | \alpha \perm{\gamma}{\xi}
$$
si $\alpha \xi\gamma = \gamma_{\xi} \alpha \perm{\gamma}{\xi}$
avec $\gamma_{\xi} \in \Gamma_1$ et $\permm{\gamma}$ une permutation de $\cC$.
On a $\permm{\gamma^{-1}}=\permm{\gamma}^{-1}$ et
$(\gamma^{-1})_{\perm{\gamma}{\xi}}=(\gamma_\xi)^{-1}$.
On en déduit que
$$ (\widetilde{\Phi}|[\Gamma_1 \alpha \Gamma_2])(\gamma^{-1}) =
\sum_{\xi} \widetilde{\Phi}(\gamma_{\xi}^{-1}) | \alpha \xi \ .
$$
Si $\alpha \in M_2(\ZZ)^+$ et $\widetilde{\Phi} \in Z^1(\Gamma, V)$, on note
$\widetilde{\Phi}|\alpha$ le cocycle de $Z^1(\alpha^{-1} \Gamma \alpha, V)$ déduit
par transport de structure : pour
$\gamma \in \alpha^{-1} \Gamma \alpha$
$$ \widetilde{\Phi}|\alpha( \gamma) = \widetilde{\Phi}(\alpha \gamma \alpha^{-1})| \alpha \ .$$

\begin{prop}
\label{prop:hecke}
Soient $\widetilde{\Phi}_1 \in Z^1(\Gamma_1,V)$
et $\Phi_2\in \Hom_{\Gamma_2}(\Delta_0,V)$. Si $\alpha \in M_2(\ZZ)^+$, on a
$$
\left\lbrace \widetilde{\Phi}_1|[\Gamma_1 \alpha \Gamma_2], \Phi_2 \right\rbrace_{\Gamma_2}
=
\left\lbrace \widetilde{\Phi}_1,
\Phi_2|[\Gamma_2 \alpha^* \Gamma_1] \right\rbrace_{\Gamma_1} \ .
$$
\end{prop}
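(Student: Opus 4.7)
The plan is to reduce both sides of the identity to a common sum over the arcs of a Farey symbol attached to the intermediate subgroup $\Gamma_3 = \Gamma_2 \cap \alpha^{-1}\Gamma_1\alpha \subseteq \Gamma_2$, equivalently (via $\alpha$-conjugation) to $\Gamma_4 = \alpha\Gamma_3\alpha^{-1} \subseteq \Gamma_1$, both of index $|\cC|$ in the respective ambient group. I would first apply the algorithm of Proposition~\ref{prop:sousgroupe} to construct a Farey symbol $\cF_3$ for $\Gamma_3$ starting from $\cF_2$ and the given representatives $\cC$; its arcs are, apart from the order-3 elliptic corrections, the products $\xi a$ for $(\xi, a) \in \cC \times \widetilde{\cV}_2$, with gluing data $\gammap_{\xi a} = \xi \gamma_a \cl{\xi\gamma_a}^{-1} \in \Gamma_3$. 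Since $\alpha \in M_2(\ZZ)^+$ acts on $\cH$ by an orientation-preserving isometry sending geodesic arcs to geodesic arcs, translating $\cF_3$ by $\alpha$ produces a Farey symbol $\cF_4 = \alpha\cF_3$ for $\Gamma_4$, whose arcs $\alpha\xi a$ carry gluing data $\alpha \gammap_{\xi a}\alpha^{-1}$.

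The key observation is that for each $a \in \widetilde{\cV}_2$ the defining formula for the Hecke operator yields
\[(\widetilde{\Phi}_1 \mid [\Gamma_1\alpha\Gamma_2])(\gamma_a^{-1}) = \sum_{\xi\in\cC}\widetilde{\Phi}_1\bigl((\alpha\gammap_{\xi a}\alpha^{-1})^{-1}\bigr) \mid \alpha\xi,\]
because the element $\alpha\xi\gamma_a\cl{\xi\gamma_a}^{-1}\alpha^{-1}$ that appears from the relation $\alpha\xi\gamma_a = (\alpha\gammap_{\xi a}\alpha^{-1})\,\alpha\cl{\xi\gamma_a}$ is precisely the gluing datum of the arc $\alpha\xi a$ in $\cF_4$. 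Substituting this into the left-hand side and applying the adjointness $\langle P\mid\gamma, Q\rangle_V = \langle P, Q\mid\gamma^*\rangle_V$ with $\gamma=\alpha\xi$, together with the $\Gamma_2$-equivariance of $\Phi_2$ (which converts $\Phi_2(a)\mid\xi^{-1}\alpha^*$ into $(\Phi_2\mid\alpha^*)(\alpha\xi a)$ with the correct weight normalization), rewrites the LHS as the single sum
\[\frac{1}{2}\sum_{b\in\widetilde{\cV}_4} \langle \widetilde{\Phi}_1(\gamma_b^{-1}), (\Phi_2\mid\alpha^*)(b)\rangle_V.\]
The analogous expansion of the right-hand side, carried out on a Farey symbol for $\Gamma_1$ which — by independence of the Farey symbol — may be taken to be the one built by the same algorithm from $\cF_4$ and a set of representatives of $\Gamma_4\backslash\Gamma_1$, produces the same sum, whence the identity.

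The main obstacle is the bookkeeping for the exceptional order-3 orbits that may arise in the algorithm of Proposition~\ref{prop:sousgroupe}, where a single arc of $\cF_2$ is replaced by four arcs with modified gluing data and a reshuffled involution; one must verify that the Hecke-expansion terms still match pairwise across $\cF_2$, $\cF_3$, $\cF_4$ and the chosen Farey symbol of $\Gamma_1$. A secondary technicality is the subgroup restriction identity $\{\cdot,\cdot\}_{\Gamma'} = [\Gamma:\Gamma']\{\cdot,\cdot\}_\Gamma$ used to transport the pairing from $\cF_4$ back to a Farey symbol of $\Gamma_1$; it follows either from Theorem~\ref{thm:petersson1} via the Petersson interpretation, or directly by reading off the pairing formula in terms of the arcs of the Farey symbol for $\Gamma'$ built from that of $\Gamma$.
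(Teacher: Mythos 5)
Your overall strategy is in substance the paper's own: reduce both sides to a single pairing over the intermediate group $\Gamma_2\cap\alpha^{-1}\Gamma_1\alpha$ (equivalently, over its $\alpha$-conjugate inside $\Gamma_1$) computed on the Farey symbol produced by Proposition \ref{prop:sousgroupe}, and conclude by independence of the Farey symbol (Corollaire \ref{independance}); this is exactly the chain Lemme \ref{heckecomp} $+$ Lemme \ref{heckecomp0}. Your treatment of the left-hand side is correct: the identity $(\widetilde{\Phi}_1|[\Gamma_1\alpha\Gamma_2])(\gamma_a^{-1})=\sum_{\xi}\widetilde{\Phi}_1(\delta_{\xi a}^{-1})|\alpha\xi$ with $\delta_{\xi a}=\alpha\gammap_{\xi a}\alpha^{-1}$ the gluing datum of the arc $\alpha\xi a$, followed by adjointness of $\langle\cdot,\cdot\rangle_V$ and the $\Gamma_2$-invariance of $\Phi_2$, does yield $\frac{1}{2}\sum_{b}\langle\widetilde{\Phi}_1(\gamma_b^{-1}),(\Phi_2|\alpha^*)(b)\rangle_V$ over $b\in\alpha(\cC\times\Vmod)$; the discrepancy between this index set and the actual arcs of the subgroup's symbol (the order-$3$ orbits) is exactly what Lemme \ref{orbite3} repairs, so the obstacle you flag is real but surmountable.

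The genuine gap is on the right-hand side. The expansion there is \emph{not} analogous: in the definition \eqref{def:petersson} the cocycle is evaluated at the gluing data and the modular symbol at the arcs, so when the Hecke operator sits on the $\Hom_{\Gamma_1}(\Delta_0,V)$ argument you obtain a double sum in which the cocycle is still evaluated at the gluing data $\gamma_a^{-1}$ of the $\Gamma_1$-symbol, not at the gluing data $\gamma_{\eta a}^{-1}$ of the subgroup's symbol, while the coset representative $\eta$ attaches to the other argument. Converting one into the other requires the cocycle relation (twice) and two separate vanishing arguments, in which the parasitic terms pair against $\sum_a \Psi(\eta a)$, the image of the closed boundary path, and hence vanish; this is the content of the second identity of Lemme \ref{heckecomp} and cannot be skipped. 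The tool you propose instead, $\left\lbrace\cdot,\cdot\right\rbrace_{\Gamma'}=[\Gamma:\Gamma']\left\lbrace\cdot,\cdot\right\rbrace_{\Gamma}$, does not apply: that rescaling holds only when both arguments are invariant under the larger group, whereas $\Phi_2|\alpha^*$ is invariant only under $\Gamma_1\cap\alpha\Gamma_2\alpha^{-1}$. What is actually needed is the trace compatibility $\left\lbrace\widetilde{\Phi}_1,\Psi\right\rbrace_{\Gamma'}=\left\lbrace\widetilde{\Phi}_1,\sum_\eta\Psi|\eta\right\rbrace_{\Gamma_1}$, which is precisely the statement to be proved, so invoking it would be circular. (A minor confusion in the same sentence: the algorithm of Proposition \ref{prop:sousgroupe} builds a Farey symbol for a subgroup from one for the ambient group, not the reverse, so one must start from a symbol for $\Gamma_1$ and descend.)
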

On utilise le fait que l'accouplement $\langle \cdot,\cdot\rangle_{V}$
vérifie
$\langle \gamma v_1,v_2\rangle_{V}=\langle v_1, \gamma^*v_2\rangle_{V}$
où
$\gamma^*=\det(\gamma) \gamma^{-1}$ pour $\gamma \in \Gl_2(\QQ)$.

La proposition \ref{prop:hecke} se déduira des lemmes \ref{heckecomp0}, \ref{heckecomp}
et du corollaire \ref{independance}.
\begin{lem}
\label{heckecomp0}
Soient $\widetilde{\Phi}_1 \in Z^1(\alpha\Gamma\alpha^{-1},V)$ et
$\Phi_2 \in \Hom_{\Gamma}(\Delta_0,V)$.
Alors,
\begin{equation*}
\begin{split}
  \left\lbrace \widetilde{\Phi}_1|\alpha,
  \Phi_2 \right\rbrace_{\Gamma}&=
  \left\lbrace \widetilde{\Phi}_1,
  \Phi_2|\alpha^* \right\rbrace _{\alpha\Gamma\alpha^{-1}}
\ .\end{split}
\end{equation*}
\end{lem}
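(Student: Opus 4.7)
The plan is to exploit the (just-assumed) independence of $\{\cdot,\cdot\}_{\Gamma,\cF}$ from the choice of extended Farey symbol. Fix an extended Farey symbol $\cF = (\cV,*,\mu)$ for $\Gamma$ with gluing data $\gamma_a$, and transport it by $\alpha$: set $\alpha\cF = (\alpha\cV, *, \mu)$ where $\alpha\cV = \{\alpha a : a\in\cV\}$, $(\alpha a)^* = \alpha(a^*)$ and $\mu(\alpha a) = \mu(a)$. Since $\alpha \in \Gl_2^+(\QQ)$ acts on $\cH^*$ by an orientation-preserving homography which sends $\PP^1(\QQ)$ into itself and geodesics to geodesics, $\alpha\cF$ is an extended Farey symbol for $\alpha\Gamma\alpha^{-1}$, with gluing datum $\alpha\gamma_a\alpha^{-1}$ attached to the arc $\alpha a$, and with $\widetilde{\alpha\cV}=\alpha\Vmod$. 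This is the exact analogue of the construction $\epsilon\cF$ used in the preceding proposition.

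Next I would compute the left-hand side using $\cF$ and the right-hand side using $\alpha\cF$, and check that the two sums coincide term by term. Unwinding definitions,
\begin{equation*}
\{\widetilde{\Phi}_1|\alpha, \Phi_2\}_{\Gamma,\cF}
= \frac{1}{2}\sum_{a\in\Vmod}\langle (\widetilde{\Phi}_1|\alpha)(\gamma_a^{-1}), \Phi_2(a)\rangle_V
= \frac{1}{2}\sum_{a\in\Vmod}\langle \widetilde{\Phi}_1(\alpha\gamma_a^{-1}\alpha^{-1})|\alpha, \Phi_2(a)\rangle_V,
\end{equation*}
and by the duality identity $\langle P|\gamma, Q\rangle_V = \langle P, Q|\gamma^*\rangle_V$ applied to $\gamma=\alpha$, this becomes
\begin{equation*}
\frac{1}{2}\sum_{a\in\Vmod}\langle \widetilde{\Phi}_1(\alpha\gamma_a^{-1}\alpha^{-1}), \Phi_2(a)|\alpha^*\rangle_V.
\end{equation*}

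The change of variables $b=\alpha a$ bijects $\Vmod$ with $\widetilde{\alpha\cV}$; by the transport of gluing data, $\gamma_b^{-1}=\alpha\gamma_a^{-1}\alpha^{-1}$. Furthermore
\begin{equation*}
(\Phi_2|\alpha^*)(b) = \Phi_2(\alpha^* b)|\alpha^* = \Phi_2(\det(\alpha)\cdot a)|\alpha^* = \Phi_2(a)|\alpha^*,
\end{equation*}
since $\alpha^*\alpha = \det(\alpha)\cdot\id_2$ and the scalar matrix $\det(\alpha)\cdot\id_2$ acts trivially on $\Delta_0\subset\ZZ[\PP^1(\QQ)]$. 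Hence the sum rewrites as
\begin{equation*}
\frac{1}{2}\sum_{b\in\widetilde{\alpha\cV}}\langle \widetilde{\Phi}_1(\gamma_b^{-1}), (\Phi_2|\alpha^*)(b)\rangle_V = \{\widetilde{\Phi}_1, \Phi_2|\alpha^*\}_{\alpha\Gamma\alpha^{-1},\,\alpha\cF},
\end{equation*}
which by the independence assumption equals $\{\widetilde{\Phi}_1, \Phi_2|\alpha^*\}_{\alpha\Gamma\alpha^{-1}}$, as required. The only delicate step is the bookkeeping around $\alpha\cF$: one must verify that conjugation by $\alpha$ preserves the involution, the elliptic orders, and the $\Vmod$ construction (so that elliptic $3$-arcs split in matching ways on both sides), and that $\Phi_2|\alpha^*$ really defines an element of $\Hom_{\alpha\Gamma\alpha^{-1}}(\Delta_0,V)$ — the latter follows from $(\alpha^*)^{-1}(\alpha\Gamma\alpha^{-1})\alpha^* = \Gamma$ together with the triviality of scalar matrices on $\Delta_0$.
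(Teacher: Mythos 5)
Your proposal is correct and follows essentially the same route as the paper: transport the Farey symbol to $\alpha\cF$ for $\alpha\Gamma\alpha^{-1}$, apply the adjunction $\langle P|\alpha, Q\rangle_V=\langle P, Q|\alpha^*\rangle_V$ together with $(\Phi_2|\alpha^*)(\alpha a)=\Phi_2(a)|\alpha^*$, change variables $b=\alpha a$ using $\gamma_{\alpha a}=\alpha\gamma_a\alpha^{-1}$ and $\alpha\Vmod=\widetilde{\alpha\cV}$, and conclude by independence of the pairing from the chosen symbol. The bookkeeping points you flag at the end are exactly those the paper leaves implicit under ``la d\'efinition naturelle de $Ast'$ et de $\mu'$''.
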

\begin{proof}
Soit $\cF=(\cV,Ast,\mu)$ un symbole de Farey pour $\Gamma$. Alors,
$(\alpha \cV,Ast',\mu')$ est un symbole de Farey pour $\alpha \Gamma \alpha^{-1}$
avec la définition naturelle de $Ast'$ et de $\mu'$.
On a alors
\begin{equation*}
\begin{split}
\left\lbrace \widetilde{\Phi}_1|\alpha,
  \Phi_2 \right\rbrace_{\Gamma}&=
 \frac{1}{2} \sum_{a\in \Vmod} \langle (\widetilde{\Phi}_1|\alpha)(\gamma_a^{-1})
 ,\Phi_2(a)\rangle_{V}
=\frac{1}{2}
  \sum_{a \in \Vmod} \langle \widetilde{\Phi}_1(\alpha\gamma_a^{-1}\alpha^{-1})|\alpha
 ,\Phi_2(a)\rangle_{V}
 \\
 &=\frac{1}{2}
  \sum_{a\in \Vmod} \langle \widetilde{\Phi}_1(\alpha\gamma_a^{-1}\alpha^{-1})
 ,(\Phi_2| \alpha^*) (\alpha a)\rangle_{V}
 =
 \frac{1}{2}
  \sum_{b\in \alpha \Vmod} \langle \widetilde{\Phi}_1(\gamma_{b}^{-1})
 ,(\Phi_2| \alpha^*) (b)\rangle_{V}
 \\
&=
\left\lbrace \widetilde{\Phi}_1,
  \Phi_2|\alpha^* \right\rbrace_{\alpha\Gamma\alpha^{-1}}
\end{split}
\end{equation*}
car $ \alpha \Vmod=\widetilde{\alpha \cV}$.
\end{proof}
Donnons quelques notations valables pour les lemmes suivants.
Choisissons un symbole de Farey $\cF=(\cV, *, \mu)$ de
 $\Gamma_2$ et soit le symbole de Farey $\cF'=(\cV', *, \mu')$ de
$\Gamma_2 \cap \alpha^{-1} \Gamma_1 \alpha$
construit à partir de $\cF$ dans la proposition \ref{prop:sousgroupe}
dont on reprend les notations.
On pose $\Wmod=\cC \times \Vmod$.
On a pour $a\in\Vmod$
$$\delta_{\xi a}=\alpha\gamma_{\xi a}\alpha^{-1}
\in \Gamma_1 \cap \alpha \Gamma_2 \alpha^{-1}$$
et
$\alpha \perm{\gamma_a}{\xi} \gamma_a^{-1}=\delta_{\xi a}^{-1} \alpha \xi$
que
$\alpha\xi \gamma_a=\delta_{\xi a} \alpha \perm{\gamma_a}{\xi}$.
On a
\begin{equation}\label{decomp}
\cC \times \Vmod= \cC \times (\cV - \cV_{ell,3})
  \sqcup \widetilde{\cV'}_{ell,3} \sqcup \cW_3
\end{equation}
 où $\cW_3$ est la réunion des
  $\widetilde{\xi a\ }=\{\xi u_a, \xi v_a\}$ pour $\xi a $ dans une orbite d'ordre 3
  dans $\cC\times \cW$.
\begin{lem} \label{orbite3}
Pour une orbite $\{A,B,C\}$ d'ordre 3 pour $Ast'$ dans $\cW$,
on a pour $\Phi=\Phi_1|\alpha$
$$
\sum_{P \in \widetilde{\{A,B,C\}\ }}
\langle
\widetilde{\Phi}(\gamma_{P}^{-1}),
\Phi_2(P) \rangle_{V}=
\sum_{P \in \{A',A'', B, C\}}
\langle
\widetilde{\Phi}(\gamma_{P}^{-1}),
\Phi_2(P) \rangle_{V}\ .
$$
\end{lem}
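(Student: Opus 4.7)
Mon plan consiste � identifier les donn�es de recollement de part et d'autre, puis � d�velopper les $\Phi_2(u_X),\Phi_2(v_X)$ par les formules triangulaires, et enfin � assembler gr�ce aux relations cocycliques. �crivons $A=\xi_1 a$, $B=\xi_2 a$, $C=\xi_3 a$ avec $\xi_{i+1}=\cl{\xi_i\gamma_a}$ cycliquement dans $\ZZ/3\ZZ$, et posons $\gammap_A=\xi_1\gamma_a\xi_2^{-1}$, $\gammap_B=\xi_2\gamma_a\xi_3^{-1}$, $\gammap_C=\xi_3\gamma_a\xi_1^{-1}$, dont le produit $\gammap_A\gammap_B\gammap_C$ vaut $\pm\id$ puisque $\gamma_a^3\in\{\pm\id\}$. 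Comme $u_a=\gamma_a v_a^-$, les donn�es de recollement dans $\Vmod$ sont $\gamma_{u_a}=\gamma_a$ et $\gamma_{v_a}=\gamma_a^{-1}$. La r�gle $\gammap_{\xi x}=\xi\gamma_x\cl{\xi\gamma_x}^{-1}$, combin�e � l'�galit� $\cl{\xi_i\gamma_a^{-1}}=\xi_{i-1}$ (valable parce que $\gamma_a^{-1}\equiv\gamma_a^2\pmod{\pm\id}$), donne alors $\gamma_{u_{X_i}}=\gammap_{X_i}$ et $\gamma_{v_{X_i}}=\gammap_{X_{i-1}}^{-1}$. Les arguments $\gamma_P^{-1}$ du membre de gauche sont ainsi les six �l�ments $\gammap_A^{\pm 1},\gammap_B^{\pm 1},\gammap_C^{\pm 1}$~; la proposition~\ref{prop:sousgroupe} ne livre � droite que $\gammap_A^{\pm 1}$ et $\gammap_C^{\pm 1}$ ($\gamma_{A'}=\gammap_A$, $\gamma_{A''}=\gammap_C^{-1}$, $\gamma_B=\gammap_A^{-1}$, $\gamma_C=\gammap_C$), et les contributions en $\gammap_B^{\pm 1}$ devront donc se compenser.

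Ensuite, j'utiliserai $\Phi_2(u_X)=\tfrac13(\Phi_2(X')+\Phi_2(X))$ et $\Phi_2(v_X)=\tfrac13(\Phi_2(X)+\Phi_2(X''))$. Pour $X=A$, les chemins $A'$ et $A''$ sont ceux du membre de droite. Pour $X=B$ ou $C$, j'�tablirai les identit�s g�om�triques cl�s $\gammap_A B=-A'$, $\gammap_A B'=A''$, $\gammap_B C=-B'$, $\gammap_B C'=B''$, $\gammap_C A=-C'$, $\gammap_C A'=C''$, obtenues en faisant agir $\gammap_A,\gammap_B,\gammap_C$ sur les sommets $r_{X_i},s_{X_i},t_{X_i}$. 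L'invariance de $\Phi_2$ par $\Gamma_2$ les transforme en relations lin�aires du type $\Phi_2(A')=-\Phi_2(B)|\gammap_A^{-1}$, $\Phi_2(A'')=-\Phi_2(C)|\gammap_C$, $\Phi_2(B')=-\Phi_2(C)|\gammap_B^{-1}$, $\Phi_2(B'')=\Phi_2(B)+\Phi_2(C)|\gammap_B^{-1}$, et leurs analogues obtenus par rotation cyclique sur $\{A,B,C\}$.

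Enfin, il s'agira de substituer ces expressions dans la somme de gauche et de simplifier. L'invariance $\langle v|\gamma,w\rangle_V=\langle v,w|\gamma^{-1}\rangle_V$ pour $\gamma\in\Sl_2(\QQ)$ me permettra de d�placer les $\gammap_X$ d'un c�t� � l'autre de l'accouplement~; la propri�t� de cocycle $\widetilde{\Phi}(\gammap_A\gammap_B)=\widetilde{\Phi}(\gammap_A)|\gammap_B+\widetilde{\Phi}(\gammap_B)$ combin�e � $\gammap_A\gammap_B\gammap_C=\id$ convertira alors chaque $\widetilde{\Phi}(\gammap_B^{\pm 1})$ en combinaison de $\widetilde{\Phi}(\gammap_A^{\pm 1})$ et $\widetilde{\Phi}(\gammap_C^{\pm 1})$, faisant dispara�tre les termes superflus. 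La principale difficult� r�side pr�cis�ment dans cette derni�re �tape, de nature purement combinatoire~: il faudra v�rifier qu'apr�s regroupement, les douze termes �l�mentaires produits par la substitution reproduisent exactement, avec les bons signes et les bonnes multiplicit�s $\tfrac13$, les quatre contributions du membre de droite.
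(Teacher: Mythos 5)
Votre plan r\'eunit les bons ingr\'edients, et vos identifications pr\'eliminaires sont exactes : les donn\'ees de recollement $\gamma_{u_{X_i}}=\gammap_{X_i}$ et $\gamma_{v_{X_i}}=\gammapp_{X_{i-1}}^{-1}$ (de sorte que seuls $\gammapp_A^{\pm1}$ et $\gammapp_C^{\pm1}$ doivent survivre), les formules triangulaires pour $\Phi_2(u_X)$ et $\Phi_2(v_X)$, les identit\'es g\'eom\'etriques du type $A'=\gammap_A B^-$, la relation de cocycle et l'adjonction. Mais la d\'emonstration n'est pas men\'ee \`a terme : vous reportez explicitement la v\'erification combinatoire finale, qui est pr\'ecis\'ement le contenu du lemme. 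C'est l\`a que doivent se produire la disparition des termes en $\widetilde{\Phi}(\gammapp_B^{\pm1})$, l'apparition des bons signes et des multiplicit\'es $\tfrac13$ ; tant que les douze termes \'el\'ementaires n'ont pas \'et\'e effectivement recombin\'es, rien ne garantit le r\'esultat, et c'est exactement l'endroit o\`u une erreur de signe ferait \'echouer l'argument. En l'\'etat, c'est une esquisse, pas une preuve.

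Pour conclure, deux simplifications rendent le calcul nettement plus court que votre plan \`a douze termes. D'une part, le lemme \ref{symetrie} permet de remplacer $\frac12\sum_{P\in\widetilde{\{A,B,C\}}}$ par la somme sur trois repr\'esentants des paires \'echang\'ees par $Ast$, par exemple $u_A$, $u_C$ et $v_C$ ; une seule application de la relation de cocycle est alors n\'ecessaire, \`a $\gamma_{v_C}^{-1}=\gammapp_A^{-1}\gammapp_C^{-1}$. D'autre part, la relation dans $\Delta_0$ issue du d\'ecoupage du triangle, $\Phi_2(X)=\Phi_2(X')+\Phi_2(X'')$, jointe aux identit\'es $\gammapp_C^{-1}C={A''}^-$ et $\gammapp_C^{-1}C''=A'$, donne imm\'ediatement $\Phi_2(A)+2\Phi_2(A')-\Phi_2(A'')=3\Phi_2(A')$ et $2\Phi_2(C)+\Phi_2(C')+\Phi_2(C'')=3\Phi_2(C)$, d'o\`u le membre de droite apr\`es une derni\`ere application du lemme \ref{symetrie}.
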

\begin{proof}
La contribution de
$\widetilde{\{A, B, C\}\ }$
d'ordre 3 sous $Ast$ est

\begin{equation*}
\begin{split}
\frac{1}{2}\sum_{P \in \widetilde{\{A,B,C\}\ }}&
\langle \widetilde{\Phi}(\gamma_{P}^{-1}), \Phi_2(P) \rangle_{V}=
\langle \Phi(\gamma_{u_A}^{-1}),\Phi(u_A) \rangle_V
+
\langle \Phi(\gamma_{u_C}^{-1}),\Phi(u_C) \rangle_V
+
\langle \Phi(\gamma_{v_C}^{-1}),\Phi(v_C) \rangle_V \\
&=\frac{1}{3}
\left(
\langle \Phi(\gamma_{A}^{-1}),\Phi(A)+\Phi(A') \rangle_V
+
\langle \Phi(\gamma_{C}^{-1}),\Phi(C) + \Phi(C') \rangle_V
+
\langle \Phi(\gamma_{A}^{-1}\gamma_{C}^{-1}),\Phi(C) + \Phi(C'')\rangle_V
\right)\ .
\end{split}
\end{equation*}
On a
\begin{equation*}
\begin{split}
\langle \Phi(\gamma_{A}^{-1}\gamma_{C}^{-1}),\Phi(C) + \Phi(C'')\rangle_V
&=
\langle \Phi(\gamma_{A}^{-1}),\Phi(\gamma_{C}^{-1}C) + \Phi(\gamma_{C}^{-1}C'')\rangle_V
+
\langle \Phi(\gamma_{C}^{-1}),\Phi(C) + \Phi(C'')\rangle_V
\\
&=
\langle \Phi(\gamma_{A}^{-1}),-\Phi(A'') + \Phi(A')\rangle_V
+
\langle \Phi(\gamma_{C}^{-1}),\Phi(C) + \Phi(C'')\rangle_V
\ .\end{split}
\end{equation*}
Donc
\begin{equation*}
\begin{split}
\frac{1}{2}\sum_{P \in \widetilde{\{A,B,C\}\ }}
\langle \widetilde{\Phi}(\gamma_{P}^{-1}), \Phi_2(P) \rangle_{V}
&=\frac{1}{3}
\left(
\langle \Phi(\gamma_{A}^{-1}),3\Phi(A') \rangle_V
+
\langle \Phi(\gamma_{C}^{-1}),3\Phi(C)\rangle_V
\right)
\\
&=
\langle \Phi(\gamma_{A'}^{-1}),\Phi(A') \rangle_V
+
\langle \Phi(\gamma_{C}^{-1}),\Phi(C)\rangle_V \\
&=\frac{1}{2}
\sum_{P\in \{ A',A'',B,C\}}
\langle \widetilde{\Phi}_1(\gamma_{P}^{-1}),\Phi_2(P)\rangle_V
\ ,
\end{split}
\end{equation*}
ce qui termine la démonstration du lemme.
\end{proof}
\begin{lem}
\label{heckecomp}
Soient $\widetilde{\Phi}_1 \in Z^1(\Gamma_1,V)$ et $\Phi_2 \in \Hom_{\Gamma_2}(\Delta_0,V)$.
Alors,
\begin{equation*}
\begin{split}
\left\lbrace \widetilde{\Phi}_1|[\Gamma_1 \alpha \Gamma_2], \Phi_2 \right\rbrace_{\Gamma_2}
&=
\left\lbrace \widetilde{\Phi}_1|\alpha,
  \Phi_2 \right\rbrace_{\Gamma_2 \cap \alpha^{-1} \Gamma_1 \alpha}\\
\left\lbrace \widetilde{\Phi}_1, \Phi_2 |[\Gamma_2 \alpha \Gamma_1]\right\rbrace_{\Gamma_1}
&=
\left\lbrace \widetilde{\Phi}_1,
  \Phi_2|\alpha \right\rbrace_{\Gamma_1 \cap \alpha^{-1} \Gamma_2 \alpha}
\ .\end{split}
\end{equation*}
\end{lem}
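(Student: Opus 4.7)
The plan is to unpack the first identity via the Farey symbol construction of Proposition~\ref{prop:sousgroupe}, applied to $\Gamma' = \Gamma_2 \cap \alpha^{-1}\Gamma_1\alpha$ inside $\Gamma_2$. This yields a Farey symbol $\cF' = (\cV',Ast',\mu')$ for $\Gamma'$ together with a system of coset representatives $\cC$. The key observation is that $\delta_{\xi a} := \alpha\,\gammap_{\xi a}\,\alpha^{-1}$ coincides with the element $\gamma_\xi \in \Gamma_1$ appearing in the Hecke decomposition $\alpha\xi\gamma_a = \gamma_\xi\,\alpha\,\perm{\gamma_a}{\xi}$, so that
\[(\widetilde{\Phi}_1|[\Gamma_1\alpha\Gamma_2])(\gamma_a^{-1}) = \sum_{\xi\in\cC}\widetilde{\Phi}_1(\delta_{\xi a}^{-1})|\alpha\xi.\]

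With this identification, the left-hand side, computed using the Farey symbol $\cF$ of $\Gamma_2$, is the double sum over $\cC \times \Vmod$
\[\tfrac{1}{2}\sum_{a\in\Vmod}\sum_{\xi\in\cC}\langle \widetilde{\Phi}_1(\delta_{\xi a}^{-1})|\alpha\xi,\;\Phi_2(a)\rangle_{V}.\]
For the right-hand side computed with $\cF'$, each arc $b = \xi a \in \widetilde{\cV'}$ satisfies $\Phi_2(b) = \Phi_2(a)|\xi^{-1}$ by the $\Gamma_2$-invariance of $\Phi_2$, and the $SL_2$-adjointness $\langle v, w|\xi^{-1}\rangle_{V} = \langle v|\xi, w\rangle_{V}$ (valid since $\xi^* = \xi^{-1}$ for $\xi \in SL_2(\ZZ)$) lets me rewrite the summand as $\langle \widetilde{\Phi}_1(\delta_b^{-1})|\alpha\xi,\;\Phi_2(a)\rangle_V$, which matches the shape of the LHS summand at $\xi a$.

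It then remains to match the two sums term by term, which I would do by means of the decomposition \eqref{decomp} $\cC \times \Vmod = \cC \times (\cV - \cV_{ell,3}) \sqcup \widetilde{\cV'}_{ell,3} \sqcup \cW_3$. On non-elliptic arcs with $\gammap_{\xi a} = 1$ (equivalently $\xi a \notin \cV'$), the LHS contribution vanishes because $\widetilde{\Phi}_1(1) = 0$, and such arcs are absent from $\widetilde{\cV'}$; the remaining non-elliptic arcs and the $Ast$-fixed elliptic arcs of order $3$ (case~(2) of Proposition~\ref{prop:sousgroupe}) appear identically on both sides. The delicate case is that of an order-$3$ orbit $\{\xi_1 a,\xi_2 a,\xi_3 a\}$ of $Ast$, where $\cC \times \Vmod$ contributes six arcs $\widetilde{\xi_i a} = \{\xi_i u_a,\xi_i v_a\}$ while $\widetilde{\cV'}$ contributes only the four arcs $\{\xi_1 a',\xi_1 a'',\xi_2 a,\xi_3 a\}$; here Lemma~\ref{orbite3} applied to $\Phi=\widetilde{\Phi}_1|\alpha$ is precisely the identity needed. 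Combined with Corollary~\ref{independance} (independence of the bracket from the Farey symbol), this establishes the first equality.

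The second equality is proved by an entirely symmetric argument, constructing a Farey symbol for $\Gamma_1 \cap \alpha^{-1}\Gamma_2\alpha$ from a Farey symbol of $\Gamma_1$ and running the same unpacking, using now that $\Phi_2|\alpha$ is $\Gamma_1 \cap \alpha^{-1}\Gamma_2\alpha$-invariant. I expect the main technical obstacle to lie entirely in the bookkeeping of the order-$3$ elliptic arcs: the "rectification" built into Proposition~\ref{prop:sousgroupe}, which replaces each such orbit by four non-elliptic arcs, is indispensable, and without Lemma~\ref{orbite3} the six infinitesimal arcs coming from each order-$3$ orbit on the $\cC \times \Vmod$ side would fail to match the four arcs appearing in $\widetilde{\cV'}$.
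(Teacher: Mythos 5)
Your treatment of the first equality is essentially the paper's own proof: the same double sum over $\cC\times\Vmod$, the same identification of $\delta_{\xi a}=\alpha\gammap_{\xi a}\alpha^{-1}$ with the elements $\gamma_\xi$ of the Hecke decomposition so that $(\widetilde{\Phi}_1|[\Gamma_1\alpha\Gamma_2])(\gamma_a^{-1})=\sum_\xi\widetilde{\Phi}_1(\delta_{\xi a}^{-1})|\alpha\xi$ matches the summand of the bracket for $\Gamma_2\cap\alpha^{-1}\Gamma_1\alpha$ term by term, the vanishing of the arcs with trivial gluing datum, and the reduction of the order-$3$ orbits via the decomposition \eqref{decomp} and Lemma \ref{orbite3}, followed by Corollary \ref{independance}. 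Nothing to add there.

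The second equality, however, is not obtained by ``an entirely symmetric argument'', and this is where your proposal has a real gap. The two slots of $\left\lbrace\cdot,\cdot\right\rbrace_{\Gamma,\cF}$ play asymmetric roles: the first is a cocycle evaluated at the gluing data, the second a modular symbol evaluated at the arcs. When the Hecke operator sits on the $\Hom(\Delta_0,V)$ side, the unpacking over a Farey symbol of $\Gamma_1$ refined to $\Gamma_1\cap\alpha^{-1}\Gamma_2\alpha$ (with representatives $\eta$ of the cosets) leads you to compare $\langle\widetilde{\Phi}_1(\gamma_a^{-1})|\eta^{-1},(\Phi_2|\alpha)(\eta a)\rangle_V$ with the wanted summand $\langle\widetilde{\Phi}_1(\gammap_{\eta a}^{-1}),(\Phi_2|\alpha)(\eta a)\rangle_V$, and these are \emph{not} equal: writing $\widetilde{\Phi}_1(\gamma_a^{-1}\eta^{-1})=\widetilde{\Phi}_1(\gamma_a^{-1})|\eta^{-1}+\widetilde{\Phi}_1(\eta^{-1})$ and $\gamma_a^{-1}\eta^{-1}=\perm{\gamma_a}{\eta}^{-1}\gammap_{\eta a}^{-1}$, two parasitic terms appear, namely $\widetilde{\Phi}_1(\eta^{-1})$ and $\widetilde{\Phi}_1(\perm{\gamma_a}{\eta}^{-1})|\gammap_{\eta a}^{-1}$. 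The paper kills the first by observing that $\sum_{a\in\Vmod}(\Phi_2|\alpha)(\eta a)=0$ (the arcs of the Farey polygon form a closed path), and the second by the same closed-path argument after re-indexing by the permutation $\permm{\gamma_a}$ and using the involution $a\mapsto a^*$ together with $\eta a=\gammap_{\eta a}\,Ast(\eta a)^-$ --- exactly the mechanism used to prove that the bracket is independent of the chosen cocycle. Your framework (refine a Farey symbol of $\Gamma_1$, then apply \eqref{decomp}, Lemma \ref{orbite3} and Corollary \ref{independance}) is the right one, but without these two cancellations the ``same unpacking'' does not close, so they must be supplied.
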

\begin{proof}
Montrons la première égalité.
\begin{equation*}
\begin{split}
\left\lbrace \widetilde{\Phi}_1|[\Gamma_1 \alpha \Gamma_2], \Phi_2 \right\rbrace_{\Gamma_2}
&=
 \frac{1}{2}
\sum_{a\in \Vmod}\sum_{\xi\in \cC}
\langle \widetilde{\Phi}_1(\delta_{\xi a}^{-1})|\alpha \xi,
\Phi_2(a) \rangle_{V}
=\frac{1}{2}
\sum_{\xi a\in \cC \times \Vmod}\langle \widetilde{\Phi}_1(\delta_{\xi a}^{-1})|\alpha \xi,
\Phi_2(a) \rangle_{V}
\\
&=\frac{1}{2}
\sum_{\xi a\in \cC \times \Vmod}\langle
(\widetilde{\Phi}_1|\alpha)(\gamma_{\xi a}^{-1}),
\Phi_2(a)|\xi^{-1} \rangle_{V}
=\frac{1}{2}
\sum_{\xi a\in \cC \times \Vmod}\langle
(\widetilde{\Phi}_1|\alpha)(\gamma_{\xi a}^{-1}),
\Phi_2(\xi a)| \rangle_{V}\ .
\end{split}
\end{equation*}
Pour $a\notin \cV_{ell}$ et $\xi a \notin \Vmod'$, la contribution
de $\xi a$ est 0 car $\gamma_{\xi a}=1$. En utilisant la décomposition
\eqref{decomp}
de $\cC \times \Vmod$ et le lemme \ref{orbite3}, on obtient l'expression de
$\left\lbrace \widetilde{\Phi}_1|\alpha,\Phi_2
\right\rbrace_{\Gamma_2 \cap \alpha^{-1} \Gamma_1 \alpha}
$
calculée à l'aide du symbole de Farey étendu $\cF'$ et donc la première égalité du lemme
\ref{heckecomp}
\begin{equation*}
\left\lbrace \widetilde{\Phi}_1|[\Gamma_1 \alpha \Gamma_2], \Phi_2 \right\rbrace_{\Gamma_2}
=
\left\lbrace \widetilde{\Phi}_1|\alpha,\Phi_2
\right\rbrace_{\Gamma_2 \cap \alpha^{-1} \Gamma_1 \alpha}
\end{equation*}
par l'indépendance relative au symbole de Farey étendu.

Montrons la seconde égalité.
Pour ne pas introduire de nouvelles notations, démontrons-la en échangeant $\Gamma_1$
et $\Gamma_2$. Autrement dit, montrons que
si $\widetilde{\Phi}_2 \in Z^1(\Gamma_2,V)$ et $\Phi_1 \in \Hom_{\Gamma_1}(\Delta_0,V)$,
\begin{equation*}
\left\lbrace \widetilde{\Phi}_2, \Phi_1 |[\Gamma_1 \alpha \Gamma_2]\right\rbrace_{\Gamma_2}
=
\left\lbrace \widetilde{\Phi}_2,
  \Phi_1|\alpha \right\rbrace_{\Gamma_2 \cap \alpha^{-1} \Gamma_1 \alpha} \ .
\end{equation*}
On a
\begin{equation*}
\begin{split}
\left\lbrace \widetilde{\Phi}_2, \Phi_1 |[\Gamma_1 \alpha \Gamma_2]\right\rbrace_{\Gamma_2}
&=\frac{1}{2}\sum_{\xi a\in \cC\times \Vmod}
\langle \widetilde{\Phi}_2(\gamma_a^{-1}), \Phi_1 |(\alpha \xi)(a)\rangle_{V}
\\
&=\frac{1}{2}
\sum_{\xi a\in \cC\times \Vmod}
\langle \widetilde{\Phi}_2(\gamma_a^{-1})|\xi^{-1},
(\Phi_1|\alpha)(\xi a)\rangle_{V}
\\&=\frac{1}{2}
\sum_{\xi a\in \cC\times \Vmod}
\langle \widetilde{\Phi}_2(\gamma_a^{-1}\xi^{-1}) - \widetilde{\Phi}_2(\xi^{-1}),
(\Phi_1|\alpha)(\xi a)\rangle_{V} \ .
\end{split}
\end{equation*}
La somme sur $a$ des $(\Phi_1|\alpha)(\xi a)$ est nulle car on somme alors
sur un chemin fermé. Donc,
\begin{equation*}
\begin{split}
\left\lbrace \widetilde{\Phi}_2, \Phi_1 |[\Gamma_1 \alpha \Gamma_2]\right\rbrace_{\Gamma_2}
&=\frac{1}{2}
\sum_{\xi a\in \cC\times \Vmod}
\langle \widetilde{\Phi}_2(\gamma_a^{-1}\xi^{-1}),
(\Phi_1|\alpha)(\xi a)\rangle_{V}
\\&=\frac{1}{2}
\sum_{\xi a\in \cC\times \Vmod}\langle \widetilde{\Phi}_2(\perm{\gamma_a}{\xi}^{-1}\gamma_{\xi a}^{-1}),
(\Phi_1|\alpha)(\xi a)\rangle_{V}
\\&=
\frac{1}{2}\sum_{\xi a\in \cC\times \Vmod}\langle \widetilde{\Phi}_2(\perm{\gamma_a}{\xi}^{-1})|\gamma_{\xi a}^{-1}
+ \widetilde{\Phi}_2(\gamma_{\xi a}^{-1}),
(\Phi_1|\alpha)(\xi a)\rangle_{V}\\
&=\frac{1}{2}\sum_{a\in\Vmod}\sum_{\xi\in \cC}
-\langle \widetilde{\Phi}_2(\perm{\gamma_a}{\xi}^{-1})
,(\Phi_1|\alpha)(\perm{\gamma_a}{\xi} a^*)\rangle_{V}+
\sum_{\xi a\in \cC\times \Vmod}
\langle \widetilde{\Phi}_2(\gamma_{\xi a}^{-1}),
(\Phi_1|\alpha)(\xi a)\rangle_{V}
\\
&=\frac{1}{2}\sum_{a\in\Vmod}\sum_{\xi'\in \cC}
-\langle \widetilde{\Phi}_2(\xi)
,(\Phi_1|\alpha)(\xi a^*)\rangle_{V}+
\sum_{\xi a\in \cC\times \Vmod}
\langle \widetilde{\Phi}_2(\gamma_{\xi a}^{-1}),
(\Phi_1|\alpha)(\xi a)\rangle_{V}
\\
&=\frac{1}{2}\sum_{\xi a\in \cC\times \Vmod}
\langle \widetilde{\Phi}_2(\gamma_{\xi a}^{-1}),
(\Phi_1|\alpha)(\xi a)\rangle_{V}\ .
\end{split}
\end{equation*}
En utilisant la décomposition \eqref{decomp} de $\cC\times \Vmod$ comme précédemment et
le lemme \ref{orbite3}, on en déduit par l'indépendance relative au symbole de Farey étendu que
$$
\left\lbrace \widetilde{\Phi}_2, \Phi_1 |[\Gamma_1 \alpha \Gamma_2]\right\rbrace_{\Gamma_2}
=
\left\lbrace \widetilde{\Phi}_2, \Phi_1|\alpha\right\rbrace_{\Gamma_2 \cap \alpha^{-1} \Gamma_1 \alpha}
\ .$$
\end{proof}

\subsection{Comportement du module d'Eisenstein et du module parabolique}

Si $s \in \PP^1(\QQ)$, notons
$w(s)$ la largeur de la pointe $\Gamma s$.
Le résultat suivant est dans l'esprit de \cite{pasol}.
\begin{prop}
Si $\Phi \in\Hom_{\Gamma}(\Delta_0,V_k)$ est l'image
d'un élément $\Phi^{0}$ de $\Hom_{\Gamma}(\Delta,V_k)$,
$$\Phi^{0}=\sum_{s\in \Gamma \backslash \PP^1(\QQ)}c_s(\Phi^0) Eis_{s}$$
et si $f\in \hypk{\Fonct{\Gamma}{\QQ}}$, on a
$$\left\lbrace\Psi_k(f),\Phi\right\rbrace_{\Gamma}=
\sum_{s\in C(\Gamma)}
w(s) \left (\int f|\gamma_{s} \dd{\beta_k}{\beta_0}\right ) c_s(\Phi^0)
\ .$$
\end{prop}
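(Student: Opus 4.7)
Le plan est d'appliquer la proposition \ref{noncusp} � $\Phi_1=\Psi_k(f)$ et $\Phi_2=\Phi$ (d'ant�c�dent $\Phi_2'=\Phi^0$), puis de calculer explicitement les deux facteurs de chaque terme. L'id�e cl� est d'exploiter la libert� dans le choix du point-base d�finissant un repr�sentant de la restriction de $\Psi_k(f)$ � $\Gamma_s$, afin de transporter le calcul par $\gamma_s$ sur un sym�tre infinit�simal en $\infty$, pour lequel la formule explicite du th�or�me de Stevens s'applique directement.

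Premi�rement, par d�finition de $Eis_s$, on a imm�diatement
\[
\Phi^0(\{s\})= c_s(\Phi^0)\, v_\infty| \gamma_s^{-1}\ .
\]
Deuxi�mement, pour calculer $\widetilde{\Psi_k(f)^{(s)}}(\tau_s)$, on choisit comme point-base $Z=\pi_s(\gamma_s 0)$. Le repr�sentant obtenu est $\gamma\mapsto \Psi_k(f)([Z,\gamma^{-1}Z])$, et en appliquant la propri�t� d'�quivariance \eqref{invariance} avec $\gamma_s^{-1}\cdot[\gamma_s 0,\tau_s^{-1}\gamma_s 0]_s=[0,T^{-w(s)}0]_\infty=[0,-w(s)]_\infty$, on obtient
\[
\widetilde{\Psi_k(f)^{(s)}}(\tau_s)=\Psi_k(f|\gamma_s)\bigl([0,-w(s)]_\infty\bigr)\bigl|\gamma_s^{-1}\ .
\]
La formule de Stevens donne alors
\[
\Psi_k(f|\gamma_s)([0,-w(s)]_\infty)
=\frac{1}{k-1}\Bigl(\int(f|\gamma_s)^-\dd{\beta_k}{\beta_0}\Bigr)
\frac{(-w(s)x+y)^{k-1}-y^{k-1}}{x}\ .
\]

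Troisi�mement, on �value le produit de Petersson alg�brique. Par l'invariance de $\langle\cdot,\cdot\rangle_{V_k}$ sous $\Sl_2(\ZZ)$,
\[
\langle\widetilde{\Psi_k(f)^{(s)}}(\tau_s),\Phi^0(\{s\})\rangle_{V_k}
=c_s(\Phi^0)\,\Bigl\langle \Psi_k(f|\gamma_s)([0,-w(s)]_\infty),\,v_\infty\Bigr\rangle_{V_k}\ .
\]
Un calcul direct de la forme bilin�aire avec $v_\infty=x^{k-2}$ ne retient que le coefficient en $y^{k-2}$ du polyn�me $\frac{(-w(s)x+y)^{k-1}-y^{k-1}}{(k-1)x}$, qui vaut $-w(s)$, multipli� par $(-1)^{k-2}$ ; au total on trouve $(-1)^{k-1}w(s)$. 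En utilisant enfin la parit� $\int g^-\dd{\beta_k}{\beta_0}=(-1)^k\int g\dd{\beta_k}{\beta_0}$, on a
\[
\langle\widetilde{\Psi_k(f)^{(s)}}(\tau_s),\Phi^0(\{s\})\rangle_{V_k}=-c_s(\Phi^0)\,w(s)\int f|\gamma_s\dd{\beta_k}{\beta_0}\ ,
\]
ce qui, apr�s multiplication par $-1$ et sommation sur $C(\Gamma)$ selon la proposition \ref{noncusp}, donne pr�cis�ment la formule annonc�e. L'unique difficult� technique est le suivi rigoureux des signes provenant des parit�s de $\beta_k$, du facteur $(-1)^{k-2}$ dans $\langle\cdot,\cdot\rangle_{V_k}$ et du signe global dans la proposition \ref{noncusp} ; les choix de point-base et de repr�sentant ci-dessus les font se simplifier.
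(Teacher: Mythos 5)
Votre preuve est correcte et suit essentiellement la m�me d�marche que celle du texte : r�duction via la proposition \ref{noncusp}, choix du point-base $Z_s=\gamma_s\pi_\infty(0)$, transport par �quivariance vers le symbole infinit�simal $[0,-w(s)]_\infty$, puis application de la formule explicite de Stevens et de l'invariance de $\langle\cdot,\cdot\rangle_{V_k}$. Votre suivi des signes (le $(-1)^{k-1}$ issu du coefficient de $y^{k-2}$ et le $(-1)^k$ issu de la parit� de $\beta_k$, qui se compensent en $-1$) est m�me un peu plus explicite que dans le texte et aboutit bien � la formule annonc�e.
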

\begin{proof}
D'après la proposition \ref{noncusp},
on a
\begin{equation*}
\begin{split}
\left\lbrace\Psi_k(f),\Phi\right\rbrace_{\Gamma}&=-
\sum_{s\in C(\Gamma)}
\langle \widetilde{\Psi^{(s)}}(\tau_s), \Phi^{0}(\{s\}) \rangle\ .
\end{split}
\end{equation*}
On choisit le cocycle $\widetilde{\Psi_k(f)^{(s)}}$ défini par
$\widetilde{\Psi_k(f)^{(s)}}(\gamma)=
\Psi_k(f)([Z_s,\gamma^{-1}Z_s])$ avec
 $Z_s=\pi_{s}(\gamma_{s} 0))=\gamma_{s} \pi_\infty(0)$
 et $\gamma_s\infty=s$.
Alors,
\begin{equation*}
\begin{split}
\langle \Psi_k(f)([Z_{s},\tau_s^{-1}Z_{s}],\Phi^0(\{s\})\rangle_{V_k}&=
\langle \Psi_k(f)(\gamma_{s}[\pi_\infty(0),\gamma_{s}^{-1}
\tau_s^{-1}\gamma_{s}
\pi_\infty(0)]),\Phi^0(\{s\})\rangle_{V_k}\\
&=\langle \Psi_k(f|\gamma_{s})([\pi_\infty(0),
\pi_\infty(\gamma_{s}^{-1}\tau_s^{-1}\gamma_{s} 0])|
\gamma_{s}^{-1},\Phi^0(\{s\})\rangle_{V_k}
\\
&=\langle \Psi_k(f|\gamma_{s})([\pi_\infty(0),
\pi_\infty(\gamma_{s}^{-1}\tau_s^{-1}\gamma_{s} 0)]),
\Phi^0(\{s\})|\gamma_{s}\rangle_{V_k} \ .
\end{split}
\end{equation*}
On a $\Phi^0({s})|\gamma_{s}=c_{s}(\Phi^0)x^{k-2}$.
D'autre part, $\tau_s$ est le générateur positif du stabilisateur
de la pointe $s$
et $\gamma_{s}^{-1}\tau_s\gamma_{s} 0$ est égal à
la largeur $w(s)$ de la pointe $s$. Comme
\begin{equation*}
\begin{split}\Psi_k(f|\gamma_{s})([\pi_\infty(0),
\pi_\infty(\gamma_{s}^{-1}\tau_s^{-1}\gamma_{s} 0)])
&=
\Psi_k(f|\gamma_{s})([\pi_\infty(0),-\pi_\infty(w(s))])\\
&=
\frac{1}{k-1}
\left (\int f|\gamma_{s} \dd{\beta_k}{\beta_0}\right )
\frac{(-w(s)x+y)^{k-1}-y^{k-1}}{x}
\ ,
\end{split}
\end{equation*}
on en déduit l'égalité
$$
\langle \Psi_k(f)([Z_{s},\tau_s^{-1}Z_{s}],\Phi^0(\{s\})\rangle_{V_k}=
-w(s)\left (\int f|\gamma_s \dd{\beta_k}{\beta_0}\right )c_{s}(\Phi_0)
$$
et la proposition.
\end{proof}
\begin{cor}\label{cor:eis}
La restriction de $\left\lbrace \cdot,\cdot\right\rbrace_{\Gamma}$
à $\EiskQ{k,\Gamma} \times \Hom_\Gamma(\Delta, V_k)$
(resp. à $\EiskQ{k,\Gamma} \times \Hom_\Gamma(\Delta, V_k)/\QQ$ lorsque $k=2$)
est non dégénérée.
\end{cor}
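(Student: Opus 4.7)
Le plan est d'exploiter la formule explicite de la proposition précédente :
$$
\{\Psi_k(f), \Phi^0\}_\Gamma = \sum_{s \in C(\Gamma)} w(s)\, I_s(f)\, c_s(\Phi^0),
$$
où $I_s(f) := \int f|\gamma_s \dd{\beta_k}{\beta_0}$, $f \in \hypk{\Fonct{\Gamma}{\QQ}}$ représente la classe $\cocycle_k(f) \in \EiskQ{k,\Gamma}$, et $\Phi^0 = \sum_s c_s(\Phi^0)\, Eis_s$ parcourt $\Hom_\Gamma(\Delta, V_k)$. La non-dégénérescence se ramène ainsi à deux énoncés de dualité entre les fonctions $f$ et les vecteurs $(c_s)$.

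Pour l'annulation à gauche dans $\EiskQ{k,\Gamma}$ : si $\cocycle_k(f)$ est orthogonal à tous les $\Phi^0$, spécialiser à $\Phi^0 = Eis_{s}$ pour chaque pointe $s$ et utiliser $w(s) > 0$ donnent $I_s(f) = 0$. L'invariance de $f$ par $\Gamma$ combinée à l'équivariance de $\Psi_k$ par $\Sl_2(\ZZ)$ (voir \eqref{invariance}) entraîne $\int f|\gamma \dd{\beta_k}{\beta_0} = 0$ pour tout $\gamma \in \Sl_2(\ZZ)$. Par la formule explicite donnant $\Psi_k(f)([0,r]_\infty)$ établie au théorème de Stevens et par équivariance, $\Psi_k(f)$ s'annule sur tous les symboles infinitésimaux. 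La proposition \ref{prop:ortheisenstein} (point 2) donne alors $\Eis_k(f) = 0$, d'où $\cocycle_k(f) = 0$.

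Pour l'annulation à droite : supposons $\Phi^0 = \sum_s c_s Eis_s$ orthogonal à tout $\cocycle_k(f)$, ce qui se traduit par $\sum_s w(s) c_s I_s(f) = 0$ pour tout $f \in \hypk{\Fonct{\Gamma}{\QQ}}$. La quantité $I_s(f)$ s'identifie, à un facteur explicite non nul près, au terme constant en $i\infty$ du $q$-développement de $\Eis_k(f)|_k \gamma_s$. L'application $f \mapsto (I_s(f))_s$ se factorise donc par $\Eis_k$ suivi du passage aux termes constants aux pointes. Pour $k > 2$, cette dernière est injective sur $\Eisk{k,\Gamma}{\QQ}$ par la proposition \ref{prop:ortheisenstein} (point 1) — une série d'Eisenstein holomorphe sans terme constant en aucune pointe est parabolique, donc nulle. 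Comme $\dim_\QQ \Eisk{k,\Gamma}{\QQ} = \#C(\Gamma)$, elle est surjective sur $\QQ^{C(\Gamma)}$, et donc $c_s w(s) = 0$, d'où $c_s = 0$ pour tout $s$. Pour $k = 2$, le même argument montre que $\hypk{\Eisk{2,\Gamma}{\QQ}}$ est de dimension $\#C(\Gamma)-1$ et que l'image de $(I_s)_s$ est un hyperplan de $\QQ^{C(\Gamma)}$, qui s'avère être $\{(v_s) : \sum_s w(s) v_s = 0\}$. Son orthogonal pour l'accouplement pondéré $\langle (c_s),(v_s)\rangle := \sum_s w(s) c_s v_s$ est alors la droite engendrée par $(1,\ldots,1)$, correspondant à $\Phi^0 = c \sum_s Eis_s$ pour $c \in \QQ$, c'est-à-dire à $\Phi^0$ image d'un élément de $V_2^\Gamma = \QQ$, précisément le sous-espace quotienté dans l'énoncé.

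L'obstacle principal est la vérification, dans le cas $k = 2$, de la relation linéaire $\sum_s w(s) I_s(f) = 0$ qui caractérise l'image de $\hypk{\Eisk{2,\Gamma}{\QQ}}$ dans $\QQ^{C(\Gamma)}$ ; elle encode le fait que $\Eis_2(f)$ est modulaire sous l'hypothèse $f(0) = 0$. On peut l'établir par un calcul direct sur les distributions de Bernoulli $\beta_k$ intervenant dans les intégrales $I_s(f)$, ou bien par une application de la formule de Stokes à la forme différentielle $\Eis_2(f)(z)\, dz$ sur la courbe modulaire compactifiée.
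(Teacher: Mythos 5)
Your proof is correct and its core coincides with the paper's: both arguments read off from the formula $\left\lbrace \Psi_k(f),\Phi\right\rbrace_{\Gamma}=\sum_{s}w(s)\,I_s(f)\,c_s(\Phi^0)$ that a left-annihilating $F=\Eis_k(f)$ has $I_s(f)=0$ for every $s\in C(\Gamma)$, hence vanishing restriction to the infinitesimal symbols, hence $F=0$ by the proposition \ref{prop:ortheisenstein}. The divergence is in the treatment of the right kernel. The paper stops after the left kernel: since $\EiskQ{k,\Gamma}$ and $\Hom_\Gamma(\Delta,V_k)$ (resp. $\Hom_\Gamma(\Delta,V_k)/\QQ$ for $k=2$) have the same finite dimension, injectivity of $F\mapsto\left\lbrace F,\cdot\right\rbrace_{\Gamma}$ already forces non-degeneracy, so your explicit study of the image of $f\mapsto(I_s(f))_s$ is logically superfluous (though it does give the extra information that the constant-term map is onto $\QQ^{C(\Gamma)}$ for $k>2$). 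More importantly, the relation $\sum_s w(s)\,I_s(f)=0$ that you single out as the main remaining obstacle for $k=2$, to be settled by a Bernoulli computation or by Stokes, is in fact free of charge: the constant function $\sum_s Eis_s\in\Hom_\Gamma(\Delta,V_2)$ has image $0$ in $\Hom_\Gamma(\Delta_0,V_2)$, so applying the preceding proposition to this lift of $0$ yields $0=\left\lbrace\Psi_2(f),0\right\rbrace_{\Gamma}=\sum_s w(s)\,I_s(f)$ directly -- it is nothing but the independence of the formula from the choice of lift $\Phi^0$. And even this identity is dispensable for your route: once the left kernel is trivial, the image of $(I_s)_s$ is a hyperplane, its orthogonal for the weighted pairing is a line, and that line contains the constants for the trivial reason just given, hence equals them. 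With that observation your argument closes; but the shorter path is the paper's dimension count.
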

\begin{proof}
Si $F=\Eis_{k,\Gamma}(f)\in \EiskQ{k,\Gamma}$ est orthogonal à
$\Hom_\Gamma(\Delta, V_k)$, alors $\int f|\gamma_s \dd{\beta_k}{\beta_0}$
est nul pour tout $s\in C(\Gamma)$. On en déduit que $F=0$
(proposition \ref{prop:ortheisenstein}).
Les deux espaces $\EiskQ{k,\Gamma}$ et $\Hom_\Gamma(\Delta, V_k)$
(resp.
$\EiskQ{k,\Gamma}$ et $\Hom_\Gamma(\Delta, V_k)/
\Hom_\Gamma(\QQ, \QQ)$ lorsque $k=2$)
ayant même dimension, le corollaire s'en déduit.
\end{proof}
\begin{prop}\label{prop:dualeisenstein}
L'espace vectoriel $\cS$ des éléments $\Phi \in\Hom_{\Gamma}(\Delta_0,V_k)$
tels que $\left\lbrace\Psi_k(f),\Phi\right\rbrace_{\Gamma}=0$ pour
tout $f \in \hypk{\Fonct{\Gamma}{\QQ}}$ est un sous-espace vectoriel
stable par les opérateurs de Hecke et d'intersection nulle avec
l'image de $\Hom_{\Gamma}(\Delta,V_k)$. Son image dans
$H^1(\Gamma,V_k)$ est $H^1_{par}(\Gamma,V_k)$.
De plus, $\RR\otimes \cS$ est égal à l'image de $S_k(\Gamma)$ dans
$\Hom_{\Gamma}(\Delta,V_k(\RR))$ par $\Per_\RR$.
\end{prop}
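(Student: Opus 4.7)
The plan is to establish the four assertions in turn, building on Proposition \ref{prop:hecke} for Hecke adjointness, Corollary \ref{cor:eis} for non-degeneracy against Eisenstein symbols, the exact sequence $\Hom_\Gamma(\Delta,V_k)\to\Hom_\Gamma(\Delta_0,V_k)\to H^1_{par}(\Gamma,V_k)\to 0$ from Remark \ref{cobord} and Proposition \ref{prop:suiteexacte}, and Theorem \ref{thm:petersson1} comparing the algebraic pairing with the classical Petersson product.

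For Hecke stability, I would combine Proposition \ref{prop:hecke} with the $\Gl_2^+(\QQ)$-equivariance of $\Per$ and the intertwining relation $T_\ell\circ\Eis_k=\Eis_k\circ T_\ell^{(k)}$ to obtain
$\left\lbrace\Psi_k(g),\Phi|[\Gamma\alpha\Gamma]\right\rbrace_\Gamma=\left\lbrace\Psi_k(g)|[\Gamma\alpha^*\Gamma],\Phi\right\rbrace_\Gamma=\left\lbrace\Psi_k(g'),\Phi\right\rbrace_\Gamma$
for some $g'\in\hypk{\Fonct{\Gamma}{\QQ}}$, which vanishes when $\Phi\in\cS$. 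For the trivial intersection claim, if $\Phi\in\cS$ is the image of some $\Phi^0\in\Hom_\Gamma(\Delta,V_k)$, then $\Phi^0$ lies in the right kernel of the pairing of Corollary \ref{cor:eis}; non-degeneracy forces $\Phi^0$ to be zero (resp.\ constant in $V_2^\Gamma=\QQ$ when $k=2$, whose image in $\Hom_\Gamma(\Delta_0,V_2)$ is already trivial), so $\Phi=0$.

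The second step already gives injectivity of $\cS\to H^1_{par}(\Gamma,V_k)$; for surjectivity I would run a dimension count. The same non-degeneracy implies that the left kernel of the full pairing $\EiskQ{k,\Gamma}\times\Hom_\Gamma(\Delta_0,V_k)\to\QQ$ is also trivial (any such $F$ is already orthogonal to $\Hom_\Gamma(\Delta,V_k)$), so $\dim\cS=\dim\Hom_\Gamma(\Delta_0,V_k)-\dim\EiskQ{k,\Gamma}$. Combining this with the exact-sequence identity $\dim\Hom_\Gamma(\Delta_0,V_k)=\dim K+\dim H^1_{par}(\Gamma,V_k)$, where $K$ is the image of $\Hom_\Gamma(\Delta,V_k)$, and with $\dim K=\dim\EiskQ{k,\Gamma}$ (which follows from Corollary \ref{cor:eis} once one accounts for the kernel $V_k^\Gamma$ of $\Hom_\Gamma(\Delta,V_k)\to\Hom_\Gamma(\Delta_0,V_k)$, which is zero for $k>2$ and one-dimensional for $k=2$, cancelling on both sides), one concludes $\dim\cS=\dim H^1_{par}(\Gamma,V_k)$, and the injection is an isomorphism.

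Finally, to identify $\RR\otimes\cS$ with $\Per_\RR(S_k(\Gamma))$, note first that the vanishing of $a_0(G|_k\gamma)$ for every $\gamma$ makes $\Per(G)$ vanish on all infinitesimal symbols, so $\Per_\RR(G)\in\Hom_\Gamma(\Delta_0,V_k(\RR))$. Expanding $\left\lbrace\Psi_k(f),\Per_\RR(G)\right\rbrace_\Gamma$ as half the sum of $\left\lbrace\Per(\Eis_k(f)),\Per(G)\right\rbrace_\Gamma$ and $\left\lbrace\Per(\Eis_k(f)),\overline{\Per(G)}\right\rbrace_\Gamma$, Theorem \ref{thm:petersson1} makes the first term vanish identically and the second vanish via the Petersson orthogonality between Eisenstein series and cusp forms (Proposition \ref{prop:ortheisenstein}). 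Hence $\Per_\RR(S_k(\Gamma))\subseteq\RR\otimes\cS$, and equality follows from the dimension identity $\dim_\RR\Per_\RR(S_k(\Gamma))=\dim_\RR H^1_{par}(\Gamma,V_k(\RR))=\dim_\QQ\cS$ provided by Eichler-Shimura. The main nuisance throughout is the careful bookkeeping of the $k=2$ case, where the one-dimensional space $V_2^\Gamma=\QQ$ introduces corrections of dimension one that must be tracked and shown to cancel at each step.
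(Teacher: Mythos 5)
Your proposal is correct and follows essentially the same route as the paper: Hecke stability from Proposition \ref{prop:hecke} combined with the Hecke equivariance of $\Eis_k$, trivial intersection from the non-degeneracy in Corollaire \ref{cor:eis}, surjectivity onto $H^1_{par}$ from Proposition \ref{prop:suiteexacte} together with that non-degeneracy, and the last point from Théorème \ref{thm:petersson1} and Proposition \ref{prop:ortheisenstein}. The only difference is that you make explicit the dimension counts (including the $k=2$ corrections by $V_2^\Gamma$ and the Eichler--Shimura count needed to upgrade the containment $\Per_\RR(S_k(\Gamma))\subseteq\RR\otimes\cS$ to an equality), which the paper's own four-line proof leaves implicit.
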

\begin{proof}
L'espapce $\cS$ est stable par les opérateurs de Hecke d'après la proposition
\ref{prop:hecke}. Son intersection avec l'image de $\Hom_{\Gamma}(\Delta_0,V_k)$
est nulle par le corollaire \ref{cor:eis}.
Son image dans $H^1(\Gamma,V_k)$ est $H^1_{par}(\Gamma,V_k)$
par la proposition \ref{prop:suiteexacte}.
Enfin, son tensorisé avec $\RR$ contient l'image de $S_k(\Gamma)$ par
$\Per_\RR$ par la proposition \ref{prop:ortheisenstein} et la compatibilité
avec le produit de Petersson.
\end{proof}

\begin{cor}
L'espace vectoriel des éléments $\Phi \in\Hom_{\Gamma_0(N)}(\Delta_0,V_k)$
tels que $\left\lbrace\Psi_k(f),\Phi\right\rbrace_{\Gamma}=0$ pour
tout $f \in \VN$ (resp. $f \in \VN-\{\charun_{(0,0)}\}$ lorsque $k=2$)
est un sous-espace vectoriel stable par les opérateurs de Hecke et d'intersection nulle avec
l'image de $\Hom_{\Gamma}(\Delta,V_k)$. Son image dans
$H^1(\Gamma,V_k)$ est $H^1_{par}(\Gamma,V_k)$.
\end{cor}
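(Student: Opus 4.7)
Le plan est de déduire ce corollaire directement de la proposition \ref{prop:dualeisenstein} appliquée à $\Gamma=\Gamma_0(N)$, en montrant que le sous-espace décrit dans l'énoncé coïncide avec l'espace $\cS$ de cette proposition. Toute la stratégie repose sur le fait que $\VN$ (privé éventuellement de $\charun_{(0,0)}$) fournit une $\QQ$-base de $\EiskQ{k,\Gamma_0(N)}$, ce qui ramène la condition d'orthogonalité sur une famille finie à la condition sur tout $\hypk{\Fonct{\Gamma_0(N)}{\QQ}}$.

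Je commencerais par invoquer la proposition \ref{gen}: l'ensemble $\{\Eis_k(f) : f\in \VN\}$ (moins l'élément $\charun_{(N,N)}=\charun_{(0,0)}$ lorsque $k=2$) forme une $\CC$-base de $\Eisk{k,\Gamma_0(N)}{\CC}$. Comme ces éléments appartiennent déjà à $\EiskQ{k,\Gamma_0(N)}$ et y sont $\QQ$-linéairement indépendants, un argument de comptage de dimensions utilisant $\dim_\QQ \EiskQ{k,\Gamma_0(N)} = \dim_\CC \Eisk{k,\Gamma_0(N)}{\CC}$ montre qu'ils en forment également une $\QQ$-base.

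Par conséquent, pour tout $f \in \hypk{\Fonct{\Gamma_0(N)}{\QQ}}$, il existe des coefficients rationnels $c_i$ et des fonctions $f_i$ parcourant cette base tels que $\Eis_k(f) = \sum_i c_i \Eis_k(f_i)$. L'application $\Per$ étant $\QQ$-linéaire, on en déduit l'égalité $\Psi_k(f)=\sum_i c_i \Psi_k(f_i)$ dans $\Hom(\KKK_0,V_k(\CC))$, d'où par bilinéarité, pour tout $\Phi \in \Hom_{\Gamma_0(N)}(\Delta_0,V_k)$,
$$\left\lbrace \Psi_k(f),\Phi\right\rbrace_\Gamma = \sum_i c_i \left\lbrace \Psi_k(f_i),\Phi\right\rbrace_\Gamma.$$
Ainsi, annuler $\left\lbrace \Psi_k(f),\Phi\right\rbrace_\Gamma$ pour tout $f$ de la base équivaut à l'annuler pour tout $f \in \hypk{\Fonct{\Gamma_0(N)}{\QQ}}$. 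Le sous-espace de l'énoncé s'identifie alors exactement à l'espace $\cS$ de la proposition \ref{prop:dualeisenstein} pour $\Gamma=\Gamma_0(N)$, et les trois propriétés (stabilité par les opérateurs de Hecke, intersection nulle avec l'image de $\Hom_\Gamma(\Delta,V_k)$, image dans $H^1(\Gamma,V_k)$ égale à $H^1_{par}(\Gamma,V_k)$) se transfèrent directement.

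Le point potentiellement le plus délicat est la vérification du caractère $\QQ$-basique de $\{\Eis_k(f) : f\in \VN\}$ (moins un élément si $k=2$) dans $\EiskQ{k,\Gamma_0(N)}$, mais il se règle par le simple argument de dimension mentionné ci-dessus. Le reste de la preuve est une transcription sans calcul de la proposition \ref{prop:dualeisenstein}.
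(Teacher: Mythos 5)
Votre démonstration est correcte et suit exactement la même voie que celle du texte : le corollaire y est présenté comme une réécriture de la proposition \ref{prop:dualeisenstein} pour le système de générateurs de la proposition \ref{gen}, et vous ne faites qu'expliciter la réduction par linéarité (en notant au passage que $\charun_{(N,N)}=\charun_{(0,0)}$). Votre précaution sur la rationalité des coefficients $c_i$ est bienvenue mais même pas indispensable pour l'équivalence des deux conditions d'orthogonalité, puisque l'accouplement étendu par $\CC$-linéarité donne la même conclusion.
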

\begin{proof}
Le corollaire est une réécriture de la proposition
\ref{prop:dualeisenstein} pour le système de générateurs
donné dans la proposition \ref{gen} pour $\Gamma=\Gamma_0(N)$.
\end{proof}

\subsection{Le cas de $\Sl_2(\ZZ)$}

Appliquons ce qui précède au groupe $\Gamma=\Sl_2(\ZZ)$.
Comme $-\id\in \Gamma$, on peut supposer $k$ pair. Prenons le symbole de Farey
$\cV$ formé de $a_1=a_1^*=\{\infty,0\}$ et $a_2=a_2^*=\{0,\infty\}$ avec les données de recollement
respectives $\sigma= \smallmat{0&-1\\1&0}$ et
$\tau= \smallmat{0&-1\\1&-1}$. Nous aurons aussi besoin de $T=\tau^{-1} \sigma=\smallmat{1&1\\0&1}$.
La formule (\ref{def:petersson1}) s'écrit
\begin{equation*}
\begin{split}
\left\lbrace \Phi_1, \Phi_2\right\rbrace_{\Sl_2(\ZZ)}&=
\frac{1}{2} \langle
  \widetilde{\Phi}_1(\sigma^{-1}),\Phi_2 (\{\infty,0\})\rangle_{V}
+ \frac{1}{3}
\langle \widetilde{\Phi}_1(\tau^{-1})+\widetilde{\Phi}_1(\tau^{-2}),
\Phi_2 (\{0,\infty\})\rangle_{V}
\\
&= \langle
\frac{1}{2}\widetilde{\Phi}_1(\sigma^{-1}) - \frac{1}{3}
\widetilde{\Phi}_1(\tau^{-1})-\frac{1}{3}\widetilde{\Phi}_1(\tau^{-2}),
\Phi_2 (\{\infty,0\})\rangle_{V}\\
\end{split}
\end{equation*}
où $\widetilde{\Phi}_1$ est un 1-cocycle sur $\Gamma=SL_2(\ZZ)$ représentant un élément
$\Phi_1$ de $H^1(\Gamma,V)$ et $\Phi_2\in\Hom_\Gamma(\Delta_0, V)$.
Si $\Phi_1\in\Hom_\Gamma(\KKK_0, V)$, on peut donc encore écrire
\begin{equation*}
\begin{split}
\left\lbrace \Phi_1, \Phi_2\right\rbrace_{\Sl_2(\ZZ)}&=\frac16\langle \Phi_1(3[\pi_\infty(0),\sigma\pi_\infty(0)] -
2[\pi_\infty(0),\tau\pi_\infty(0)]
-2[\pi_\infty(0),\tau^{2}\pi_\infty(0)]),\Phi_2 (\{\infty,0\})\rangle_{V}\\
&=\frac16\langle \Phi_1
\left((1 + 2\tau^2)\{\infty,0\} + 2(\tau -1) [0,1]_\infty\right)
,\Phi_2 (\{\infty,0\})\rangle_{V}
\end{split}
\end{equation*}
en utilisant les relation $\tau^2(\pi_0(\infty))=\pi_\infty(1)$ et $\sigma(\pi_\infty(0))=\pi_0(\infty)=\tau(\pi_\infty(1))$.

On a dans $\Delta_0$ les relations \[(1+\sigma)\{\infty,0\}=(1+\tau+\tau^2)\{\infty,0\}=0\]
d'où l'on déduit, pour tout $\delta\in\KKK_0$
\begin{equation*}
\begin{split}
\langle \Phi_1(\delta)|(1 + \tau + \tau^2)
,\Phi_2 (\{\infty,0\})\rangle_{V}&
=\langle \Phi_1(\delta)
,\Phi_2 (\{\infty,0\})|(1 + \tau + \tau^2)\rangle_{V}=0\\
\langle \Phi_1(\delta)|(1+\sigma)
,\Phi_2 (\{\infty,0\})\rangle_{V}&=\langle \Phi_1(\delta)
,\Phi_2 (\{\infty,0\})|(1+\sigma)\rangle_{V}=0\ .
\end{split}
\end{equation*}
On peut donc réécrire
\begin{equation}
\begin{split}
\label{simplif}
\left\lbrace \Phi_1, \Phi_2\right\rbrace_{\Sl_2(\ZZ)}&=
\frac{1}{6}\langle
 \Phi_1(\{\infty,0\})|(\tau-\tau^{-1}) + 2 \Phi_1([0,1]_\infty)|(\tau^{-1}-1), \Phi_2(\{\infty,0\})
\rangle_V
\end{split}
\end{equation}
ou encore, comme $\tau^{-1}=T\sigma$ et $\tau=\sigma T^{-1}$, en termes de $T$,
\begin{equation}\label{accouplement}
\left\lbrace \Phi_1, \Phi_2\right\rbrace_{\Sl_2(\ZZ)}
=\frac{1}{6}\left(
\langle \Phi_1(\{\infty,0\})|(T-T^{-1})
-2\Phi_1([0,1]_\infty)|(1+T),\Phi_2 (\{\infty,0\})\rangle_{V}
\right )
\end{equation}
pour $\Phi_1\in \Hom_{\Sl_2(\ZZ)}(\KKK_0,V_k)$ et $\Phi_2\in \Hom_{\Sl_2(\ZZ)}(\Delta_0,V_k)$.
Dans le cas où $\Phi_1\in \Hom_{\Sl_2(\ZZ)}(\Delta_0,V_k)$,
on obtient la formule de Haberland (\cite[Folgerung, p. 278]{haberland}):
\begin{equation}
\label{accouplement2}
\begin{split}\left\lbrace \Phi_1, \Phi_2\right\rbrace_{\Sl_2(\ZZ)}
&=\frac{1}{6}
\langle \Phi_1(\{\infty,0\})|(T-T^{-1}),\Phi_2 (\{\infty,0\})\rangle_{V}
\\
&=\frac{1}{6}\left(
\langle \Phi_1(\{\infty,0\})|T,\Phi_2 (\{\infty,0\})\rangle_{V} -
\langle\Phi_1(\{\infty,0\}),\Phi_2 (\{\infty,0\})|T\rangle_{V}
\right )\ .
\end{split}
\end{equation}
Écrivons pour $\Phi \in \Hom_{\Sl_2(\ZZ)}( \Delta_0,V_k)$
\begin{equation*}
\Phi(\{\infty,0\})
=\sum_{j=0}^{k-2}\binom{k-2}{j}r_j(\Phi) x^j y^{k-2-j}\ .
\end{equation*}
Donc
\begin{equation*}
\begin{split}
\Phi_1(\{\infty,0\})|(T-T^{-1})&=\Phi_1(\{\infty,0\})(x,y-x)-\Phi_1(\{\infty,0\})(x,y+x)\\
&=-2\sum_{m=0}^{k-2}\left(\sum_{\substack{0\leq n\leq m\\n\not\equiv m\pmod2}}
  \frac{(k-2)!}{n!(m-n)!(k-2-m)!}r_n(\Phi_1)\right) x^m y^{k-2-m}\ .
\end{split}
\end{equation*}
On a, en utilisant la première équation de \eqref{accouplement2},
\begin{equation*}
\begin{split}
\left\lbrace \Phi_1, \Phi_2\right\rbrace_{\Sl_2(\ZZ)}&=
\frac13
\sum_{\substack{0\leq n\leq m\leq k-2\\n\not\equiv m\pmod2}}
  (-1)^n\frac{(k-2)!}{n! (m-n)!(k-2-m)!}r_n(\Phi_1)r_{k-2-m}(\Phi_2)
  \ .
\end{split}
\end{equation*}

Si $\Phi_1=\Per(F_1)$ et $\Phi_2=\Per(F_2)$ proviennent de formes
paraboliques $F_1$ et $F_2$, on a
\begin{equation*}
\begin{split}
\Per(F_l)(\{\infty,0\})&=\int_{i\infty}^0F_l(t)(tx+y)^{k-2}dt
=\sum_{j=0}^{k-2}\binom{k-2}{j}r_j(F_l) x^j y^{k-2-j}
\end{split}
\end{equation*}
avec
\begin{equation*}
r_j(F_l)= \int_{i\infty}^0 F_l(\tau) \tau^j d\tau=r_j(\Phi_l)\ .
\end{equation*}
On retrouve la formule classique (\cite[Folgerung 1, p. 280]{haberland})
\begin{equation*}
\begin{split}
\left\lbrace  F_1, F_2\right\rbrace_{\Sl_2(\ZZ)}&=
\frac1{3(2i)^{k-1}}
\sum_{\substack{0\leq n\leq m \leq k-2\\n\not\equiv m\pmod2}}
(-1)^m\frac{(k-2)!}{n!(m-n)!(k-2-m)!}
r_n(F_1) \overline{r_{k-2-m}(F_2)}\\
&=
\frac1{3(2i)^{k-1}}\sum_{\substack{0\leq n\leq m\leq k-2\\n\not\equiv m\pmod2}}
(-1)^m\binom{k-2}{n,m-n,k-2-m}
r_n(F_1) \overline{r_{k-2-m}(F_2)}\ .
\end{split}
\end{equation*}

\begin{cor}\label{corN1}
Pour $m$ pair et compris entre 0 et $k-2$, posons
\begin{equation*}
\lambda_{k,m}=2\binom{k-1}{m}\frac{B_k}{k(k-1)}+ \sum_{\substack{0\leq n\leq k-2-m \\n\textrm{ impair}}}
\binom{k-2}{n,m,k-2-m-n}\frac{B_{k-1-n}}{k-1-n}
 \frac{B_{n+1}}{n+1}\ .
\end{equation*}
Soit $\Phi\in \Hom_{\Sl_2(\ZZ)}(\Delta_0,V_k)$.
Alors
$$\left\lbrace\Psi_k(\charun),\Phi\right\rbrace_{\Sl_2(\ZZ)}=\frac{1}{3}\sum_{\substack{0\leq j\leq k-2\\m\textrm{ pair}}}\lambda_{k,m} r_m(\Phi)\ .$$
Si $F$ est une forme parabolique pour $\Sl_2(\ZZ)$, on a
\begin{equation*}
\sum_{\substack{0\leq m\leq k-2\\m\textrm{ impair}}}\binom{k-2}m r_m(F)=0
\end{equation*}
et
\begin{equation}
\label{relation}
\sum_{\substack{0\leq m\leq k-2\\m\textrm{ pair}}}\lambda_{k,m} r_m(F)=0
\ .\end{equation}
\end{cor}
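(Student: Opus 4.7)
The plan is to reduce the corollary to three ingredients: a preliminary cocycle identity, an explicit computation using Stevens' formula, and the orthogonality already established in Theorem 4.

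\textbf{Preliminary identity.} I first establish that for any $\Phi\in\Hom_{\Sl_2(\ZZ)}(\Delta_0,V_k)$ (with $k$ even), $\sum_{m\text{ odd}}\binom{k-2}{m}r_m(\Phi)=0$. Setting $P=\Phi(\{\infty,0\})$, the Manin relations for $\Sl_2(\ZZ)$ impose $P|(1+\sigma)=0$, i.e.\ $P(x,y)+P(-y,x)=0$, and $P|(1+\tau+\tau^2)=0$, i.e.\ $P(x,y)+P(-x-y,x)+P(-y,x+y)=0$. Evaluating the $\tau$-relation at $(x,y)=(1,-1)$ gives $P(1,-1)+P(0,1)+P(1,0)=P(1,-1)+r_0(\Phi)+r_{k-2}(\Phi)=0$, while the $\sigma$-relation at $(0,1)$ forces $r_0(\Phi)+r_{k-2}(\Phi)=0$, whence $P(1,-1)=0$. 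Since $P(1,-1)=P(-1,1)$ for $k$ even and the $\sigma$-relation at $(1,1)$ yields $P(1,1)+P(-1,1)=0$, both evaluations vanish, and the stated sum, which equals $(P(1,1)-P(-1,1))/2$, is zero.

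\textbf{Explicit form of $\Psi_k(\charun)$.} Specializing Stevens' theorem to $N=1$, $\charun=\charun^-=\widehat{\charun}$, and using $\beta_h(0)=-B_h/h$ for $h\geq 2$, $\beta_1(0)=0$, $\beta_0(0)=1$, one obtains
\begin{equation*}
\Psi_k(\charun)(\{\infty,0\}) = -\sum_{\substack{1\leq n\leq k-3\\ n\text{ odd}}}\binom{k-2}{n}\frac{B_{k-1-n}B_{n+1}}{(k-1-n)(n+1)}x^n y^{k-2-n} + \beta'_{k-1}(0)(x^{k-2}-y^{k-2}),
\end{equation*}
\begin{equation*}
\Psi_k(\charun)([0,1]_\infty) = -\frac{B_k}{k(k-1)}\sum_{j=0}^{k-2}\binom{k-1}{j+1}x^jy^{k-2-j}.
\end{equation*}

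\textbf{Main computation.} I substitute these into formula \eqref{accouplement}. The transcendental term $\beta'_{k-1}(0)(x^{k-2}-y^{k-2})|(T-T^{-1})=2\beta'_{k-1}(0)\sum_{n\text{ odd}}\binom{k-2}{n}x^ny^{k-2-n}$, when paired with $\Phi(\{\infty,0\})=\sum_m\binom{k-2}{m}r_m(\Phi)x^my^{k-2-m}$, yields a multiple of $\sum_{j\text{ odd}}\binom{k-2}{j}r_j(\Phi)$, which vanishes by the preliminary identity. Expanding $(y\pm x)^{k-2-n}$ and $(y-x)^{k-2-j}$ via the binomial theorem, extracting the coefficient of $x^a y^{k-2-a}$ and then pairing, the coefficient of $r_m(\Phi)$ (for $m$ even) becomes exactly $\lambda_{k,m}/3$: the factor $\binom{k-2}{n}\binom{k-2-n}{m}$ collapses to the trinomial $\binom{k-2}{n,m,k-2-m-n}$, and a binomial identity of Vandermonde type, namely $\sum_{j=0}^{k-3-m}(-1)^j\binom{k-1}{j+1}\binom{k-2-j}{m}=0$ for $m$ even, collapses the $-2\Psi_k(\charun)([0,1]_\infty)|(1+T)$ contribution to the single term $2\binom{k-1}{m}B_k/(k(k-1))$.

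\textbf{Cusp form relations.} The identity $\sum_{m\text{ odd}}\binom{k-2}{m}r_m(F)=0$ is the preliminary identity applied to $\Phi=\Per(F)$, which lies in $\Hom_\Gamma(\Delta_0,V_k(\CC))$ because $a_0(F)=0$ for $F$ parabolic (no infinitesimal part). The identity $\sum_{m\text{ even}}\lambda_{k,m}r_m(F)=0$ follows by combining the just-proved formula for $\{\Psi_k(\charun),\Per(F)\}$ with Theorem~\ref{thm:petersson1}, which gives $\{\Per(\Eis_k(\charun)),\Per(F)\}_\Gamma=0$ whenever $F$ is cuspidal and $\Eis_k(\charun)$ (proportional to the classical $G_k$) is holomorphic modular. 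The principal obstacle is the bookkeeping in the third step: checking the Vandermonde-type identity above and tracking signs uniformly across $m$ odd versus $m$ even.
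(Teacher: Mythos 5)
Your proof is correct and follows the paper's strategy in all essentials: specialize formula \eqref{accouplement}, insert the explicit values of $\Psi_k(\charun)(\{\infty,0\})$ and $\Psi_k(\charun)([0,1]_\infty)$ coming from Stevens' theorem, discard the transcendental part, extract the coefficient of $x^{k-2-m}y^{m}$ to recognize $\lambda_{k,m}$, and deduce \eqref{relation} from the orthogonality of Eisenstein and cusp symbols. The one genuine divergence is your handling of the term $\beta'_{k-1}(0)(x^{k-2}-y^{k-2})$ and of the identity $\sum_{m\ \mathrm{odd}}\binom{k-2}{m}r_m(\Phi)=0$. The paper kills this term by the abstract fact that $\Theta_k(\charun)$ lies in $\Hom_\Gamma(\Delta,V_k(\CC))$ and hence sits in the radical of the pairing (propositions \ref{invariance0} and \ref{lem:nul}), and then reads off the odd-coefficient identity as a byproduct of the same computation (implicitly using $\zeta'(2-k)\neq 0$); you instead prove that identity directly from the two period relations $P|(1+\sigma)=P|(1+\tau+\tau^2)=0$ by evaluating at $(1,-1)$, $(0,1)$ and $(1,1)$, and then use it to annihilate the transcendental contribution. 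Your route is more elementary and self-contained; the paper's is shorter given its earlier propositions. Finally, your evaluation of $\Psi_k(\charun)([0,1]_\infty)|(1+T)$ via the Vandermonde-type identity $\sum_{j=0}^{k-3-m}(-1)^j\binom{k-1}{j+1}\binom{k-2-j}{m}=0$ (for $m$ even) is valid but can be bypassed: the paper computes $\frac{(x+y)^{k-1}-y^{k-1}}{x}\big|(1+T)=\frac{(y+x)^{k-1}-(y-x)^{k-1}}{x}=2\sum_{m\ \mathrm{even}}\binom{k-1}{m}x^{k-2-m}y^{m}$ in closed form, which yields the term $2\binom{k-1}{m}\frac{B_k}{k(k-1)}$ with no summation identity to verify.
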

\begin{proof}
Prenons pour $\Phi_1$ l'élément d'Eisenstein
$\Psi_k(\charun)\in \Hom_{\Sl_2(\ZZ)}(\KKK_0,V_k)$
avec $\charun$ la fonction sur $(\ZZ/1\ZZ)^2$ valant 1
et $\Phi_2= \Per(F)$.
On a (voir la proposition \ref{prop:dualeisenstein})
\begin{equation*}
\left\lbrace \Psi_k(\charun), \Per(F)\right\rbrace_{\Sl_2(\ZZ)}=0
\ .
\end{equation*}
Il s'agit donc de calculer $\left\lbrace \Psi_k(\charun), \Per(F)\right\rbrace_{\Sl_2(\ZZ)}$
en utilisant la formule \eqref{accouplement}.
Rappelons que l'on a
\begin{equation*}
\begin{split}
\Psi_k(\charun)(\{\infty,0\})&=-\sum_{\substack{0\leq j\leq k-2\\j\textrm{ impair}}}
\binom{k-2}{j}\frac{B_{k-1-j}}{k-1-j} \frac{B_{j+1}}{j+1}x^j y^{k-2-j}+\frac{1}{-i\pi}\zeta'(2-k)(x^{k-2}-y^{k-2})\\
\Psi_k(\charun)([0,1]_\infty)&=
\frac{-B_k}{k(k-1)}\frac{(x+y)^{k-1} - y^{k-1}}{x}\ .
\end{split}
\end{equation*}
D'après le théorème \ref{def:biendefini}
et la proposition \ref{invariance0}, la contribution du terme
\hbox{$\frac{1}{-i\pi}\zeta'(2-k)(x^{k-2}-y^{k-2})$} est nulle.
En posant $S=x^{k-2}-y^{k-2}$ et $Q=\sum_{j=0}^{k-2}b_jx^jy^{k-2-j}$, on a
\begin{equation*}
\langle S|(T-T^{-1}),Q\rangle_{V_k}
=-2\langle\sum_{\substack{0\leq j\leq k-2\\j\textrm{ impair}}}\binom{k-2}jx^jy^{k-2-j},Q\rangle_{V_k}
=2\sum_{\substack{0\leq j\leq k-2\\j\textrm{ impair}}}b_j\ .
\end{equation*}
On en déduit que la somme des coefficients de degré impair de $\Phi_2(\{\infty,0\})$ est nulle.

En posant
\begin{equation*}
R=-\sum_{\substack{0\leq n\leq k-2\\n\textrm{ impair}}}
\binom{k-2}{n}\frac{B_{k-1-n}}{k-1-n} \frac{B_{n+1}}{n+1}x^ny^{k-2-n}\ ,
\end{equation*}
 on calcule
\begin{equation*}
\begin{split}
R|(T-T^{-1})&= R(x,y-x)- R(x,y+x)
\\&=
2\sum_{\substack{0\leq n\leq k-2\\n\textrm{ impair}}}
  \binom{k-2}{n}\frac{B_{k-1-n}}{k-1-n} \frac{B_{n+1}}{n+1}
\sum_{\substack{n+m\leq k-2\\m\not\equiv n\pmod2}}\binom{k-2-n}{m}x^{k-2-m}y^{m}\\
&=
2\sum_{\substack{0\leq m\leq k-2\\m\textrm{ pair}}}
\left(
 \sum_{\substack{0\leq n+m \leq k-2 \\n\textrm{ impair}}}
\binom{k-2}{n,m,k-2-m-n}\frac{B_{k-1-n}}{k-1-n}
  \frac{B_{n+1}}{n+1}
 \right)x^{k-2-m} y^{m}\ .
\end{split}
\end{equation*}

Calculons enfin $\Psi_k(\charun)([0,1]_\infty)|(T+1)$. Pour $P= \frac{(x+y)^{k-1} - y^{k-1}}{x}$, on a
\begin{equation*}
P|(T+ 1)=\frac{y^{k-1} -(y-x)^{k-1}+(y+x)^{k-1} - y^{k-1}}{x}
=2\sum_{\substack{0\leq m\leq k-2\\m\textrm{ pair}}}\binom{k-1}{m}x^{k-2-m}y^{m}
\ .\end{equation*}
Pour $m$ pair, le coefficient de $x^{k-2-m}y^{m}$ dans $R|(T-T^{-1})-2\frac{-B_k}{k(k-1)}P|(1+T)$ est donc
\begin{equation*}
4\frac{B_k}{k(k-1)}\binom{k-1}{m}+2 \sum_{\substack{0\leq n+ m\leq k-2 \\n\textrm{ impair}}}
\binom{k-2}{n,m,k-2-m-n}\frac{B_{k-1-n}}{k-1-n}
 \frac{B_{n+1}}{n+1}\ ,
\end{equation*}
d'où la formule pour $\lambda_{k,m}$. Ce qui termine la démonstration
du corollaire \ref{corN1}.
\end{proof}

\appendix
\section{Calculs classiques sur les séries d'Eisenstein}

\subsection{\texorpdfstring{Rappels sur les fonctions $\zeta$}{Lg}}
Si $g$ est une fonction sur $\NN$, on pose
$$ L(s, g) = \sum_{n>0} \frac{g(n)}{n^s}\ .$$
Cette définition s'étend à une fonction
sur $\ZZ/N\ZZ$ par relèvement de manière naturelle.
\begin{prop} (\cite[chap. XIV, thm. 2.1]{lang} ou \cite[app. A]{katz}).
Si $g$ est une fonction sur $\ZZ/N\ZZ$, $L(s,g)$ converge pour $\re(s) > 1$ et
se prolonge en une fonction méromorphe sur le plan complexe ayant
au plus un pôle simple en $s=1$ de résidu $\frac{\widehat{g}(0)}{N}$.
Les fonctions $L$ de $g$ et $\widehat{g}$
vérifient l'équation fonctionnelle
\begin{equation}
\label{eqfonctgen}
L(1-s,g)=(2\pi)^{-s}N^{s-1}\Gamma(s)
\big (L(s,\widehat{g}) e^{\pi i s/2}+ L(s,\widehat{g}^-) e^{-\pi i s/2}\big )
\ .\end{equation}
\end{prop}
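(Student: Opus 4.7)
Le plan est de ramener l'�nonc� � la th�orie classique des fonctions z�ta de Hurwitz $\zeta(s,x)=\sum_{k\geq 0}(k+x)^{-s}$, d�finie et holomorphe pour $\re(s)>1$. En regroupant les entiers $n\geq 1$ selon leur classe de congruence modulo $N$, je commencerais par �tablir l'identit� formelle
$$L(s,g) = \sum_{a=1}^{N} g(a) \sum_{k\geq 0} \frac{1}{(a+kN)^s} = N^{-s} \sum_{a=1}^{N} g(a)\,\zeta(s, a/N),$$
d'o� la convergence absolue pour $\re(s)>1$ d�coule directement.

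J'en d�duirais le prolongement m�romorphe et le r�sidu en $s=1$ � partir du r�sultat analogue classique pour la fonction de Hurwitz : $\zeta(s,x)$ admet un prolongement m�romorphe � $\CC$, holomorphe hors de $s=1$, o� elle a un p�le simple de r�sidu $1$. On en d�duit imm�diatement que $L(s,g)$ admet un prolongement m�romorphe avec au plus un p�le simple en $s=1$ de r�sidu
$$\frac{1}{N}\sum_{a=1}^{N} g(a) = \frac{\widehat{g}(0)}{N},$$
en utilisant le fait que $\widehat{g}(0) = \sum_a g(a)$ d'apr�s la convention de transform�e de Fourier du paragraphe pr�c�dent.

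Pour l'�quation fonctionnelle, j'appliquerais la formule de Hurwitz
$$\zeta(1-s,x)=\frac{\Gamma(s)}{(2\pi)^s}\Bigl(e^{-i\pi s/2}\sum_{n\geq 1}\frac{e^{2\pi inx}}{n^s}+e^{i\pi s/2}\sum_{n\geq 1}\frac{e^{-2\pi inx}}{n^s}\Bigr)$$
en $x=a/N$, puis je multiplierais par $g(a)$ et sommerais sur $a \in \{1,\ldots,N\}$. Apr�s interversion des sommations (l�gitime par convergence absolue dans le domaine $\re(s)>1$, puis par prolongement analytique), les sommes int�rieures sur $a$ valent exactement $\sum_a g(a)e^{-2\pi ina/N}=\widehat{g}(n)$ et $\sum_a g(a)e^{2\pi ina/N}=\widehat{g}^-(n)$. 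En tenant compte du facteur $N^{s-1}$ provenant de $L(1-s,g) = N^{s-1}\sum_a g(a)\zeta(1-s,a/N)$, on obtient pr�cis�ment \eqref{eqfonctgen}. L'obstacle principal est bien s�r contenu dans la formule de Hurwitz elle-m�me, dont la d�monstration classique repose sur une repr�sentation int�grale de $\zeta(s,x)$ et une d�formation de contour (voir Lang, chap.~XIV, ou Katz, app.~A); la r�duction depuis cette formule � l'�nonc� g�n�ral sur $\ZZ/N\ZZ$ n'est qu'une manipulation formelle via la transform�e de Fourier discr�te.
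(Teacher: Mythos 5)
Your argument is correct. The paper itself gives no proof of this proposition \textemdash{} it simply cites Lang (chap.~XIV) and Katz (app.~A) \textemdash{} and your reduction $L(s,g)=N^{-s}\sum_{a=1}^{N}g(a)\,\zeta(s,a/N)$ followed by the Hurwitz formula is exactly the classical argument contained in those references: the interchange of the finite sum over $a$ with the series over $n$ is immediate, the identifications $\sum_a g(a)e^{-2\pi ina/N}=\widehat{g}(n)$ and $\sum_a g(a)e^{2\pi ina/N}=\widehat{g}^-(n)$ match the paper's Fourier convention (in particular $\widehat{g}(0)=\sum_a g(a)$, giving the residue $\widehat{g}(0)/N$), and the resulting identity is precisely \eqref{eqfonctgen}.
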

On pose
$\CN{N}{h}=\frac{1}{N^{h}}\frac{(-2\pi i)^h}{(h-1)!}$
pour $h$ entier $\geq 1$.
\begin{cor}
Soit $g$ une fonction sur $\ZZ/N\ZZ$.
\begin{enumerate}
\item Pour $h$ entier $>1$,
\begin{equation}
\label{eqfonct}
\begin{split}
NL(1-h,g)&=
\CN{N}{h}^{-1} L(h,\widehat{g} + (-1)^h\widehat{g}^-)\\
L(1-h,\widehat{g})&=
\CN{N}{h}^{-1} L(h,g^- + (-1)^hg)\ .
\end{split}
\end{equation}
En particulier, $$L(1-h,g) = (-1)^h L(1-h, g^-)\ .$$
\item
Si
$$L(s,g) = \frac{1}{N}\frac{\hat{g}(0)}{s-1} + L^*(1,g) + O(s-1)$$
est le développement de $L(s,g)$ au voisinage de $s= 1$
avec $L^*(1,g)\in \CC$, on a
\begin{equation}
\label{eq:a}
\begin{split}
L(0,\widehat{g})+ \frac{1}{2} \widehat{g}(0) &=
\CN{N}{1}^{-1} (L^*(1,g^-) - L^*(1,g)) \\
L(0, g+g^-)&= -g(0)\\
L(0, g-g^-)&= 2 L(0,g) + g(0) \ .
\end{split}
\end{equation}
\end{enumerate}
\end{cor}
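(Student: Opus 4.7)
La strat�gie consiste � d�duire les deux parties du corollaire de l'�quation fonctionnelle \eqref{eqfonctgen} en sp�cialisant $s$ � des valeurs enti�res appropri�es. Pour le point (1), on prend $s=h$ avec $h\geq 2$ entier. Alors $\Gamma(h)=(h-1)!$ et $e^{\pm i\pi h/2}=(\pm i)^h$, de sorte que les deux termes exponentiels du membre de droite de \eqref{eqfonctgen} se combinent en $i^h\bigl(L(h,\widehat{g})+(-1)^hL(h,\widehat{g}^-)\bigr)=i^hL(h,\widehat{g}+(-1)^h\widehat{g}^-)$. Un calcul direct identifie le pr�facteur $(2\pi)^{-h}N^{h-1}(h-1)!\,i^h$ � $\tfrac{1}{N}\CN{N}{h}^{-1}$, ce qui donne la premi�re �galit� de (1). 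La seconde s'obtient en appliquant \eqref{eqfonctgen} � $\widehat{g}$ � la place de $g$ et en utilisant $\widehat{\widehat{g\ }}=Ng^-$ rappel�e plus haut. L'assertion ``en particulier'' $L(1-h,g)=(-1)^hL(1-h,g^-)$ s'en d�duit formellement en appliquant la premi�re �galit� � $g^-$ et en remarquant que $\widehat{g^-}=\widehat{g}^-$.

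Pour le point (2), l'id�e est de passer � la limite $s\to 1$ dans \eqref{eqfonctgen}. La d�licatesse provient de ce que $L(s,\widehat{g})$ et $L(s,\widehat{g}^-)$ ont chacune un p�le simple de r�sidu $g(0)$ en $s=1$ (puisque $\widehat{\widehat{g\ }}(0)=Ng(0)$), tandis que les signes oppos�s de $e^{\pm i\pi s/2}$ en $s=1$ assurent la compensation des parties polaires. En �crivant
\begin{equation*}
L(s,\widehat{g})=\frac{g(0)}{s-1}+L^*(1,\widehat{g})+\cO(s-1)
\end{equation*}
ainsi que la formule analogue pour $\widehat{g}^-$, et en d�veloppant $e^{\pm i\pi s/2}=\pm i-\tfrac{\pi}{2}(s-1)+\cO((s-1)^2)$, on trouve apr�s compensation des termes en $1/(s-1)$ que le membre de droite de \eqref{eqfonctgen} a pour limite $\tfrac{1}{2\pi}\bigl[-\pi g(0)+i\bigl(L^*(1,\widehat{g})-L^*(1,\widehat{g}^-)\bigr)\bigr]$ en $s=1$, ce qui, identifi� au membre de gauche $L(0,g)$, fournit
\begin{equation*}
L(0,g)+\tfrac12 g(0)=\tfrac{i}{2\pi}\bigl(L^*(1,\widehat{g})-L^*(1,\widehat{g}^-)\bigr).
\end{equation*}
Appliqu�e � $\widehat{g}$ � la place de $g$, en utilisant $\widehat{\widehat{g\ }}=Ng^-$ et l'identification $\CN{N}{1}^{-1}=\tfrac{Ni}{2\pi}$, cette identit� donne la premi�re �quation de (2).

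Les deux derni�res �galit�s $L(0,g+g^-)=-g(0)$ et $L(0,g-g^-)=2L(0,g)+g(0)$ en sont des cons�quences formelles: en appliquant l'identit� pr�c�dente � $g^-$ et en observant $\widehat{g^-}=\widehat{g}^-$ ainsi que $g^-(0)=g(0)$, on obtient $L(0,g^-)+\tfrac12 g(0)=-\bigl(L(0,g)+\tfrac12 g(0)\bigr)$, d'o� $L(0,g^-)=-L(0,g)-g(0)$; l'addition et la soustraction avec $L(0,g)$ donnent les deux �galit�s annonc�es. Le principal obstacle technique est le d�veloppement pr�cis en s�ries de Laurent et de Taylor au voisinage de $s=1$ pour v�rifier la compensation des p�les et extraire le bon terme constant; une fois les d�veloppements pos�s, toutes les identifications sont �l�mentaires, le seul risque �tant une erreur de signe dans $\CN{N}{1}^{-1}=\tfrac{iN}{2\pi}$, v�rifiable directement � partir de la d�finition $\CN{N}{1}=-2\pi i/N$.
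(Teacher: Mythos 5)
Votre preuve est correcte et suit essentiellement la m\^eme d\'emarche que celle du texte : sp\'ecialisation de l'\'equation fonctionnelle \eqref{eqfonctgen} en $s=h$ pour le point (1) (avec l'\'echange $g\leftrightarrow\widehat{g}$ et $\widehat{\widehat{g\,}}=Ng^-$ pour la seconde \'egalit\'e), puis d\'eveloppement de Laurent au voisinage de $s=1$ avec compensation des p\^oles pour le point (2) ; vous explicitez m\^eme des calculs que le texte laisse au lecteur, et les identifications de constantes ($\frac{1}{N}\CN{N}{h}^{-1}=(2\pi)^{-h}N^{h-1}(h-1)!\,i^h$ et $\CN{N}{1}^{-1}=\frac{iN}{2\pi}$) sont exactes. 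La seule variante mineure est la d\'erivation des deux derni\`eres \'egalit\'es via $L(0,g^-)=-L(0,g)-g(0)$ au lieu de la manipulation $\widehat{g}-\widehat{g}^-=2\widehat{g}-(\widehat{g}+\widehat{g}^-)$ du texte, ce qui revient au m\^eme.
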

\begin{proof} Pour $h>1$,
la première égalité est une conséquence de l'équation fonctionnelle
\eqref{eqfonctgen};
la deuxième égalité se déduit de la première en échangeant les rôles de $g$ et de
$\widehat{g}$ et en utilisant le fait que $\widehat{\widehat{g\, }}=N g^-$.

Pour $h=1$, l'équation fonctionnelle \eqref{eqfonctgen} implique que
$$L^*(1,\widehat{g}) - L^*(1,\widehat{g}^-)= -\pi i g(0) - 2\pi i L(0,g)
\ .$$
En remplaçant $g$ par $\widehat{g}$, on obtient
$$L^*(1,g^-) - L^*(1,g)= \frac{1}{N}\left(-\pi i\widehat{g}(0) - 2\pi i L(0,\widehat{g})\right)
\ ,$$
ce qui donne la première identité.
La deuxième égalité se déduit de la première en l'appliquant à
$\widehat{g}$ et à $\widehat{g}^-$. Pour la troisième, on écrit simplement
que $\widehat{g}-\widehat{g}^-=2 \widehat{g}-(\widehat{g}+ \widehat{g}^-)$
et on utilise la deuxième égalité.
\end{proof}
Pour $h\neq 2$, posons $L^*(h-1,g)=L(h-1,g)$ pour simplifier l'écriture
du corollaire suivant.
\begin{cor}
Pour $h\geq 1$,
\begin{equation}
\label{eqderiv}
\begin{split}
\CN{N}{h-1}^{-1}L^*(h-1,g)&=
\frac{1}{-2\pi i}L'(2-h,\widehat{g}^- + (-1)^h \widehat{g})
+ \frac{1}{4}L(2-h, \widehat{g}^--(-1)^h \widehat{g})\\
&=
\frac{1}{-2\pi i}L'(2-h,\widehat{g}^- + (-1)^h \widehat{g})
+ \frac{1}{2}L(2-h, \widehat{g}^-)\ .
\end{split}
\end{equation}
\end{cor}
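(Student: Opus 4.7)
Le point de d�part est l'�quation fonctionnelle \eqref{eqfonctgen} elle-m�me, valable pour tout $s$ complexe. Je propose d'introduire les combinaisons auxiliaires $B^\pm(s) = L(s,\widehat g) \pm L(s, \widehat g^-)$, qui permettent de r��crire le second membre sous la forme
$$(2\pi)^{-s} N^{s-1} \Gamma(s)\bigl[\cos(\pi s/2)\, B^+(s) + i\sin(\pi s/2)\, B^-(s)\bigr].$$
L'id�e est d'extraire la valeur en $s=h-1$ du membre de gauche (c'est-�-dire $L(h-1,g)$, ou le terme constant du d�veloppement de Laurent lorsque $h=2$) � partir du comportement du membre de droite au voisinage de $s=2-h$.

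Le plan est d'effectuer le d�veloppement $s = 2-h+t$ et de faire tendre $t$ vers $0$. Pour $h \geq 2$, $\Gamma(2-h+t) = \frac{(-1)^h}{(h-2)!\,t} + D_h + O(t)$ pr�sente un p�le simple. Selon la parit� de $h$, c'est exactement l'un des deux facteurs trigonom�triques qui s'annule en $s=2-h$ : pour $h$ pair, $\sin(\pi(2-h)/2)=0$ ; pour $h$ impair, $\cos(\pi(2-h)/2)=0$. Il faut alors montrer que la fonction $L$ affect�e du facteur trigonom�trique non nul s'annule aussi en $s=2-h$ (pour que le produit soit fini). C'est exactement le contenu de l'identit� $L(1-m,f) = (-1)^m L(1-m, f^-)$ du corollaire pr�c�dent : appliqu�e � $f=\widehat g$ avec $m=h-1$, elle donne $B^+(2-h)=0$ pour $h$ pair $\geq 3$, et $B^-(2-h)=0$ pour $h$ impair $\geq 3$.

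Le d�veloppement de Taylor � l'ordre $1$ de $B^\pm(2-h+t) = t\,(B^\pm)'(2-h) + O(t^2)$, combin� au d�veloppement $\sin(\pi(2-h+t)/2) \sim c_h t$ ou $\cos(\pi(2-h+t)/2) \sim c_h' t$ pour des constantes explicites $c_h, c_h'$, donne un terme fini par compensation du p�le et du z�ro simples : c'est ce terme qui fournit $L'(2-h, \widehat g^- + (-1)^h \widehat g)$ apr�s identification des constantes. Sym�triquement, la contribution du d�veloppement � l'ordre $1$ de la fonction trigonom�trique qui s'annule, multipli�e par la valeur (non nulle) de l'autre combinaison en $2-h$, fournit le terme en $L(2-h, \widehat g^- - (-1)^h \widehat g)$. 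Il reste � v�rifier que les facteurs num�riques co\"\i ncident avec $\CN{N}{h-1}^{-1} = N^{h-1}(h-2)!/(-2\pi i)^{h-1}$, ce qui se fait en regroupant les puissances de $i$ provenant de $(-1)^{h/2+1}$ (resp.\ $(-1)^{(h+1)/2}$) selon la parit� de $h$.

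L'�quivalence des deux formulations de \eqref{eqderiv} s'obtient alors en observant que leur diff�rence est $-\tfrac{1}{4}L(2-h,\widehat g^- + (-1)^h \widehat g)$, et que ce terme s'annule pour $h \geq 2$ en vertu de la m�me identit� du corollaire pr�c�dent. Les cas $h=1$ et $h=2$ demandent un traitement s�par� : pour $h=2$, le p\^ole simple de $L(s,g)$ en $s=1$ impose de travailler avec $L^*(1,g)$ et d'int�grer la contribution du terme constant $D_h$ du d�veloppement de $\Gamma(2-h+t)$, moyennant les corrections $\tfrac12 g(0)$ de \eqref{eq:a} qui relient $L(0,\cdot)$ � la distribution $\beta_1$ ; pour $h=1$, $\Gamma(s)$ est r�guli�re en $s=1$, et l'identit� se lit presque directement sur \eqref{eqfonctgen} combin�e aux �galit�s \eqref{eq:a}, avec la convention de valeur r�gularis�e pour $L(1,\widehat g^-)$. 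La principale difficult� tient moins � l'id�e g�n�rale (d�velopper l'�quation fonctionnelle autour du p\^ole gamma) qu'au contr\^ole minutieux des signes et des puissances de $i$ dans les deux parit�s, et au traitement propre des corrections en $\tfrac12 f(0)$ intervenant au voisinage du p\^ole $s=1$ de $L$.
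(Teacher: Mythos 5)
Votre d\'emarche est essentiellement celle du texte : \'evaluer l'\'equation fonctionnelle \eqref{eqfonctgen} en $s=2-h+t$, utiliser l'identit\'e $L(2-h,\widehat{g})+(-1)^h L(2-h,\widehat{g}^-)=0$ pour compenser le p\^ole simple de $\Gamma$, puis identifier le terme constant en $t$ (avec le m\^eme renvoi des cas $h=1,2$ \`a un traitement s\'epar\'e). La seule diff\'erence est de pr\'esentation : le texte remplace $\Gamma(2-h+s)$ par $(-1)^h\pi/\bigl(\Gamma(h-1-s)\sin(\pi s)\bigr)$ via la formule des compl\'ements, ce qui traite les deux parit\'es uniform\'ement l\`a o\`u vous distinguez selon le facteur trigonom\'etrique ($\cos$ ou $\sin$) qui s'annule en $2-h$.
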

\begin{proof}
Rappelons que $$\Gamma(2-h+s) \Gamma(h-1-s) = \frac{\pi}{\sin \pi (2-h+s)}
=(-1)^h\frac{\pi}{\sin(\pi s)} \ .$$
En appliquant l'équation fonctionnelle \eqref{eqfonctgen} en $2-h+s$, on a
\begin{equation*}
\begin{split}
(2\pi)^{2-h+s} &N^{h-1 -s} L(h-1-s,g) \\
&=\frac{(-1)^{h}\pi}{\Gamma(h-1-s)\sin(\pi s)}
\left ( L(2-h+s, \widehat{g}) e^{\pi i \frac{s-h+2}{2}} +
L(2-h+s, \widehat{g}^-) e^{-\pi i \frac{s-h+2}{2}}
\right)\\
&=
\frac{i^{h-2}\pi}{\Gamma(h-1-s)\sin(\pi s)}
\left (L(2-h+s, \widehat{g}) e^{\pi i \frac{s}{2}} +
(-1)^h L(2-h+s, \widehat{g}^-) e^{-\pi i \frac{s}{2}}
\right)
\ ,\end{split}
\end{equation*}
d'où
\begin{equation*}
\begin{split}
\frac{N^{h-1}}{(-2\pi i)^{h-2}}(2\pi)^s &N^{-s} \Gamma(h-1-s)L(h-1-s,g)
=\pi\frac{ L(2-h+s, \widehat{g}) e^{\pi i \frac{s}{2}} +
(-1)^h L(2-h+s, \widehat{g}^-) e^{-\pi i \frac{s}{2}}}
{\sin(\pi s)}\end{split}
\end{equation*}
et au voisinage de $s=0$,
\begin{equation*}
\begin{split}
\frac{N^{h-1}(h-2)!}{(2\pi i)^{h-2}}L(h-1-s,g)+O(s)=
\pi\frac{L(2-h+s, \widehat{g}) e^{\pi i \frac{s}{2}} +
(-1)^h L(2-h+s, \widehat{g}^-) e^{-\pi i \frac{s}{2}}}
{\sin(\pi s)}\ .
\end{split}
\end{equation*}
On a
\begin{equation*}
\begin{cases}
L(2-h+s, \widehat{g}) e^{\pi i \frac{s}{2}}&=
L(2-h, \widehat{g})
+ (L'(2-h, \widehat{g}) + \frac{\pi i}{2}L(2-h, \widehat{g})) s + O(s^2)\\
L(2-h+s, \widehat{g}^-) e^{-\pi i \frac{s}{2}}&=
L(2-h, \widehat{g}^-)
+ (L'(2-h, \widehat{g}^-) - \frac{\pi i}{2}L(2-h, \widehat{g}^-)) s + O(s^2)\\
\sin(\pi s)&= \pi s + O(s^2)\ .
\end{cases}
\end{equation*}
Pour $h>2$ (resp. $h=2)$,
$L(2-h, \widehat{g}) + (-1)^h L(2-h, \widehat{g}^-)$ est nul (resp.
est égal à $-g(0)$ et les coefficients de $\frac{1}{s}$ se simplifient).
Donc,
\begin{equation*}
\begin{split}
(-2\pi i)\CN{N}{h-1}^{-1} L^*(h-1,g)&=(-1)^h\left(
L'\left(2-h, \widehat{g}+(-1)^h\widehat{g}^-\right) +
\frac{\pi i}{2}L\left(2-h, \widehat{g}-(-1)^h\widehat{g}^-\right)
\right)
\\& =
L'\left(2-h, \widehat{g}^-+(-1)^h\widehat{g}\right) -
\frac{\pi i}{2}L\left(2-h, \widehat{g}^--(-1)^h\widehat{g}\right)
\end{split}
\end{equation*}
et
\begin{equation*}
\begin{split}
\CN{N}{h-1}^{-1} L^*(h-1,g)& =
\frac{1}{-2\pi i}L'\left(2-h, \widehat{g}^-+(-1)^h\widehat{g} \right) +
\frac{1}{4}L\left(2-h, \widehat{g}^--(-1)^h\widehat{g}\right)
\\
&=
\frac{1}{-2\pi i}L'\left(2-h, \widehat{g}^-+(-1)^h\widehat{g} \right) +
\frac{1}{2}L\left(2-h, \widehat{g}^-\right)\ .
\end{split}
\end{equation*}
\end{proof}

\subsection{Développement de Fourier et transformée de Mellin des séries d'Eisenstein}
\begin{lem}
Soit $f$ une fonction sur $(\ZZ/N\ZZ)^2$.
La valeur de $\eisenmod{k}{f}=\CN{N}{k}^{-1} E_{k,f}$
en la pointe $\infty$ est
$$\Ikf{k}{f}=L(1-k,P_2(f)(0,\cdot)^-)$$
où $P_2(f)(0,\cdot)$ désigne la fonction $x \mapsto P_2(f)(0,x)$
sur $\ZZ/N\ZZ$.
Son $q$-développement est
$$\eisenmod{k}{f}(\tau)= L(1-k,P_2(f)(0,\cdot)^-)+ \sum_{n \geq 1, m\geq 1}
\left(P_2(f)(n,-m) + (-1)^k P_2(f^-)(n,-m)\right) m^{k-1}q_N^{n m}
$$
avec $q_N=exp(\frac{2i\pi\tau}{N})$.
Lorsque $f$ est décomposé, autrement dit $f(x_1,x_2)=f_1(x_1) f_2(x_2)$,
la transformée de Mellin $\mellin_f(s)$ de $\eisenmod{k}{f} - \Ikf{k}{f}$,
prolongement analytique de
\begin{equation*}
\begin{split}
\int_{0}^{i\infty} \left(\eisenmod{k}{f}(\tau) -
\Ikf{k}{f}\right)\tau^{s} \frac{d\tau}{\tau} \ ,
\end{split}
\end{equation*}
est égale à
\begin{equation*}
\begin{split}
\mellin_f(s)&=(\frac{N}{-2i\pi})^{s}\Gamma(s)
\left(L(s, f_1) L(s-k+1,\widehat{f_2}^-) + (-1)^k L(s, f_1^-) L(s-k+1,\widehat{f_2})
\right)\ .
\end{split}
\end{equation*}
\end{lem}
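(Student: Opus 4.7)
La strat�gie est de s�parer dans $E_{k,f}(\tau) = \sum'_{(c,d)} f(c,d)(c\tau+d)^{-k}$ la contribution $c=0$ (qui donnera la valeur en la pointe $\infty$) des termes $c\neq 0$ (qui donneront le $q$-d�veloppement via la formule de Lipschitz), puis d'appliquer ces calculs � la transform�e de Mellin.

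Pour la contribution $c=0$, je regroupe la somme $\sum_{d\neq 0} f(0,d)\,d^{-k}$ selon la classe de $d$ modulo $N$ pour obtenir $L(k,f(0,\cdot)) + (-1)^k L(k,f(0,\cdot)^-)$. L'�quation fonctionnelle \eqref{eqfonct} appliqu�e � $g=f(0,\cdot)^-$, combin�e � l'identit� $\widehat{f(0,\cdot)^-} = P_2(f)(0,\cdot)^-$, r��crit cette quantit� sous la forme $\CN{N}{k}\,L(1-k, P_2(f)(0,\cdot)^-)$; en divisant par $\CN{N}{k}$ on identifie pr�cis�ment $\Ikf{k}{f}$.

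Pour la contribution $c\neq 0$, j'associe les termes $c>0$ et $c<0$ via l'�galit� $f(-c,-d)=f^-(c,d)$, puis je d�compose $d=Nq+r$ avec $r$ modulo $N$ et j'applique la formule de Lipschitz
\[\sum_{q\in\ZZ} \left(\tfrac{cz+r}{N}+q\right)^{-k} = \tfrac{(-2\pi i)^k}{(k-1)!}\sum_{m\geq 1} m^{k-1} e^{2\pi i m(cz+r)/N}\]
valable pour $c>0$. La somme int�rieure $\sum_r f(c,r)\,e^{2\pi i mr/N}$ vaut par d�finition $P_2(f)(c,-m)$, ce qui fournit le $q$-d�veloppement annonc�. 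Pour $f=f_1\otimes f_2$, les identit�s $P_2(f)(n,-m)=f_1(n)\widehat{f_2}^-(m)$ et $P_2(f^-)(n,-m)=f_1^-(n)\widehat{f_2}(m)$, combin�es au calcul �l�mentaire $\int_0^{i\infty} e^{2\pi i mn\tau/N}\tau^{s-1}d\tau = \left(\frac{N}{-2\pi i\,mn}\right)^s\Gamma(s)$ appliqu� terme � terme, factorisent la double s�rie en produit des fonctions $L$ annonc�es.

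Le point le plus d�licat est le cas $k=2$, o� la convergence n'est pas absolue et o� la sommation d'Eisenstein rappel�e au paragraphe \ref{sub:eisenstein} impose un ordre pr�cis; il faudra v�rifier que les r�organisations pr�c�dentes (appariement $c/-c$, passage $d\mapsto Nq+r$, interversion avec la formule de Lipschitz) restent licites dans ce cas, typiquement en traitant s�par�ment les contributions $c=0$ et $c\neq 0$. Pour $k\geq 3$, la convergence absolue rend toutes les manipulations imm�diates.
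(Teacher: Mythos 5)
Votre d�marche co�ncide essentiellement avec celle du papier : m�me s�paration $c=0$ / $c\neq 0$, m�me usage de l'�quation fonctionnelle \eqref{eqfonct} pour le terme constant, m�me formule de Lipschitz pour le $q$-d�veloppement, et m�me int�gration terme � terme pour la transform�e de Mellin dans le cas $f=f_1\otimes f_2$. Votre remarque finale sur $k=2$ est pertinente (le papier ne la commente pas) ; la sommation d'Eisenstein, qui fixe pr�cis�ment l'ordre � $d$ d'abord, puis $c$ �, est exactement ce qui rend licite l'application de la formule de Lipschitz � $c$ fix�.
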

\begin{proof}
Le terme constant $\Ikf{k}{f}$ vérifie
\begin{equation*}
\begin{split}
\CN{N}{k}\Ikf{k}{f}&=
\sum_{d\neq 0} f(0,d) d^{-k}
=
\sum_{d>0} f(0,d) d^{-k} + (-1)^k \sum_{d>0} f(0,-d) d^{-k}
\\&= L(k,f(0,\cdot)) + (-1)^k L(k,f^-(0,\cdot)) \ .
\end{split}
\end{equation*}
La formule \eqref{eqfonct}
implique que
$$\Ikf{k}{f}=L(1-k,P_2(f)^-(0,\cdot))\ .$$
Le $q$-développement est une conséquence de la formule classique
\[\lim_{N\to\infty} \sum_{d=-N}^N\frac1{z+d}=\frac\pi{\tan\pi z}=(-i\pi)\left(1+2\sum_{m=1}^\infty\exp(2i \pi m z)\right)\]
pour $z$ dans le
demi-plan de Poincaré et, par dérivations successives,
$$\sum_{d\in \ZZ} (z+d)^{-k} = \frac{(-2\pi i)^{k}}{(k-1)!}\sum_{m=1}^\infty m^{k-1}\exp(2i \pi m z)
\ .$$
On a en effet
\begin{equation*}
\begin{split}
&\sum_{n\neq 0, d\equiv d_0 \bmod N}f(n,d) (nz + d) ^{-k}=
\sum_{n\neq 0, d\in \ZZ} f(n,d_0) (nz + d_0+dN) ^{-k}\\
&\quad =
N^{-k}\left(\sum_{n \geq1, d\in \ZZ} f(n,d_0) (\frac{nz +d_0}{N} + d) ^{-k}
+
\sum_{n\geq 1, d\in \ZZ} f(-n,d_0) (-1)^k (\frac{nz-d_0}{N} + d) ^{-k}\right )\\
&\quad=
C_{N,k}
\sum_{n\geq 1} \left (f(n,d_0)
\sum_{m=1}^\infty m^{k-1}\exp(2i \pi \frac{md_0}{N})q_N^{mn}
+ (-1)^k f(-n,d_0)
\sum_{m=1}^\infty m^{k-1}\exp(-2i \pi \frac{md_0}{N})q_N^{mn}
\right) \ .
\end{split}
\end{equation*}
D'où
\begin{equation*}
\begin{split}
\eisenmod{k}{f} - \Ikf{k}{f}&=
\sum_{m\geq 1,n\geq 1}
\left(\sum_{d_0\bmod N}\left (f(n,-d_0) + (-1)^k f(-n,d_0)
\right)\exp(-2i \pi \frac{md_0}{N})\right)m^{k-1}
q_N^{mn}\\
&=\sum_{n \geq 1, m\geq 1}
\left(P_2(f)(n,-m) + (-1)^k P_2(f^-)(n,-m)\right) m^{k-1}q_N^{n m} \ .
\end{split}
\end{equation*}
Faisons le calcul de la transformée de Mellin.
Si $g_1$ et $g_2$ sont des fonctions sur $\ZZ/N\ZZ$
et si
\begin{equation*}
\begin{split}
h=\sum_{n \geq 1} \sum_{m \geq 1} g_1(n) g_2(m) m^{k-1} q_N^{n m}
\end{split}
\end{equation*}
la transformée de Mellin de $h$ est le prolongement analytique de
\begin{equation*}
\begin{split}
\int_{0}^{i\infty} h(\tau) \tau^{s} \frac{d\tau}{\tau}&=
i^s\sum_{n \geq 1} \sum_{m \geq 1} \int_0^\infty g_1(n) g_2(m) m^{k-1}
y^s \exp(-2\pi \frac{nm y}{N})\frac{dy}{y}\\
\\&=
(\frac{N}{-2i\pi})^s\Gamma(s)
\sum_{n \geq 1} \sum_{m \geq 1}
g_1(n) g_2(m) m^{k-1-s} n^{-s}
\\&=
(\frac{N}{-2i\pi})^s\Gamma(s)L(s,g_1) L(s-k+1, g_2) \ .
\end{split}
\end{equation*}
Quand $f$ est de la forme $f_1 \otimes f_2$, on a
$P_2(f)(n,-m)=f_1(n) \widehat{f_2}^-(m)$ et
$P_2(f^-)(n,-m)=f_1^-(n) \widehat{f_2}(m)$
et on applique la formule précédente au $q$-développement de
$\eisenmod{k}{f}(\tau) - \Ikf{k}{f}$. Cela termine la démonstration du lemme.
\end{proof}
\begin{prop}\label{psi}
Soient $k \geq 2$ et $f\in \hypk{Fonc((\ZZ/N\ZZ)^2, \CC)}$.
Alors,
$$\Ikf{k}{N^{-1} \widehat{f}}=\int f^- \dd{\beta_k}{\beta_0}$$
et
\begin{equation*}
\periode_{N^{-1}\widehat{f}}(j+1)=
 (-1)^{j+1} \int f^-\dd{\beta_{k-j-1}}{\beta_{j+1}}
 + \begin{cases}
 \phantom{-}\int \widehat{f}\ \dd{\beta_0 }{\beta'_{k-1}}
  &\text{ pour $j=0$}
\\
 -\int \widehat{f}\ \dd{\beta'_{k-1}}{\beta_0}
&\text{ pour $j=k-2$ et $k\neq 2$} \ .
\end{cases}
\end{equation*}

\end{prop}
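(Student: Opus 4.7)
Le plan est de partir des formules du lemme qui pr�c�de (donnant $\Ikf{k}{g}$ et $\periode_g(s)$ pour une fonction $g$ quelconque sur $(\ZZ/N\ZZ)^2$), de sp�cialiser � $g=N^{-1}\widehat{f}$, puis de traduire les $L$-valeurs obtenues en int�grales contre les distributions $\beta_h$ et $\beta'_{k-1}$ gr�ce aux identit�s \eqref{eqfonct}, \eqref{eq:a} et \eqref{eqderiv} de l'appendice.

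Pour la premi�re �galit�, un calcul direct par orthogonalit� des caract�res donne $P_2(N^{-1}\widehat{f})(0,m)=N^{-1}\sum_c f(m,c)$. En posant $g(a)=\sum_c f(a,c)$, la proposition \ref{valeurzeta} fournit $\Ikf{k}{N^{-1}\widehat{f}}=N^{-1}L(1-k,g^-)=N^{-1}\int g^-\,d\beta_k$ (l'hypoth�se $k\geq 2$ �vitant la correction sp�cifique � $\beta_1$). L'identit� voulue d�coule alors de $\int f^-\,\dd{\beta_k}{\beta_0}=N^{-1}\int g^-\,d\beta_k$, puisque $\beta_0\equiv N^{-1}$.

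Pour la seconde �galit�, on se ram�ne par lin�arit� au cas d�composable $f=f_1\otimes f_2$. Les identit�s $\widehat{f_1\otimes f_2}=N^{-1}\widehat{f_2}\otimes \widehat{f_1}^-$ et $\widehat{\widehat{h\,}}=Nh^-$ (en dimension 1) permettent de r��crire la formule du lemme sous la forme
\begin{equation*}
\periode_{N^{-1}\widehat{f}}(s)=N^{-1}\left(\tfrac{N}{-2i\pi}\right)^s\Gamma(s)\bigl[L(s,\widehat{f_2})L(s{-}k{+}1,f_1^-)+(-1)^kL(s,\widehat{f_2}^-)L(s{-}k{+}1,f_1)\bigr].
\end{equation*}
Dans le r�gime g�n�rique $1\leq j\leq k-3$, tous les facteurs sont r�guliers en $s=j+1$: \eqref{eqfonct} exprime $L(j+1,\widehat{f_2})+(-1)^{j+1}L(j+1,\widehat{f_2}^-)$ en fonction de $\int f_2\,d\beta_{j+1}$, tandis que $L(j+2-k,f_1^\pm)$ se ram�ne � $\int f_1\,d\beta_{k-j-1}$. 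L'identit� $\CN{N}{j+1}\left(\tfrac{N}{-2i\pi}\right)^{j+1}\Gamma(j+1)=1$ fait dispara�tre tous les facteurs $N$ et $-2i\pi$, et le d�ploiement $f^-=f_1^-\otimes f_2^-$ conduit � la formule annonc�e.

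Les cas limites $j=0$ et $j=k-2$ (pour $k\geq 3$) constituent la principale difficult�. Pour $j=0$, $L(s,\widehat{f_2})$ et $L(s,\widehat{f_2}^-)$ ont un p�le simple commun en $s=1$, de r�sidu $f_2(0)$; le produit avec le d�veloppement de Taylor de $L(s-k+1,f_1^\pm)$ fait appara�tre une contribution r�siduelle proportionnelle � $L'(2-k,f_1^-)+(-1)^kL'(2-k,f_1)$, que la d�finition m�me de $\beta'_{k-1}$ identifie, apr�s recombinaison avec le facteur $f_1(0)/N$, au terme $\int\widehat{f}\,\dd{\beta_0}{\beta'_{k-1}}$; la partie r�guli�re $L^*(1,\widehat{f_2})-L^*(1,\widehat{f_2}^-)$, calcul�e via \eqref{eq:a}, produit le terme principal. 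Pour $j=k-2$, ce sont les valeurs $L(0,f_1^\pm)$ qui font intervenir les corrections $\tfrac12f_1(0)$ de la proposition \ref{valeurzeta}; l'application de \eqref{eqderiv} s�par�ment � $g=\widehat{f_2}$ et � $g=\widehat{f_2}^-$, puis leur somme, exprime $L(k-1,\widehat{f_2})+(-1)^kL(k-1,\widehat{f_2}^-)$ en fonction de $\int\widehat{f_2}\,d\beta'_{k-1}$, ce qui fournit la correction $-\int\widehat{f}\,\dd{\beta'_{k-1}}{\beta_0}$. Le cas $k=2$ ($j=0$) se traite identiquement, l'hypoth�se $f(0,0)=0$ garantissant la bonne d�finition de tout. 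L'obstacle technique principal est le suivi rigoureux des signes et des puissances de $N$ et de $-2i\pi$ dans ces cas limites, ainsi que la juste identification des d�veloppements asymptotiques au voisinage du p�le en $s=1$ avec la d�finition de $\beta'_{k-1}$.
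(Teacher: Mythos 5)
Votre esquisse suit pour l'essentiel la d\'emonstration du texte : r\'eduction au cas d\'ecompos\'e $f=f_1\otimes f_2$, sp\'ecialisation de la formule de Mellin du lemme pr\'ec\'edent en $s=j+1$, conversion des valeurs de $L$ via \eqref{eqfonct}, \eqref{eq:a} et \eqref{eqderiv}, avec la m\^eme analyse du p\^ole en $s=1$ pour $j=0$ et le m\^eme recours \`a \eqref{eqderiv} pour $j=k-2$. La seule variante (inessentielle) est le calcul direct de $P_2(N^{-1}\widehat{f}\,)(0,\cdot)$ pour le terme constant, l\`a o\`u le texte passe aussi par le cas d\'ecompos\'e.
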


\begin{proof}
Il suffit de démontrer les formules pour une fonction $f$
de la forme $f=f_1\otimes f_2$. On a alors
$\widehat{f\ }=N^{-1}\widehat{f_2}\otimes \widehat{f_1}^-$
et
$P_2(\widehat{f})(0,x)=\widehat{f_2}(0)f_1(x)$,
d'où
\begin{equation*}
\begin{split}
\Ikf{k}{N^{-1}\widehat{f\ }}&=N^{-1}\widehat{f_2}(0)L(1-k,f_1^-)
=\int f_1(-x_1)f_2(-x_2) d\beta_{k}(x_1)d\beta_{0}(x_2)
\\
&=\int f^- \dd{\beta_{k}}{\beta_0}\ .
\end{split}
\end{equation*}
Montrons les formules sur $\periode_{\widehat{f}}(j+1)$.
On a \begin{equation*}
\begin{split}
\mellin_{\widehat{f}}(s)&=(\frac{N}{-2i\pi})^{s}\Gamma(s)
\left(L(s, \widehat{f_2}) L(s-k+1,f_1^-) +
(-1)^k L(s, \widehat{f_2}^-) L(s-k+1,f_1)
\right)
\end{split}
\end{equation*}
en appliquant le lemme précédent.
D'où
\begin{equation*}
\begin{split}
N\CN{N}{j+1}\periode_{N^{-1}\widehat{f}}(j+1)=
L(j+1, \widehat{f_2}) L(2-k+j,f_1^-) +
(-1)^{k} L(j+1, \widehat{f_2}^-) L(2-k+j,f_1)\ .
\end{split}
\end{equation*}
Pour $0 <j < k-2$, on a
$$L(2-k+j,f_1^-)= (-1)^{j-k+1}L(2-k+j,f_1)\ .$$
D'où,
\begin{equation*}
\begin{split}
N\CN{N}{j+1}\periode_{N^{-1}\widehat{f}}(j+1)&=
L(j+1, \widehat{f_2}) L(2-k+j,f_1^-) + (-1)^{j+1}
L(j+1, \widehat{f_2}^-) L(2-k+j,f_1^-)\\
&=
L(j+1, \widehat{f_2} + (-1)^{j+1}\widehat{f_2}^-)L(2-k+j,f_1^-)\\
&= N\CN{N}{j+1} L(-j,f_2) L(2-k+j,f_1^-) \ .
\end{split}
\end{equation*}
D'où
\begin{equation*}
\begin{split}
\periode_{N^{-1}\widehat{f}}(j+1)&=
L(-j,f_2) L(2-k+j,f_1^-)\\
&=(-1)^{j+1}\int_{(\ZZ/N\ZZ)^2} f_1(-x_1) f_2(-x_2)
d\beta_{k-j-1}(x_1)d\beta_{j+1}(x_2) \\
&=(-1)^{j+1}\int_{(\ZZ/N\ZZ)^2}f^-\dd{\beta_{k-j-1}}{\beta_{j+1}} \ .
\end{split}
\end{equation*}

Passons aux cas particuliers.
Si $j=0$, le résidu de $L(s+1,N^{-1}\widehat{f_2})$ en $s=0$ est $f_2(0)$.
On a alors au voisinage de $s=0$
\begin{equation*}
\begin{split}
N\CN{N}{1}\periode_{N^{-1}\widehat{f}}(1)&=
\left(\frac{f_2(0)}{s} + L^*(1,\widehat{f_2}) + O(s)\right)
\left(L(2-k,f_1^-)+ L'(2-k,f_1^-) s + O(s^2)\right)
\\ &\quad + (-1)^k \left(\frac{f_2(0)}{s} + L^*(1,\widehat{f_2}^-) + O(s)\right)
\left(L(2-k,f_1 )+ L'(2-k,f_1) s + O(s^2)\right)
\\
&
=f_2(0)\left(\frac{L(2-k,f_1^- + (-1)^k f_1)}{s}
 +L'(2-k,f_1^-+ (-1)^kf_1) \right )\\
 &\quad+ L^*(1,\widehat{f_2}) L(2-k,f_1^-) +(-1)^k L^*(1,\widehat{f_2}^-) L(2-k,f_1)
 + O(s) \ .
\end{split}
\end{equation*}
Comme $L(2-k,f_1^- + (-1)^k f_1)=0$,
\begin{equation*}
\begin{split}
N\CN{N}{1} \periode_{N^{-1}\widehat{f}}(1)&=
 f_2(0) L'(2-k,f_1^-+ (-1)^kf_1) +
\left(L^*(1,\widehat{f_2}) - L^*(1,\widehat{f_2}^-)\right) L(2-k,f_1^-) \ .
\\
&=
f_2(0) L'(2-k,f_1^-+ (-1)^kf_1) +
N\CN{N}{1}
\left(L(0,f_2)+\frac{1}{2} f_2(0))\right) L(2-k,f_1^-) \ .
\end{split}
\end{equation*}
En utilisant le corollaire \ref{eq:a} et le fait que
$N\CN{N}{1}=-2\pi i$,
\begin{equation*}
\begin{split}
\periode_{N^{-1}\widehat{f}}(1)&=
\frac{1}{-2i\pi} f_2(0)L'(2-k,f_1^-+ (-1)^kf_1)
+\left(
L(0,f_2)+\frac{1}{2} f_2(0) \right)L(2-k,f_1^-) \ .
\end{split}
\end{equation*}
En introduisant les distributions de Bernoulli,
\begin{equation*}
\begin{split}
\periode_{N^{-1}\widehat{f}}(1)&=
\int_{(\ZZ/N\ZZ)^2} N^{-1}\widehat{f_2}(x_1) \widehat{f_1}(-x_2) d \beta_0(x_1)
d\beta'_{k-1}(x_2)
-\int_{(\ZZ/N\ZZ)^2} f_1(-x_1)f_2(-x_2)
d \beta_{k-1}(x_1)d \beta_1(x_2)
\\
&=
\int_{(\ZZ/N\ZZ)^2} \widehat{f}\dd{\beta_0}{\beta'_{k-1}}
-\int_{(\ZZ/N\ZZ)^2} f^- \dd{\beta_{k-1}}{\beta_1}\ .
\end{split}
\end{equation*}
D'où la formule pour $j=0$.
Si $j=k-2$ et $k>2$, on applique la formule du corollaire \ref{eqderiv}:
\begin{equation*}
\begin{split}
\periode_{N^{-1}\widehat{f}}(k-1)&=
  \CN{N}{k-1}^{-1}\left(L(k-1, \widehat{f_2}) L(0,f_1^-) + (-1)^{k}
 L(k-1, \widehat{f_2}^-) L(0,f_1)\right)\\
 &=
\frac{1}{-2\pi i}\left(
   L'(2-k,f_2 + (-1)^kf_2^-) L(0,f_1^-)
+
  (-1)^k L'(2-k,f_2^- + (-1)^kf_2) L(0,f_1)
\right)\\&
\quad\quad\quad +
\frac{1}{2} L(2-k,f_2) L(0,f_1^- - f_1)
\\
&=
\frac{1}{-2\pi i}
   L'(2-k,f_2 + (-1)^kf_2^-) L(0,f_1+f_1^-)
+
  \frac{1}{2} L(2-k,f_2) L(0,f_1^- - f_1)
\\
&=- \frac{1}{-2\pi i}
   L'(2-k,f_2 + (-1)^k f_2^-)f_1(0)
+ L(2-k,f_2)
\left( L(0,f_1^-) + \frac{1}{2} f_1(0) \right)\ .
\end{split}
\end{equation*}
D'où
\begin{equation*}
\begin{split}
\periode_{N^{-1}\widehat{f}}(k-1)
=&-\int_{(\ZZ/N\ZZ)^2} N^{-1}\widehat{f_2}(x_1)\widehat{f_1}(-x_2)
  d\beta'_{k-1}(x_1)d\beta_0(x_2)\\
&+(-1)^{k-1}
\int_{(\ZZ/N\ZZ)^2} f_1(-x_1) f_2(-x_2)
d\beta_1(x_1) d\beta_{k-1}(x_2) \\
=&-\int_{(\ZZ/N\ZZ)^2} \widehat{f}
d\beta'_{k-1}d\beta_0
+(-1)^{k-1}\int_{(\ZZ/N\ZZ)^2} f^- \dd{\beta_1}{\beta_{k-1}} \ .
\end{split}
\end{equation*}
\end{proof}

\bibliographystyle{smfplain}
\selectbiblanguage{french}
\bibliography{petersson}
\nocite{HW}
\nocite{hida}
\nocite{zagierkohnen}

\end{document}